\documentclass[12pt]{amsart}

\pagestyle{plain}
\topmargin=-0.5in
\textheight=9in
\evensidemargin=0in
\oddsidemargin=0in
\parskip=0.1in
\setlength{\textwidth}{6.5in}

\raggedbottom

\usepackage{amssymb}		
\usepackage{graphicx}		
\usepackage{hyperref}		
\usepackage{amsmath}
\usepackage{array}
\usepackage{parskip}
\usepackage{amsthm}

\usepackage{amsfonts}
\usepackage{tikz-cd}

\usepackage{mathrsfs}
\usepackage{breqn}
\usepackage{xcolor}
\usepackage[mathscr]{eucal}
\usepackage{enumitem}
\newtheorem{defn}{Definition}
\newtheorem{lem}{Lemma}
\newtheorem{prop}{Proposition}
\newtheorem{thm}{Theorem}
\newtheorem{cor}{Corollary}
\newtheorem{remark}{Remark}
\newtheorem{ex}{Example}
\newtheorem{Q}{Question}
\newtheorem{proc contact}{Procedure Contact}
\newtheorem{conj}{Conjecture}

\newcommand{\partiald}[2]{\displaystyle\frac{\partial#1}{\partial#2}}
\newcommand{\partialds}[3]{\displaystyle\frac{\partial^2#1}{\partial#2\partial#3}}

\newcommand{\ann}{\mathrm{ann}\,}
\newcommand{\mcal}[1]{\mathcal{#1}}
\newcommand{\wh}[1]{\widehat{#1}}
\newcommand{\vel}[1]{\mathrm{vel}(\mathcal{#1})}
\newcommand{\dec}[1]{\mathrm{deccel}(\mathcal{#1})}
\newcommand{\der}[2]{\mathcal{#1}^{(#2)}}
\newcommand{\coder}[2]{{#1}^{(#2)}}
\newcommand{\Chare}[1]{\mathrm{Char}\,{\mathcal{#1}}}
\newcommand{\Char}[2]{\mathrm{Char}\,{\mathcal{#1}}^{(#2)}}
\newcommand{\hChar}[2]{\mathrm{Char}\,{\widehat{\mathcal{#1}}}^{(#2)}}
\newcommand{\inChar}[2]{\mathrm{Char}\,{\mathcal{#1}}^{(#2)}_{#2-1}}
\newcommand{\inCharOne}[1]{\mathrm{Char}\,{\mathcal{#1}}^{(1)}_{0}}
\newcommand{\hinCharOne}[1]{\mathrm{Char}\,{\widehat{\mathcal{#1}}}^{(1)}_{0}}

\newcommand{\rank}{\mathrm{rank}\,}
\newcommand{\tab}{\hspace*{2em}}

\newcommand{\sD}{\mathscr{D}} 
\newcommand{\sE}{\mathscr{E}}
\newcommand{\sL}{\mathscr{L}}

\newcommand{\bA}{\textbf{A}} 
\newcommand{\bB}{\textbf{B}}

\newcommand{\bc}{\textbf{c}}
\newcommand{\bff}{\textbf{f}}
\newcommand{\bg}{\textbf{g}}
\newcommand{\bh}{\textbf{h}}
\newcommand{\bU}{\textbf{U}}
\newcommand{\bu}{\textbf{u}}

\newcommand{\bv}{\textbf{v}}
\newcommand{\bw}{\textbf{w}}
\newcommand{\bX}{\textbf{X}}
\newcommand{\bx}{\textbf{x}}
\newcommand{\bxd}{\textbf{\.x}}

\newcommand{\by}{\textbf{y}}
\newcommand{\byd}{\textbf{\.y}}

\newcommand{\bz}{\textbf{z}}
\newcommand{\bzd}{\textbf{\.z}}

\everymath{\displaystyle}

\makeatletter
\setcounter{tocdepth}{3}

\renewcommand{\tocsection}[3]{%
  \indentlabel{\@ifnotempty{#2}{\bfseries\ignorespaces#1 #2\quad}}\bfseries#3}
\renewcommand{\tocsubsection}[3]{%
  \indentlabel{\@ifnotempty{#2}{\ignorespaces#1 #2\quad}}#3}

\newcommand\@dotsep{4.5}
\def\@tocline#1#2#3#4#5#6#7{\relax
  \ifnum #1>\c@tocdepth 
  \else
    \par \addpenalty\@secpenalty\addvspace{#2}%
    \begingroup \hyphenpenalty\@M
    \@ifempty{#4}{%
      \@tempdima\csname r@tocindent\number#1\endcsname\relax
    }{%
      \@tempdima#4\relax
    }%
    \parindent\z@ \leftskip#3\relax \advance\leftskip\@tempdima\relax
    \rightskip\@pnumwidth plus1em \parfillskip-\@pnumwidth
    #5\leavevmode\hskip-\@tempdima{#6}\nobreak
    \leaders\hbox{$\m@th\mkern \@dotsep mu\hbox{.}\mkern \@dotsep mu$}\hfill
    \nobreak
    \hbox to\@pnumwidth{\@tocpagenum{\ifnum#1=1\bfseries\fi#7}}\par
    \nobreak
    \endgroup
  \fi}
\AtBeginDocument{%
\expandafter\renewcommand\csname r@tocindent0\endcsname{0pt}
}
\def\l@subsection{\@tocline{2}{0pt}{2.5pc}{5pc}{}}
\makeatother


\begin{document}

\title{Geometry of Cascade Feedback Linearizable Control Systems}

\author{Taylor J. Klotz}
\address{Department of Mathematics, 395 UCB, University of
Colorado, Boulder, CO 80309-0395}
\email{Taylor.Klotz@colorado.edu}

\subjclass[2010]{34H05, 58A17, 58A30, 58D19.}
\keywords{Control systems, feedback linearization, Goursat Bundles, infinitesimal symmetry.}

\begin{abstract}
In this thesis, we provide new insights into the theory of cascade feedback linearization of control systems. In particular, we present a new explicit class of cascade feedback linearizable control systems, as well as a new obstruction to the existence of a cascade feedback linearization for a given invariant control system. These theorems are presented in Chapter 4, where truncated versions of operators from the calculus of variations are introduced and explored to prove these new results. This connection reveals new geometry behind cascade feedback linearization and establishes a foundation for future exciting work on the subject with important consequences for dynamic feedback linearization. 
\end{abstract} 

\maketitle

\tableofcontents

\section{Brief Control Theory Background\label{chap:one}}
\subsection{Introduction}
Control theory, simply defined, is an area of research in mathematics and engineering that is fundamentally driven by the question: how can one control the result of some mechanical, electrical, biological, or other process in an acceptable way? For example, autonmous driving and parallel parking, optimal power consumption for super computers, blood sugar levels in humans, confined plasma devices etc. It is an incredibly vague and daunting question that is almost always dependent on the particular process being considered. Nevertheless, there have been many important mathematical and engineering insights that have led to general techniques, as well as the recognition of types of control systems where said techniques may apply. In the course of researching this question, more specific lines of inquiry begin to arise, such as: can one optimally control the process subject to some constraint? How sensitive is the process to the choice of controls? Can the process be controlled even in the presence of random noise? and many more. This thesis is essentially concerned with a particular variant of the question: what types of seemingly nonlinear processes are actually linear in some--potentially expanded--sense? When this is the case, the full array of well developed techniques in linear control theory are suddenly applicable. Even this question has already accumulated a deep literature, some of which we will introduce in the remainder of Chapter 1. For a reasonably detailed overview of the big ideas and history of control theory, the reader is encouraged to peruse \cite{ControlOverviewHistory}. 

This thesis is in the area of geometric control theory, specifically from the viewpoint of differential geometry. There are many papers from this perspective in control theory, and we will be particularly concerned with results that answer questions of equivalence between control systems; that is, questions related to classifying control systems. Chapter 2 will address the key concepts from differential geometry, as they apply to geometric control theory, that are relevant to this thesis. These concepts include topics such as: distributions, exterior differential systems, derived systems, jet bundles, generalized Goursat bundles, etc. 
Chapter 3 is specifically concerned with the role of symmetry from the perspective of exterior differential systems, and provides a key property needed to define cascade feedback linearizable systems. Chapter 4 opens with the definition of a \textit{cascade feeback linearizable control system}, and then presents the primary results of this thesis, including a new necessary condition for cascade feedback linearizable control systems, as well as the presentation of a new explicit class of such systems.  

For more on geometric control theory, good places to begin are \cite{BulloLewisBook},\cite{JurdjevicBook}, and \cite{GeometricControlBoulder}. 

\subsection{Control Systems}
In this section we will define explicitly what we mean by a \textit{control system} for the purposes of this thesis, as well as introduce several examples that will appear throughout.
\begin{defn}\label{control system def 1}
Let $M$ be a manifold such that $M\cong_{\mathrm{loc}}\mathbb{R}\times\mathbb{R}^n\times\mathbb{R}^m$, with coordinates $(t,\bx,\bu)$, where $\bx=(x^1,\ldots,x^n)$ and $\bu=(u^1,\ldots,u^m)$. A \textbf{control system} on $M$ is an underdetermined system of ordinary differential equations,
\begin{equation}\label{control system}
\frac{d\bx}{dt}=\bff(t,\bx,\bu),
\end{equation}
where $\bff(t,\bx,\bu)=(f^1(t,\bx,\bu),\ldots,f^n(t,\bx,\bu))$. The coordinate $t$ will denote time, and the variables $\bx$ and $\bu$ are the \textbf{state variables} and \textbf{control variables} respectively. Additionally, denote $\bX(M)\cong\mathbb{R}^n$ to be the \textbf{state space} of $M$ with the states $\bx$ as local coordinates on $\bX(M)$. 
\end{defn}
\begin{defn}
A \textbf{solution} or \textbf{trajectory} of a control system is any curve $(t, \bx(t),\bu(t))$ in $M\cong_{\mathrm{loc}}\mathbb{R}\times\mathbb{R}^n\times\mathbb{R}^m$ that satisfies equation (\ref{control system}). 
\end{defn}
As alluded to in the introduction, we mention that, in general, a \textit{control system} could refer to many different types of differential equations or processes, e.g. PDEs, SDEs, discrete DEs, general stochastic processes, etc. The author is curious to know the extent to which the ideas in this thesis may be applied to other types of control systems, and will likely investigate this to some degree in the future. For now, however, we will be content with our above definition of a control system. 

One important property of control systems is the question of controllability. 
\begin{defn}
A control system is \textbf{controllable} if, for any two points $p$ and $q$ in $\bX(M)$, there exists a solution to (\ref{control system}) such that $\bx(t_0)=p$ and $\bx(t_1)=q$. 
\end{defn}
Studying controllability of control systems is of central importance in the overall field of control theory, and there are many different types of controllability and related notions. We will not explore this topic any further in this thesis, except briefly in Section 1.3 and in the following example. One can refer to \cite{BulloLewisBook} for more on controllability. We now list several examples of control systems that appear throughout this thesis. 
\begin{ex}\label{lin 1}
\begin{equation}
\begin{aligned}
\,&\,&\dot{x}^1&=x^2,&\dot{x}^2&=u^2,\,&\,&\,\\
\,&\,&\dot{x}^3&=u^1,&\dot{x}^4&=u^2.\,&\,&\,
\end{aligned}
\end{equation}
This control system has 4 states and 2 controls. 
\end{ex}
Solutions for this control system passing through the point $(\bx_0=\bx(t_0))\in \bX(M)\cong \mathbb{R}^4$ are easily seen to be given by 
\begin{equation}
\begin{aligned}
\,&\,& x^1&= f_2(t),&x^2&=\dot{f}_2(t),\,&\,&\,\\
\,&\,&x^3&=f_1(t),&x^4&=\dot{f}_2(t) +C,\,&\,&\,
\end{aligned}
\end{equation}
where $C=x^4(t_0)-x^2(t_0)$. We then notice the algebraic constraint $x^4(t)=x^2(t)+C$ for any choice of $f_2(t)$ and for all $t$. Thus, this control system is not controllable. All remaining examples introduced in this section are controllable. 
\begin{ex}\label{so(5)}\cite{Cascade2}
The following is a control system of 3 states and 2 controls.
\begin{equation}
\begin{aligned}
\dot{x}_1&=\frac{1}{2}(x_2+2x_3x_5),& \dot{x}_2&=2(x_3+x_1x_5),\\
\dot{x}_3&=\frac{2(u_1-x_1u_2)}{1+x_1},&\dot{x}_4&=x_5\\
\dot{x}_5&=\frac{2(u_1+u_2)}{1+x_1}.&\,
\end{aligned}
\end{equation}
\end{ex}
This example first appears in \cite{Cascade2}, and will have importance as an illustration of the main results in Chapter 4. It has the property of being cascade feedback linearizable, as shown in \cite{Cascade2}, and in particular, it serves as an example of Theorem \ref{sufficiency} in Chapter 4. 
\begin{ex}\label{HSM chap1}\cite{HuntSuMeyerLin}\cite{GSalgorithmExample}
\begin{equation}
\begin{aligned}
\frac{dx^1}{dt}&=\sin(x^2),\,&\frac{dx^2}{dt}&=\sin(x^3),\,&\frac{dx^3}{dt}&=(x^4)^3+u^1,&\,\\
\frac{dx^4}{dt}&=x^5+(x^4)^3-(x^1)^{10},\,&\frac{dx^5}{dt}&=u^2.\,&\,&\,&\,&\,
\end{aligned}
\end{equation}
\end{ex}
Example \ref{HSM chap1} above is an example of a control system that appears to be a nonlinear system of underdetermined ODE. However, the control system is equivalent to a \textit{linear} system in a precise way to be defined in Sections 1.2 and 3.7.
\begin{ex}\label{BC chap1}
\begin{equation}
\begin{aligned}
\,&\frac{dx^1}{dt}=u^1,\,&\frac{dx^2}{dt}=x^1,\,&\,\phantom{==}&\frac{dx^3}{dt}=(x^2+x^6+x^2u^1),&\,\\
\,&\frac{dx^4}{dt}=(u^2+x^1u^3),\,&\frac{dx^5}{dt}=x^4,\,&\,\phantom{=}&\frac{dx^6}{dt}=(x^5+x^2x^4),&\,\\
\,&\frac{dx^7}{dt}=u^3.\,&\,&\,&\,&\,
\end{aligned}
\end{equation}
\end{ex}
The above system in Example \ref{BC chap1} will be referred to as the BC system, for Battilotti and Califano, who introduced the system in \cite{BC3control}. This system is also cascade feedback linearizable, as will be demonstrated in Chapter 4.  

\begin{ex}\label{reduction example}
\begin{equation}
\begin{aligned}
\frac{dz^1_0}{dt}&=z^1_1,&\frac{dz^2_0}{dt}&=z^2_1,\,&\,\\
\frac{dz^1_1}{dt}&=z^1_2,&\frac{dz^2_1}{dt}&=z^2_2,\,&\,\\
\frac{d\epsilon}{dt}&=e^{z^1_1z^2_0}.&\,&\,&\,&\,
\end{aligned}
\end{equation}
\end{ex}
This relatively simple looking control system will be used only once, and will not reappear until Chapter 4. The control parameters are $z^1_2$ and $z^2_2$. It provides a nice demonstration of the necessary condition found in Theorem \ref{necessity}, which is one of the main results of this thesis. 

\begin{ex}\label{affine sym}
\begin{equation}
\begin{aligned}
\frac{dx^1}{dt}&=((x^2)^2+x^1f(t,x^3,x^4,x^5,u^2)),\\
\frac{dx^2}{dt}&=x^2f(t,x^3,x^4,x^5,u^2),\\
\frac{dx^i}{dt}&=g^i(t,x^3,x^4,x^5,u^2)\left(x^2e^{-u^1}\right)^{a_i}\,\,\mathrm{ for }\,\,3\leq i\leq 5
\end{aligned}
\end{equation}
where $f,g$ are arbitrary functions and the $a_i$ are constants, not all zero. 
\end{ex}
The above family of control systems possesses a familiar set of symmetries--the affine transformations of the real plane. Specific choices of the functions $g^i$ and the constants $a_i$ will be used to demonstrate various theorems in Chapter 3 and Chapter 4. 

\begin{ex}\label{PVTOL}\cite{VTOL}
\begin{equation}
\begin{aligned}
\ddot{x}&=-u_1\,\sin(\theta)+h\,u_2\,\cos(\theta), \\ 
\ddot{z}&=u_1\,\cos(\theta)+h\,u_2\,\sin(\theta)-g, \\
\ddot{\theta}&=\lambda u_2.
\end{aligned} 
\end{equation}
\end{ex}
One important example to mention, which will not be explored in this thesis, is that of the planar vertical take-off and landing vehicle, (PVTOL) control system, listed above. An in-depth analysis of the system regarding cascade feedback linearization will appear in a later work. 

\begin{ex}\label{Sluis}
\begin{equation}\label{Sluis system}
\begin{aligned}
\dot{x}^1&=c_1x^1+c_3x^3+u^1(a_0+a_1x^1+a_3x^3+a_4x^4),\\
\dot{x}^2&=e_1x^1+e_3x^3+u^2(b_0+b_3x^3+b_4x^4),\\
\dot{x}^3&=u^1,\\
\dot{x}^4&=u^2.
\end{aligned}
\end{equation}
\end{ex}
Finally, Example \ref{Sluis} is an 11-parameter family of control systems. Any choice of the parameters leads to a system that is \textit{not} linearizable (see Definition \ref{ESFL def 1}). However, this system does have the property of being linearized when additional differential equations are imposed. This concept will be made precise in Section 1.3. 

All of the examples presented in this section have very different properties, and in terms of classification, are all inequivalent to one another. 
\subsection{Linear Control Systems}
The most fundamental and well studied class of control systems are those that are linear.
\begin{defn}
A control system (\ref{control system}) in $n$ states and $m$ controls is \textbf{linear} if it has the form 
\begin{equation}\label{linear control system}
\bxd(t)=A\bx+B\bu,
\end{equation}
where $A$ and $B$ are $n\times n$ and $n\times m$ constant matrices respectively. 
\end{defn}
In particular, Example \ref{lin 1} is a linear control system. We demonstrated that it was not controllable, which naturally makes one wonder about when a linear control system is controllable.
\begin{thm}(Kalman Condition)\cite{KalmanCondition}
A linear control system (\ref{linear control system}) is controllable if and only if the $n\times nm$ matrix 
\begin{equation}
[B\,AB\,A^2B\,\cdots\,A^{n-1}B],
\end{equation}
has rank $n$.
\end{thm}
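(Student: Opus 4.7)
The plan is to reduce controllability to a linear-algebraic statement about a reachable subspace, and then identify that subspace via the Cayley--Hamilton theorem. First I would write down the variation-of-constants formula for the linear system: for any admissible control $\bu(s)$ on $[t_0,t_1]$, the trajectory starting at $\bx(t_0)=p$ satisfies
\begin{equation*}
\bx(t_1) = e^{A(t_1-t_0)}p + \int_{t_0}^{t_1} e^{A(t_1-s)}B\,\bu(s)\,ds.
\end{equation*}
Using translation invariance of the system in $t$ and the linearity of the map $\bu\mapsto \bx(t_1)-e^{A(t_1-t_0)}p$, controllability between arbitrary $p$ and $q$ is equivalent to the assertion that the reachable set from the origin,
\begin{equation*}
R(T) = \left\{\int_0^{T} e^{A(T-s)}B\,\bu(s)\,ds : \bu\in L^\infty([0,T],\mathbb{R}^m)\right\},
\end{equation*}
equals all of $\mathbb{R}^n$ for some (equivalently, every) $T>0$.

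Next I would identify $R(T)$ with a concrete subspace. Since $R(T)$ is a linear subspace of $\mathbb{R}^n$, it suffices to compute its orthogonal complement: a vector $\bv\in\mathbb{R}^n$ annihilates $R(T)$ if and only if $\bv^{\top} e^{A(T-s)}B = 0$ for all $s\in[0,T]$. Expanding $e^{A(T-s)}$ as a power series in $(T-s)$ and using that the functions $(T-s)^k$ are linearly independent, this is equivalent to $\bv^{\top} A^k B = 0$ for every integer $k\geq 0$. Here I would invoke the Cayley--Hamilton theorem: $A^n$ is a polynomial of degree less than $n$ in $A$, so all higher powers $A^k$ with $k\geq n$ lie in the span of $I,A,\ldots,A^{n-1}$. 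Hence the infinite family of conditions collapses to the finite one $\bv^{\top}[B\ AB\ \cdots\ A^{n-1}B] = 0$.

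Combining these two steps, $R(T)=\mathbb{R}^n$ if and only if the only $\bv$ annihilating the columns of $[B\ AB\ \cdots\ A^{n-1}B]$ is $\bv=0$, i.e.\ if and only if this $n\times nm$ matrix has rank $n$. This yields both directions of the equivalence. The main obstacle, such as it is, is the step from ``vanishing for all $s\in[0,T]$'' to ``vanishing of $\bv^\top A^k B$ for all $k$''; I would handle this either by repeatedly differentiating $\bv^{\top} e^{A(T-s)}B$ in $s$ at $s=T$, or equivalently by uniqueness of real-analytic functions. Everything else is then a clean application of Cayley--Hamilton and the observation that $R(T)$ is a linear subspace, so no delicate estimates or choice of special controls (beyond, say, polynomial or bang-bang $\bu$) are needed.
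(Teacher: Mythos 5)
The paper does not prove this theorem; it is quoted with a citation to Kalman, so there is no internal argument to compare against. Your proposal is the standard classical proof (variation of constants, the reachable set as a linear subspace, annihilators, and Cayley--Hamilton to truncate the powers of $A$ at $n-1$), and it is correct as outlined; the only step needing care is the passage from $\bv^{\top}e^{A(T-s)}B\equiv 0$ to $\bv^{\top}A^kB=0$ for all $k$, which you rightly handle by repeated differentiation at $s=T$ (or analyticity).
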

We will restrict ourselves to controllable systems from here on. Given a control system, it may be possible to change the state and control variables in such a way that the new system is a linear control system. In particular, a theorem of Brunovsk\'y \cite{Brunovsky} says that \text{all} controllable linear control systems may be put into the following form by a specific type of transformation.  
\begin{defn}\cite{Brunovsky}
The \textbf{Brunovsk\'y normal form}, is a linear control system (\ref{linear control system}) such that matrix $A$ consists of $\sigma_i\times \sigma_i$ block matrices $A_i$, $1\leq i\leq m$ down the diagonal, with the form
\begin{equation}
A_i=
\begin{bmatrix}
0& 1 &0&\cdots& 0\\
0& 0 & 1 &\cdots& 0\\
\vdots&\vdots&\vdots&\ddots&\vdots\\
0& 0 & 0 &\cdots& 1\\
0& 0 & 0 &\cdots& 0
\end{bmatrix}, 
\end{equation}
and the matrix $B$ has entries $B^l_k=1$ if $l=k=\sum_{i=1}^s\sigma_i$ for all $1\leq s\leq m$, and $B^l_k=0$ otherwise. That is, the matrix $B$ has a  one in a diagonal position on row $k$ if the $k$th row of $A$ is all zeroes, and the rest of the entries for $B$ are zero.  
\end{defn}
Sometimes it is possible to transform a seemingly nonlinear control system to Brunovsk\'y normal form via a change of coordinates. Indeed, in \cite{HuntSuMeyerLin} and \cite{GSalgorithmExample} Example (\ref{HSM chap1}) was shown to be equivalent to a Brunovsk\'y normal form via a change of coordinates where the new state variables are of the form $\bz=\bz(\bx)$ and the new control variables have the form $\bv=\bv(\bx,\bu)$. 
\begin{defn}\label{ESFL def 1}
A control system (\ref{control system}) is called \textbf{static feedback linearizable} (SFL) if there is an invertible map $(t,\bz,\bv)=(t,\varphi(\bx),\psi(\bx,\bu))$ such that (\ref{control system}) transforms to a Brunovsk\'y normal form $\bzd=A\bz+B\bv$. The control system is called \textbf{extended static feedback linearizable} (ESFL) if the map has the form $(t,\bz,\bv)=(t,\varphi(t,\bx),\psi(t,\bx,\bu))$.   
\end{defn}
Notice that the forms of the maps only take state variables to state variables, while the new controls are allowed to depend on the old controls \textit{and} old state variables. This is what is meant by ``feedback". For example, when one is driving a car, the current position of the car is used to determine how to change the steering wheel or acceleration to stay on the road. However, a driver has no way of controlling the shape or orientation of the road in order to keep the car on the road. This property is important for having meaningful solutions for a control system.  

The first results concerning when a given \textit{nonlinear} control system is SFL were given by Krener in \cite{KrenerLin}, as well as by Brockett in \cite{BrockettLin} and then Jakubczyk and Respondek in \cite{RespondekLin}. Constructing explicit maps for SFL systems is harder, and that work was started by Hunt, Su, and Meyer in \cite{HuntSuMeyerLin}, and then a more geometric approach based on symmetry was developed in \cite{BrunovskySymmetry} by Gardner, Shadwick, and Wilkens, and finally work of Gardner and Shadwick \cite{GSalgorithm},\cite{GSFeedback}, and \cite{GSalgorithmExample} provided what is now known as the GS algorithm for static feedback linearization. The work of Vassiliou in \cite{VassiliouGoursat} and \cite{VassiliouGoursatEfficient} provides a way to construct the required maps for ESFL systems, as well as identifying when a given control system is ESFL. It is Vassiliou's work that will be central to this thesis. The main results of the two previously mentioned papers will appear in Chapter 2. 
\subsection{Dynamic Feedback Linearizable Control Systems} 
A particularly desirable property for a control system is when solutions can be written purely in terms of arbitrary function and their derivatives. Much like the case of Example \ref{lin 1}, determining a solution curve requires no integration, and involves only algebraic expressions of arbitrary functions and their derivatives. Solutions to Brunovsk\'y linear control systems always have this property. However, there are nonlinear systems that may also have this property and are \textit{not} SFL. 
\begin{defn}
A controllable control system is called \textbf{explicitly integrable} (EI) if generic solutions may be written as
\begin{equation}
\bx(t)=\bA(t,z^i_0(t),z^i_1(t),\ldots,z^i_{s_i}(t)),\,\,\bu(t)=\bB(t,z^i_0(t),z^i_1(t),\ldots,z^i_{r_i}(t)),
\end{equation}
for $1\leq i\leq m$ for $m$ the number of controls and $z^i_{l_i}(t)=\frac{d^{l_i}z^i_0}{dt^{l_i}}$ for some arbitrary smooth functions $z^i_0(t)$. Additionally, we may add the distinction of \textbf{autonomous} to an EI system if $A$ and $B$ have time dependence only through the functions $z^i_0(t)$ and their derivatives. That is, $A$ and $B$ have the form
\begin{equation}
\bx(t)=\bA(z^i_0(t),z^i_1(t),\ldots,z^i_{s_i}(t)),\,\,\bu(t)=\bB(z^i_0(t),z^i_1(t),\ldots,z^i_{r_i}(t)).
\end{equation}
\end{defn}
It turns out that EI systems are related to another type of linearization called \textit{dynamic feedback linearization}. 
\begin{defn}
A control system (\ref{control system}) is \textbf{dynamic feedback linearizable} (DFL) if there exists an augmented system of the form
\begin{equation}
\begin{aligned}
\bxd&=\bff\,(\bx,\bu),\phantom{\bw==}\bx\in\mathbb{R}^n,\phantom{==} \bu\in\mathbb{R}^m,\\
\byd&=\bg\,(\bx,\by,\bw),\phantom{==}\by\in\mathbb{R}^k, \phantom{==}\bw\in\mathbb{R}^q,\\
\bu&=\bh\,(\bx,\by,\bw),
\end{aligned}
\end{equation}
such that the control system
\begin{equation}
\begin{aligned}
\bxd&=\bff\,(\bx,\bh(\bx,\by,\bw)),\\
\byd&=\bg\,(\bx,\by,\bw),
\end{aligned}
\end{equation}
is SFL. 
\end{defn}
There has been considerable effort to understand DFL systems, more than we can exhaustively list here. The concept first appears in \cite{SinghDFL}, and was subsequently studied in \cite{IsidoriMoogLucaDFL}, \cite{CharletLevineMarino1}, and \cite{CharletLevineMarino2}. A geometric necessary condition based on ruled submanifolds was presented in \cite{sluis1994absolute}. A method for producing a DFL, if it exists, was the subject of work by Battilotti and Califano in \cite{BCDFL1},\cite{BCDFL2}, and \cite{BC3control}. However, a complete classification of DFL systems has yet to be achieved. 

The 11-parameter family from Example \ref{Sluis} was shown to be DFL in \cite{sluis1994absolute} by differentiating twice along the control $u^1$. That is, if we augment (\ref{Sluis system}) by 
\begin{equation}
\begin{aligned}
\dot{y}^1&=y^2,\\
\dot{y}^2&=v^1,\\
u^1&=y^1,
\end{aligned}
\end{equation}
then the augmented system is SFL. 
We also have the following nonautonomous version of a dynamic feedback linearizable system. 
\begin{defn}
A control system (\ref{control system}) is \textbf{extended dynamic feedback linearizable} (EDFL) if there exists an augmented system of the form
\begin{equation}
\begin{aligned}
\bxd&=\bff\,(t,\bx,\bu), \bx\in\mathbb{R}^n, \bu\in\mathbb{R}^m,\\
\byd&=\bg\,(t,\bx,\by,\bw), \by\in\mathbb{R}^k, \bw\in\mathbb{R}^q,\\
\bu&=\bh\,(t,\bx,\by,\bw),
\end{aligned}
\end{equation}
such that the control system
\begin{equation}
\begin{aligned}
\bxd&=\bff\,(t,\bx,\bh(t,\bx,\by,\bw)),\\
\byd&=\bg\,(t,\bx,\by,\bw),
\end{aligned}
\end{equation}
is ESFL. 
\end{defn}
In forthcoming work \cite{Cascade2}, it is shown that 
\begin{prop}\label{EI iff EDFL}
A control system is EDFL if and only if it EI. Additionally, a control system is DFL if and only if it is an autonomous EI system.  
\end{prop}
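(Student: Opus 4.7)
The plan is to prove both biconditionals in tandem, since they differ only in whether time dependence is retained; the autonomous case (DFL $\iff$ autonomous EI) will follow by re-running the argument for EDFL $\iff$ EI with the time variable suppressed throughout and with SFL replacing ESFL.

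For the forward direction (EDFL $\Rightarrow$ EI), I would start from the augmentation data $(\bff, \bg, \bh)$. Performing the feedback substitution $\bu = \bh(t, \bx, \by, \bw)$ yields a control system in states $(\bx, \by)$ and controls $\bw$ which, by hypothesis, is ESFL. Invoking the defining property of ESFL, I obtain an invertible map $(t, \bz, \bv) = (t, \varphi(t, \bx, \by), \psi(t, \bx, \by, \bw))$ transforming the augmented system to a Brunovsk\'y normal form $\dot{\bz} = A\bz + B\bv$. Direct integration of the Brunovsk\'y chains shows that generic solutions are parameterized by $m+q$ arbitrary smooth functions $z^i_0(t)$ together with their time derivatives up through the block heights. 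Inverting $\varphi$ expresses $(\bx, \by)$, and then through $\psi$ and $\bh$ also $\bu$, as smooth functions of $t$ and the $z^i_j$; projecting onto the $(\bx, \bu)$ components gives exactly the EI form.

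For the reverse direction (EI $\Rightarrow$ EDFL), the construction of the augmentation is guided directly by the EI data. Given $\bx = \bA(t, z^i_0, \ldots, z^i_{s_i})$ and $\bu = \bB(t, z^i_0, \ldots, z^i_{r_i})$, I would introduce chain-of-integrators dynamics on the $z^i_j$, treating the lowest jets $z^i_0, \ldots, z^i_{s_i}$ as already encoded in $\bx$ via local inversion of $\bA$, the middle jets $z^i_{s_i+1}, \ldots, z^i_{r_i-1}$ as the augmented state $\by$, and the top jets $z^i_{r_i}$ as the new controls $\bw$. The feedback $\bh$ is then defined by substituting the expressions for the $z^i_j$ back into $\bB$. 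Taking the collection of all the $z^i_j$, recovered from $\bx$ via $\bA^{-1}$ and directly from $\by$, as coordinates $\bz$, produces a candidate ESFL map whose image is visibly the Brunovsk\'y normal form associated with $m$ chains of heights $r_i$.

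The main obstacle is the reverse direction's verification that the constructed augmentation is genuinely ESFL. A naive augmentation that simply adjoins independent chains of integrators typically fails to be controllable and is therefore not equivalent to any Brunovsk\'y form; the EI data must be exploited carefully, through the local inverse of $\bA$, to ensure that the new $\by$-directions truly extend the controllable subsystem rather than duplicate coordinates already captured by $\bx$. Moreover, since the EI formulas need only hold for generic solutions, one must restrict to the locus where $\bA$ has maximal rank in its jet arguments, so that $\varphi$ and $\psi$ are smooth diffeomorphisms on a suitable open dense set. Once this is handled, the second sentence of the proposition is immediate by dropping all explicit $t$-dependence from the constructions above.
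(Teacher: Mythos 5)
First, a point of comparison: the paper itself contains no proof of this proposition --- it is stated as a result of the forthcoming work \cite{Cascade2} --- so there is no in-paper argument to measure your proposal against. Your overall strategy (Brunovsk\'y chains parameterize solutions in one direction; build an integrator-chain augmentation from the EI data in the other, with the autonomous/DFL case obtained by suppressing $t$) is the natural one, but as written it has two genuine gaps.

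The serious one is in the direction EI $\Rightarrow$ EDFL. You assume the lowest-order jets $z^i_0,\ldots,z^i_{s_i}$ can be recovered from $\bx$ ``via local inversion of $\bA$.'' Nothing in the definition of EI makes $\bA$ invertible: it is merely a smooth map writing generic solutions as $\bx(t)=\bA(t,z^i_0,\ldots,z^i_{s_i})$, its jet arguments number $\sum_i(s_i+1)$, which need not equal $n=\dim\bX(M)$, and even when the dimensions match $\bA$ need not be a local diffeomorphism. Without that inversion your candidate augmentation and feedback $\bh$ cannot even be written down, let alone shown ESFL; recovering the parameterizing functions from $(\bx,\bu)$ and finitely many derivatives (the ``endogenous'' property in flatness language) is precisely the hard content of this implication, and your proposal names it as the main obstacle without supplying the argument. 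There is also a gap in EDFL $\Rightarrow$ EI: from an ESFL augmentation you only conclude that trajectories of the augmented system project to explicitly parameterized $(\bx(t),\bu(t))$; to conclude EI you must also show that \emph{generic} solutions of the original system arise as such projections, i.e.\ that given $(\bx(t),\bu(t))$ one can solve $\bu=\bh(t,\bx,\by,\bw)$, $\dot{\by}=\bg(t,\bx,\by,\bw)$ for $(\by(t),\bw(t))$. This is not automatic: for $\dot{x}=u$ the augmentation $\dot{y}=w$, $u=e^{y}$ yields an ESFL system whose projections only realize trajectories with $u(t)>0$, so a lifting/genericity argument (e.g.\ rank conditions on $\bh$ at generic points) is needed and is missing.
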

Example \ref{BC chap1} is DFL, and in fact, in Chapter 4 of this thesis, we will show that Example \ref{BC chap1} is EI. In order to prove this, the theory of \textit{cascade feedback linearization} (CFL) is introduced and applied in Chapter 4. The key idea is the existence of two particular kinds of ESFL systems whose trajectories may be ``composed" in order to compute trajectories of Example \ref{BC chap1}. CFL systems are introduced in \cite{VassiliouCascadeI} where it is shown that such systems are EI. In light of this result and Proposition \ref{EI iff EDFL} from \cite{Cascade2}, we can say that any CFL system is EDFL. In particular, as is shown in \cite{Cascade2}, carrying out the CFL process tells one how to construct a simple augmented system that is ESFL, therefore demonstrating directly that a CFL system is EDFL. 

We would also like to remark briefly on Example \ref{PVTOL} mentioned in Section 2. The control system is shown to be DFL by Martin, Devasia, and Paden in \cite{DynamicVTOL}, and in \cite{VassiliouCascadeI} it is shown to be CFL. However, the sizes of the augmented systems differ from the two constructions, namely the CFL construction in \cite{VassiliouCascadeI} requires a larger augmented system. In another forthcoming work by the author, Example \ref{PVTOL} will be explored through the lens of CFL theory more closely, demonstrating, in particular, that there is an augmented system of the same size as that presented in \cite{DynamicVTOL}. 


\section{Geometry of Feedback Transformations and Linearization\label{chap:two}}

\subsection{Exterior Differential Systems, Distributions, and Derived Systems}
In this section we provide some background on exterior differential systems (EDS) and derived systems. For a comprehensive account of EDS, refer to \cite{BCGGG} and \cite{CartanBeginners}. Throughout this thesis, we will assume that the ranks of all bundles that appear are constant on sufficiently small open sets unless otherwise specified. 
\begin{defn} An \textbf{Exterior Differential System (EDS)} is an ideal $\mathcal{I}$ in the exterior algebra of differential forms on a manifold $M$ that satisfies the condition $d\mathcal{I}\subset\mathcal{I}$, where $d$ is the exterior derivative. 
\end{defn}
We will always consider the case that an EDS is finitely generated as an ideal. We have two ways of generating an EDS: algebraically or differentially. That is, 
\begin{align}
\mathcal{I}&=\langle \theta^a, d\theta^a \rangle_{\mathrm{alg}},\quad 1\leq a\leq k,\\
\mathcal{I}&=\langle \theta^a \rangle_{\mathrm{diff}}
\end{align}
where $k$ is positive integer, each $\theta^a$ is a differential form on $M$, and $\langle \theta^a\rangle_{\mathrm{diff}}:=\langle \theta^a, d\theta^a\rangle_{\mathrm{alg}}$. For shorthand, we will often drop the ``diff" subscript so that $\langle \theta^a\rangle=\langle \theta^a\rangle_{\mathrm{diff}}$. 
An important question about a given EDS is whether or not it admits integral manifolds. 
\begin{defn}
Let $f:N\to M$ be an injective immersion of a manifold $N$ into $M$. Then $f(N)$ is an integral manifold of the EDS $\mathcal{I}$ if $f^*\phi=0$ for all $\phi\in\mathcal{I}$. 
\end{defn}
A straightforward example is the case of integral curves of a nowhere vanishing vector field $X$ on a manifold $M^n$. Let $\{\theta^a\}_{a=1}^{n-1}$ span the space of all 1-forms $\psi$ on $M$ such that $\psi(X)=0$. Then the set of integral manifolds of the EDS $\mathcal{I}=\langle \theta^a\rangle$ contains the integral curves of $X$. If one considers the space $L(X)=\text{Span}_{C^\infty(M)}\{X\}$, then the set of all integral curves of vectors in $L(X)$ are in 1-1 correspondence with integral manifolds of $\langle \theta^a\rangle$.

Sometimes it is desirable to find an $m$-dimensional integral manifold $f:N\to M$ of $\mathcal{I}$ such that $f^*\Omega\neq0$ for a given $m$-form $\Omega=dx^1\wedge\cdots\wedge dx^m$ on $M$, where $\{x^i\}_{i=1}^m$ form part of a local coordinate system given by $(x^1,\ldots,x^m,y^{m+1},\ldots,y^n)$. When this is the case, $\Omega$ is called an \textit{independence condition}, and it plays the role of establishing independent variables for integral manifolds of the EDS. The requirement that $f^*\Omega\neq0$ is equivalent to claiming that $(x^1,\ldots,x^m)$ may be chosen as local coordinates for $N$. Then the integral manifold $f:N\to M$ may be thought of as a graph given by $(\bx,\by(\bx))$ where $\bx=(x^1,\ldots,x^m)$ and $\by=(y^{m+1},\ldots,y^n)$. Returning to the example of a vector field $X$, it may be possible to pick a 1-form $\Omega=dt$ such that integral curves to $X$ (and therefore its associated EDS) may be written locally as graphs $(t,x^1(t),\ldots,x^{n-1}(t))$, where $(t,x^1,\ldots,x^{n-1})$ form coordinates for $M$.
\begin{defn}
Let $\{\theta^a\}_{a=1}^r$ and $\{\theta^a,\omega^i\}_{a,i=1}^{r,m}$ be bases for sections of subbundles $I,J\subset T^*M$ respectively. An EDS $\mathcal{I}=\langle \theta^1,\ldots,\theta^r \rangle$, is called a \textbf{Pfaffian system}. We say that $I$ generates $\mcal{I}$, and write $\mcal{I}=\langle I\rangle$. If in addition, $\mcal{I}$ is given an independence condition $\Omega=\omega^1\wedge\ldots\wedge\omega^m$ such that $d\theta^a\equiv 0\,\mathrm{mod}\,J$ for all $1\leq a\leq r$, then $(\mcal{I},\Omega)$ is called a \textbf{linear Pfaffian system}. 
\end{defn}
\begin{defn}
Let $\mcal{I}$ and $\tilde{\mcal{I}}$ be two Pfaffian systems generated by the subbundles $I$ and $\tilde{I}$ of $T^*M$, respectively. Then the sum of two Pfaffian systems is defined to be
\begin{equation}
\mcal{I}+\tilde{\mcal{I}}:=\langle I + \tilde{I}\rangle.
\end{equation} 
Additionally, if $I\cap \tilde{I}$ is trivial, then the direct sum of two Pfaffian systems is the Pfaffian system
 \begin{equation}
\mcal{I}\oplus\tilde{\mcal{I}}:=\langle I\oplus \tilde{I}\rangle.
\end{equation}
\end{defn}
The EDS in this thesis will be either Pfaffian or linear Pfaffian systems. Since our systems will be Pfaffian, we will often formulate results using the dual notion of \textit{distributions}.
\begin{defn}
A \textbf{distribution} $\mcal{V}$ on a manifold $M$ is a subbundle of the tangent bundle $TM$. An \textbf{integral manifold} of a distribution $\mcal{V}$ is any submanifold $N$ of $M$ such that $TN$ is a subbundle of $\mcal{V}$.  
\end{defn}
We will denote distributions by a set of sections of $TM$ that generate the distribution by $C^\infty$ linear combinations. That is, if $\mcal{V}$ is a distribution of rank $s$, then
\begin{equation}
\mcal{V}=\{X_1,\ldots,X_s\},
\end{equation}
where the $X_i$, for $1\leq i\leq s$, are linearly independent sections of $\mcal{V}\subset TM$.
In the case that we have a Pfaffian system, there is a natural distribution whose integral manifolds are the same as those of the Pfaffian system. \begin{defn} Let $I\subset T^*M$ be a subbundle. Then the \textbf{annihilator} of $I$ is the subbundle of $TM$ given by
\begin{equation}
\ann I=\bigcup_{p\in M}\{X_p\in T_pM\colon\theta_p\left(X_p\right)=0, \forall\,\theta_p\in I_p\}. 
\end{equation}
Conversely, given a distribution $\mcal{V}$ on manifold M,
\begin{equation}
\ann \mcal{V}=\bigcup_{p\in M}\{\theta_p\in T^*M\colon \theta_p(X_p)=0, \forall\,X_p\in\mcal{V}_p\},
\end{equation}
is a subbundle of the cotangent bundle of $M$. Moreover, $\ann\left(\ann B\right)=B$ for any subbundle $B$ of $T^*M$ or $TM$. 
\end{defn}
We now discuss two equivalent versions of an important theorem in the study of EDS and distributions.  
\begin{thm}\label{Frobenius}The following are equivalent statements of the Frobenius theorem. 
\begin{enumerate}
\item Let $\mcal{I}=\langle \theta^1,\ldots,\theta^{n-r}\rangle$ be a rank $n-r$ Pfaffian system on manifold $M^n$ with $r< n$. If
\begin{equation}\label{fro condition}
d\theta^a=\alpha^a_b\wedge \theta^b,
\end{equation}
where $\alpha^a_b\in \Omega^1(M)$ for all $1\leq a,b\leq n-r$, then through any point $p\in M$ there exists an $r$-dimensional integral manifold of $\mcal{I}$ containing $p$. Furthermore, on a sufficiently small open neighborhood of $p\in M$, there exists a coordinate system $(y^1,\ldots,y^{n-r}, x^{n-r+1},\ldots,x^n)$ such that 
\begin{equation}
\mathcal{I}=\langle dy^1,\ldots,dy^{n-r}\rangle
\end{equation}
and integral manifolds are determined by the equations $y^1=c^1,\ldots,y^{n-r}=c^{n-1},$ where $c^a,\quad1\leq a\leq n-r$ are constants. 
\item Let $\mcal{V}$ be a distribution of rank $r$ on a manifold $M^n$. If $[X,Y]\in \Gamma(\mcal{V})$ for all $X,Y\in \Gamma(\mcal{V})$, then through any point $p\in M$ there exists an $r$-dimensional integral manifold of $\mcal{V}$.
\end{enumerate}
\end{thm}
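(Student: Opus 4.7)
My plan is to prove Theorem \ref{Frobenius} in three stages: first establish that statements (1) and (2) are equivalent via the duality between Pfaffian systems and their annihilator distributions, then prove (2) by induction on the rank $r$ of the distribution using flows of vector fields, and finally extract the coordinate normal form asserted in (1) from the foliation by integral manifolds produced in (2).

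For the equivalence, I would set $\mcal{V} = \ann \mcal{I}$, which is a distribution of rank $r$ on $M^n$. The integrability condition (\ref{fro condition}) is precisely the statement that $d\theta^a \in \mcal{I}$ for each generator, i.e.\ $d\mcal{I}\subset\mcal{I}$ in a sharp algebraic sense on generators. For any $X,Y\in\Gamma(\mcal{V})$ and any generator $\theta^a$, Cartan's formula
\begin{equation}
d\theta^a(X,Y)=X\left(\theta^a(Y)\right)-Y\left(\theta^a(X)\right)-\theta^a([X,Y])=-\theta^a([X,Y]),
\end{equation}
combined with (\ref{fro condition}), forces $\theta^a([X,Y])=0$. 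Hence $[X,Y]\in\ann\mcal{I}=\mcal{V}$, which is exactly involutivity. The converse runs the same identity in reverse: involutivity of $\mcal{V}$ kills $d\theta^a$ on all pairs in $\mcal{V}$, so each $d\theta^a$ lies in the ideal generated by $\theta^1,\ldots,\theta^{n-r}$, giving (\ref{fro condition}).

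To prove statement (2), I would induct on $r$. The base case $r=1$ is the local existence theorem for ODE: any nonvanishing $X_1\in\Gamma(\mcal{V})$ has an integral curve through each $p\in M$ given by its flow. For the inductive step, assume the theorem for rank $r-1$, and let $\mcal{V}=\{X_1,\ldots,X_r\}$ be involutive of rank $r$. Using the flow box theorem I can choose local coordinates $(x^1,\ldots,x^n)$ about $p$ so that $X_1=\partial/\partial x^1$. Replacing each $X_i$ ($i\geq 2$) by $X_i-X_i(x^1)X_1$, I obtain a new basis $\{X_1,\tilde X_2,\ldots,\tilde X_r\}$ of $\mcal{V}$ in which the $\tilde X_i$ have no $\partial/\partial x^1$ component. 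The key computation is that $[X_1,\tilde X_i]\in\mcal{V}$ by involutivity, but also has no $\partial/\partial x^1$ component, so the coefficients of $\tilde X_i$ are independent of $x^1$. Restricting to the slice $N_0=\{x^1=0\}$ then yields an involutive distribution of rank $r-1$, to which the inductive hypothesis applies and produces an $(r-1)$-dimensional integral manifold $\Sigma\subset N_0$ through $p$. Flowing $\Sigma$ along $X_1$ sweeps out the desired $r$-dimensional integral manifold of $\mcal{V}$.

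Finally, for the coordinate normal form in (1), the integral manifolds constructed above foliate a neighborhood of $p$. I would use the inverse function theorem to pick $n-r$ functions $y^1,\ldots,y^{n-r}$ whose common level sets are exactly the leaves of this foliation, together with $r$ further coordinates $x^{n-r+1},\ldots,x^n$ parametrizing each leaf. Since $\ann\mcal{V}$ is generated by the differentials of the leaf-constant functions, $\mcal{I}=\ann(\ann\mcal{I})^\perp=\langle dy^1,\ldots,dy^{n-r}\rangle$, and the leaves are $y^a=c^a$. The main technical obstacle I anticipate is the inductive step above: verifying that the modified basis $\{X_1,\tilde X_2,\ldots,\tilde X_r\}$ remains involutive and that the restricted frame on the slice $N_0$ is well-defined and still involutive requires careful bookkeeping of brackets, and this is where the hypothesis $[X,Y]\in\Gamma(\mcal{V})$ is really used; once this straightening succeeds, the rest of the argument is largely a packaging of the inductive output into the stated coordinate form.
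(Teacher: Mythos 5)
The paper states this theorem as classical background and supplies no proof of its own, so there is nothing to compare your route against; your proposal has to stand on its own. Its architecture --- using $d\theta^a(X,Y)=X(\theta^a(Y))-Y(\theta^a(X))-\theta^a([X,Y])$ to identify condition (\ref{fro condition}) with involutivity of $\mcal{V}=\ann\mcal{I}$, then an induction on rank via a flow-box straightening of $X_1$, then reading off the coordinate normal form from the resulting foliation --- is the standard textbook proof and is sound in outline.

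There is, however, a genuine gap in the inductive step. From the two facts that $[X_1,\tilde X_i]\in\Gamma(\mcal{V})$ and that $[X_1,\tilde X_i]$ has no $\partial/\partial x^1$ component, you may conclude only that $[X_1,\tilde X_i]\in\mathrm{span}\{\tilde X_2,\ldots,\tilde X_r\}$, not that it vanishes; hence the coefficients of the $\tilde X_i$ need \emph{not} be independent of $x^1$. For instance $\mcal{V}=\{\partial_x,\,e^x\partial_y\}$ is involutive, $\tilde X_2=e^x\partial_y$ already has no $\partial_x$ component, yet its coefficient depends on $x$. This matters because your closing step --- that flowing the $(r-1)$-dimensional integral manifold $\Sigma\subset\{x^1=0\}$ along $X_1$ sweeps out an integral manifold of $\mcal{V}$ --- is exactly where that false claim was doing the work. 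Two standard repairs: (a) invoke the lemma that $[X_1,\Gamma(\mcal{V})]\subseteq\Gamma(\mcal{V})$ implies the flow $\phi_t$ of $X_1$ preserves $\mcal{V}$ (write $(\phi_{-t})_*\tilde X_i\big(\phi_t(q)\big)$ in the frame and note it solves a linear ODE), so that $(\phi_t)_*T_q\Sigma\subset\mcal{V}$ and the swept-out submanifold is integral by a dimension count; or (b) perform a further frame change $\tilde X_i\mapsto\sum_j b_i^j\tilde X_j$ with $b$ solving a linear ODE in $x^1$, after which the coefficients genuinely are $x^1$-independent. The restriction-to-slice step itself is fine, since it only needs $\tilde X_i(x^1)=0$ together with the bracket computation you did give, and the remaining pieces (the algebraic converse that involutivity forces $d\theta^a\equiv 0\ \mathrm{mod}\ \mcal{I}$, and extracting $\mcal{I}=\langle dy^1,\ldots,dy^{n-r}\rangle$ from the foliation) are routine and correctly indicated.
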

The condition (\ref{fro condition}) may also be stated as
\begin{equation}
d\theta^a \equiv\, 0\,\text{mod}\,I,
\end{equation}
for all $1\leq a\leq n-r$. Condition (\ref{fro condition}) is also equivalent to saying that $\mcal{I}$ is algebraically generated by 1-forms, i.e.
\begin{equation}
\mcal{I}=\langle \theta^1,\ldots,\theta^{n-r}\rangle_{\mathrm{alg}}.  
\end{equation}
\begin{defn}
Pfaffian systems and distributions that satisfy the hypotheses of the Frobenius theorem are called \textbf{Frobenius} or \textbf{completely integrable}. 
\end{defn}
Certainly, not all Pfaffian systems are Frobenius, and indeed, one might be interested in measuring how far a system deviates from being completely integrable. One can do this by removing all the forms in $\mathcal{I}$ that obstruct the EDS from being Frobenius. This is the idea of the derived system. 
\begin{defn}\label{derived definition}
Let $\mathcal{I}$ be a Pfaffian system. Then the Pfaffian system generated by 
\begin{equation}
I^{(1)}=\{\theta\in\mathcal{I}\cap\Omega^1(M):d\theta\equiv 0\, \mathrm{mod}\, \,\mathcal{I}\cap\Omega^1(M)\}
\end{equation}
is called the \textbf{first derived system} or \textbf{derivation} of $\mathcal{I}$. If one starts with a distribution $\mathcal{V}$, then the first derived system is defined as
\begin{equation}
\mathcal{V}^{(1)}=\{Z\in \Gamma(TM)\colon Z=\sum_{i}a^i[X_i,Y_i],\,\, X_i,Y_i\in\Gamma(\mcal{V})\,,a^i\in C^\infty(M)\, \mathrm{or}\,Z\in \Gamma(\mcal{V})\}.
\end{equation}
Informally, we will denote the derived system as
\begin{equation}
\der{V}{1}=\mcal{V}+[\mcal{V},\mcal{V}]. 
\end{equation}
\end{defn}
Notice that if $\coder{I}{1}=I$ then $\mcal{I}$ generates a Frobenius Pfaffian system. Similarly, if $\der{V}{1}=\mcal{V}$ then $\mcal{V}$ is Frobenius. Furthermore, the derived system of a Pfaffian system is a diffeomorphism invariant since pullback commutes with exterior differentiation and the wedge product. Additionally, we mention that if a distribution and Pfaffian system are related by $\mcal{V}=\ann I$, then $\der{V}{1}=\ann \coder{I}{1}$. This fact follows from the identity
\begin{equation}
d\theta(X,Y)=X(\theta(Y))-Y(\theta(X))-\theta([X,Y])
\end{equation}
for any $X,Y\in \Gamma(TM),\,\theta\in \Omega^1(M)$.
 As there is a first derived system, one can repeat the constructions in Definition \ref{derived definition} to generate a \textit{second} derived system, and so on. 
\begin{defn}
Let $\mathcal{I}=\langle I\rangle$ for $I$ a subbundle of $T^*M$ and let $\mathcal{V}$ be a distribution. Then the $l$\textbf{th derived system} of $\mcal{I}$ is 
\begin{equation}
\coder{I}{l}=\{\theta\in\coder{I}{l-1}\colon d\theta\equiv 0\,\mathrm{mod}\,\coder{I}{l-1}\}.
\end{equation}
Similarly for a distribution $\mcal{V}$, 
\begin{equation}
\der{V}{l}=\der{V}{l-1}+[\der{V}{l-1},\der{V}{l-1}].
\end{equation}
The \textbf{derived flag} of a Pfaffian system is given by 
\begin{equation}
I^{(k)}\subset  I^{(k-1)}\subset I^{(k-2)}\subset\ldots\subset I^{(1)}\subset I\subseteq T^*M, 
\end{equation}
where $k$ is the smallest integer such that $I^{(k+1)}=I^{(k)}$. For a distribution $\mcal{V}$, the derived flag is
\begin{equation}
\mathcal{V}\subset \mathcal{V}^{(1)}\subset\ldots\subset\mathcal{V}^{(k-1)}\subset \mathcal{V}^{(k)}\subseteq TM,
\end{equation}
where $k$ is the smallest integer such that $\der{V}{k+1}=\der{V}{k}$. The integer $k$ is called the \textbf{derived length} of the EDS/distribution. 
\end{defn}
Additionally, we have the following integer invariants of a derived flag. 
\begin{defn}\label{accel def}\cite{VassiliouGoursat}
Let $\mathcal{V}$ be a distribution with derived length $k>1$. Let $m_i$ denote the rank of $\der{V}{i}$ for $0\leq i\leq k$. Then one can define the following lists of integers: 
\begin{enumerate}
\item The \textbf{velocity} of $\mathcal{V}$: given by $\vel{V}=\langle \Delta_1,\ldots,\Delta_k\rangle$ where $\Delta_i=m_i-m_{i-1}$ for $1\leq i\leq k$.
\item The \textbf{acceleration} of $\mathcal{V}$: given by $\mathrm{accel}(\mathcal{V})=\langle \Delta^2_2,\ldots,\Delta^2_k, \Delta_k\rangle $ where $\Delta^2_i=\Delta_i-\Delta_{i-1}$ for $2\leq i\leq k$.
\item The \textbf{deceleration} of $\mathcal{V}$: given by $\dec{V}=\langle -\Delta^2_2,\ldots,-\Delta^2_k,\Delta_k\rangle$.  
\end{enumerate}
\end{defn}
The last bundle in the derived flag is always Frobenius. We see that, for distributions, if $\der{V}{k+1}=\der{V}{k}$ then $[\der{V}{k},\der{V}{k}]\subseteq \der{V}{k}$, which is exactly the condition needed to apply the Frobenius theorem. In the case of a Pfaffian system, $\coder{I}{k+1}=\coder{I}{k}$ means that $d\theta\equiv\,0\mod \coder{I}{k}$ \text{for all} $\theta\in \coder{I}{k}$. Hence the Frobenius condition is satisfied. 
\begin{defn}
Given a Pfaffian system $\mathcal{I}=\langle I\rangle$, where $I$ is a subbundle of $T^*M$, the \textbf{first integrals} or \textbf{invariant functions} of $\mathcal{I}$ are all non-constant functions $f:M\to\mathbb{R}$ such that $df\in \Gamma\left(I\right)$. Given a distribution $\mcal{V}$, the first integrals are given by all non-constant functions $f$ such that $X(f)=0$ for all $X\in \mcal{V}$. 
\end{defn}
Consider a completely integrable Pfaffian system $\mcal{I}$ of rank $n-r$ on a manifold $M^n$. Then the Frobenius theorem says there is a coordinate system $(y^1,\ldots,y^{n-r},x^1,\ldots,x^r)$ where the coordinate functions $\{y^1,\ldots,y^{n-r}\}$ can be chosen so that $\mcal{I}$ is generated by $I=\{dy^1,\ldots,dy^{n-r}\}\subseteq T^*M$. Hence the coordinate functions $y^1,\ldots,y^{n-r}$ are first integrals of $\mcal{I}$. Furthermore, in this coordinate system, any other first integral $F$ of $\mcal{I}$ must be of the form $F(y^1,\ldots,y^{n-r})$. Indeed, if $F(\bx,\by)$ is a first integral, then $dF\wedge \Omega_y=0$, where $\Omega_y=dy^1\wedge\cdots\wedge dy^{n-r}\neq0$. However, this means that $\partiald{F}{x^i}dx^i\wedge\Omega_y=0$ for all $1\leq i\leq r$, and thus $F$ has no dependence on $x^i$ for all $1\leq i\leq r$.
\begin{defn} 
If the derived flag of a system terminates in the zero ideal for an EDS (or is the entire tangent bundle for a distribution), then there are no first integrals of the system. We call such systems \textbf{completely non-integrable}.
\end{defn}
A classic example of a completely non-integrable system is the EDS generated by a contact form on $\mathbb{R}^3$. Indeed, if
\begin{equation}
\mathcal{I}=\langle dy-z\,dx\rangle, 
\end{equation}
then it is clear that $d(dy-z\,dx)=-dz\wedge dx$. This 2-form is not zero modulo $dy-z\,dx$. Thus the first derived system is the zero ideal, and therefore this EDS has no first integrals. 
 When we consider control systems as linear Pfaffian systems, we will also assume that such systems are completely non-integrable. This is a necessary condition for a control system to be controllable. In fact, the contact system on $\mathbb{R}^3$ is a simple example of a control system with 1 control and 1 state, where $x$ is the independent variable. The contact system has only curves as integral manifolds, and such curves are given in coordinates by $(x,f(x),f'(x))$ where $f:\mathbb{R}\to\mathbb{R}$.

Another important bundle that is associated to a given EDS/distribution is the Cauchy bundle. 
\begin{defn}
The \textbf{Cauchy bundle} of an EDS $\mathcal{I}=\langle I\rangle$ is 
\begin{equation}
\mathrm{Char}\,(\mathcal{I})=\{X\in TM: \iota_X\psi\in\mathcal{I}\,\,\mathrm{for}\,\,\mathrm{all}\,\,\psi\in\mathcal{I}\},
\end{equation}
or in the language of distributions with $\mathcal{V}=\mathrm{ann}\,I$, 
\begin{equation}
\mathrm{Char}\,\mathcal{V}=\{X\in \mathcal{V}: [X,\mathcal{V}]\subseteq\mathcal{V}\}.
\end{equation}
We call sections of $\Chare{V}$ \textbf{Cauchy characteristics}. 
\end{defn}
Note that $\Chare{\mcal{V}}$ is integrable. This follows directly from the Jacobi identity on Lie brackets.

Let $X$ be any Cauchy characteristic for $\mcal{I}$ and let $N$ be any $m$-dimensional integral manifold of $\mathcal{I}$ on $M^n$ that is transverse to $X$. Consider the family of submanifolds $N_s=\varphi_s(N)$, where $\varphi_s$ is the 1-parameter family of diffeomorphisms generated by the flow of $X$. Each submanifold in the family $N_s$ is a an integral manifold of $\mcal{I}$, and moreover, the manifold $\bigcup_{s}N_s$ is an $m+1$ dimensional integral manifold of $\mcal{I}$. Hence, knowledge of the Cauchy bundle of an EDS can be used to construct more integral manifolds to the EDS. Additionally, we will always assume that the rank of any Cauchy bundle that appears in this thesis is constant. 

\begin{defn}\label{EDS symmetry}
Let $\mathcal{I}$ be an EDS. Then a vector field $X$ is an \textbf{infinitesimal symmetry of} $\mathcal{I}$ if $\mathcal{L}_X\psi\in\mathcal{I}$ for all $\psi\in\mathcal{I}$, where $\mcal{L}_X$ is the Lie derivative in the Lie derivative in the direction of $X$. 
\end{defn}
Cauchy characteristic vector fields turn out to be a special type of infinitesimal symmetry of an EDS. In general, the flow generated by an infinitesimal symmetry of an EDS will take integral manifolds to integral manifolds. However, we may not be able to construct higher dimensional integral manifolds as in the case of transverse Cauchy characteristics. In this thesis, symmetry plays a particularly important role and will be the main subject in Chapter 2. 

We will frequently use a diffeomorphism invariant from \cite{VassiliouGoursat} to identify particular types of distributions. 
\begin{defn}\label{refined derived type}\cite{VassiliouGoursatEfficient}
Let $\mcal{V}$ be a distribution with derived length $k>1$. Let
\begin{equation}
\inChar{V}{i}=\der{V}{i-1}\cap\Char{V}{i},\, 1\leq i\leq k,
\end{equation}
$m_i=\dim \der{V}{i}$, $\chi^i=\dim \Char{V}{i}$, and $\chi^i_{i-1}=\dim \inChar{V}{i}$. Then the \textbf{refined derived type} of $\mcal{V}$ is 
\begin{equation}
\mathfrak{d}_r(\mcal{V})=[[m_0,\chi^0],[m_1,\chi^1_0,\chi^1],\ldots,[m_{k-1},\chi^{k-1}_{k-2},\chi^{k-1}],[m_k,\chi^k]].
\end{equation}
\end{defn}

\subsection{The Resolvent Bundle}
A particularly special structure that appears in the study of generalized Goursat bundles (see section 2.6 below) is that of the resolvent bundle. In this section we present the definition of a resolvent bundle as well as important theorems from \cite{VassiliouGoursat}. 
Let $\mathcal{V}$ be a subbundle of $TM$ and consider the map
\begin{align}
\sigma:\mathcal{V}\to\mathrm{Hom}(\mathcal{V},TM/\mathcal{V}),\\
\sigma(X)(Y)=[X,Y]\mod\mcal{V}. 
\end{align}
The kernel of this map is exactly the Cauchy bundle of $\mathcal{V}$. 
\begin{defn}\label{singular polar}\cite{VassiliouGoursat}
For each $x\in M$, let 
\begin{equation}
\mathcal{S}_x=\{v\in \Gamma(\mathcal{V})\big|_x\backslash0\mid\,\sigma(v)\text{ has less than generic rank for all } y \text{ in a neighborhood of }x\}.
\end{equation}
Then the \textbf{singular variety} of $\mathcal{V}$ is the bundle
\begin{equation}
\mathrm{Sing}(\mathcal{V})=\coprod_{x\in M}\mathcal{S}_x.
\end{equation}
Additionally, for $X\in \mathcal{V}$, any matrix representation of the homomorphism $\sigma(X)$ is called a \textbf{polar matrix} of $[X]\in \mathbb{P}\mathcal{V}$. 
\end{defn}
Given some $[X]\in\mathbb{P}\mathcal{V}$, the map $\text{deg}_{\mathcal{V}}:\mathbb{P}\mathcal{V}\to\mathbb{N}$ is called the \textit{degree} of $[X]$ and is defined by 
\begin{equation}
\mathrm{deg}_{\mathcal{V}}\left([X]\right)=\mathrm{rank}\,\sigma(X),\text{ for }[X]\in\mathbb{P}\mathcal{V}.
\end{equation}
Note that for $X\in\mathrm{Char}\mathcal{V}$, $\text{deg}_{\mathcal{V}}\left([X]\right)=0$. For this reason, we consider the quotient $\pi:TM\to TM/\mathrm{Char}\mathcal{V}$ and denote all quotient objects by an overbar, so that $\overline{TM}=TM/\mathrm{Char}\mathcal{V}$. 
\begin{defn}\cite{VassiliouGoursat}
Let $\mathcal{V}$ be a subbundle of $TM$ of rank $c+q+1,\,q\geq2,\,c\geq0$ and $\dim{M}=c+2q+1$. Assume $\mathcal{V}$ has the properties:
\begin{enumerate}
\item $\dim\mathrm{Char}\mathcal{V}=c,\,\mathcal{V}^{(1)}=TM$,
\item $\bar{\Sigma}:=\mathrm{Sing}(\bar{\mathcal{V}})=\mathbb{P}\bar{\mathcal{B}}$, where $\bar{\mcal{B}}$ is some rank $q$ subbundle of $\bar{\mcal{V}}$. 
\end{enumerate}
Then we call $(\mathcal{V},\bar{\Sigma})$ a \textbf{Weber structure} on $M$. Furthermore, for a Weber structure $(\mathcal{V},\bar{\Sigma})$, let $\mathcal{R}_{\bar{\Sigma}}(\mathcal{V})$ denote the rank $q+c$ subbundle of $\mathcal{V}$ such that 
\begin{equation}
\pi(\mathcal{R}_{\bar{\Sigma}}(\mathcal{V}))=\bar{\mathcal{B}},
\end{equation}
where $\pi:TM\to TM/\Chare{\mcal{V}}$. We call $\mathcal{R}_{\bar{\Sigma}}(\mathcal{V})$ the \textbf{resolvent bundle} of the Weber structure $(\mathcal{V},\bar{\Sigma})$. Additionally, $(\mcal{V},\bar{\Sigma})$ is an \textbf{integrable Weber structure} if its resolvent bundle is integrable. 
\end{defn} 
\begin{prop}\cite{VassiliouGoursat}
Let $(\mathcal{V},\bar{\Sigma})$ be an integrable Weber structure on $M$. Then $\mathcal{R}_{\bar{\Sigma}}(\mathcal{V})$ is the unique, maximal, integrable subbundle of $\mathcal{V}$. 
\end{prop}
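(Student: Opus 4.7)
The strategy is to show that $\mathcal{R}:=\mathcal{R}_{\bar{\Sigma}}(\mathcal{V})$ attains the maximum possible rank for an integrable subbundle of $\mathcal{V}$, and is the unique integrable subbundle of that rank. Integrability of $\mathcal{R}$ is exactly the hypothesis that the Weber structure is integrable, and its rank is $q+c$, since $\pi(\mathcal{R})=\bar{\mathcal{B}}$ has rank $q$ and $\ker\pi|_{\mathcal{V}}=\mathrm{Char}\,\mathcal{V}$ has rank $c$. The ambient bundle $\mathcal{V}$, of rank $q+c+1$, cannot itself be integrable: the equality $\mathcal{V}^{(1)}=TM$ together with $\dim M=c+2q+1>\dim\mathcal{V}$ forces $\mathcal{V}^{(1)}\neq\mathcal{V}$. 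So no integrable subbundle of $\mathcal{V}$ can have rank exceeding $q+c$, and $\mathcal{R}$ achieves this maximum.

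For uniqueness, I will take an arbitrary integrable subbundle $\mathcal{W}\subseteq\mathcal{V}$ of rank $q+c$ and show $\mathcal{W}=\mathcal{R}$. The first step is to argue $\mathrm{Char}\,\mathcal{V}\subseteq\mathcal{W}$. Suppose for contradiction that some Cauchy characteristic $Y$ lies outside $\mathcal{W}$; then by the codimension-one splitting $\mathcal{V}=\mathcal{W}+\langle Y\rangle$, any $A,B\in\mathcal{V}$ decompose as $A=A_0+\alpha Y$, $B=B_0+\beta Y$ with $A_0,B_0\in\mathcal{W}$. Expanding $[A,B]$ via the Leibniz rule, the bracket $[A_0,B_0]$ lies in $\mathcal{W}\subseteq\mathcal{V}$ by integrability of $\mathcal{W}$, while all remaining terms lie in $\mathcal{V}$ by the Cauchy property of $Y$ (so that $[A_0,Y],[B_0,Y]\in\mathcal{V}$) and because the remaining pieces are multiples of $Y\in\mathcal{V}$. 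This forces $[\mathcal{V},\mathcal{V}]\subseteq\mathcal{V}$, hence $\mathcal{V}$ is integrable, contradicting $\mathcal{V}^{(1)}=TM$.

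The second step is the polar analysis: I will show $\pi(\mathcal{W})\subseteq\bar{\mathcal{B}}$, placing $\mathcal{W}\subseteq\mathcal{R}$. Since $\mathrm{Char}\,\mathcal{V}\subseteq\mathcal{W}$, the projection $\pi(\mathcal{W})$ has rank $q$ inside the rank-$(q+1)$ bundle $\bar{\mathcal{V}}$. For any section $X$ of $\mathcal{W}$, integrability of $\mathcal{W}$ gives $\bar{\sigma}(\pi(X))(\pi(\mathcal{W}))=0$, so $\dim\ker\bar{\sigma}(\pi(X))\geq q$. The hypothesis $\mathcal{V}^{(1)}=TM$ makes the induced bracket pairing $\bar{\mathcal{V}}\times\bar{\mathcal{V}}\to TM/\mathcal{V}$ surjective, and together with the Weber condition $\bar{\Sigma}=\mathbb{P}\bar{\mathcal{B}}\subsetneq\mathbb{P}\bar{\mathcal{V}}$ this forces the generic rank of $\bar{\sigma}$ to equal $q$, i.e. the generic kernel is one-dimensional. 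Since $q\geq 2$, a kernel of dimension at least $q>1$ means $\bar{\sigma}(\pi(X))$ has strictly less than generic rank, so $[\pi(X)]\in\bar{\Sigma}=\mathbb{P}\bar{\mathcal{B}}$, giving $\pi(X)\in\bar{\mathcal{B}}$ and hence $X\in\mathcal{R}$. Thus $\mathcal{W}\subseteq\mathcal{R}$, and rank equality $\mathrm{rank}\,\mathcal{W}=q+c=\mathrm{rank}\,\mathcal{R}$ forces $\mathcal{W}=\mathcal{R}$.

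The main obstacle is Step 1, where I must rule out any Cauchy characteristic escaping $\mathcal{W}$. This is where the non-integrability of $\mathcal{V}$ (itself a consequence of $\mathcal{V}^{(1)}=TM$ and $\dim\mathcal{V}<\dim M$) does the real work, together with the codimension-one splitting that promotes $\mathcal{W}+\langle Y\rangle$ all the way to $\mathcal{V}$. Once this containment is in hand, Step 2 is essentially a rank count against the defining datum $\bar{\Sigma}=\mathbb{P}\bar{\mathcal{B}}$ of the Weber structure.
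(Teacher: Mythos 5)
This proposition is only quoted in the thesis from \cite{VassiliouGoursat}; the paper itself contains no proof, so there is nothing to match your argument against line by line, and I have assessed it on its own terms. Your proof is essentially correct, and its two steps are the right ones: first, that $\mathrm{Char}\,\mathcal{V}\subseteq\mathcal{W}$ for any rank-$(q+c)$ integrable $\mathcal{W}\subseteq\mathcal{V}$ (your codimension-one splitting argument is valid, since a Cauchy direction transverse to an involutive corank-one subbundle would make $\mathcal{V}$ itself involutive, contradicting $\mathcal{V}^{(1)}=TM\neq\mathcal{V}$); and second, the polar rank count showing $\pi(\mathcal{W})\subseteq\bar{\mathcal{B}}$, hence $\mathcal{W}=\mathcal{R}_{\bar{\Sigma}}(\mathcal{V})$ by rank. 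Note that Step 1 also admits a shortcut avoiding the involutivity argument: if $\mathrm{Char}\,\mathcal{V}\not\subseteq\mathcal{W}$ at a point, then $\mathcal{W}+\mathrm{Char}\,\mathcal{V}=\mathcal{V}$ there, so $\ker\sigma(X)\supseteq\mathcal{W}+\mathrm{Char}\,\mathcal{V}=\mathcal{V}$ for every section $X$ of $\mathcal{W}$, forcing $\mathcal{W}\subseteq\mathrm{Char}\,\mathcal{V}$, which contradicts $q+c>c$. Your reading of ``maximal'' as ``of maximal rank'' is also the right one, since inclusion-maximal integrable subbundles of $\mathcal{V}$ need not be unique (a generic line field in $\mathcal{V}$ is already inclusion-maximal in examples such as $\mathcal{C}^1_2$).

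The one soft spot is your assertion that surjectivity of the bracket pairing together with $\bar{\Sigma}=\mathbb{P}\bar{\mathcal{B}}\subsetneq\mathbb{P}\bar{\mathcal{V}}$ ``forces the generic rank of $\bar{\sigma}$ to equal $q$''; as stated this is not an argument, and it is not obviously a consequence of those two facts alone. Fortunately you do not need it: your conclusion only requires that the generic rank exceed $1$, and this is immediate, because if the generic rank were $\leq 1$ then the sub-generic locus would consist of directions with $\bar{\sigma}(\bar{X})=0$, i.e.\ Cauchy directions, which are zero in $\bar{\mathcal{V}}$, so $\bar{\Sigma}$ would be empty, contradicting $\bar{\Sigma}=\mathbb{P}\bar{\mathcal{B}}$ with $\mathrm{rank}\,\bar{\mathcal{B}}=q\geq 2$. (If you do want the sharper statement, it follows from the hypothesis you have but did not use here: integrability of $\mathcal{R}_{\bar{\Sigma}}(\mathcal{V})$ makes $\bar{\mathcal{B}}$ isotropic for the pairing, and then surjectivity forces $\bar{\sigma}(\bar{X})$ to have rank exactly $q$ for every $\bar{X}\notin\bar{\mathcal{B}}$.) With that justification supplied, the proof is complete.
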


\subsection{Control Systems as Geometric Objects}
We want to express Definition \ref{control system def 1} in the language of Pfaffian systems and distributions so that we can better explore the underlying geometry of control systems.
\begin{defn}\label{geometric control system}
Let $M$ be a manifold such that $M\cong_{\mathrm{loc}}\mathbb{R}\times \bX(M)\times\bU(M)$, where $\mathbb{R}$ has $t$ as a local coordinate, $\bX(M)$ is a manifold of dimension $n$ with local coordinates $\bx=(x^1,\ldots,x^n)$, and $\bU(M)$ is a manifold of dimension $m$ with local coordinates $\bu=(u^1,\ldots,u^m)$. Then a control system on $M$ with $n$ states and $m$ controls is given by the rank $n$ linear Pfaffian system 
\begin{equation}\label{control pfaff}
\omega=\langle dx^1-f^1(t,\bx,\bu)\,dt,\ldots,dx^n-f^n(t,\bx,\bu)\,dt \rangle, 
\end{equation}
with independence condition $dt$. In the language of distributions, a control system is given by the rank $m+1$ distribution $\mcal{V}=\ann\omega$, which in local coordinates is given by
\begin{equation}\label{control dist}
\mcal{V}=\{\partial_t+f^1(t,\bx,\bu)\,\partial_{x^1}+\cdots+f^n(t,\bx,\bu)\,\partial_{x^n},\partial_{u^1},\ldots,\partial_{u^m}\}. 
\end{equation}
Additionally, we require that the Cauchy bundles of $\omega$ and $\mcal{V}$ be trivial.  
\end{defn}
The last part of this definition is found in the definition of Sluis in \cite{sluis1994absolute} on page 34/35. Control systems must be underdetermined ODE systems. That is, the differential equations (\ref{control system}) must have nontrivial dependence on \textit{all} the specified control parameters in a way that is not redundant. This is the reason for the Cauchy bundle condition. 

For control systems, we would like to pick the controls $u^1,\ldots,u^m$ to be functions of $t$ so that a corresponding solution of the control system is the graph of a curve in $M$ that passes through two desired points in $\bX(M)$. If we assume that a control system is controllable, then it follows that as a Pfaffian system or distribution, the control system needs to be completely nonintegrable; otherwise integral curves would be ``stuck" in submanifolds that foliate $M$. To see this in a bit more detail, assume that there is some $k$ such that $\coder{\omega}{k+1}=\coder{\omega}{k}$ is nontrivial. That is, the derived flag of $\omega$ terminates in a nontrivial Frobenius system at step $k$. Any integral curve of $\omega$ is also an integral curve of $\coder{\omega}{k}$. Let $\gamma(t)=(t,\bx(t), \bu(t))$ be such an integral curve, and let $\coder{\omega}{k}$ be generated by the exact 1-forms $dy^1,\ldots,dy^s$, so that locally we have a new coordinate system $(t,y^1,\ldots,y^s,z^{s+1},\ldots,z^{n+m})$. Note that neither $dt$ nor the $du^a$ may be in $\coder{\omega}{k}$; otherwise they would belong to $\omega$ as well, and this is prohibited by the independence condition and the Cauchy characterstic condtion, respectively. In these new coordinates, the curve $\gamma(t)$ becomes $\tilde{\gamma}(t)=(t,\by(t),\bz(t))$. However, since $\gamma(t)$ is an integral curve of $\coder{\omega}{k}$, then so is $\tilde{\gamma}(t)$, and thus $\by(t)=\bc$ for some constants $\bc=(c^1,\ldots,c^s)$. Thus the curve $\tilde{\gamma}(t)$ is contained in the submanifold defined by $\by=\bc$. However, if we wanted to connect two points \textit{not} in any such submanifold, then we could not connect those two points via an integral curve of $\omega$. Hence, the controllability property implies that $\omega$ must be completely nonintegrable. Hence, for the remainder of this thesis, we will always assume that control systems are completely nonintegrable. 

\subsection{Lie Transformations}
The classes of diffeomorphisms considered in this thesis fall under the umbrella of Lie transformation (pseudo)groups. The study of transformation pseudogroups has produced a rich literature of interesting results. We cannot give a full account here, but we would like to direct the interested reader to \cite{BryantLieSymplectic}, \cite{KuranishiPseudo1},\cite{KuranishiPseudo2}, and \cite{LieCartanGroups}, as well as \cite{OlverPseudoOverview} for modern perspectives in this area. We will use the following definition of a Lie pseudogroup.
\begin{defn}\label{pseudo def}\cite{LieCartanGroups}
Let $M$ be a differentiable manifold and let $\mcal{P}$ be a collection of diffeomorphisms of open subsets of $M$ into $M$. We say that $\mcal{P}$ is a \textbf{Lie pseudogroup} if:
\begin{enumerate}
\item $\mcal{P}$ is closed under restriction: if $\varphi:U\to M$ belongs to $\mcal{P}$ so does $\varphi|_V$ for any $V\subset U$, open. 
\item Elements of $\mcal{P}$ can be pieced together: If $\varphi: U\to M$ is a diffeomorphism and $U=\cup_{\alpha} U_\alpha$ with $\varphi|_{U_\alpha}\in\mcal{P}$ then $\varphi\in\mcal{P}$.
\item $\mcal{P}$ is closed under inverse: if $\varphi:U\to M$ belongs to $\mcal{P}$ so does $\varphi^{-1}:\varphi(U)\to M$.
\item $\mcal{P}$ is closed under composition: if $\varphi: U\to M$ and $\psi: \varphi(U)\to M$ are in $\mcal{P}$ then $\psi\circ\varphi\in \mcal{P}$. 
\item The identity diffeomorphism belongs to $\mcal{P}$.  
\end{enumerate}
\end{defn}
Although we are interested in studying control systems invariant under certain infinite dimensional Lie pseudogroups of transformations, we will also work with transformations induced by finite dimensional Lie groups. 
\begin{defn}\label{local Lie group}
Let $G$ be a Lie group of dimension $r<\infty$. A \textbf{local Lie group} is (up to Lie group isomorphism) any open subset of $G$ containing the identity element $e$. 
\end{defn}
Unless otherwise specified, all Lie groups $G$ in this thesis will be considered as local Lie groups. Furthermore, we note that Definition \ref{local Lie group} is essentially Theorem 1.22 of \cite{OlverLieBook} which says that every local Lie group may be realized as an open subset of a Lie group that contains the identity element. 
\begin{defn}\cite{OlverLieBook}
Let $M$ be a smooth manifold. A \textbf{(local) Lie group of transformations} acting on $M$ is given by a (local) Lie group $G$, an open subset $U$, with 
\begin{equation}
\{e\}\times M\subset U\subset G\times M,
\end{equation}
which is the domain of definition of the group action, and a smooth map $\Psi:U\to M$ with the following properties:
\begin{enumerate}
\item If $(h,x)\in U,\,(g,\Psi(h,x))\in U$, and also $(g\cdot h,x)\in U$, then 
\begin{equation}
\Psi(g,\Psi(h,x))=\Psi(g\cdot h, x).
\end{equation}
\item For all $x\in M$, 
\begin{equation}
\Psi(e,x)=x.
\end{equation}
\item If $(g,x)\in U$, then $(g^{-1},\Psi(g,x))\in U$ and 
\begin{equation}
\Psi(g^{-1},\Psi(g,x))=x.
\end{equation}
One may also write $g\cdot x$ for $\Psi(g,x)$. 
\end{enumerate}
\end{defn}
When we refer to a Lie group of transformations, we will always be referring to a \textit{local} Lie group of transformations unless otherwise specified. 
In practice, we will usually rely on the infinitesimal version of Lie transformations, which we now define. 
\begin{defn}\cite{OlverLieBook}
Let $G$ be a Lie transformation group acting on a smooth manifold $M$, and let $\mathfrak{g}$ be the Lie algebra of right-invariant vector fields on $G$. Then the \textbf{infinitesimal action} of $\mathfrak{g}$ on $M$ is given by
\begin{equation}
\psi(v)|_x=\frac{d}{ds}\Big|_{s=0}\Psi(\exp(sv),x)=d\Psi_x(v|_e)\label{exp action}
\end{equation}
for all $v\in\mathfrak{g}$ and $x\in U\subset M$, and where $\Psi_x(g)=\Psi(g,x)$. Equation (\ref{exp action}) defines a vector field, $\psi(v)$, on $U\subset M$. 
\end{defn}
The map $\psi:\mathfrak{g}\to \Gamma(TM)$ defined by \ref{exp action}, is a Lie algebra homomorphism and sections of $TM$ in the image of $\psi$ are \textit{infinitesimal generators} of the group action $G$. We then have the following important theorem. 
\begin{thm}\label{inf vf}\cite{OlverLieBook}
Let $w_1,\ldots,w_r$ be vector fields on a manifold $M$ satisfying
\begin{equation}
[w_i,w_j]=\sum_{k=1}^r c^k_{ij}w_k,\quad i,j=1,\ldots,r, 
\end{equation}
for certain constants $c^k_{ij}$. Then there is a Lie group $G$ whose Lie algebra has the given $c^k_{ij}$ as structure constants relative to some basis $v_1,\ldots,v_r,$ and a local group action of $G$ on $M$ such that $\psi(v_i)=w_i$ for $i=1,\ldots,r,$ where $\psi$ is defined by (\ref{exp action}). 
\end{thm}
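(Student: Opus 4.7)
The plan is to combine Lie's Third Theorem with a Frobenius integration on the product $G \times M$. The Jacobi identity for brackets of vector fields, applied to the $w_i$, forces the constants $c^k_{ij}$ to satisfy the Jacobi identity (provided the $w_i$ are pointwise linearly independent; otherwise one first passes to the quotient by the kernel of the assignment $v_i \mapsto w_i$). So by Lie's Third Theorem there exists a Lie group $G$ with Lie algebra $\mathfrak{g}$ admitting a basis $v_1,\ldots,v_r$ realizing the prescribed structure constants. Let $\widehat{v}_1,\ldots,\widehat{v}_r$ denote the corresponding right-invariant vector fields on $G$.

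Next, on $G \times M$ I would form the rank-$r$ distribution
\begin{equation}
\mathcal{D} = \{\widehat{v}_1 + w_1,\,\widehat{v}_2 + w_2,\,\ldots,\,\widehat{v}_r + w_r\},
\end{equation}
where $\widehat{v}_i + w_i$ denotes the vector field on $G\times M$ whose $G$-component is $\widehat{v}_i$ and whose $M$-component is $w_i$. Since $\widehat{v}_i$ and $w_j$ live on different factors, $[\widehat{v}_i, w_j] = 0$, and because both sets of vector fields share the structure constants $c^k_{ij}$ one computes
\begin{equation}
[\widehat{v}_i + w_i,\, \widehat{v}_j + w_j] = \sum_{k=1}^r c^k_{ij}\bigl(\widehat{v}_k + w_k\bigr).
\end{equation}
Thus $\mathcal{D}$ is Frobenius by Theorem \ref{Frobenius}. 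For each $x \in M$ let $L_x$ be the maximal integral leaf of $\mathcal{D}$ through $(e,x)$. Because the projection $\mathcal{D}|_{(e,x)} \to T_e G$ is an isomorphism, $L_x$ is locally the graph $\{(g, \Psi(g,x)) : g \in U\}$ over some neighborhood $U$ of $e$, which defines the candidate map $\Psi : U \times M \to M$.

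It remains to check the axioms of a local Lie group of transformations and the identity $\psi(v_i) = w_i$. The axiom $\Psi(e,x) = x$ is immediate. The composition law $\Psi(gh,x) = \Psi(g,\Psi(h,x))$ is the heart of the argument and uses right-invariance of the $\widehat{v}_i$: the diffeomorphism $R_h \times \mathrm{id}_M$ of $G \times M$ preserves $\mathcal{D}$ and therefore permutes its leaves. One shows that $(R_h \times \mathrm{id}_M)(L_x)$ is an integral leaf containing $(h,x)$, identifies it with $L_x$ (via the leaf through this common point), and compares $M$-components at the point above $gh$ to recover the cocycle identity; the inverse axiom follows by the same leaf-identification argument applied with $h = g^{-1}$. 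The infinitesimal identity $\psi(v_i) = w_i$ is then a direct consequence of the construction: the curve $s \mapsto (\exp(sv_i), \Psi(\exp(sv_i),x))$ lies in $L_x$ with tangent $\widehat{v}_i + w_i$ at $s=0$, so its $M$-projection has velocity $w_i|_x$.

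The main obstacle is the cocycle identity, because the flow of the right-invariant field $\widehat{v}_i$ is \emph{left} multiplication by $\exp(sv_i)$, and keeping the leaf-translation bookkeeping straight between the $G$-factor and the $M$-factor is the delicate piece. A cleaner alternative is to avoid leaves altogether by defining $\Psi$ through the composition of flows associated with canonical coordinates of the second kind on $G$, and then derive the cocycle law from the Baker–Campbell–Hausdorff formula applied to $\widehat{v}_i + w_i$ versus $\widehat{v}_i$; the essential geometric content, the involutivity of $\mathcal{D}$, is in either case absorbed into the Frobenius step.
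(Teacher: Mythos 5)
The paper itself does not prove Theorem \ref{inf vf}: it is quoted from \cite{OlverLieBook}, and the argument given there (the classical Lie--Palais construction) is exactly the one you sketch --- realize the constants by a group $G$ via Lie's Third Theorem, form the involutive distribution spanned by the fields $\widehat{v}_i+w_i$ on $G\times M$, apply the Frobenius theorem (Theorem \ref{Frobenius}), and read the action off the leaves, which are local graphs over $G$ near $e$. So your route coincides with that of the cited source, and the overall plan, including the derivation of $\psi(v_i)=w_i$ from the tangent vector of the curve $s\mapsto(\exp(sv_i),\Psi(\exp(sv_i),x))$, is sound.

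One step, as written, would fail: in the cocycle verification you translate the wrong leaf. The set $(R_h\times\mathrm{id}_M)(L_x)$ is the leaf through $(h,x)$, and $(h,x)$ is in general \emph{not} on $L_x$ (that would force $\Psi(h,x)=x$), so it cannot be identified with $L_x$ ``via a common point.'' The correct bookkeeping is to apply $R_h\times\mathrm{id}_M$ to $L_{\Psi(h,x)}$: since right translations preserve each $\widehat{v}_i$, the image is again a leaf, it contains $(h,\Psi(h,x))$, and that point also lies on $L_x$, so the two leaves coincide; the point $(g,\Psi(g,\Psi(h,x)))\in L_{\Psi(h,x)}$ is then carried to $(gh,\Psi(g,\Psi(h,x)))\in L_x$, and the graph property of $L_x$ over the point $gh$ gives $\Psi(gh,x)=\Psi(g,\Psi(h,x))$. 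Two smaller points. First, the bracket computation on $G\times M$ needs the right-invariant fields to satisfy the \emph{same} constants $c^k_{ij}$ as the $w_i$; this is consistent here precisely because the paper (following \cite{OlverLieBook}) takes $\mathfrak{g}$ in (\ref{exp action}) to be the algebra of right-invariant fields --- with left-invariant fields one must flip the basis to absorb the sign, so state which convention you are using. Second, your opening remark that the Jacobi identity for the $c^k_{ij}$ follows from the bracket relation requires the $w_i$ to be linearly independent over $\mathbb{R}$; if they are dependent, the given constants need not satisfy the Jacobi identity at all, and then no group has them as structure constants, so ``passing to a quotient'' proves a weaker statement than the one quoted. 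The theorem should be read with that independence (equivalently, with skew-symmetry and the Jacobi identity for the $c^k_{ij}$) as a standing hypothesis.
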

\begin{defn}
The vector fields in $w_1,\ldots,w_r$ in Theorem \ref{inf vf} are called the \textbf{infinitesimal generators} of the action of $G$ on $M$. 
\end{defn}
Recall Definition \ref{EDS symmetry}, where we defined an infinitesimal symmetry of an EDS. If an EDS has infinitesimal symmetries that satisfy the hypotheses of Theorem \ref{inf vf}, then there will be an associated local Lie group of transformations that, as will be discussed later, takes integral manifolds of an EDS to integral manifolds of the same EDS. It is often easier to work with infinitesimal symmetries, however, and in light of Theorem \ref{inf vf}, the word ``symmetry" will be used to mean either an element of a local Lie group of transformations $G$, or one of the vector fields that arise from an infinitesimal action of $\mathfrak{g}$ on a manifold $M$. In Chapter 3, we will explore symmetries of Pfaffian systems as they apply to control systems. 
Next, we'll present some important facts about local Lie group actions that will allow us to investigate invariant integral manifolds of an EDS. 
\begin{defn}
Let $G$ be a local Lie group of transformations acting on a manifold $M$. Define the \textbf{stabilizer} of a point $x\in M$ as the set
\begin{equation}
G_x=\{g\in G\colon\,g\cdot x=x\}.
\end{equation}
We say the action of $G$ on $M$ is \textbf{free} if for all $x\in M$, $G_x=\{e\}$, where $e$ is the identity element. 
\end{defn}
\begin{defn}
Let $G$ be a local Lie transformation group acting on a manifold $M$. Then the \textbf{orbit} of the action on a point $x\in M$ is
\begin{equation}
G\cdot x=\{y\in M\colon\,y=g\cdot x, \,\mathrm{for}\,\mathrm{some}\,g\in G\}.
\end{equation}
Two points $x$ and $y$ in $M$ are \textbf{equivalent} if and only if they belong to the same orbit. Then the space of equivalence classes endowed with the quotient topology is denoted $M/G$ and is called the \textbf{orbit space} of the action of $G$ on $M$. 
\end{defn}
\begin{thm}\cite{BryantLieSymplectic}
The stabilizer $G_x$ for any point $x\in M$ is a closed Lie subgroup of $G$. Additionally, the orbits $G\cdot x$ are smooth immersed submanifolds of $M$.
\end{thm}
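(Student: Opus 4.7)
The plan is to address the two assertions separately. For the stabilizer $G_x$, I would begin by observing that the partial orbit map $\Psi_x : g \mapsto g\cdot x$ is smooth on the open neighborhood $\{g \in G \mid (g,x)\in U\}$ of the identity, and that $G_x$ is exactly the preimage $\Psi_x^{-1}(\{x\})$. Since $M$ is Hausdorff, $\{x\}\subset M$ is closed, so $G_x$ is closed in $G$. I would then invoke Cartan's closed subgroup theorem, which states that every closed subgroup of a finite-dimensional Lie group is an embedded Lie subgroup; applied here, this immediately yields the Lie subgroup structure on $G_x$, together with the Lie algebra $\mathfrak{g}_x = \ker d\Psi_x|_e \subset \mathfrak{g}$.

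For the orbit $G\cdot x$, I would use the quotient construction. Because $G_x$ is a closed Lie subgroup, the coset space $G/G_x$ carries a unique smooth manifold structure for which the projection $\pi : G \to G/G_x$ is a submersion. The orbit map $\Psi_x$ is constant on left $G_x$-cosets by definition of $G_x$, so it descends to a smooth injective map $\widetilde{\Psi}_x : G/G_x \to M$ whose image is exactly $G \cdot x$. To show $\widetilde{\Psi}_x$ is an immersion, I would exploit $G$-equivariance: left translation by $g \in G$ is a diffeomorphism of $G/G_x$ and $\Psi_x$ is equivariant, so the rank of $d\widetilde{\Psi}_x$ is constant along $G/G_x$. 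At the identity coset, the induced map on $\mathfrak{g}/\mathfrak{g}_x \cong T_{[e]}(G/G_x)$ is injective precisely because its kernel at the $\mathfrak{g}$-level is $\mathfrak{g}_x$ by construction. Thus $\widetilde{\Psi}_x$ is an injective immersion, and its image, endowed with the smooth structure pulled back from $G/G_x$, is an immersed submanifold of $M$.

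The main obstacle is the local nature of the group action in the definition adopted in the excerpt: since $\Psi$ is only defined on an open set $U \subset G \times M$ containing $\{e\}\times M$, I must take care that the group-theoretic manipulations (closure under composition and inverses) used to identify $G_x$ as a genuine subgroup and to define the quotient $G/G_x$ are carried out after restricting to a suitable neighborhood $V$ of $e$ with $V\cdot V \subset G$ and $V^{-1}\subset G$. Alternatively, Theorem \ref{inf vf} lets one replace the local action by a global action of an associated abstract Lie group on a possibly shrunken neighborhood in $M$, apply the above arguments verbatim, and then transfer the conclusion back to the local setting. After this bookkeeping, the closed subgroup theorem is the single nontrivial input, and the rest is the standard homogeneous-space construction.
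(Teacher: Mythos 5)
The paper does not prove this statement---it is quoted directly from \cite{BryantLieSymplectic}---and your argument is the standard proof used by that source and by textbook treatments: closedness of $G_x=\Psi_x^{-1}(\{x\})$ plus Cartan's closed subgroup theorem, followed by the homogeneous-space construction $G/G_x\to M$, upgraded to an injective immersion via equivariance, with a sensible treatment of the local-action bookkeeping. The only step you leave implicit is the identification of the Lie algebra of the closed subgroup $G_x$ with $\ker d\Psi_x|_e$ (needed so that $T_{[e]}(G/G_x)\cong\mathfrak{g}/\mathfrak{g}_x$ and injectivity of the induced differential follows), which is the one-line flow argument that $\psi(v)|_x=0$ forces $\exp(tv)\cdot x=x$ for all small $t$; with that noted, the proof is complete and matches the cited approach.
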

\begin{prop}\cite{OlverSymmetryBook}
Let $G$ be a local Lie group of transformations acting on a manifold $M$. Then the dimensions of the orbits of $G$ are all equal to the dimension of $G$ if and only if $G$ acts freely on $M$. 
\end{prop}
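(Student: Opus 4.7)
The plan is to leverage the orbit-stabilizer correspondence together with the facts, stated just above, that $G_x$ is a closed Lie subgroup of $G$ and $G\cdot x$ is a smooth immersed submanifold of $M$. The structural input that drives both directions is the smooth map $G\to G\cdot x$ given by $g\mapsto g\cdot x$, which descends to a bijective immersion $G/G_x\to G\cdot x$. In the local Lie group setting this gives the dimension identity
\begin{equation}
\dim(G\cdot x)=\dim G-\dim G_x,
\end{equation}
and each implication will be obtained by reading off what this identity forces.

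For the forward implication, assume $G$ acts freely on $M$. Then by definition $G_x=\{e\}$ for every $x\in M$, so $\dim G_x=0$, and the displayed identity immediately gives $\dim(G\cdot x)=\dim G$ for every $x$. This direction is essentially bookkeeping.

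For the reverse implication, assume $\dim(G\cdot x)=\dim G$ for every $x\in M$. The identity forces $\dim G_x=0$, so $G_x$ is a zero-dimensional closed Lie subgroup of $G$ and hence discrete. This is where the hypothesis that $G$ is a local Lie group, in the sense of Definition \ref{local Lie group}, does the essential work: we may choose a representative of $G$ as an open neighborhood $U$ of the identity $e$ in an ambient Lie group, and after shrinking $U$ if necessary any discrete subgroup meeting $U$ and containing $e$ reduces to $\{e\}$. Thus $G_x=\{e\}$ for every $x\in M$, which is precisely the condition that $G$ act freely.

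I expect the main subtlety, and the only nontrivial point, to be in the reverse direction, where one must resist the temptation to equate $\dim G_x=0$ with $G_x=\{e\}$ for an abstract Lie group. Passing to the local Lie group setting cleanly resolves this: in a sufficiently small neighborhood of $e$ the only zero-dimensional closed subgroup is the trivial one, so discreteness of $G_x$ is upgraded to triviality. Everything else is routine once the diffeomorphism $G/G_x\cong G\cdot x$ is recorded and the above dimension formula is extracted.
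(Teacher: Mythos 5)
The paper does not actually prove this proposition---it is quoted from \cite{OlverSymmetryBook} without argument---so there is no proof to compare against; judged on its own merits, your forward direction is fine, but the reverse direction has a genuine gap. The orbit--stabilizer dimension count $\dim(G\cdot x)=\dim G-\dim G_x$ only gives that each stabilizer $G_x$ is zero-dimensional, hence discrete (the action is \emph{locally} free), and your upgrade to $G_x=\{e\}$ ``after shrinking $U$ if necessary'' does not work as written. First, the neighborhood of $e$ that avoids the nontrivial elements of $G_x$ depends on $x$, and no single neighborhood need work uniformly on $M$: take $G=\mathbb{R}$ (an open subset of itself containing $e$, so a local Lie group in the sense of Definition \ref{local Lie group}) acting on $M=S^1\times(0,\infty)$ by $t\cdot(\theta,r)=(\theta+t/r\ \mathrm{mod}\ 2\pi,\ r)$; every orbit is $1$-dimensional, equal to $\dim G$, yet $G_{(\theta,r)}=2\pi r\,\mathbb{Z}$ is a nontrivial discrete subgroup whose nonzero elements approach $e$ as $r\to 0$, so no shrinking of $G$ alone trivializes all stabilizers at once. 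Second, and more basically, the proposition asserts freeness of the \emph{given} action of the \emph{given} $G$; replacing $G$ by a smaller local group in mid-proof establishes a different statement, not the one claimed (already $G=\mathbb{R}$ acting on $S^1$ by rotations, with stabilizers $2\pi\mathbb{Z}$, shows the literal biconditional needs such caveats). What your dimension argument honestly yields is: all orbits have dimension $\dim G$ if and only if the action is locally free. To pass from locally free to free you must invoke an additional hypothesis---for instance the standing convention, adopted later in the paper, that $M$ and $G$ have been restricted to sufficiently small neighborhoods of the point and of the identity---and your proof should state that reduction explicitly rather than fold it into an unjustified uniform shrinking of $G$.
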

\begin{defn}\label{regular def}\cite{OlverLieBook}
Let $G$ be a local Lie group of transformations acting on a manifold $M$. If the dimensions of the orbits of the action are all constant and equal then we say that $G$ acts \textbf{semi-regularly} on $M$. Furthermore, the action of $G$ on $M$ is called \textbf{regular} if $G$ acts semi-regularly on $M$ and has the additional property that for any $x\in M$, there exist arbitrarily small open sets $U$ containing $x$ such that individual orbits of $G$ intersect $U$ in pathwise connected subsets. 
\end{defn}
\textbf{Remark:} In other contexts, a \textit{regular} group action may refer to a free and transitive group action; however, we have no need for this meaning of the word. We also mention that, if a group action on a manifold is regular as in Definition \ref{regular def}, then the orbits of the action are regular submanifolds, although the converse may not be true. 

A classic example of a semi-regular, but not regular, group action on a manifold $M$ is the case of an irrational flow on the 2-torus. The group $G$ is the whole real line, and although each orbit is a 1-dimensional, immersed submanifold of $M$, every open set of any point on $\mathbb{T}^2$ fails the definition of regularity since the orbits of the action are dense. However, if $G$ is any nontrivial, finite open interval of $\mathbb{R}$ containing zero, then the corresponding irrational flow \textit{is} a regular action by $G$ on $\mathbb{T}^2$.
The definitions of semi-regular and regular actions extend to the infinitesimal action of a Lie group as well. Additionally, the definition extends to any completely integrable distribution on $M$. 
\begin{defn}\cite{OlverLieBook}
Let $\mcal{V}$ be a completely integrable distribution on a manifold $M$. If $\rank\mcal{V}$ is a fixed constant everywhere on $M$, then we say $\mcal{V}$ is \textbf{semi-regular}. Furthermore, a semi-regular distribution $\mcal{V}$ is \textbf{regular} if the integral manifolds of $\mcal{V}$ have the property that for any $x\in M$, there exist arbitrarily small open sets $U$ containing $x$ such that the individual integral manifolds of $\mcal{V}$ intersect $U$ in pathwise connected subsets. 
\end{defn}
Let $\Gamma$ denote the span over $C^\infty(M)$ of the infinitesimal generators of the action on $M$ of a Lie group $G$, which as a distribution, is always completely integrable by virtue of the Jacobi identity and Definition \ref{EDS symmetry}. If $\Gamma$ is semi-regular or regular, then the action of $G$ is also semi-regular or regular, respectively.  As in the previous example, one may not always be guaranteed that a given distribution $\Gamma$ on $M$ corresponding to a Lie group action is regular or even semi-regular. However, we can always restrict to smaller open submanifolds of $M$ and smaller open submanifolds of $G$ containing the identity such that $\Gamma$ is semi-regular or regular. For the remainder of the thesis, we will always assume that we have restricted to sufficiently small open submanifolds of $M$ and $G$ to guarantee that all actions are regular. 
\begin{thm}\cite{OlverLieBook}
Let $M$ be a smooth $n$-dimensional manifold. Suppose $G$ is an $r$-dimensional local Lie group of transformations which acts regularly and freely on $M$. Then the orbit space, or quotient manifold $M/G$, is a smooth $(n-r)$-dimensional manifold with a projection map $\pi:M\to M/G$ such that the following hold. 
\begin{enumerate}
\item $\pi$ is a smooth map between manifolds.\\
\item Two points $x$ and $y$ belong to the same orbit of $G$ in $M$ if and only if $\pi(x)=\pi(y)$.\\
\item If $\Gamma$ denotes the Lie algebra of infinitesimal generators of the action of $G$ on $M$, then the linear map
\begin{equation}
d\pi:TM\big|_{x}\to T(M/G)\big|_{\pi(x)}
\end{equation}
is onto, with kernel $\Gamma\big|_x=\{X\big|_x\colon X\in \Gamma\}$.\\
\item If $\{\eta^i(x)\}_{i=1}^{n-r}$ are independent first integrals of the Lie algebra of infinitesimal generators $\Gamma$, then $(\eta^1,\ldots,\eta^{n-r})$ form a local coordinate system on $M/G$. 
\end{enumerate}
\end{thm}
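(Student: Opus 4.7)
The plan is to build the manifold structure on $M/G$ by first flattening out the orbits with Frobenius's theorem applied to the infinitesimal generators, and then using the regularity hypothesis to upgrade local slice coordinates into a genuine atlas on the orbit space.

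The first step is to observe that since the action of $G$ is regular and free, the distribution $\Gamma$ spanned over $C^\infty(M)$ by the infinitesimal generators $w_1,\ldots,w_r$ supplied by Theorem \ref{inf vf} has constant rank equal to $r$ (freeness plus semi-regularity) and is Frobenius (the bracket relations $[w_i,w_j]=c^k_{ij}w_k$ place $[\Gamma,\Gamma]\subseteq \Gamma$). Applying Theorem \ref{Frobenius} at any $x_0\in M$, I obtain a flat chart $(U;y^1,\ldots,y^{n-r},z^1,\ldots,z^r)$ centered at $x_0$ in which $\Gamma=\{\partial_{z^1},\ldots,\partial_{z^r}\}$ and each plaque $y=\mathrm{const}$ is an open piece of a $G$-orbit. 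The functions $y^i$ are, by construction, first integrals of $\Gamma$.

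Next I would use the regularity hypothesis to shrink $U$ so that each $G$-orbit through $U$ meets $U$ in a \emph{single} connected plaque; this is precisely the ``pathwise connected intersection'' clause in Definition \ref{regular def}, and it is what forces the quotient topology to be Hausdorff rather than merely $T_1$. With this choice of $U$, the map $\bar{\bY}\colon \pi(U)\to \field{R}^{n-r}$ sending the orbit through $(y,z)\in U$ to $y$ is a well-defined bijection onto an open set of $\field{R}^{n-r}$, giving a chart on $\pi(U)\subset M/G$. For the transition maps: on the overlap of two such charts, the coordinate functions of one are first integrals of $\Gamma$, hence (by the first-integral discussion following Theorem \ref{Frobenius}) smooth functions of the coordinate functions of the other, so transitions are smooth diffeomorphisms. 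This gives $M/G$ the structure of a smooth $(n-r)$-dimensional manifold.

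With the atlas in hand, the four claims are essentially bookkeeping. In the charts $(U;y,z)$ and $\pi(U);\bar{\bY}$, the projection $\pi$ reads $(y,z)\mapsto y$, so (1) is immediate, and (2) holds because two points lie in the same plaque if and only if they share the same $y$-coordinate, which by the shrunken-$U$ step is equivalent to lying in the same orbit. For (3), in these coordinates $d\pi_x$ sends $\partial_{y^i}\mapsto \partial_{y^i}$ and $\partial_{z^a}\mapsto 0$, so it is surjective with kernel $\{\partial_{z^1},\ldots,\partial_{z^r}\}|_x=\Gamma|_x$. Claim (4) follows because any collection $\{\eta^i\}_{i=1}^{n-r}$ of independent first integrals of $\Gamma$ satisfies $d\eta^i\wedge dy^1\wedge\cdots\wedge dy^{n-r}\neq 0$ with no dependence on the $z^a$, so $(\eta^1,\ldots,\eta^{n-r})$ descends to a local coordinate system on $M/G$ via an invertible smooth change from the $y^i$. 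The main obstacle is the Hausdorff/well-definedness step: Frobenius alone gives only a local foliation, and without the regularity clause one cannot rule out an orbit re-entering a small neighborhood (as in the irrational torus flow), in which case the putative chart $\bar{\bY}$ fails to be injective and $M/G$ fails to be Hausdorff; regularity is precisely what removes this pathology.
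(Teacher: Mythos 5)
The paper itself does not prove this statement: it is quoted from Olver's book as a background result, so there is no internal proof to compare against. Your proposal reconstructs exactly the standard (Olver-style) argument: rectify the integrable rank-$r$ distribution $\Gamma$ by the Frobenius theorem, use regularity to shrink the flat chart so that each orbit meets it in a single plaque, take the transverse coordinates (first integrals of $\Gamma$) as charts on $M/G$, check smoothness of transitions via the fact that any first integral is a function of the slice coordinates, and then read off conclusions (1)--(4) in these coordinates. That core construction and the verifications of (1)--(4) are correct and are the same route the cited source takes.

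The one claim you should retract or weaken is that regularity ``forces the quotient topology to be Hausdorff.'' Regularity gives you injectivity and well-definedness of each slice chart (an orbit cannot re-enter the shrunken flat chart, which rules out the irrational-flow pathology), but it does not separate two orbits that never share a chart. A concrete counterexample inside the paper's framework of local Lie transformation groups: let $\mathbb{R}$ act on $\mathbb{R}^2\setminus\{0\}$ by horizontal translation. The action is free, semi-regular, and regular in the sense of Definition \ref{regular def}, yet the two half-line orbits on the $x$-axis cannot be separated in the quotient, which is a line with a doubled point. So what your argument actually delivers is a smooth $(n-r)$-dimensional atlas on $M/G$ with smooth surjective $\pi$ and properties (1)--(4), i.e.\ the local statement; global Hausdorffness requires a further hypothesis (e.g.\ properness of the action) or must be arranged by restricting to suitable open sets, which is precisely how the paper uses the result after shrinking $M$ and $G$. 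This does not affect anything downstream in the paper, but as stated your Hausdorff remark is not a consequence of regularity alone.
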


\subsection{Extended Static Feedback Transformations (ESFTs)}
We are concerned with two types of diffeomorphisms determining equivalence classes of control systems. They are known as \textit{static feedback transformations} and \textit{extended static feedback transformations}. The former of the two types of transformations are of particular interest to control theory as a whole and are generally well studied. The latter are a slightly broader type of diffeomorphism that allows for extra time dependence. To be precise:
\begin{defn}
A diffeomorphism $\Phi:M\to N$ of the form  
\[
\Phi:(t,x,u) \mapsto (t,\varphi(x),\psi(x,u)) 
\]
is called a \textbf{static feedback transformation} (SFT). Two control systems $(M,\omega)$ and $(N,\eta)$ are called \textbf{static feedback equivalent} (SFE) if $\Phi^*\eta=\omega$ for some SFT $\Phi:M\to N$.
\end{defn}
And the slightly broader class of diffeomorphisms can be defined by:
\begin{defn}
A diffeomorphism $\Phi:M\to N$ of the form  
\[
\Phi:(t,x,u) \mapsto (t,\varphi(t,x),\psi(t,x,u)) 
\]
is called an \textbf{extended static feedback transformation} (ESFT). Two control systems $(M,\omega)$ and $(N,\eta)$ are called \textbf{extended static feedback equivalent} (ESFE) if $\Phi^*\eta=\omega$ for some ESFT $\Phi:M\to N$.
\end{defn}
Although SFTs are more common in the control theory literature, we will need the use of both types. In particular, the last chapter necessarily requires that we use ESFTs. Since SFTs are special type of ESFTs, we will always refer to the more general case unless otherwise specified. 
\begin{ex}
The two systems 
\begin{equation}
\omega=\langle dx^1-x^3\,dt,dx^2-x^4\,dt,dx^3-u^1\,dt,dx^4-u^2\,dt\rangle
\end{equation}
on the manifold $M=\mathbb{R}\times\mathbb{R}^4\times\mathbb{R}^2$ and 
\begin{equation}
\eta=\langle dy^1-(y^3e^{-ty^1}-t)\,dt,dy^2-y^4\,dt,dy^3-v^1y^2\,dt,dy^4-y^1\arctan(v^2)\,dt\rangle
\end{equation}
on the manifold $N=\mathbb{R}\times\mathbb{R}^4\times\mathbb{R}^2$ are ESFE. 
\end{ex}
The ESFT that accomplishes the equivalence is given by
\begin{equation}
\Phi^{-1}: (t,\by,\bv)\mapsto (t,e^{ty^1},y^2,y^3,y^4,v^1y^2,y^1\arctan(v^2)).
\end{equation}
Computing the pullback by $\Phi^{-1}$ of the forms that generate $\omega$, we find 
\begin{align}
(\Phi^{-1})^*(dx^1-x^3\,dt)&=te^{ty^1}\left(dy^1-\frac{(y^3e^{-ty^1}-y^1)}{t}\,dt\right),\\
(\Phi^{-1})^*(dx^2-x^4\,dt)&=dy^2-y^4\,dt,\\
(\Phi^{-1})^*(dx^3-u^1\,dt)&=dy^3-v^1y^2\,dt,\\
(\Phi^{-1})^*(dx^4-u^2\,dt)&=dy^4-y^1\arctan(v^2)\,dt.
\end{align}
Hence, $(\Phi^{-1})^*\omega=\eta$, so the two systems are ESFE. 
\subsection{Brunovsk\'y Normal Forms and Goursat Bundles}
In this section we will explore a specific class of controllable linear control systems that are equivalent via ESFTs. We will introduce jet bundles, contact systems, and a generalization of these concepts known as Goursat bundles.
\begin{defn}\label{jet}
Let $f,g: \mathbb{R}\to \mathbb{R}^m$ be two $C^n$ curves in $\mathbb{R}^m$. We say that $f$ and $g$ are equivalent via $n$\textbf{-th order contact at a point} $x_0\in\mathbb{R}$ if the $n$th degree Taylor polynomials for $f$ and $g$ agree at $x_0$. In particular, we denote the equivalence class of $f$ as $j^n_{x_0}f$ and we call $j^n_{x_0}f$ the $n$-\textbf{jet of} $f$ \textbf{at} $x_0$.   
\end{defn}
Two functions are in $0$-th order contact at $x_0$ if the graphs of $f$ and $g$ in $\mathbb{R}\times \mathbb{R}^m$ pass through the same point at $x_0$, $1$-st order contact if the graphs are mutually tangent to each other at $x_0$, and so on. However, we are not only interested in $n$-jets of functions over a single point. 
\begin{defn}
Let $J^n_{x_0}(\mathbb{R},\mathbb{R}^m)$ denote the space of $n$-jets of functions $f:\mathbb{R}\to\mathbb{R}^m$ at $x_0$. Then the \textbf{jet bundle of order} $n$ is defined to be 
\begin{equation}
J^n(\mathbb{R},\mathbb{R}^m)=\coprod_{x_0\in\mathbb{R}}J^n_{x_0}(\mathbb{R},\mathbb{R}^m).
\end{equation}
Furthermore, the space $\mathbb{R}$ in the notation $J^n(\mathbb{R},\mathbb{R}^m)$ may be refered to as the \textbf{source} and is the image of the \textbf{source projection map} 
\begin{align*}
\pi_n:J^n(\mathbb{R},\mathbb{R}^m)\to \mathbb{R},
j^n_{x_0}f\mapsto x_0.
\end{align*}
\end{defn}
We will often abbreviate the notation $J^n(\mathbb{R},\mathbb{R}^m)$ to $J^n$ whenever there is no danger of ambiguity. In general, jet bundles can be defined for maps between any two differentiable manifolds $M$ and $N$. For more on jet bundles see the text \cite{JetBundleText}.  
Let $t$ be the local coordinate for $\mathbb{R}$ in $J^n$ and $(z^1_0,\ldots,z^m_0)$ the local coordinates for $\mathbb{R}^m$ in $J^n$. The jet bundle $J^n$ has local coordinates given by $(t, z^i_0, z^i_1,\ldots, z^i_n)$ where $1\leq i\leq m$. The $n$-jet lift of a function $f:\mathbb{R}\to\mathbb{R}^m$ is the curve $j^nf:\mathbb{R}\to J^n$ that has the parameterization given by $\left(t, f^i(t), \frac{df^i}{dt}(t),\ldots, \frac{d^nf^i}{dt^n}(t)\right)$. Thus one can interpret local coordinates for a jet bundle so that the coordinate from $\mathbb{R}$ is the ``independent variable", the coordinates $z^i_0$ are ``place-holders" for the ``dependent variables", and the $z^i_l$ may be thought of as ``place-holders" for $l$th order derivatives of the ``dependent variables".

Consider the jet space $J^n(\mathbb{R},\mathbb{R}^m)$. There is a natural Pfaffian system on this space whose integral manifolds correspond to the graphs of jets of functions from $\mathbb{R}$ to $\mathbb{R}^m$. This Pfaffian system is called the \textit{contact system} or the \textit{Cartan system} \cite{CartanBeginners}. Note: the terminology ``Cartan system" has another well established meaning in EDS theory as the \textit{retracting space}; see Chapter 6.1 of \cite{CartanBeginners}. 
\begin{defn}
The linear Pfaffian system $\left(J^n(\mathbb{R},\mathbb{R}^m),\beta^n_m\right)$ with 
\begin{align}
\beta^n_m&=\langle \theta^i_l \rangle,\\
\theta^i_l&=dz^i_l-z^i_{l+1}\,dt,
\end{align}
for all $1\leq i\leq m$ and $0\leq l\leq n-1$, is called the \textbf{contact system}. Furthermore, denote by $\mathcal{C}^n_m$ the distribution on $J^n(\mathbb{R},\mathbb{R}^m)$ that is annihilated by the 1-forms $\{\theta^i_l\}$. 
\end{defn}
Let $f:\mathbb{R}\to\mathbb{R}^m$ be any smooth function given in coordinates by $(t,f^1(t),\ldots,f^m(t))$. Then the $n$-jet $j^nf$ is an integral curve of $\beta^n_m$. Indeed, each $\theta^i_l$ is zero when restricted to the curve $j^nf(t)$, since 
\begin{equation}
\begin{aligned}
(j^nf)^*(dz^i_l)&=d\left(\frac{d^lf^i}{dt^l}\right)\\
\,&=\frac{d^{l+1}f^i}{dt^{l+1}}\,dt\\
\,&=(j^nf)^*(z^i_{l+1})\,dt,
\end{aligned}
\end{equation}
and therefore
\begin{equation}
(j^nf)^*(dz^i_l-z^i_{l+1}\,dt)=0.
\end{equation}
We now discuss the notion of \textit{prolongation} of a jet space/contact system. First, notice that there is a surjective submersion $\pi: J^{n+1}(\mathbb{R},\mathbb{R}^m)\to J^n(\mathbb{R},\mathbb{R}^m)$ given by $j^{n+1}f\mapsto j^nf$, or in coordinates, $(t,z^i_0,\ldots,z^i_{n+1})\mapsto(t,z^i_0,\ldots,z^i_n)$. The canonical contact systems on these two jet spaces have the property that
\begin{equation}
\beta^{n+1}_m=\pi^*\left(\beta^n_m\right)\oplus\langle \theta^i_n\rangle,
\end{equation}
so that integral curves $j^n_tf$ of $\beta^n_m$ lift to integral curves $j^{n+1}_tf$ of $\beta^{n+1}_m$. For our purposes, we'll use the following restricted definition:
\begin{defn}
The linear Pfaffian system $\left(J^{n+1}(\mathbb{R},\mathbb{R}^m),\beta^{n+1}_m\right)$ is a \textbf{prolongation}\\ of $\left(J^n(\mathbb{R},\mathbb{R}^m),\beta^n_m\right)$. 
\end{defn}
See \cite{CartanBeginners} and \cite{BCGGG} for the general definition of prolongation of exterior differential systems. In this thesis, we will frequently work on \textit{partial} prolongations of jet spaces. 
\begin{defn}\label{partial prolongation}
A \textbf{partial prolongation} of the Pfaffian system $(J^1(\mathbb{R},\mathbb{R}^m),\beta^1_m)$ is given by the Pfaffian system $(J^\kappa(\mathbb{R},\mathbb{R}^m), \beta^\kappa_m)$ where
\begin{align}
J^\kappa(\mathbb{R},\mathbb{R}^m)&:=\left(\prod_{i\in I}J^i(\mathbb{R},\mathbb{R}^{\rho_i})\right)/\sim,\label{jkappa}\\
\beta^\kappa_m&:=\bigoplus_{i\in I}\beta^i_{\rho_i},
\end{align}
with $I=\{1\leq a\leq k\,\mid\rho_a\neq0 \}$. The equivalence relation `$\sim$' in (\ref{jkappa}) is defined by
\begin{equation}
\pi_i\left(J^i(\mathbb{R},\mathbb{R}^{\rho_i})\right)=\pi_j\left(J^j(\mathbb{R},\mathbb{R}^{\rho_j})\right),
\end{equation}
for all $1\leq i,j\leq k$, where $\pi_i,\pi_j$ are source projection maps. Furthermore, $m=\rho_1+\cdots+\rho_k$, where $k$ is the derived length of $\beta^\kappa$, and $\kappa=\langle \rho_1,\ldots,\rho_k\rangle$ is the list of natural numbers that defines the \textbf{type} of the partial prolongation of $J^1(\mathbb{R},\mathbb{R}^m)$.
\end{defn}
The contact system mentioned in Definition \ref{partial prolongation} represents an important class of control systems. Indeed, in \cite{Brunovsky} it was proven than any controllable linear system is equivalent via a linear feedback transformation to a partial prolongation of the form given in Definition \ref{partial prolongation}. 
\begin{defn}
The contact system $\beta^\kappa_m$ in Definition \ref{partial prolongation} is called a \textbf{Brunovsk\'y normal form} of type $\kappa$, and we will denote by $\mcal{C}^\kappa_m$ the distribution annihilated by $\Omega^1(M)\cap\beta^\kappa_m$.  
\end{defn}
A Brunovsk\'y normal form is uniquely determined by its type $\kappa$. For example, a Brunovsk\'y normal form of type $\kappa=\langle1,2,0,0,1,1\rangle$ on
\begin{equation}
J^\kappa(\mathbb{R},\mathbb{R}^5)=\left(J^1(\mathbb{R},\mathbb{R})\times J^2(\mathbb{R},\mathbb{R}^2)\times J^5(\mathbb{R}\mathbb{R})\times J^6(\mathbb{R},\mathbb{R})\right)/\sim
\end{equation}
is generated by the 1-forms
\begin{equation*}
\begin{array}{ccccc}
\,&\,&\,&\,&\theta^5_5=dz^5_5-z^5_6\,dt,\\
\,&\,&\,&\theta^4_4=dz^4_4-z^4_5\,dt,&\theta^5_4=dz^5_4-z^5_5\,dt,\\
\,&\,&\,&\theta^4_3=dz^4_3-z^4_4\,dt,&\theta^5_3=dz^5_3-z^5_4\,dt,\\
\,&\,&\,&\theta^4_2=dz^4_2-z^4_3\,dt,&\theta^5_2=dz^5_2-z^5_3\,dt,\\
\,&\theta^2_1=dz^2_1-z^2_2\,dt,&\theta^3_1=dz^3_1-z^3_2\,dt,&\theta^4_1=dz^4_1-z^4_2\,dt,&\theta^5_1=dz^5_1-z^5_2\,dt,\\
\theta^1_0=dz^1_0-z^1_1\,dt,&\theta^2_0=dz^2_0-z^2_1\,dt,&\theta^3_0=dz^3_0-z^3_1\,dt,&\theta^4_0=dz^4_0-z^4_1\,dt,&\theta^5_0=dz^5_0-z^5_1\,dt.
\end{array}
\end{equation*}
In this example, one can say that $J^\kappa$ has one variable of order 1, two of order 2, zero of orders 3 and 4, one of order 5, and one of order 6. So the type $\kappa$ is a list of the local coordinates on $J^\kappa$ categorized by order. As we will see later in this section, the type $\kappa$ of a Brunovsk\'y form is a diffeomorphism invariant. However, when working in coordinates on the partial prolongation of a jet space, it is often easier to use an alternative notation. Indeed, we can also write the partial prolongation of a jet space as
\begin{equation}
J^\kappa(\mathbb{R},\mathbb{R}^m)=\left(\prod_{i=1}^m J^{\sigma_i}(\mathbb{R},\mathbb{R})\right)/\sim
\end{equation}
where the equivalence relation is the same as in Definition \ref{partial prolongation} in the sense that all source manifolds for each jet space are identified. For this description of $J^\kappa$ we can write local coordinates as $(t, j^{\sigma_1}z^1,\ldots,j^{\sigma_m}z^m)$, where $j^{\sigma_i}z^i$ represents the $(\sigma_i+1)$-tuple $(z^i_0,z^i_1,\ldots,z^i_{\sigma_i})$. For the example of a partial prolongation of a jet space with type $\kappa=\langle1,2,0,0,1,1\rangle$, one can then write the local coordinates as $(t,j^1z^1,j^2z^2,j^2z^3,j^5z^4,j^6z^5)$. 

\begin{prop}\label{refined derived type numbers}\cite{VassiliouGoursatEfficient}
Let $\mcal{C}^\kappa_m\subset TM$ be the distribution that annihilates the 1-forms in a Brunovsk\'y normal form $\beta^\kappa_m$ with type $\kappa=\langle \rho_1,\ldots,\rho_k\rangle$. Then the entries in the refined derived type 
\begin{equation}
\mathfrak{d}_r(\mcal{C}^\kappa_m)=[[m_0,\chi^0],[m_1,\chi^1_0,\chi^1],\ldots,[m_{k-1},\chi^{k-1}_{k-2},\chi^{k-1}],[m_k,\chi^k]]
\end{equation}
satisfy the following relations:
\begin{align}
\kappa&=\mathrm{deccel}(\mcal{C}^\kappa_m),\quad \Delta_i=\sum_{l=i}^{k}\rho_l,\\ \nonumber
m_0&=1+m,\quad m_j=m_0+\sum_{l=1}^j\Delta_l,\quad1\leq j\leq k,\\ \nonumber
\chi^j&=2m_j-m_{j+1}-1,\quad 0\leq j\leq k-1,\\ \nonumber
\chi^i_{i-1}&=m_{i-1}-1,\quad 1\leq i\leq k-1, \nonumber
\end{align}
where $\Delta_j$ is given in Definition \ref{accel def}.  
\end{prop}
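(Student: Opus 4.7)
\medskip

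\textbf{Proof Proposal.} The plan is to compute the derived flag, the Cauchy bundles, and the intersections $\mathcal{V}^{(i-1)}\cap\mathrm{Char}\,\mathcal{V}^{(i)}$ directly in coordinates, exploiting the very explicit description of $\mathcal{C}^\kappa_m$. First I would switch from the type notation $\kappa=\langle\rho_1,\ldots,\rho_k\rangle$ to a chain-by-chain indexing: rewrite $J^\kappa(\mathbb{R},\mathbb{R}^m)=(\prod_{i=1}^m J^{\sigma_i}(\mathbb{R},\mathbb{R}))/\sim$, where $\sigma_i$ is the order of the $i$-th chain, so that $\rho_j=|\{i\colon\sigma_i=j\}|$. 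In these coordinates $\mathcal{C}^\kappa_m=\ann(\beta^\kappa_m)$ is spanned by the total derivative
\[
D_t=\partial_t+\sum_{i=1}^m\sum_{l=0}^{\sigma_i-1}z^i_{l+1}\,\partial_{z^i_l}
\]
together with the $m$ top vectors $\partial_{z^i_{\sigma_i}}$, giving the base rank $m_0=1+m$.

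Next I would compute the derived flag inductively. Because $[\partial_{z^i_l},\partial_{z^{i'}_{l'}}]=0$, the only nontrivial brackets are $[D_t,\partial_{z^i_l}]=-\partial_{z^i_{l-1}}$ for $l\geq 1$. A straightforward induction on $j$ then shows
\[
\mathcal{V}^{(j)}=\bigl\{D_t\bigr\}\;\oplus\;\mathrm{span}\bigl\{\partial_{z^i_l}\colon 1\leq i\leq m,\;\max(0,\sigma_i-j)\leq l\leq\sigma_i\bigr\}.
\]
Counting ranks gives $m_j=1+m+\sum_{i=1}^m\min(j,\sigma_i)$, from which
\[
\Delta_j=m_j-m_{j-1}=\bigl|\{i\colon\sigma_i\geq j\}\bigr|=\sum_{l=j}^{k}\rho_l,\qquad m_j=m_0+\sum_{l=1}^j\Delta_l,
\]
and $\mathcal{V}^{(k)}=TM$ exactly when $k=\max_i\sigma_i$, giving the derived length. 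The deceleration identity $\kappa=\mathrm{deccel}(\mathcal{C}^\kappa_m)$ follows immediately, since $-\Delta^2_i=\Delta_{i-1}-\Delta_i=\rho_{i-1}$ and $\Delta_k=\rho_k$.

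For the Cauchy bundle, I would check the Cauchy condition vector by vector in $\mathcal{V}^{(j)}$. The vector $D_t$ is Cauchy iff $[D_t,\partial_{z^i_{\sigma_i-j}}]=-\partial_{z^i_{\sigma_i-j-1}}\in\mathcal{V}^{(j)}$ for every $i$, which fails as soon as some $\sigma_i>j$ and so fails for $j<k$. For a vertical generator $\partial_{z^i_l}$ in $\mathcal{V}^{(j)}$, the only bracket to check is $[\partial_{z^i_l},D_t]=\partial_{z^i_{l-1}}$; this lies in $\mathcal{V}^{(j)}$ iff $l-1\geq\sigma_i-j$ or $l=0$. Tallying the survivors chain by chain yields
\[
\chi^j=\sum_{i\colon\sigma_i\leq j}(\sigma_i+1)+\sum_{i\colon\sigma_i>j}j,
\]
and a short algebraic check (using $m_{j+1}-m_j=\Delta_{j+1}$) confirms $\chi^j=2m_j-m_{j+1}-1$ for $0\leq j\leq k-1$.

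Finally, for $\chi^i_{i-1}=\dim\bigl(\mathcal{V}^{(i-1)}\cap\mathrm{Char}\,\mathcal{V}^{(i)}\bigr)$ with $1\leq i\leq k-1$, I would intersect the two explicit spans computed above. For each chain: if $\sigma_l\leq i-1$, all $\sigma_l+1$ vertical vectors of the chain lie in both; if $\sigma_l\geq i$, the overlap is $\{\partial_{z^l_r}\colon\sigma_l-i+1\leq r\leq\sigma_l\}$, contributing $i$ vectors; and $D_t\in\mathcal{V}^{(i-1)}$ but $D_t\notin\mathrm{Char}\,\mathcal{V}^{(i)}$ since $i<k$. Summing,
\[
\chi^i_{i-1}=\sum_{l\colon\sigma_l\leq i-1}(\sigma_l+1)+\sum_{l\colon\sigma_l\geq i}i=\sum_{l=1}^m\bigl(1+\min(i-1,\sigma_l)\bigr)=m_{i-1}-1,
\]
which is the last identity. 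The main obstacle here is purely notational: keeping the two indexing conventions $\rho_j$ and $\sigma_i$ aligned, and carefully distinguishing the four cases $\sigma_l<i$, $\sigma_l=i$, $\sigma_l>i$, and whether $D_t$ is Cauchy at each level $j$. Once that bookkeeping is in place every identity reduces to elementary manipulations with $\min$-functions and telescoping sums.
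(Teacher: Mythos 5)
Your computation is correct. Note that the paper does not prove this proposition at all — it is quoted from Vassiliou's work \cite{VassiliouGoursatEfficient} — so there is no in-text argument to compare against; your direct coordinate verification on $J^\kappa$ (deriving the flag from $[D_t,\partial_{z^i_l}]=-\partial_{z^i_{l-1}}$, counting ranks with $\min(j,\sigma_i)$, and tallying Cauchy directions chain by chain) is exactly the standard way to establish these identities for the canonical contact system, and all of your counts and telescoping checks are right, including the boundary case $\chi^0=0$. The only point worth tightening is the Cauchy bundle computation: checking generators ``vector by vector'' shows your claimed span is contained in $\mathrm{Char}\,\mathcal{V}^{(j)}$ and that $D_t$ and the bottom-level verticals individually fail, but to conclude equality you should also rule out mixed sections $X=b\,D_t+\sum a^i_l\partial_{z^i_l}$; this is immediate, since bracketing $X$ against a bottom-level vertical of a chain with $\sigma_i>j$ produces the component $-b\,\partial_{z^i_{\sigma_i-j-1}}$ outside $\mathcal{V}^{(j)}$, forcing $b=0$, and then $[X,D_t]$ forces the bottom-level coefficients to vanish because the excluded coordinate directions are linearly independent. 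With that one-line completion the argument is airtight.
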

Some of the most important geometric structures for this thesis are the generalized Goursat bundles \cite{VassiliouGoursat}. The prototypical examples of Goursat bundles are exactly those subbundles $\mathcal{B}^\kappa$ of the tangent bundle of some $J^\kappa$ that are annihilated by the Brunovsk\'y 1-forms on $J^\kappa$. Like the example of Brunovsk\'y forms, Goursat bundles have the property of being \textit{completely nonintegrable}. 
\begin{defn}
A (nonsingular) \textbf{rank 2 Goursat bundle} is a rank 2 distribution $\mathcal{V}$ on $M$ with no Cauchy characteristics such that $\mathrm{dim}\mathcal{V}^{(i+1)}=1+\mathrm{dim}\mathcal{V}^{(i)}$ for all $1\leq i\leq k-1$ and $\mathcal{V}^{(k)}=TM$. 
\end{defn}
Furthermore, although Goursat bundles are examples of completely nonintegrable distributions, they are in some sense degenerate among such distributions. The growth of the derived flag is as slow as possible to still guarantee that the distribution is completely nonintegrable. Rank 2 Goursat bundles are the classical Goursat bundles that were studied by Goursat, Engel, and E. Cartan. Indeed, in the case that $M$ is a 4 dimensional manifold, a rank 2 Goursat bundle on $M$ is exactly an Engel structure. 

What about bundles of higher rank? This is the work of Vassiliou in \cite{VassiliouGoursat} and \cite{VassiliouGoursatEfficient}. Indeed, a generalized Goursat bundle, or simply a Goursat bundle, is given by the following definition. 
\begin{defn}\cite{VassiliouGoursat}\label{Goursat}
A subbundle $\mathcal{V}\subset TM$ of derived length $k$ will be called a \textbf{Goursat bundle of type} $\kappa$ if:
\begin{enumerate}
\item $\mathcal{V}$ has the refined derived type of a partial prolongation of $J^1(\mathbb{R},\mathbb{R}^m)$ whose type $\kappa=\mathrm{deccel}(\mathcal{V})$,
\item each intersection $\inChar{V}{i}:= \mathcal{V}^{(i-1)}\cap\Char{V}{i}$ is an integrable subbundle whose rank, assumed to be constant on $M$, agrees with the corresponding rank of $\mathrm{Char}(\mathcal{C}^\kappa_m)^{(i)}_{i-1}$, and
\item in case $\Delta_k>1$, then $\mathcal{V}^{(k-1)}$ determines an integrable Weber structure whose resolvent bundle is of rank $\Delta_k+\chi^{k-1}$. 
\end{enumerate}
\end{defn}
Goursat bundles have a particularly nice normal form, and this is the main result of \cite{VassiliouGoursat}. 
\begin{thm}\label{Generalized Goursat Normal Form}\cite{VassiliouGoursat}
(Generalized Goursat Normal Form). Let $\mathcal{V}\subset TM$ be a Goursat bundle on a manifold $M$, with derived length $k>1$, and type $\kappa=\mathrm{deccel}(\mathcal{V})$. Then there is an open dense subset $U\subset M$ such that the restriction of $\mathcal{V}$ to $U$ is locally equivalent to $\mathcal{C}^\kappa_m$ via a local diffeomorphism of $M$. Conversely, any partial prolongation of $\mathcal{C}^1_m$ is a Goursat bundle. 
\end{thm}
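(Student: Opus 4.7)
The plan is to treat the two directions separately: the converse follows by direct verification that $\mcal{C}^\kappa_m$ satisfies Definition \ref{Goursat}, while the forward direction proceeds by induction on the derived length $k$, using the integrable substructures guaranteed by the hypothesis to build Brunovsk\'y coordinates one level at a time.

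For the converse, in the natural jet coordinates $(t, j^{\sigma_1}z^1,\ldots, j^{\sigma_m}z^m)$ on $J^\kappa(\mathbb{R},\mathbb{R}^m)$, condition (1) of Definition \ref{Goursat} is essentially tautological, with Proposition \ref{refined derived type numbers} providing the refined derived type explicitly. Condition (2) reduces to the observation that $\mathrm{Char}\,(\mcal{C}^\kappa_m)^{(i)}_{i-1}$ is spanned by the coordinate vector fields $\partial_{z^j_r}$ of sufficiently low order, which is manifestly integrable and of the required rank. For condition (3), when $\Delta_k > 1$, the coordinate subbundle spanned by $\partial_{z^j_{\sigma_j}}$ for indices $j$ with $\sigma_j = k$ is integrable and visibly realizes the resolvent bundle of the Weber structure on $(\mcal{C}^\kappa_m)^{(k-1)}$.

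For the forward direction, I would use strong induction on $k$, with the base case $k = 2$ handled by the classical rank-two Goursat/Engel normal form together with its higher-rank analogues. For the inductive step, the key idea is to select an integrable subbundle $\mcal{K}\subset\mcal{V}$, built from $\inChar{V}{1}$ and, when necessary, a piece of the resolvent structure at the bottom of the flag, such that the quotient distribution $\bar{\mcal{V}}$ on $M/\mcal{K}$ is again a Goursat bundle, but of strictly smaller type $\kappa'$. By the inductive hypothesis, $\bar{\mcal{V}}$ is locally equivalent to $\mcal{C}^{\kappa'}_{m'}$. Pulling these coordinates back along $\pi\colon M\to M/\mcal{K}$ and augmenting with first integrals of $\inChar{V}{k}$ (and, when $\Delta_k > 1$, first integrals of the resolvent bundle of $\mcal{V}^{(k-1)}$) should produce a full coordinate system in which $\mcal{V}$ takes the form $\mcal{C}^\kappa_m$.

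The main obstacle is verifying that $\bar{\mcal{V}}$ genuinely inherits the Goursat structure with the correctly shifted refined derived type; this requires careful tracking of how each invariant in $\mathfrak{d}_r(\mcal{V})$ transforms under the quotient and hinges crucially on the constant-rank integrability hypotheses on $\inChar{V}{i}$ that are built into Definition \ref{Goursat}. A secondary subtlety is bifurcating the argument according to whether $\Delta_k = 1$ or $\Delta_k > 1$: in the former, $\mcal{V}^{(k-1)}$ is corank one in $TM$ and the last coordinate comes from a single quadrature, while in the latter the $\Delta_k$ first integrals of the resolvent bundle must be chosen so that the induced ``time'' vector field differentiates them compatibly with the jet tower. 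Finally, the restriction to an open dense $U\subset M$ accommodates the possibility that constant-rank or polar-matrix genericity assumptions fail on a proper closed subset, with the construction producing the desired diffeomorphism only on its complement.
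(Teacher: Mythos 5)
The paper does not actually prove this theorem: it is quoted from \cite{VassiliouGoursat}, and the surrounding text only records the route taken there and in \cite{VassiliouGoursatEfficient}, namely a \emph{direct} construction of contact coordinates — build the canonical filtration (\ref{filt res}) or (\ref{filt fun}), take first integrals of the quotients $\Xi^{(j)}_{j-1}/\Xi^{(j)}$ and of the resolvent (or fundamental) bundle to get the fundamental functions, and generate all higher contact coordinates by iterating a section $Z$ of $\mathcal{V}$ with $Zx=1$ (procedure \textbf{contact}). Your plan — induction on derived length via quotienting by an integrable subbundle built from $\inCharOne{V}$ — is therefore a genuinely different route, in the spirit of the classical von Weber/Cartan induction for rank-2 Goursat structures. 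Your converse direction is essentially fine, except that the resolvent bundle of $(\mathcal{C}^\kappa_m)^{(k-1)}$ is \emph{not} the span of the $\partial_{z^j_{\sigma_j}}$ with $\sigma_j=k$: that span has rank $\Delta_k$ and sits inside $\mathrm{Char}\,(\mathcal{C}^\kappa_m)^{(k-1)}$, whereas the resolvent bundle has rank $\Delta_k+\chi^{k-1}$ and is the subbundle annihilated by $dt$ and the $dz^j_0$ of the order-$k$ variables.

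The forward direction, however, has genuine gaps. First, what descends to $M/\mathcal{K}$ is not $\mathcal{V}$ but $\der{V}{1}$: for $d\pi(\mathcal{W})$ to be a well-defined constant-rank subbundle you need $\mathcal{K}$ to consist of Cauchy characteristics of $\mathcal{W}$, and $\mathcal{K}\subseteq\Char{V}{1}$ guarantees this only for $\mathcal{W}=\der{V}{1}$; already in the model $\mathcal{C}^\kappa_m$ the pushforward of $\mathcal{C}^\kappa_m$ itself is not well defined, since its coefficients involve the top-order fibre coordinates being quotiented out. So the induction must run on the first derived system, and you then need an extra (unaddressed) argument recovering $\mathcal{V}$ inside the lifted coordinates. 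Second, the assertion that the quotient "is again a Goursat bundle of strictly smaller type" — i.e.\ that the refined derived type shifts correctly, the intersections $\inChar{V}{i}$ descend to the corresponding integrable intersections, and, when $\Delta_k>1$, the Weber structure and its resolvent bundle descend — is precisely the substantive content of the theorem; labelling it "the main obstacle" leaves the proof without its core. Third, the base case is partly circular: for $k=2$ and $m>1$ the normal form is not the classical Engel/Goursat theorem but already requires the resolvent-bundle analysis, so "higher-rank analogues" at $k=2$ is an instance of the very statement being proved. As it stands, the proposal is a plausible strategy outline rather than a proof, and in any case differs from the first-integral construction the paper cites.
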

The paper \cite{VassiliouGoursat} establishes the local normal form for generalized Goursat bundles constructively. However, in \cite{VassiliouGoursatEfficient}, the construction of local coordinates is streamlined into a nearly algorithmic procedure. We'll next outline this procedure, often referred to as procedure \textbf{contact}, and apply it to an example in detail.

\subsection{ESF Linearizable Systems and Procedure Contact}
In this section we'll present procedure \textbf{contact} from \cite{VassiliouGoursatEfficient} and then apply the procedure to Goursat bundles that represent control systems. There will be additional requirements to make sure that the diffeomorphism created by procedure \textbf{contact} can be chosen to be an ESFT. At the end of this section, we prove a result that highlights the difference between ESFL and SFL systems.

Let $\mathcal{V}$ be a Goursat bundle on a manifold $M$ with derived length $k$. The Goursat bundle $\mathcal{V}$ will induce one of two possible filtrations of $TM$; one for the case that $\mathcal{V}$ has $\Delta_k>1$ and the other for the case of $\Delta_k=1$. To start, we'll assume that $\Delta_k>1$. The associated filtration of $TM$ for such a Goursat bundle is given by
\begin{equation}\label{filt res}
\text{Char}\,\mathcal{V}^{(1)}_0\subseteq\text{Char}\,\mathcal{V}^{(1)}\subset\cdots\subset\text{Char}\,\mathcal{V}^{(k-1)}_{k-2}\subseteq\text{Char}\,\mathcal{V}^{(k-1)}\subset\mathcal{R}_{\bar{\Sigma}_{k-1}}(\mathcal{V}^{(k-1)})\subset TM.
\end{equation}
Similarly, there is also a filtration of $T^*M$ defined by taking the annihilator of all of the above subbundles,
\begin{equation}
\Upsilon_{\bar{\Sigma}_{k-1}}(\mathcal{V}^{(k-1)})\subset \Xi^{(k-1)}\subseteq \Xi^{(k-1)}_{k-2}\subset\cdots\subset\Xi^{(1)}\subseteq\Xi^{(1)}_0\subset T^*M,
\end{equation}
where $\Xi^{(i)}=\text{ann }\Char{V}{i},\,\Xi^{(i)}_{i-1}=\text{ann }\inChar{V}{i}$, and $\Upsilon_{\bar{\Sigma}_{k-1}}(\mathcal{V}^{(k-1)})=\text{ann }\mathcal{R}_{\bar{\Sigma}_{k-1}}(\mathcal{V}^{(k-1)})$. Each subbundle in these filtrations is integrable by Definition \ref{Goursat}. In particular, these subbundles are diffeomorphism invariants of a given Goursat bundle, and hence their first integrals are also diffeomorphism invariants of the Goursat bundle. Such invariant functions will be used to construct the appropriate contact coordinates. We will not, however, need all first integrals of these subbundles. Notice that $\Char{V}{i}=\inChar{V}{i}$ if and only if $-\Delta^2_i=\rho_i=0$. Specifically, it is the case that $\rank \Xi^{(i)}_{i-1}/\Xi^{(i)}=\rho_i$. It turns out that the first integrals of each nontrivial quotient bundle $\Xi^{(i)}_{i-1}/\Xi^{(i)}$, and also of the annihilator of the resolvent bundle, $\Upsilon_{\bar{\Sigma}_{k-1}}(\mathcal{V}^{(k-1)})$, give the zeroth order contact coordinates on the appropriate $J^\kappa$. 
\begin{defn}\cite{VassiliouGoursatEfficient}
For each $1\leq j\leq k-1$, let $\{\varphi^{l_j,j}\}_{l_j=1}^{\rho_j}$ generate the independent first integrals of $\Xi^{(j)}_{j-1}/\Xi^{(j)}$. Each $\varphi^{l_j,j}$ is called a \textbf{fundamental function of order} $j$. Now let $\{\varphi^{0,k},\ldots,\varphi^{\rho_k,k}\}$ generate the first integrals of $\Upsilon_{\bar{\Sigma}_{k-1}}(\mathcal{V}^{(k-1)})$. These will be the \textbf{fundamental functions of order} $k$. 
\end{defn}
Notice that there are $\rho_j$ fundamental functions of order $j$ and $\rho_k+1$ fundamental functions of order $k$. The fundamental function $\varphi^{0,k}$ will usually denote a local coordinate for the source of $J^\kappa$. 
\begin{thm}\cite{VassiliouGoursatEfficient}
Let $\mathcal{V}\subset TM$ be a Goursat bundle of derived length $k$ with $\kappa=\mathrm{deccel}(\mathcal{V})=\langle \rho_1,\ldots,\rho_k\rangle$, $\rho_k\geq2$. Let $\{x,\varphi^{1,k},\ldots,\varphi^{\rho_k,k}\}$ denote the fundamental functions of order $k$, and for each $\rho_j>0$ let $\{\varphi^{1,j},\ldots,\varphi^{\rho_j,j}\}$ denote the fundamental functions of order $j$ defined on some open subset $U\subseteq M$.
Then there is an open, dense subset $V\subseteq U$ and a section $Z$ of $\mathcal{V}$ such that on $V$, $Zx=1$ and the fundamental functions, $x,\varphi^{l_j,j}_0:=\varphi^{l_j,j}$, together with the functions 
\begin{equation}
\varphi^{l_j,j}_{s_j+1}=Z\varphi^{l_j,j}_{s_j},\,j\in\{1,\ldots,k\},\,1\leq l_j\leq\rho_j,\,0\leq s_j\leq j-1, 
\end{equation} 
are contact coordinates for $\mathcal{V}$, identifying it with $\mathcal{C}^\kappa_m$ on $J^\kappa$. 
\end{thm}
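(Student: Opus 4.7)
The plan is to extract explicit contact coordinates from Theorem \ref{Generalized Goursat Normal Form}, which already asserts the abstract existence of an identification of $\mcal{V}$ with $\mcal{C}^\kappa_m$, by constructing the diffeomorphism directly from the fundamental functions together with a canonical section $Z\in\Gamma(\mcal{V})$. The construction proceeds in three stages: (i) produce $Z$ with $Zx=1$ on an open dense $V\subseteq U$; (ii) show inductively that iterated $Z$-derivatives of the fundamental functions descend the Goursat filtration (\ref{filt res}) correctly; (iii) verify functional independence and the contact-form relations.

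For (i), since $x$ is a first integral of $\Upsilon_{\bar{\Sigma}_{k-1}}(\der{V}{k-1})$, the form $dx$ annihilates the resolvent bundle. However $dx$ cannot annihilate all of $\mcal{V}$: otherwise $dx\in\ann\der{V}{k-1}$ would furnish a nontrivial first integral of $\der{V}{k-1}$, contradicting $\der{V}{k}=TM$ and complete nonintegrability of $\mcal{V}$. Hence on some open dense $V\subseteq U$ there is $W\in\Gamma(\mcal{V})$ with $Wx$ nonvanishing, and $Z:=W/Wx$ satisfies $Zx=1$ on $V$.

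For (ii), the inductive claim is that each successive $\varphi^{l_j,j}_{s_j+1}=Z\varphi^{l_j,j}_{s_j}$ is a first integral of the characteristic subbundle lying one step to the left of its predecessor in (\ref{filt res}). The base case is the defining property of the fundamental functions. For the inductive step, given $Y$ in that adjacent characteristic subbundle, one expands
\begin{equation*}
Y(Z\varphi^{l_j,j}_{s_j})=Z(Y\varphi^{l_j,j}_{s_j})+[Y,Z]\varphi^{l_j,j}_{s_j};
\end{equation*}
the first term vanishes by inductive hypothesis, and the characteristic property $[Y,\der{V}{i}]\subseteq\der{V}{i}$ combined with the Goursat nesting $\Char{V}{i}\subseteq\Char{V}{i+1}$ from (\ref{filt res}) forces the bracket term to annihilate $\varphi^{l_j,j}_{s_j}$ at the appropriate level.

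Finally, for (iii), the cardinality $1+\sum_{j=1}^k\rho_j(j+1)$ of the constructed collection equals $\dim M$ by Proposition \ref{refined derived type numbers}, so one needs only functional independence to conclude a chart; this follows because at each iteration $Z\varphi^{l_j,j}_{s_j}$ lies in a strictly smaller cotangent subbundle than its predecessors, contributing a genuinely new covector direction, and $V$ is precisely the locus where this genericity holds. The pullback of each contact form $dz^{l_j,j}_{s_j}-z^{l_j,j}_{s_j+1}\,dt$ under the candidate chart is $d\varphi^{l_j,j}_{s_j}-\varphi^{l_j,j}_{s_j+1}\,dx$, which annihilates $Z$ by construction and annihilates the remaining $m$ generators of $\mcal{V}$ because those generators lie in $\Char{V}{k-1}$ and hence kill both $x$ and all lower-order fundamental functions. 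The principal obstacle is step (ii): the careful bookkeeping of which characteristic subbundle each $Z^s\varphi^{l_j,j}$ inhabits; the hypothesis $\rho_k\geq 2$ enters precisely here to place us in the Weber-structure regime, so that the resolvent bundle annihilator supplies the canonical cotangent home for $x$ and the cascade has a well-defined top level.
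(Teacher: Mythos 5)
First, a point of reference: the paper does not prove this theorem at all --- it is quoted from \cite{VassiliouGoursatEfficient}, whose argument leans on Theorem \ref{Generalized Goursat Normal Form} and the structural lemmas of \cite{VassiliouGoursat}: one reduces to the model $\mcal{C}^\kappa_m$, identifies the first integrals of the filtration (\ref{filt res}) there, and checks that the map built from the fundamental functions and their iterated $Z$-derivatives is a contact transformation on an open dense set. Your proposal instead attempts a direct verification from the definitions, and its central step (ii) has a genuine gap. In the inductive step you take $Y$ in the adjacent characteristic subbundle and expand $Y(Z\varphi^{l_j,j}_{s_j})=Z(Y\varphi^{l_j,j}_{s_j})+[Y,Z]\varphi^{l_j,j}_{s_j}$, then claim the bracket term dies because $[Y,\der{V}{i}]\subseteq\der{V}{i}$ together with the nesting $\Char{V}{i}\subseteq\Char{V}{i+1}$. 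But those facts only give $[Y,Z]\in\der{V}{i}$ (since $Z\in\Gamma(\mcal{V})\subseteq\Gamma(\der{V}{i})$), while the inductive hypothesis makes $\varphi^{l_j,j}_{s_j}$ a first integral only of a characteristic (or resolvent) subbundle, which is a proper and typically much smaller subbundle of $\der{V}{i}$; a general section of $\der{V}{i}$ has no reason to annihilate it. What your argument actually requires is the finer incidence relation that bracketing a characteristic direction with a section of $\mcal{V}$ lands in the \emph{next} subbundle of (\ref{filt res}) --- schematically $[\Char{V}{i},\mcal{V}]\subseteq\Char{V}{i+1}$ at intermediate levels, and into $\mcal{R}_{\bar{\Sigma}_{k-1}}(\der{V}{k-1})$ at the top --- and these relations are exactly the nontrivial structural content of the Goursat hypotheses (the ranks of the refined derived type, integrability of the $\inChar{V}{i}$, the integrable Weber structure); they are not formal consequences of the definition of Cauchy characteristics plus nesting.

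Two further points. The bookkeeping ``first integral of the subbundle one step to the left'' is not quite the correct statement: a fundamental function of order $j<k$ is a first integral of $\inChar{V}{j}$ only modulo $\Xi^{(j)}$ (it need not be annihilated by all of $\Char{V}{j}$), and the top derivatives $\varphi^{l_j,j}_{j}$ exit the filtration altogether, so the induction cannot be phrased uniformly as you have it. And in (iii), the functional independence of the $1+\sum_{j}\rho_j(j+1)$ functions --- which is precisely where the restriction to the open dense subset $V$ earns its keep --- is asserted (``contributes a genuinely new covector direction'') rather than proved; without independence the dimension count yields nothing, and the claim that the non-$Z$ generators of $\mcal{V}$ annihilate all lower-order functions is again part of the unproven bookkeeping of (ii). Your step (i), producing $Z$ with $Zx=1$ on an open dense set via complete nonintegrability, is fine.
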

The above theorem gives a way to explicitly construct the coordinates for the Brunovsk\'y normal form for a Goursat bundle (in the case that $\Delta_k>1$). We now give the analogous result in the case that $\Delta_k=1$. In this case, the Goursat bundle $\mathcal{V}$ induces the filtrations 
\begin{equation}\label{filt fun}
\text{Char}\,\mathcal{V}^{(1)}_0\subseteq\text{Char}\,\mathcal{V}^{(1)}\subset\cdots\subset\text{Char}\,\mathcal{V}^{(k-1)}_{k-2}\subseteq\text{Char}\,\mathcal{V}^{(k-1)}\subset\Pi^{k-1}\subset TM, 
\end{equation}
and 
\begin{equation}
(\Pi^{k-1})^\perp\subset \Xi^{(k-1)}\subseteq \Xi^{(k-1)}_{k-2}\subset\cdots\subset\Xi^{(1)}\subseteq\Xi^{(1)}_0\subset T^*M.
\end{equation}
In place of the resolvent bundle is a new integrable bundle $\Pi^{k-1}\subset\mathcal{V}^{(k-1)}$. 
\begin{defn}\cite{VassiliouGoursatEfficient}
Let $\mathcal{V}$ be a Goursat bundle with $\Delta_k=1$, $x$ a first integral of $\Char{V}{k-1}$, and $Z$ any section of $\mathcal{V}$ such that $Zx=1$. Then the \textbf{fundamental bundle} $\Pi^{k-1}\subset\mathcal{V}^{(k-1)}$ is defined inductively as
\begin{equation}
\Pi^{l+1}=\Pi^l+[\Pi^l,Z],\,\,\Pi^0=\inCharOne{V},\,\,0\leq l\leq k-2.
\end{equation}
\end{defn}
In the proof of Theorem 4.2 in \cite{VassiliouGoursat} it is shown that $\Pi^{k-1}$ is integrable and has corank 2 in $TM$. Note also that $x$ is a first integral of $\Pi^{k-1}$ by virtue of filtration (\ref{filt fun}). We can now state the theorem that constructs contact coordinates for $\mathcal{V}$ in the case that $\Delta_k=1$.
\begin{thm}\cite{VassiliouGoursatEfficient}
 Let $\mathcal{V}\subset TM$ be a Goursat bundle of derived length $k$ and $\kappa=\mathrm{deccel}({\mathcal{V}})=\langle \rho_1,\ldots,\rho_k\rangle, \rho_k=1$. Let $\Pi^{k-1}$ be the fundamental bundle, and let $\varphi^{1,k}$ be any first integral of $\Pi^{k-1}$ such that $dx\wedge d\varphi^{1,k}\neq0$ on an open set $U\subseteq M$. Then there is an open dense subset $V\subseteq U$ upon which there is a section $Z$ of $\mathcal{V}$ satisfying $Zx=1$ such that the fundamental functions $x,\varphi^{l_j,j}_0:=\varphi^{l_j,j}$ together with the functions
\begin{equation}
\varphi^{l_j,j}_{s_j+1}=Z\varphi^{l_j,j}_{s_j},\,j\in\{1,\ldots,k\},\,1\leq l_j\leq\rho_j,\,0\leq s_j\leq j-1,
\end{equation}
are contact coordinates for $\mathcal{V}$ on $V$, identifying it with $\mathcal{C}^\kappa_m$ on $J^\kappa$.  
\end{thm}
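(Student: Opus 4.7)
The plan is to mirror the strategy just used for the $\Delta_k>1$ case, substituting the fundamental bundle $\Pi^{k-1}$ for the resolvent bundle and using its two first integrals $x,\varphi^{1,k}$ in place of the $\rho_k+1$ first integrals of $\Upsilon_{\bar\Sigma_{k-1}}(\mcal{V}^{(k-1)})$. The overall identification $\mcal{V}\cong \mcal{C}^\kappa_m$ is produced in three steps: (i) exhibit $1+m+\sum_{j=1}^k j\rho_j$ functionally independent smooth functions on an open dense $V\subseteq U$ (the candidate contact coordinates); (ii) show that the canonical contact differences $d\varphi^{l_j,j}_{s_j}-\varphi^{l_j,j}_{s_j+1}\,dx$ vanish on $\mcal{V}$; and (iii) conclude that these $\sum_j j\rho_j = \dim M-\rank\mcal{V}$ one-forms span $\ann\mcal{V}$ by dimension count.

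For (i), by Proposition~\ref{refined derived type numbers} there are $\rho_j$ order-$j$ fundamental functions for $1\le j<k$, each a first integral of $\inChar{V}{j}$ but not of $\Char{V}{j}$; together with $x,\varphi^{1,k}$ (independent by hypothesis) these give the bottom of each stack. A section $Z\in\Gamma(\mcal{V})$ with $Zx=1$ exists because $x$ cannot be a first integral of $\mcal{V}$: if it were, then $dx\in\ann\mcal{V}$, and since $dx$ is closed it would propagate through the derived flag to $\ann\der{V}{k}=\ann TM=0$, forcing $dx=0$ and contradicting $dx\wedge d\varphi^{1,k}\ne 0$. The prolongated functions $\varphi^{l_j,j}_{s_j+1}:=Z\varphi^{l_j,j}_{s_j}$ then climb the filtration (\ref{filt fun}) one rung at a time: an induction using $\Pi^{l+1}=\Pi^l+[\Pi^l,Z]$ in the top stack, and the nesting $\Char{V}{i-1}\subseteq\Char{V}{i}$ elsewhere, shows that each application of $Z$ produces a first integral of the next smaller subbundle in (\ref{filt fun}). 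Shrinking to the open dense subset $V$ on which these strict shifts hold at every iteration yields functional independence of the whole collection.

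For (ii), fix $X\in\Gamma(\mcal{V})$. Using
\begin{equation}
X(Z\varphi^{l_j,j}_{s_j})=Z(X\varphi^{l_j,j}_{s_j})+[X,Z]\varphi^{l_j,j}_{s_j},
\end{equation}
the inclusion $[X,Z]\in\Gamma(\der{V}{1})$, and the filtration-shift established in (i), an induction on $s_j$ collapses $X\varphi^{l_j,j}_{s_j+1}$ to $(Xx)\,\varphi^{l_j,j}_{s_j+2}$, which is precisely the contact identity. Step (iii) is then a dimension count: the contact differences are independent on $V$ by (i), annihilate $\mcal{V}$ by (ii), and number exactly $\dim M - \rank\mcal{V}$.

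The main obstacle will be the bookkeeping in step (i): one must verify that iterating $Z$ actually shifts a function to being a first integral of a strictly smaller subbundle in (\ref{filt fun}) at every stage, rather than stalling. The subtlety unique to the $\rho_k=1$ case is that $\Pi^{k-1}$ is itself constructed using $Z$ via $\Pi^{l+1}=\Pi^l+[\Pi^l,Z]$, so the section $Z$ satisfying $Zx=1$ and the definition of $\Pi^{k-1}$ are mutually dependent. One must argue that the tower $\Pi^0\subset\Pi^1\subset\cdots\subset\Pi^{k-1}$ and the transverse section $Z$ can be chosen consistently, and this is precisely what forces the passage to an open dense $V\subseteq U$ on which all these compatibilities and independence conditions hold simultaneously.
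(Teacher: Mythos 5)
The paper does not actually prove this statement---it is quoted from \cite{VassiliouGoursatEfficient}---so the only comparison available is with Vassiliou's argument, whose broad shape (iterate a transverse section $Z$ on the fundamental functions and verify the contact relations) your outline does follow. Within your sketch, however, there is a genuine gap at the technical heart. Everything rests on the ``filtration shift'' claim in step (i): that each application of $Z$ carries a first integral of one subbundle of (\ref{filt fun}) to a first integral of the next smaller one. You assert this from the recursion $\Pi^{l+1}=\Pi^l+[\Pi^l,Z]$ and the nesting $\Char{V}{i-1}\subseteq\Char{V}{i}$, but neither suffices, and the claim is simply false for a general bracket-generating distribution: it is exactly here that the defining hypotheses of a Goursat bundle must be invoked---the agreement of $\mathfrak{d}_r(\mathcal{V})$ with $\mathfrak{d}_r(\mathcal{C}^\kappa_m)$, the integrability of the intersections $\inChar{V}{i}$, and the corank-$2$ integrability of $\Pi^{k-1}$ (itself nontrivial; the paper cites it to the proof of Theorem 4.2 of \cite{VassiliouGoursat}). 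Even the nesting of the Cauchy bundles that you use ``elsewhere'' is something to be derived from these hypotheses, not quoted. Without a proof of the shift lemma, steps (ii) and (iii) have nothing to stand on, since both the functional independence and the dimension count feed off it.

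Moreover, the induction in step (ii) does not close as written. Writing $\varphi_s$ for $\varphi^{l_j,j}_{s}$, the identity $X(Z\varphi_{s})=Z(X\varphi_{s})+[X,Z]\varphi_{s}$ together with the inductive hypothesis $X\varphi_{s}=(Xx)\,\varphi_{s+1}$ and $[X,Z]x=-Z(Xx)$ gives
\begin{equation}
X\varphi_{s+1}=(Xx)\,\varphi_{s+2}+\Bigl([X,Z]\varphi_{s}-\bigl([X,Z]x\bigr)\varphi_{s+1}\Bigr),
\end{equation}
so the error term is the evaluation of the contact form $d\varphi_{s}-\varphi_{s+1}\,dx$ on $[X,Z]$, which is only a section of $\der{V}{1}$, not of $\mathcal{V}$. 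Killing it requires knowing that the lower-order contact forms annihilate the appropriate higher derived systems---equivalently, that $\varphi_{s}$ is a first integral of the relevant $\Char{V}{j-s}$-type bundles---which is again the unproven shift property; as stated the argument is circular. Finally, you correctly flag, but do not resolve, the mutual dependence of $Z$ and $\Pi^{k-1}$: note that passing to the open dense subset $V$ addresses genericity (nonvanishing of $Xx$, independence of the constructed functions), not that dependence, which instead requires showing that $\Pi^{k-1}$ is well defined (integrable, of corank $2$, containing $\Char{V}{k-1}$) for any admissible choice of $x$ and $Z$.
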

These results can summed up as a procedure for calculating local contact coordinates for a Goursat bundle.\
\begin{proc contact}\cite{VassiliouGoursatEfficient}\\
\textbf{Procedure A}\label{proc A}\\
Let $\mathcal{V}\subset TM$ be a Goursat bundle with derived length $k>1$ such that $\Delta_k>1$. Then one can do the following to produce local contact coordinates for $\mathcal{V}$:
\begin{enumerate}
\item Build filtration (\ref{filt res}) and its associated filtration of $T^*M$.
\item For each $1\leq j \leq k-1$ such that $\rho_j>0$, compute the quotient bundle $\Xi^{(j)}_{j-1}/\Xi^{(j)}$.
\item Compute the fundamental functions $\varphi^{l_j,j}$ of $\Xi^{(j)}_{j-1}/\Xi^{(j)}$.
\item Fix any fundamental function of order $k$ of the resolvent bundle, denoted $x$, and any section $Z$ of $\mathcal{V}$ such that $Zx=1$.
\item For each $1\leq j\leq k$ such that $\rho_j>0$, define $z^{l_j,j}_0=\varphi^{l_j,j}$, $1\leq l_j\leq \rho_j$. Furthermore, define the remaining contact coordinates to be
\begin{equation}\label{contact coord A}
z^{l_j,j}_{s_j}=Zz^{l_j,j}_{s_j-1}=Z^{s_j}z^{l_j,j}_0,\,1\leq s_j\leq j,\,1\leq l_j\leq \rho_j. 
\end{equation}
\end{enumerate}
The local coordinates for $J^\kappa(\mathbb{R},\mathbb{R}^m)$ are given by $x,z^{l_j,j}_0$, and (\ref{contact coord A}). In these coordinates $\mathcal{V}$ has the form $\mathcal{C}^\kappa_m$.\\ 
\textbf{Procedure B}\label{proc B}\\
Let $\mathcal{V}\subset TM$ be a Goursat bundle with derived length $k>1$ such that $\Delta_k=1$. Then one can do the following to produce local contact coordinates for $\mathcal{V}$:
\begin{enumerate}
\item Build filtration (\ref{filt fun}) and its associated filtration of $T^*M$ up to $\Char{V}{k-1}$.
\item Identify a first integral $x$ of $\Char{V}{k-1}$ such that there is a section $Z$ of $\mathcal{V}$ with the property $Zx=1$. Then construct $\Pi^{k-1}$, thereby completing filtration (\ref{filt fun}). 
\item For each $1\leq j\leq k-1$ such that $\rho_j>0$, compute the quotient bundle $\Xi^{(j)}_{j-1}/\Xi^{(j)}$.
\item Compute the fundamental functions $\varphi^{l_j,j}$ of $\Xi^{(j)}_{j-1}/\Xi^{(j)}$.
\item Define $z^{1,k}_0=\varphi^{1,k}$ to be any first integral of $\Pi^{k-1}$ such that $dx\wedge d\varphi^{1,k}\neq0$. 
\item For each $1\leq j\leq k$ such that $\rho_j>0$, define $z^{l_j,j}_0=\varphi^{l_j,j}$, $1\leq l_j\leq \rho_j$. Furthermore, define the remaining contact coordinates to be
\begin{equation}\label{contact coord B}
z^{l_j,j}_{s_j}=Zz^{l_j,j}_{s_j-1}=Z^{s_j}z^{l_j,j}_0,\,1\leq s_j\leq j,\,1\leq l_j\leq \rho_j. 
\end{equation}
\end{enumerate}
The local coordinates for $J^\kappa(\mathbb{R},\mathbb{R}^m)$ are given by $x,z^{l_j,j}_0$, and (\ref{contact coord B}). In these coordinates $\mathcal{V}$ has the form $\mathcal{C}^\kappa_m$. 
\end{proc contact}

Procedure \textbf{contact} produces a local \textit{diffeomorphism} equivalence between a Goursat bundle and a contact system. In particular, the first integral $x$ in procedure \textbf{contact} plays the role of the source variable of some $J^\kappa$, so that $dx$ forms the independence condition for the linear Pfaffian system $(J^\kappa,\beta^\kappa_m)$. Therefore, if $\mathcal{V}$ represents a control system with $dt$ as the independence condition, then integral curves of $\mathcal{V}$ may not be sent to integrals curves of $\beta^\kappa_m$ that are parameterized by $t$. The following theorem gives additional conditions that ensures that procedure \textbf{contact} produces an ESFT equivalence between a Goursat bundle $\mathcal{V}$ representing a control system and a Brunovsk\'y normal form. 
\begin{thm}\cite{VassiliouSICON}\label{Goursat ESFL}
Let $\mathcal{V}$ be a Goursat bundle of derived length $k>1$ that represents a control system on the manifold $M\cong_{\mathrm{loc}}\mathbb{R}\times\bX(M)\times\bU(M)$. Then $\mathcal{V}$ is ESF equivalent to a Brunovsk\'y normal form if and only if
\begin{enumerate}
\item $\{\partial_{u^a}\}\subset \inCharOne{V}$, 
\item $dt\in \Xi^{(k-1)}$ if $\Delta_k=1$ and $dt\in \Upsilon_{\bar{\Sigma}_{k-1}}(\mathcal{V}^{(k-1)})$ if $\Delta_k>1$.  
\end{enumerate}
\end{thm}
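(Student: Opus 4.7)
The plan is to prove both directions by leveraging two explicit structural facts about the Brunovsk\'y form $\mcal{C}^\kappa_m$, together with the diffeomorphism invariance of Cauchy bundles and of the resolvent. First I would establish: (a) the bundle $\mcal{C}^\kappa_m\cap\mathrm{Char}\,(\mcal{C}^\kappa_m)^{(1)}$ equals the span of the top jet directions $\{\partial_{z^{l_j,j}_j}\}$, and (b) the source coordinate $t$ is a first integral of $\mathrm{Char}\,(\mcal{C}^\kappa_m)^{(k-1)}$ when $\Delta_k=1$ and of the resolvent bundle of $(\mcal{C}^\kappa_m)^{(k-1)}$ when $\Delta_k>1$. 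Both facts reduce to tracking the brackets $[D_t,\partial_{z^{l_j,j}_s}]=-\partial_{z^{l_j,j}_{s-1}}$ through the derived flag and the filtrations \ref{filt res} and \ref{filt fun}.

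For necessity, suppose $\Phi:M\to J^\kappa(\mathbb{R},\mathbb{R}^m)$ is an ESFT with $\Phi_*\mcal{V}=\mcal{C}^\kappa_m$. Because $\Phi$ has the form $(t,\bx,\bu)\mapsto(t,\varphi(t,\bx),\psi(t,\bx,\bu))$, we have $\Phi^*dt=dt$, and $d\Phi(\partial_{u^a})$ is a linear combination of the top directions $\partial_{z^{l_j,j}_j}$ alone. Fact (a) then places $d\Phi(\partial_{u^a})$ in $\mcal{C}^\kappa_m\cap\mathrm{Char}\,(\mcal{C}^\kappa_m)^{(1)}$, so $\partial_{u^a}\in\inCharOne{V}$ by the invariance of this bundle, establishing (1). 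Fact (b) places $dt$ in $\Xi^{(k-1)}$ or $\Upsilon_{\bar{\Sigma}_{k-1}}(\mcal{V}^{(k-1)})$ on $J^\kappa$ respectively, and pulling back gives (2).

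For sufficiency, condition (2) permits the choice $x=t$ in Procedure \textbf{contact} (A or B, depending on $\Delta_k$), and the section $Z=\partial_t+f^i\partial_{x^i}$ of $\mcal{V}$ satisfies $Zt=1$. The procedure produces a local diffeomorphism $\Phi$ with $\Phi^t=t$ and contact coordinates $z^{l_j,j}_{s_j}=Z^{s_j}\varphi^{l_j,j}_0$ carrying $\mcal{V}$ to $\mcal{C}^\kappa_m$ by Theorem \ref{Generalized Goursat Normal Form}. To confirm $\Phi$ is an ESFT, diffeomorphism invariance gives $\Phi_*\inCharOne{V}=\mcal{C}^\kappa_m\cap\mathrm{Char}\,(\mcal{C}^\kappa_m)^{(1)}=\mathrm{span}\{\partial_{z^{l_j,j}_j}\}$ by fact (a); condition (1) then forces $\Phi_*\partial_{u^a}\in\mathrm{span}\{\partial_{z^{l_j,j}_j}\}$. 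Expanding
\[
\Phi_*\partial_{u^a}=\sum_{l_j,j,s_j}\partial_{u^a}(z^{l_j,j}_{s_j})\,\partial_{z^{l_j,j}_{s_j}}
\]
forces $\partial_{u^a}(z^{l_j,j}_{s_j})=0$ for every $s_j<j$. Thus the state-level contact coordinates depend only on $(t,\bx)$ while the controls $z^{l_j,j}_j$ may depend on $(t,\bx,\bu)$, so $\Phi$ has the ESFT form.

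The hard part will be verifying facts (a) and (b) on $\mcal{C}^\kappa_m$ through careful bookkeeping in the derived flag: identifying precisely where each $\partial_{z^{l_j,j}_{s_j}}$ appears, which of these vectors are Cauchy characteristics, and confirming that $t$ is a first integral of the resolvent (or of $\mathrm{Char}\,(\mcal{C}^\kappa_m)^{(k-1)}$) as dictated by $\Delta_k$. Once (a) and (b) are in hand, the remainder is a clean application of diffeomorphism invariance combined with the structure produced by Procedure \textbf{contact}.
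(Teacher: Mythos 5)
This theorem is quoted in the thesis from \cite{VassiliouSICON} without an internal proof, so there is no in-paper argument to compare against; judged on its own terms, your proposal is sound and follows what is essentially the expected route (and, in spirit, Vassiliou's): establish the two model facts on $\mcal{C}^\kappa_m$, transport them through the diffeomorphism invariance of the derived flag, the Cauchy bundles, and the resolvent bundle, and use procedure \textbf{contact} with $x=t$ for sufficiency. The logical skeleton is right in both directions: for necessity, the triangular form of an ESFT together with $t\circ\Phi=t$ really does force $d\Phi(\partial_{u^a})$ into the span of the top jet directions, and facts (a)--(b) plus naturality then yield (1) and (2); for sufficiency, condition (2) is exactly what lets you take $t$ as the first integral $x$ in Procedure A or B, and your block-triangular argument (vanishing of $\partial_{u^a}(z^{l_j,j}_{s_j})$ for $s_j<j$, with invertibility of the state and control blocks coming for free from the Jacobian of the diffeomorphism) correctly shows the resulting contact coordinates assemble into an ESFT. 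The only outstanding work is the part you flagged yourself: verifying (a), that $\mcal{C}^\kappa_m\cap\mathrm{Char}\,(\mcal{C}^\kappa_m)^{(1)}$ is spanned by the $\partial_{z^{l_j,j}_j}$ (note this uses $k>1$, since one needs a variable of order at least two to kill the $D_t$-component of a would-be Cauchy characteristic lying in $\mcal{C}^\kappa_m$), and (b), that the Cauchy bundle of $(\mcal{C}^\kappa_m)^{(k-1)}$, respectively the resolvent bundle, is purely vertical and hence annihilated by $dt$; both are routine bracket computations of the kind you describe, so I see no genuine gap, only details to write out. One small point of care: when you choose $Z=\partial_t+f^i\partial_{x^i}$, you are relying on the statement of Procedure \textbf{contact} that \emph{any} section $Z$ of $\mcal{V}$ with $Zx=1$ may be used; with that reading (which is how the procedure is stated in the thesis) your construction goes through as written.
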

\begin{defn}
Let $\omega$ be a control system on the manifold $M\cong_{loc}\mathbb{R}\bX(M)\times\bU(M)$ . Then $\omega$ is \textbf{intrinsically nonlinear} if it is not ESF equivalent to a Brunovsk\'y normal form. 
\end{defn}
Before we present some examples, it is important to discuss previous work concerning linearization of control systems via feedback transformations. The work of Gardner, Shadwick, and Wilkens \cite{BrunovskySymmetry} solved the recognition problem of understanding when a given control system is SF equivalent to a Brunovsk\'y normal form. Their discovery was that the symmetry pseudogroups of Brunovsk\'y forms completely characterize such systems. In \cite{GSalgorithm}\cite{GSalgorithmExample}, Gardner and Shadwick devised an algorithm for transforming a SFL control system into Brunovsk\'y normal form. This approach used E. Cartan's method of equivalence and EDS theory and is considered the best method for SF linearization of a control system by control theorists. However, the GS algorithm does have some shortcomings. For instance, it only applies to systems that are SF equivalent to a Brunovk\'y normal form. This means it cannot fully address the question for control systems that are nonautonomous. Secondly, although the algorithm does indeed use the minimal number of integrations required to produce the SFT, one generally has to calculate the full structure equations in order to find the systems whose first integrals are used to construct contact coordinates. On the other hand, with Vassiliou's approach, one can first solve the recognition problem by computing the refined derived type and checking the integrability of the subbundles in (\ref{filt res}) and (\ref{filt fun}), as opposed to calculating the full symmetry pseudogroup of the control system. Procedure \textbf{contact} allows for one to be able to find general ESFTs instead of just SFTs, and furthermore, one need only compute first integrals of nonempty quotients of sequential subbundles of either (\ref{filt res}) or (\ref{filt fun}), plus the first integrals of the final subbundle in these filtrations (either the resolvent bundle or fundamental bundle). In this way, procedure \textbf{contact} also accomplishes the construction of an ESFT using the minimal number of integrations possible. Additionally, procedure \textbf{contact} is not restricted to control systems, and may be used to construct contact coordinates for any Goursat bundle (i.e. general diffeomorphism equivalence). The ability of procedure \textbf{contact} to produce ESFTs is especially important for the last steps of the cascade linearization process (see Chapter 4, Section 1). 
We would also like to remark that the construction of the remaining contact coordinates from procedure \textbf{contact} is reminiscent of the computation of higher order invariants from Olver's method of equivariant moving frames for Lie pseudogroups \cite{OlverPseudoMovingFrames}. Indeed, the author believes that Olver's methods would be yet another way to construct an algortihm for producing contact coordinates for a Goursat bundle. 
To further highlight the comparison of procedure \textbf{contact} and the GS algorithm, we present the following example of Hunt-Su-Meyer \cite{HuntSuMeyerLin}, which was then linearized via the GS algorithm in \cite{GSalgorithmExample}. Note that procedure contact can be executed in MAPLE or another suitable computer algebra program in a systematic way. For the sake of completeness, the following example has been computed with almost no details suppressed. 
\begin{ex}\cite{HuntSuMeyerLin}\cite{GSalgorithmExample}
\begin{align*}
&\frac{dx^1}{dt}=\sin(x^2),\,&\frac{dx^2}{dt}=\sin(x^3),\,&\,\phantom{==}&\frac{dx^3}{dt}=(x^4)^3+u^1,&\,\\
&\frac{dx^4}{dt}=x^5+(x^4)^3-(x^1)^{10},\,&\frac{dx^5}{dt}=u^2.\,&\,\phantom{==}&\,
\end{align*}
\end{ex}
First, we'll rewrite the control system as the distribution $\mathcal{V}=\{X,\partial_{u^1},\partial_{u^2}\}$, where
\begin{equation}
X=\partial_t+\sin(x^2)\,\partial_{x^1}+\sin(x^3)\,\partial_{x^2}+((x^4)^3+u^1)\,\partial_{x^3}+(x^5+(x^4)^3-(x^1)^{10})\,\partial_{x^4}+u^2\partial_{x^5}.
\end{equation}
\textbf{Step 1:} The derived flag of $\mathcal{V}$ is given by,
\begin{align}
\der{V}{1}&=\mathcal{V}+\{\partial_{x^3},\partial_{x^5}\},\\
\der{V}{2}&=\der{V}{1}+\{\partial_{x^2},\partial_{x^4}\},\\
\der{V}{3}&=\der{V}{2}+\{\partial_{x^1}\}=TM.
\end{align}
Hence $\mathcal{V}$ has derived length 3, $\vel{V}=\langle 2,2,1 \rangle$, and $\dec{V}=\langle 0,1,1 \rangle$. Since $\Delta_k=1$ we will implement Procedure B. Next we compute the Cauchy bundles for $\der{V}{1}$ and $\der{V}{2}$. Let 
\begin{equation}
C=TX+a^1\partial_{u^1}+a^2\partial_{u^2}+b^1\partial_{x^3}+b^2\partial_{x^5}\in \der{V}{1}
\end{equation} 
be a section of the Cauchy bundle of $\der{V}{1}$, where $T,b^1,b^2,c^1,$ and $c^2$ are smooth functions. Then 
\begin{equation}
[C,Y]\in \der{V}{1}\,\text{ for all }Y\in\der{V}{1}.
\end{equation}
It is enough to check $\mathcal{L}_C$ applied to the linearly independent sections generating $\der{V}{1}$. Doing so, we obtain
\begin{align}
[C,X]&=a^1\partial_{x^3}+a^2\partial_{x^5}+b^1[\partial_{x^3},X]+b^2[\partial_{x^5},X]\label{HSM CX}\\
\,&-(X(T)X+X(a^1)\partial_{u^1}+X(a^2)\partial_{u^2}+X(b^1)\partial_{x^3}+X(b^2)\partial_{x^5})\nonumber,\\
[C,\partial_{u^1}]&=T[X,\partial_{u^1}]-(T_{u^1}X+a^1_{u^1}\partial_{u^1}+a^2_{u^1}\partial_{u^2}+b^1_{u^1}\partial_{x^3}+b^2_{u^1}\partial_{x^5}),\\
[C,\partial_{u^2}]&=T[X,\partial_{u^2}]-(T_{u^2}X+a^1_{u^2}\partial_{u^1}+a^2_{u^2}\partial_{u^2}+b^1_{u^2}\partial_{x^3}+b^2_{u^2}\partial_{x^5}),\\
[C,\partial_{x^3}]&=T[X,\partial_{x^3}]-(T_{x^3}X+a^1_{x^3}\partial_{u^1}+a^2_{x^3}\partial_{u^2}+b^1_{x^3}\partial_{x^3}+b^2_{x^3}\partial_{x^5})\label{HSM T=0 1},\\
[C,\partial_{x^5}]&=T[X,\partial_{x^5}]-(T_{x^5}X+a^1_{x^5}\partial_{u^1}+a^2_{x^5}\partial_{u^2}+b^1_{x^5}\partial_{x^3}+b^2_{x^5}\partial_{x^5}).\label{HSM T=0 2} 
\end{align}
Equation (\ref{HSM CX}) implies that $b_1=b_2=0$, and either of equations (\ref{HSM T=0 1}) or (\ref{HSM T=0 2}) imply that $T=0$ since $[X,\partial_{x^3}]=-\cos(x^3)\partial_{x^2}$ and $[X,\partial_{x^5}]=-\partial_{x^4}$ are in $\der{V}{2}$ and not $\der{V}{1}$. Therefore, any section of the Cauchy bundle must be of the form $a^1\partial_{u^1}+a^2\partial_{u^2}$ for arbitrary functions $a^1$ and $a^2$. Therefore,
\begin{equation}
\Char{V}{1}=\{\partial_{u^1},\partial_{u^2}\}. 
\end{equation}
Now let $C=TX+b^1\partial_{x^3}+b^2\partial_{x^5}+c^1\partial_{x^2}+c^2\partial_{x^4}\in \der{V}{2}$, where $T,b^1,b^2,c^1,$ and $c^2$ are smooth functions, such that $C$ is a section of the Cauchy bundle of $\der{V}{2}$. Then applying the Lie derivative $\mathcal{L}_C$ to the generating sections of $\der{V}{2}$, we find
\begin{align}
[C,X]&=b^1[\partial_{x^3},X]+b^2[\partial_{x^5},X]+c^1[\partial_{x^2},X]+c^2[\partial_{x^4},X]\label{HSM CX 2}\\
\,&-(X(T)X+X(b^1)\partial_{x^3}+X(b^2)\partial_{x^5}+X(c^1)\partial_{x^2}+X(c^2)\partial_{x^4}),\nonumber\\
[C,\partial_{x^3}]&=T[X,\partial_{x^3}]-(T_{x^3}X+b^1_{x^3}\partial_{x^3}+b^2_{x^3}\partial_{x^5}+c^1_{x^3}\partial_{x^2}+c^2_{x^3}\partial_{x^4}),\\
[C,\partial_{x^5}]&=T[X,\partial_{x^5}]-(T_{x^5}X+b^1_{x^5}\partial_{x^3}+b^2_{x^5}\partial_{x^5}+c^1_{x^5}\partial_{x^2}+c^2_{x^5}\partial_{x^4}),\\
[C,\partial_{x^2}]&=T[X,\partial_{x^2}]-(T_{x^2}X+b^1_{x^2}\partial_{x^3}+b^2_{x^2}\partial_{x^5}+c^1_{x^2}\partial_{x^2}+c^2_{x^2}\partial_{x^4}),\label{HSM T=0 1 2}\\
[C,\partial_{x^4}]&=T[X,\partial_{x^4}]-(T_{x^4}X+b^1_{x^4}\partial_{x^3}+b^2_{x^4}\partial_{x^5}+c^1_{x^4}\partial_{x^2}+c^2_{x^4}\partial_{x^4}).\label{HSM T=0 2 2} 
\end{align}
Since 
\begin{align}
[\partial_{x^2},X]&=\cos(x^2)\partial_{x^1}\not\in\der{V}{2},\\
[\partial_{x^4},X]&=3(x^4)^2(\partial_{x^3}+\partial_{x^4})\in\der{V}{1},
\end{align}
equations (\ref{HSM CX 2}) and (\ref{HSM T=0 1 2}) force $c^1=0$ and $T=0$, respectively. Hence $C=b^1\partial_{x^3}+b^2\partial_{x^5}+c^2\partial_{x^4}$ for arbitrary smooth functions $b^1,b^2,$ and $c^2$. Notice also that there is no need to check sections of $\der{V}{2}$ with components from $\Char{V}{1}$, since $\Char{V}{1}\subset \Char{V}{2}$. Therefore,  
\begin{equation}
\Char{V}{2}=\{ \partial_{u^1},\partial_{u^2},\partial_{x^3},\partial_{x^4},\partial_{x^5}\}.
\end{equation}
From here, it is easily deduced that
\begin{align}
\inCharOne{V}&=\{\partial_{u^1},\partial_{u^2}\},\\
\mathrm{Char}\der{V}{2}_1&=\{\partial_{u^1},\partial_{u^2},\partial_{x^3},\partial_{x^5}\}. 
\end{align}
Thus the refined derived type of $\mathcal{V}$ is 
\begin{equation}
\mathfrak{d}_r(\mathcal{V})=[[3,0],[5,2,2],[7,4,5],[8,8]].
\end{equation}
Checking that the relations in Proposition \ref{refined derived type numbers} are true and seeing that all the bundles in (\ref{filt fun}) (up to the fundamental bundle) are integrable, we see that $\mathcal{V}$ must be a Goursat bundle. Furthermore, since $dt\in \ann\Char{V}{2}$; by Theorem \ref{Goursat ESFL} we deduce that $\mathcal{V}$ must be ESFL. Constructing the filtration of $T^*M$ (excluding the fundamental bundle) induced by $\mathcal{V}$, we find 
\begin{equation}
\coder{\Xi}{2}=\{dt,dx^1,dx^2\}\subset \coder{\Xi}{2}_1=\{dt,dx^1,dx^2,dx^4\}\subset\coder{\Xi}{1}=\{dt,dx^1,dx^2,dx^3,dx^4,dx^5\}=\coder{\Xi}{1}_0.
\end{equation}
\textbf{Step 2:} Notice that $t$ is a first integral of $\Char{V}{2}$ and that $X(t)=1$. Now the fundamental bundle $\Pi^2$ is given by 
\begin{equation}
\Pi^2=\{\partial_{u^1},\partial_{u^2},\partial_{x^2},\partial_{x^3},\partial_{x^4},\partial_{x^5}\}.
\end{equation}
\textbf{Steps 3 and 4:} There is only one non-empty quotient bundle to be computed for this step, 
\begin{equation}
\coder{\Xi}{2}_1/\coder{\Xi}{2}=\{dx^4\},
\end{equation}
and therefore $z_0^{1,2}=x^4$.\\
\textbf{Step 5:} From $\Pi^2$, we deduce that the other zeroth order contact variable is given by $z_0^{1,3}=x^1$ since $dt\wedge dx^1\neq0$. For simplicity, we shall relabel these zeroth order contact coordinates as $z_0^{1,2}=z_0^1$ and $z_0^{1,3}=z_0^2$.\\ 
\textbf{Step 6:} Applying the final step of the procedure, we conclude that the remaining contact coordinates are 
\begin{align}\label{HSM contact coords start}
z^1_1&=X(z_0^1)=x^5+(x^4)^3-(x^1)^{10},\\
z^1_2&=X(z^1_1)=u^2+3(x^4)^2(x^5+(x^4)^3-(x^1)^{10})-10(x^1)^9\sin(x^2),\\
z^2_1&=X(z^2_0)=\sin(x^2),\\
z^2_2&=X(z^2_1)=\cos(x^2)\sin(x^3),\\
z^2_3&=X(z^2_2)=-\sin(x^2)\sin^2(x^3)+((x^4)^3+u^1)\cos(x^2)\cos(x^3).\label{HSM contact coords end}
\end{align}
Thus $t$, $z^1_0=x^5$, $z^2_0=x^4$, and (\ref{HSM contact coords start})-(\ref{HSM contact coords end}) define a static feedback transformation of $\mathcal{V}$ to the Brunovsk\'y normal form $\beta^{\langle 0,1,1\rangle}$. 
Next, we will present an example in which Procedure A must be applied. 
\begin{ex}
Consider the following control system that arises from selecting $c_1=e_3=a_0=a_1=b_0=1,\,b_3=-1$, and all other constants equal to zero in Example \ref{Sluis}. 
\begin{align*}
\dot{x}^1&=x^1+u^1(1+x^1),\\
\dot{x}^2&=x^3+u^2(1-x^3),\\
\dot{x}^3&=u^1,\\
\dot{x}^4&=u^2.
\end{align*}
The control system as a distribution is $\mcal{V}=\{X,\partial_{u^1},\partial_{u^2}\}$, where 
\begin{equation}
X=\partial_t+(x^1+u^1(1+x^1))\partial_{x^1}+(x^3+u^2(1-x^3))\partial_{x^2}+u^1\partial_{x^3}+u^2\partial_{x^4}.
\end{equation}
The distribution $\mcal{V}$ is a Goursat bundle with type $\kappa=\langle 0,2 \rangle$, and we will find contact coordinates by applying Procedure A.
\end{ex}
\textbf{Step 1:} First we calculate the derived flag, and then the filtration (\ref{filt res}), stopping short of the resolvent bundle. Using MAPLE, we find that the derived flag is given by
\begin{align*}
\der{V}{1}&=\mcal{V}+\{(1+x^1)\partial_{x^1}+\partial_{x^3},(1-x^3)\partial_{x^2}+\partial_{x^4}\},\\
\der{V}{2}&=\der{V}{1}+\{\partial_{x^1}+(1-u^2)\partial_{x^2},u^1\partial_{x^2}\}\\
\,&=TM.
\end{align*}
Using MAPLE to compute Cauchy bundles, we find
\begin{equation}
\inCharOne{V}=\{\partial_{u^1},\partial_{u^2}\}=\Char{V}{1}=\{\partial_{u^1},\partial_{u^2}\}\subset\mcal{R}_{\bar{\Sigma}}(\der{V}{1}).
\end{equation}
\tab\textbf{Step 2:} Next we calculate the resolvent bundle $\mcal{R}_{\bar{\Sigma}}(\der{V}{1})$. First, we need to compute the quotient $\der{V}{1}/\Char{V}{1}=\bar{\mcal{V}}^{(1)}$. Doing so gives
\begin{equation}
\bar{\mcal{V}}^{(1)}=\{X,Y_1=(1+x^1)\partial_{x^1}+\partial_{x^3},Y_2=(1-x^3)\partial_{x^2}+\partial_{x^4}\}.
\end{equation}
Now let $E=[a_0X+a_1Y_1+a_2Y_2]\in \mathbb{P}\bar{\mcal{V}}^{(1)}$ and let
\begin{equation}
\overline{TM}=\{X,Y_1,Y_2,Y_3=\partial_{x^1}+(1-u^2)\partial_{x^2},Y_4=u^1\partial_{x^2}\},
\end{equation}
so that $\overline{TM}/\bar{\mcal{V}}^{(1)}=\{Y_3,Y_4\}$. Note that $Y_1=[\partial_{u^1},X], Y_2=[\partial_{u^2},X], Y_3=[Y_1,X]$, and $Y_4=[Y_2,X]$. Now we can compute the polar matrix of $E$ (see Definition \ref{singular polar}),
\begin{equation}
\sigma(E)=
\begin{bmatrix}
a_1& -a_0 & 0\\
a_2& a_2/u^1 & -a_0-a_1/u^1
\end{bmatrix}.
\end{equation}
The polar matrix has less than generic rank when $a_0=-a_1/u^1$. This means that the singular bundle is given by
\begin{equation}
\bar{\mcal{B}}=\{X-u^1Y_1,Y_2\}\subset\mathbb{P}\bar{\mcal{V}}^{(1)},
\end{equation}
and therefore the resolvent bundle is
\begin{equation}
\mcal{R}_{\bar{\Sigma}}(\mcal{V}^{(1)})=\{X-u^1Y_1,Y_2,\partial_{u^1},\partial_{u^2}\}.
\end{equation}
Notice that the resolvent bundle is integrable.\\
\tab\textbf{Steps 3 and 4:} We see that there are no nontrivial quotient bundles $\coder{\Xi}{i}_{i-1}/\coder{\Xi}{i}$, and hence no fundamental functions of order less than $2$.\\
\tab\textbf{Step 5:} Now we compute the first integrals of the resolvent bundle. By use of MAPLE, we find the first integrals to be
\begin{equation}
F:=\{x=x^3,z^1_0=x^1e^{-t},z^2_0=(x^4-t)x^3+x^2-x^4\}.
\end{equation}
\tab\textbf{Step 6:} Let $Z=\frac{1}{u^1}X$, so that $Z(x)=1$. Then we can construct the remaining contact coordinates as
\begin{align}
z^1_1&=Z(z^1_0)=e^{-t}(1+x^1),\\
z^1_2&=Z(z^1_1)=e^{-t}\left(1+x^1-\frac{1}{u^1}\right),\\
z^2_1&=Z(z^2_0)=x^4-t,\\
z^2_2&=Z(z^2_1)=u^2-\frac{1}{u^1}.
\end{align}
Thus we have found local contact coordinates for this Goursat bundle $\mcal{V}$.\\

\tab Notice that although we have found contact coordinates that put the Goursat bundle into normal form, it is \textit{not} via an ESFT. Indeed, this is not possible since $dt\not\in\Upsilon_{\bar{\Sigma}}(\der{V}{1})$, and therefore the time coordinate cannot be singled out as the parameter for integral curves to $\mcal{V}$ in Brunovsk\'y normal form. However, as mentioned in Section 3 of Chapter 1, this example can be prolonged twice to  an ESFL system. The author has observed this property in a few other examples of control systems and conjectures the following:
\begin{conj}
If a control system with at least 2 controls is a Goursat bundle, but cannot be transformed to Brunovsk\'y normal form via an ESFT, then there exists a DF linearization of the control system. 
\end{conj}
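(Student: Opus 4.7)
Since $\mcal{V}$ is a Goursat bundle of type $\kappa=\dec{V}$, Theorem \ref{Generalized Goursat Normal Form} yields a local diffeomorphism sending $\mcal{V}$ to $\mcal{C}^\kappa_m$ on $J^\kappa$; by hypothesis this diffeomorphism cannot be chosen to be an ESFT, so by Theorem \ref{Goursat ESFL} one of the following fails: either $\{\partial_{u^a}\}\not\subset\inCharOne{V}$, or $dt$ fails to lie in $\coder{\Xi}{k-1}$ (when $\Delta_k=1$) or in $\Upsilon_{\bar{\Sigma}_{k-1}}(\der{V}{k-1})$ (when $\Delta_k>1$). The plan is to construct an explicit prolongation (augmentation by pure integrators on selected controls) whose resulting distribution $\tilde{\mcal{V}}$ is again a Goursat bundle and which now satisfies both hypotheses of Theorem \ref{Goursat ESFL}. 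By definition, such an augmentation exhibits a DF linearization of the original control system.

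First I would run procedure \textbf{contact} on $\mcal{V}$ to obtain the source coordinate $x$ and fundamental functions, then express the original independent variable as $t=T(x,z^{l_j,j}_{s_j})$ in contact coordinates. The obstruction $dt\notin\Upsilon_{\bar{\Sigma}_{k-1}}(\der{V}{k-1})$ (or the $\Xi^{(k-1)}$ variant) translates into $T$ depending nontrivially on at least one coordinate $z^{l_j,j}_{s_j}$ of order $s_j<j$, and tracing this dependence back through $\Phi$ identifies specific controls $u^{a_1},\ldots,u^{a_r}$ that are the source of the defect. The proposed prolongation replaces each such $u^{a_i}$ by a new state coordinate $y^{a_i}=u^{a_i}$ and introduces new controls $w^{a_i}=\dot{y}^{a_i}$, giving an augmented manifold $\tilde M=M\times\mathbb{R}^r$ and distribution $\tilde{\mcal{V}}=\{\tilde{X},\partial_{w^{a_1}},\ldots,\partial_{w^{a_r}},\partial_{u^b}\}_{b\neq a_i}$, where $\tilde{X}$ absorbs the integrator identities. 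Since the original $\partial_{u^{a_i}}$ were sections of $\mcal{V}$, a direct bracket computation shows that the new controls $\partial_{w^{a_i}}$ sit inside $\inCharOne{\tilde{V}}$ by construction, immediately repairing the first obstruction.

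The core step is to verify that $\tilde{\mcal{V}}$ remains a Goursat bundle and that (after finitely many iterations, if necessary) $dt$ is promoted into the terminal cotangent bundle of the new system. For the Goursat condition I would check, in order, that the prolonged refined derived type still matches that of a partial prolongation of $J^1(\mathbb{R},\mathbb{R}^{m+r})$ by tracking the effect of each added integrator on $\der{V}{i}$ and $\Char{V}{i}$, using that each integrator simply extends the tail of the derived flag by one step in its own direction. The hypothesis $m\geq 2$ enters crucially here: there must remain a transverse control direction outside those being prolonged in order for the Cauchy intersections $\inChar{\tilde V}{i}$ and the resolvent (or fundamental) bundle of $\tilde{\mcal{V}}$ to keep the correct ranks demanded by Definition \ref{Goursat}. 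For the $dt$-condition, one computes that after the prolongation the source coordinate produced by procedure \textbf{contact} on $\tilde{\mcal{V}}$ can be chosen to coincide with $t$, since $t$ is now a first integral of all $\partial_{w^{a_i}}$ and its coordinate expression no longer sees the offending derivative slots.

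The main obstacle is establishing that the Goursat structure genuinely survives the prolongation --- in particular, that the integrable Weber structure hypothesis of Definition \ref{Goursat}(3) is preserved when $\Delta_k>1$, since prolongation can deform the singular variety $\bar{\Sigma}_{k-1}$ and collapse its projectivized subbundle. I expect the two-control hypothesis is sharp precisely at this point: with a single control, prolongation forces the resolvent bundle into a degenerate position, whereas with $m\geq 2$ there is always a transverse direction keeping the polar matrix nondegenerate. A secondary difficulty is proving termination: one should argue by induction on the number of contact-coordinate slots in which $T$ has unwanted dependence, showing each prolongation strictly reduces this count, so the process halts in at most $\sum_j j\rho_j$ steps. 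If both points can be made rigorous, the terminal prolonged system is ESFL by Theorem \ref{Goursat ESFL}, whence the original $\mcal{V}$ is DFL.
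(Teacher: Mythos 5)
There is no proof to compare against: the statement you are proving is posed in the paper as an open conjecture (motivated by the Sluis example, which becomes SFL after prolonging twice along $u^1$), so your attempt must stand on its own, and as written it has several genuine gaps. First, the heart of the matter --- that the prolonged distribution $\tilde{\mcal{V}}$ is again a Goursat bundle whose refined derived type, Cauchy intersections $\mathrm{Char}\,\tilde{\mcal{V}}^{(i)}_{i-1}$, and (when $\Delta_k>1$) integrable Weber/resolvent structure all survive the addition of integrators --- is exactly what you acknowledge as ``the main obstacle'' and never establish; prolongation changes every term of the derived flag, not just ``the tail in its own direction,'' and the singular variety computation for the new penultimate derived bundle is not controlled by the hypothesis $m\geq 2$ in any way you have demonstrated. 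Second, the claim that after prolongation the source coordinate produced by procedure \textbf{contact} ``can be chosen to coincide with $t$'' is asserted, not proved: the obstruction $dt\notin\Upsilon_{\bar{\Sigma}_{k-1}}(\mcal{V}^{(k-1)})$ is a statement about a canonically determined bundle, and adding integrators on the controls you traced through $T$ does not obviously move $dt$ into the corresponding bundle of the augmented system. Your termination argument (that each prolongation strictly reduces the number of offending contact-coordinate slots) is likewise only sketched; without a monotonicity lemma the induction does not close.

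Finally, even if every step above were repaired, your conclusion would be too weak for the statement as posed. Establishing that the augmented system is ESFL (via the two conditions of the ESF-equivalence theorem for Goursat bundles) shows the original system is \emph{extended} dynamic feedback linearizable, whereas the conjecture asserts the existence of a DF linearization, which by the paper's definitions requires the augmented system to be \emph{static} feedback linearizable, with no explicit time dependence in the transformation. Bridging ESFL of the augmented system to SFL (equivalently, passing from EI to autonomous EI) needs an additional argument that your construction does not supply. So the strategy --- repair the two ESFT obstructions by dynamic extension on selected controls --- is a reasonable line of attack and consistent with how the motivating example behaves, but none of the three load-bearing steps (preservation of the Goursat structure, promotion of $dt$, termination) is proved, and the target notion of linearization is not the one you actually reach.
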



\subsection{Background on the Euler Operator}
In Chapter 4, we will introduce an operator known as a \textit{truncated} Euler operator, which will be used to establish the main results of this thesis. In this section, we will introduce some basic properties of the Euler operator from the theory of the calculus of variations. There is a vast literature on the calculus of variations, and much of the theory goes beyond our needs in this thesis. We primarily consider the geometric approach taken in \cite{OlverLieBook} and to some extent \cite{VarBi}. Furthermore, we will restrict ourselves to real valued functions of a single real variable, but we mention that generalizations are straightforward and can be found in any of the works referenced in this section. The motivating problem in the calculus of variations is given by the following: Let $L:\mathbb{R}\to\mathbb{R}$ be a smooth function and $A=(a,c)$ and $B=(b,d)$ be two fixed points in the plane. Then for what functions $u(t)$ whose graphs connect $A$ and $B$ does the integral,
\begin{equation}\label{lagrangian functional}
\sL[u]=\int_a^bL(t,u(t),\dot{u}(t),\ldots,u^{(n)}(t))\,dt, 
\end{equation}
attain a minimum or maximum? In the physics literature, $\sL$ is a functional associated to some physical system, and asking that there be a smooth function $u(t)$ that minimizes this functional is to say the physical system possesses a \textit{principle of least action}.  
\begin{defn}
A function $u(t)$ is an \textbf{extremal} of $\sL$ if $\sL[u]$ is a local maximum or local minimum on a space of functions with a given topology containing $u(t)$. 
\end{defn}
Typically, the function space in question is some type of Banach space, and there are many considerations from functional analysis one has to check to ensure that extremals exist. We start with an analogy to optimization of real valued functions. We will compute a type of derivative of the functional and subsequently check if any associated critical points give rise to optimal solutions. The precise arguments needed to make this idea rigorous will not be presented here, but can be found in any introductory text on the calculus of variations such as \cite{VarCalc}. The derivative we will calculate is called a variational derivative. 
\begin{defn}
The \textit{variational derivative} $\delta\sL[u]$ is defined by the condition that
\begin{equation}
\frac{d}{d\epsilon}\Big|_{\epsilon=0}\sL[u+\epsilon v]=\int_a^b\delta\sL[u]\,v(t)\,dt
\end{equation}
for any smooth function $v(t)$ such that $v(a)=v(b)=0$. 
\end{defn}
Analogously with optimization of functions of real variables, we have the following proposition.
\begin{prop}
If $u(t)$ is an extremal for $\sL[u]=\int_a^bL(t,u,\dot{u},\ldots,u^{(n)})\,dt$, then 
\begin{equation}
\delta\sL[u(t)]=0
\end{equation}
for all $t\in[a,b]$. 
\end{prop}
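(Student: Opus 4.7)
The plan is to follow the classical strategy from the calculus of variations: reduce the infinite-dimensional optimality condition on $\sL$ to the ordinary Fermat criterion applied to a one-parameter family of admissible variations, and then invoke the fundamental lemma of the calculus of variations to convert an integral identity into a pointwise vanishing statement.

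First, I would fix an arbitrary smooth test function $v:[a,b]\to\mathbb{R}$ with $v(a)=v(b)=0$ (so that the boundary conditions on competing trajectories are preserved) and introduce the one-parameter family $u_\epsilon(t)=u(t)+\epsilon\,v(t)$. Define $\Phi(\epsilon):=\sL[u_\epsilon]$. Since $u$ is an extremal, $\Phi$ attains a local extremum at $\epsilon=0$ in the ambient function-space topology, and in particular the single real-variable function $\Phi$ has a critical point at the origin, so by Fermat's theorem $\Phi'(0)=0$. By the very definition of the variational derivative this gives
\begin{equation*}
0=\Phi'(0)=\frac{d}{d\epsilon}\Big|_{\epsilon=0}\sL[u+\epsilon v]=\int_a^b \delta\sL[u]\,v(t)\,dt.
\end{equation*}
Since $v$ was arbitrary in the class of smooth functions vanishing at the endpoints, this integral identity holds for every such $v$.

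The last step is to pass from this family of integral identities to the pointwise statement $\delta\sL[u(t)]=0$ on $[a,b]$. For this I would cite the fundamental lemma of the calculus of variations (the du Bois-Reymond lemma): if $h:[a,b]\to\mathbb{R}$ is continuous and $\int_a^b h(t)\,v(t)\,dt=0$ for every smooth $v$ with $v(a)=v(b)=0$, then $h\equiv 0$. Applied with $h(t)=\delta\sL[u(t)]$, this gives the desired conclusion. The standard argument for the lemma itself, which is probably the only nontrivial step, proceeds by contradiction: if $h(t_0)\neq 0$ at some interior point, choose a smooth bump function $v$ supported in a small neighborhood of $t_0$ on which $h$ retains one sign; then $\int_a^b h\,v\,dt$ is strictly nonzero, contradicting the integral identity.

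The main obstacle, modulo functional-analytic subtleties that the surrounding text explicitly sets aside (choice of Banach topology on the space of competitors, legitimacy of differentiating under the integral sign, and existence of the first variation for arbitrary higher-order Lagrangians), is really this last step; once the fundamental lemma is granted, the proof is just Fermat plus a definition unpacking.
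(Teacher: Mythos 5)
Your proposal is correct and is the standard Fermat-plus-fundamental-lemma argument; the paper itself offers no proof of this proposition, stating it as background by analogy with finite-dimensional optimization and deferring the rigorous details to introductory texts on the calculus of variations, so your argument is precisely the one implicitly being invoked. The only point worth flagging is that for an order-$n$ Lagrangian the repeated integration by parts behind the formula for $\delta\sL[u]$ requires boundary terms involving $v',\ldots,v^{(n-1)}$ to vanish as well, so the test class should be taken as (for instance) smooth $v$ compactly supported in $(a,b)$ -- a restriction the paper itself glosses over and which does not affect your use of the fundamental lemma, since bump functions lie in that class.
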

A simple example is that of the arc length functional
\begin{equation}
\int_a^b\sqrt{1+\dot{u}^2}\,dt,
\end{equation}
which returns the length of the graph of the function $u(t)$ connecting two fixed points $A=(a,u(a))$ and $B=(b,u(b))$. Indeed, the idea of a variational derivative is to ``perturb'' the curve $u(t)$ by $\epsilon v(t)$ for any smooth function $v(t)$ such that $v(a)=v(b)=0$, and some small $\epsilon$. First we compute the derivative of $\sL[u+\epsilon v]$ with respect to $\epsilon$ and evaluate at $\epsilon=0$. Doing so, we obtain
\begin{equation}
\frac{d}{d\epsilon}\Big|_{\epsilon=0}\sL[u+\epsilon v]=\int_a^b \frac{\dot{v}\dot{u}}{\sqrt{1+\dot{u}^2}}\,dt, 
\end{equation}
so that upon performing integration by parts, we arrive at
\begin{equation}
\frac{d}{d\epsilon}\Big|_{\epsilon=0}\sL[u+\epsilon v]=-\int_a^b \frac{\ddot{u}}{(1+\dot{u}^2)^{3/2}}\,v(t)\,dt, 
\end{equation}
and hence the variational derivative is $\delta\sL[u]=-\frac{\ddot{u}}{(1+\dot{u}^2)^{3/2}}$. The only way for a function $u(t)$ to be an extremal is if the graph of $u(t)$ is the line segment connecting the two points $A$ and $B$. We now repeat this process for the more general case of (\ref{lagrangian functional}). Doing so, we obtain
\begin{equation}\label{derivation of EO}
\begin{aligned}
\frac{d}{d\epsilon}\Big|_{\epsilon=0}\sL[u+\epsilon v]&=\int_a^b \sum_{i=0}^nv^{(i)}\partiald{L}{u^{(i)}}\,dt\\
\,&=\int_a^b\left(\sum_{i=0}^n (-1)^i\frac{d^i}{dt^i}\partiald{L}{u^{(i)}}\right)v(t)\,dt, 
\end{aligned}
\end{equation}
where $v(t)$ is any smooth function that forces all boundary terms in the repeated integration by parts to vanish. Here, $\frac{d}{dt}$ is the total derivative in the multivariable calculus sense. The variational derivative is therefore
\begin{equation}
\delta\sL[L]=\sum_{i=0}^n (-1)^i\frac{d^i}{dt^i}\partiald{L}{u^{(i)}}. 
\end{equation}
It turns out that we can use the language of jets to describe the the variational derivative in a more geometric way. We will not go too deeply into this subject here; however, \cite{VarBi} is an excellent reference for a modern geometric formulation of the calculus of variations. 
\begin{defn}
Given a function $L:J^n(\mathbb{R},\mathbb{R})\to \mathbb{R}$, we call the functional 
\begin{equation}
\sL[u]=\int_a^bL(t,z_0,z_1,\ldots,z_n)\,dt
\end{equation}
a \textbf{cost functional} or \textbf{action} and the function $L$ a \textbf{Lagrangian}. Here, 
\end{defn}
\begin{defn}
The operator defined by
\begin{equation}
D_t=\partiald{\,}{t}+\sum_{i=0}^\infty z_{i+1}\partiald{\,}{z_i}
\end{equation}
is the \textbf{total derivative operator}, and may be considered as a map from $J^n(\mathbb{R},\mathbb{R})$ to $J^{n+1}(\mathbb{R},\mathbb{R})$ for all $n\geq0$. 
\end{defn}
Properly, the total derivative operator is a vector field on an infinite jet bundle. We will not go through the details here; however, we want to emphasize that the action of the total derivative operator on a function $f\in C^\infty\left( J^n(\mathbb{R},\mathbb{R})\right)$, for some non-negative integer $n$, produces no issues of convergence since the function $f$ will have no dependence on jet variables with order greater than $n$.
When the total derivative operator is applied to $L$, we find that $D_t(L)\circ (j^{n+1}f(t))$ agrees with $\frac{d}{dt}\left(L(j^n_tf)\right)$. The variational derivative can also be written in terms of jet coordinates, and we give it a special name. 
\begin{defn}
The variational derivative of a functional with Lagrangian $L$ is obtained by applying an operator to $L$ called the \textbf{Euler operator}. It is given by 
\begin{equation}
E(L)=\sum_{i=0}^\infty(-1)^iD^i_t\left(\partiald{L}{z_i}\right). 
\end{equation}
If $L$ is a function on $J^n(\mathbb{R},\mathbb{R})$, then $E(L)$ defines a function on $J^{2n}(\mathbb{R},\mathbb{R})$. When $E(L)$ is restricted to the $2n$-jet of some function $f:\mathbb{R}\to\mathbb{R}$, then the equation 
\begin{equation}
E(L)\circ j^{2n}f(t)=0
\end{equation}defines an order $2n$ ODE known as the \textbf{Euler-Lagrange equation}. 
\end{defn}  
Although variational questions are of deep interest, we will be primarily concerned with properties of the total derivative operator and the Euler operator. In Chapter 4 we will introduce $\textit{truncated}$ versions of these operators, and it important to understand how they differ from each other.
\begin{prop}\label{tot kernel const}
The total derivative operator of a function $f$ on some connected subset of $J^n(\mathbb{R},\mathbb{R})$ is zero if and only if $f$ is constant. 
\end{prop}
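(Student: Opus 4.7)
The forward direction is immediate, so the content lies in showing that $D_t(f) = 0$ forces $f$ to be locally constant. My plan is to use a descending induction on the order of the jet variable of highest index on which $f$ depends, exploiting the fact that $D_t$ maps a function on $J^n$ to one on $J^{n+1}$ and so introduces exactly one ``new'' coordinate $z_{n+1}$ that $f$ itself cannot see.

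More precisely, since $f$ is defined on (a connected open subset of) $J^n(\mathbb{R},\mathbb{R})$, I would write
\begin{equation}
D_t(f) = \frac{\partial f}{\partial t} + \sum_{i=0}^{n} z_{i+1}\frac{\partial f}{\partial z_i},
\end{equation}
and observe that only the last summand $z_{n+1}\,\partial f/\partial z_n$ depends on $z_{n+1}$, because $f$ is a function on $J^n$. Taking $\partial/\partial z_{n+1}$ of the identity $D_t(f)=0$ therefore forces $\partial f/\partial z_n = 0$, so that $f$ descends to a function on $J^{n-1}$. Substituting back, the surviving expression for $D_t(f)$ now has $z_n$ as its highest-order jet variable, appearing only through $z_n\,\partial f/\partial z_{n-1}$; repeating the same differentiation trick yields $\partial f/\partial z_{n-1}=0$.

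Iterating this descent $n+1$ times kills all the $z_i$ dependence of $f$, leaving $D_t(f) = \partial f/\partial t = 0$. Hence every partial derivative of $f$ vanishes on the connected domain, and $f$ is constant. The only subtle point is the book-keeping at each step -- one must note that after each reduction the function really does descend to a lower jet bundle, so that the ``top'' coordinate in $D_t(f)$ genuinely appears linearly with coefficient $\partial f/\partial z_j$ for the new top index $j$. I expect this step to be the main (minor) obstacle to phrasing; once set up carefully the induction is routine and the connectedness hypothesis is used only at the very end to pass from vanishing gradient to constancy.
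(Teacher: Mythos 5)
Your proposal is correct and follows essentially the same argument as the paper: isolate the unique term involving the top variable $z_{n+1}$ (which $f$ cannot depend on) to conclude $\partial f/\partial z_n=0$, then descend through the jet orders and finish with $\partial f/\partial t=0$ and connectedness. No issues.
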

\begin{proof}
Let $f\in C^\infty(J^n(\mathbb{R},\mathbb{R}))$. Then 
\begin{equation}
D_tf=\partiald{f}{t}+z_1\partiald{f}{z_0}+\cdots+z_{n+1}\partiald{f}{z_n}.
\end{equation}
If $f$ is constant, then $D_tf=0$ immediately. Thus, assume that $D_tf=0$. Indeed, this means that 
\begin{equation}\label{Dt const}
z_{n+1}\partiald{f}{z_n}=-\left(\partiald{f}{t}+z_1\partiald{f}{z_0}+\cdots+z_n\partiald{f}{z_{n-1}}\right).
\end{equation}
Since $f$ has no dependence on $z_{n+1}$, the right hand side of (\ref{Dt const}) has no dependence on $z_{n+1}$, so we must have $\partiald{f}{z_n}=0$. This means that $f\in C^\infty(J^{n-1}(\mathbb{R},\mathbb{R}))$. We can then iterate this argument to conclude that $\partiald{f}{z_i}=0$ for all $0\leq i\leq n$. On the final iteration we can then conclude that $\partiald{f}{t}=0$ as well. Therefore, $f$ must be a constant.  
\end{proof}
\begin{thm}
Let $E$ be the Euler operator. Then
\begin{equation}
\ker E=\{f : f=D_t\,g,\,\,\mathrm{where}\,\,g\in C^\infty(J^n(\mathbb{R},\mathbb{R}))\,\,\mathrm{for}\,\mathrm{any}\,\,n\geq0\}. 
\end{equation}
\end{thm}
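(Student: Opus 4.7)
The plan is to prove the two inclusions separately. For $\mathrm{Im}(D_t) \subseteq \ker E$, I would compute directly: applying the product rule gives $\partial_{z_j}(D_t g) = \partial_{z_{j-1}} g + D_t(\partial_{z_j} g)$, with the convention $\partial_{z_{-1}} g = 0$. Plugging into the definition yields
\begin{equation*}
E(D_t g) = \sum_{j \geq 0}(-1)^j D_t^j (\partial_{z_{j-1}} g) + \sum_{j \geq 0}(-1)^j D_t^{j+1}(\partial_{z_j} g),
\end{equation*}
and reindexing the first sum with $k = j - 1$ (the $j=0$ term vanishes) makes the two sums cancel identically. Hence $E \circ D_t = 0$.

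For the reverse inclusion, suppose $f \in C^\infty(J^n(\mathbb{R}, \mathbb{R}))$ satisfies $E(f) \equiv 0$. I would decompose $f(t, z) = [f(t, z) - f(t, 0)] + f(t, 0)$; the second summand depends on $t$ alone and equals $D_t g_0$ for $g_0(t) = \int^t f(s, 0)\, ds$. For the first summand, I would invoke a scaling homotopy:
\begin{equation*}
f(t, z) - f(t, 0) = \int_0^1 \frac{d}{d\lambda}\,f(t, \lambda z)\, d\lambda = \int_0^1 \sum_{i=0}^n z_i (\partial_{z_i} f)(t, \lambda z)\, d\lambda.
\end{equation*}
The task reduces to showing that the $\lambda$-integrand is a total derivative.

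The key tool is the jet-space integration-by-parts identity
\begin{equation*}
z_i F = z_0 \cdot (-D_t)^i F + D_t(R_i[F]),
\end{equation*}
established by induction on $i$ starting from $z_1 F = -z_0 D_t F + D_t(z_0 F)$, where $R_i[F]$ is an explicit bilinear differential polynomial in $z_0, \ldots, z_{i-1}$ and the iterated total derivatives of $F$. Applying this with $F = (\partial_{z_i} f)(t, \lambda z)$ and summing over $i$ converts the integrand into $z_0 \sum_i (-D_t)^i[(\partial_{z_i} f)(t, \lambda z)] + D_t S(\lambda, t, z)$ for some smooth $S$. A short direct check shows that $D_t$ commutes with the substitution $z_j \mapsto \lambda z_j$, since both expansions produce $\partial_t F(t, \lambda z) + \sum_j (\lambda z_{j+1})(\partial_{z_j} F)(t, \lambda z)$. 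Consequently $\sum_i (-D_t)^i[(\partial_{z_i} f)(t, \lambda z)] = E(f)(t, \lambda z) \equiv 0$, the integrand is exactly $D_t S(\lambda, t, z)$, and swapping $D_t$ with the $\lambda$-integral (legitimate because $D_t$ is a finite-order differential operator against smooth integration on the compact interval $[0, 1]$) gives $f(t, z) - f(t, 0) = D_t g_1$ for $g_1(t, z) = \int_0^1 S(\lambda, t, z)\, d\lambda$. Therefore $f = D_t(g_0 + g_1)$.

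The main obstacle I anticipate is the careful bookkeeping in establishing the integration-by-parts identity and verifying that the remainder terms $R_i[F]$ assemble into a single smooth function $S(\lambda, t, z)$ suitable for $\lambda$-integration. These are purely formal manipulations with no analytic subtlety -- everything in sight is smooth and the $\lambda$-interval is compact -- but the index-juggling around the scaling substitution and the commutation $[D_t, \phi_\lambda^*] = 0$ must be handled attentively so that the final expression genuinely lives on some $J^N(\mathbb{R}, \mathbb{R})$ for finite $N$. Proposition \ref{tot kernel const} is not needed for existence, but confirms that the antiderivative $g_0 + g_1$ is unique up to an additive constant.
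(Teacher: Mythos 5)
Your proof is correct: the commutator identity $\partial_{z_j}\circ D_t=\partial_{z_{j-1}}+D_t\circ\partial_{z_j}$ gives $E\circ D_t=0$ by the telescoping cancellation you describe, and the vertical-scaling homotopy combined with the integration-by-parts identity $z_iF=z_0(-D_t)^iF+D_t\bigl(R_i[F]\bigr)$ (with $R_i[F]=\sum_{j=0}^{i-1}(-1)^jz_{i-1-j}D_t^jF$) and the commutation of $D_t$ with the substitution $z\mapsto\lambda z$ yields the reverse inclusion, everything staying on a finite-order jet space. The paper itself gives no proof, deferring to Chapter 4 of \cite{OlverLieBook}, and your argument is essentially the standard homotopy proof found there, so nothing further is needed.
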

We will not prove this theorem; however, a proof may be found in Chapter 4, Section 1 of \cite{OlverLieBook}. Importantly, this means that two different Lagrangians may have the same Euler-Lagrange equations. Indeed, two Lagrangians
\begin{equation}
\sL[u]=\int_a^bL(t,z_0,\ldots,z_n)\,dt
\end{equation}
and 
\begin{equation}
\tilde{\sL}[u]=\int_a^b\tilde{L}(t,z_0,\ldots,z_n)\,dt
\end{equation}
have the same Euler-Lagrange equation if and only if $\tilde{L}(t,z_0,\ldots,z_n)=L(t,z_0,\ldots,z_n)+D_tf$ for some smooth function $f$ on $J^n(\mathbb{R},\mathbb{R})$, since $E(L)=E(\tilde{L})$. 

\section{Invariant Control Systems and Reconstruction \label{chap:three}}
In this chapter we will explore certain phenomena of control systems that admit particular kinds of symmetries. In particular, we will be interested in studying quotient control systems that arise from a control system's ``special" symmetries. We can use solutions for the quotient control system to construct individual trajectories to the original control system by essentially solving an ODE that arises from the group action. We are particularly interested in the case that the associated quotient control system is ESFL. 
\subsection{Exterior Differential Systems with Symmetry}
In this section we discuss some important results concerning EDS with symmetry. The material in this section is primarily from \cite{AndersonFelsBacklund} and \cite{AndersonFelsGroupInvSol}. Recall from Definition \ref{EDS symmetry} that a vector field $X$ is an infinitesimal symmetry of an EDS on a manifold $M$ if the Lie derivative with respect to $X$ of any form in $\mathcal{I}$ is in $\mathcal{I}$. It is possible that an EDS has no nontrivial symmetries whatsoever, and this will usually be the case. However, control systems that arise from application will usually have plenty of symmetries because of some underlying physics. So in terms of applications, studying control systems with symmetry can be quite enlightening. It is also possible that an EDS has enough symmetries such that the associated group of symmetries $G$ for the EDS has the general structure of a Lie pseudogroup, as in Definition \ref{pseudo def}. We will always restrict our attention to finite dimensional \textit{subgroups} of symmetries. In particular, we will choose subgroups of small enough dimension so that the subgroup in question has strictly smaller dimension than the dimension of the manifold $\bX(M)$. The reason for this restriction will become clear in our subsequent discussion. Our first goal is to recognize when a linear Pfaffian system $\mathcal{I}$ on a manifold $M$ with a finite-dimensional group of symmetries $G$ has the property that on the quotient manifold $M/G$, the forms in $\mathcal{I}$ descend to another linear Pfaffian system $\mathcal{I}/G$, called the \textit{quotient system}. 
\begin{defn}\cite{AndersonFelsGroupInvSol}
Let $\mcal{I}$ be an EDS with symmetry group $G$. The \textbf{quotient system} or \textbf{reduced system} of $\mcal{I}$ is defined as
\begin{equation}
\mathcal{I}/G=\{\theta\in\Omega(M/G): \pi^*\theta\in\mathcal{I}\}, 
\end{equation}
where $\pi:M\to M/G$ is the orbit projection map. 
\end{defn}
We are specifically interested in the case that the quotient of a Pfaffian system is again a Pfaffian system. To that end, we need the following definition.  
\begin{defn}\cite{AndersonFelsGroupInvSol}
Let $\Gamma$ be a Lie algebra of infinitesimal symmetries of a Pfaffian system $\mathcal{I}=\langle I\rangle$. Then we say that $\Gamma$ is \textbf{transverse} to  $\mathcal{I}$ if  $\Gamma\cap\mathrm{ann}\,\,I=\{0\}$. We say that the symmetries are \textbf{strongly transverse} if  $\Gamma\cap\mathrm{ann}\,\,I^{(1)}=\{0\}$. 
\end{defn}  
\begin{thm}\cite{AndersonFelsGroupInvSol}
Let $M$ be a manifold and consider a Pfaffian system $\mathcal{I}$ on $M$ with finite dimensional Lie group of symmetries $G$ such that $\dim{G}<\dim{M}$. Furthermore, assume that the Lie algebra of infinitesimal symmetries $\Gamma$ for the action of $G$ on $M$ is strongly transverse to $\mathcal{I}$. Then the quotient system $\mcal{I}/G$ is also a Pfaffian system. 
\end{thm}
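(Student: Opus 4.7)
The plan is to show that the 1-form part of $\mathcal{I}/G$ generates $\mathcal{I}/G$ as a differential ideal on $M/G$. Let $I\subset T^*M$ be the generating subbundle of $\mathcal{I}$, set $r=\dim G$ and $s=\mathrm{rank}\,I$, and define $\bar I:=\mathcal{I}/G\cap\Omega^1(M/G)$. The argument splits naturally into three stages, with the main work in the last one.

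In the first stage I would identify $\bar I$ as a subbundle of $T^*(M/G)$. A 1-form $\bar\theta\in\Omega^1(M/G)$ lies in $\bar I$ iff $\pi^*\bar\theta\in\Gamma(I)$; since $\pi^*\bar\theta$ is automatically $G$-basic, this identifies $\pi^*\bar I$ with the $G$-invariant sections of $I\cap\ann\Gamma$. Strong transversality implies the weaker $\Gamma\cap\ann I=0$, so a pointwise rank–nullity count on the pairing $\Gamma_x\otimes I_x\to\mathbb{R}$ yields $\mathrm{rank}(I\cap\ann\Gamma)=s-r$. Since $\mathcal{I}$ is $G$-invariant so is $I$, and combined with $G$-invariance of $\Gamma$ the bundle $I\cap\ann\Gamma$ is $G$-invariant; regularity of the action then lets it descend to a rank-$(s-r)$ subbundle $\bar I$ of $T^*(M/G)$.

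In the second stage I would build an adapted local frame for $I$, using strong transversality crucially. Pick local coordinates $(y,g)$ on $M$ with $\Gamma=\langle\partial_{g^1},\ldots,\partial_{g^r}\rangle$. Strong transversality says the pairing $\Gamma\otimes I^{(1)}\to\mathbb{R}$ is nondegenerate on $\Gamma$, so I may choose sections $\theta^1,\ldots,\theta^r$ of $I^{(1)}$ with $\theta^a(\partial_{g^b})=\delta^a_b$. Completing to a frame of $I$ by basic sections $\eta^1,\ldots,\eta^{s-r}$ pulled back from a local frame of $\bar I$, the decisive structural consequence of $\theta^a\in I^{(1)}$ is the identity
\[
d\theta^a=\sum_b\Phi^a_b\wedge\theta^b+\sum_c\Psi^a_c\wedge\eta^c
\]
for some local 1-forms $\Phi^a_b,\Psi^a_c$.

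In the third stage I would establish the nontrivial inclusion $\mathcal{I}/G\subseteq\langle\bar I\rangle_{\mathrm{diff}}$; the reverse inclusion is automatic from stage 1. Given $\bar\omega\in\mathcal{I}/G$, set $\omega:=\pi^*\bar\omega$, which is basic and lies in $\mathcal{I}$. Expanding $\omega$ algebraically in $\{\theta^a,\eta^c,d\theta^a,d\eta^c\}$ and applying the structure equation above to absorb every $d\theta^a$-term yields $\omega=\sum_a\alpha_a\wedge\theta^a+\sum_c\beta_c\wedge\eta^c+\sum_c\gamma_c\wedge d\eta^c$. Contracting with $\partial_{g^i}$, using $\iota_{\partial_{g^i}}\omega=0$, the duality $\theta^a(\partial_{g^b})=\delta^a_b$, and $\iota_{\partial_{g^i}}\eta^c=\iota_{\partial_{g^i}}d\eta^c=0$, forces each $\alpha_i$ to lie in the algebraic ideal generated by $\{\theta^a,\eta^c,d\eta^c\}$. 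Resubstituting and iterating produces terms with strictly more $\theta$-factors; since only $r$ independent $\theta^a$'s exist and $\omega$ has finite degree, the iteration must terminate with a representation involving only $\eta^c$ and $d\eta^c$. Finally, taking the basic part of both sides—using $G$-invariance of $\omega$ and of $\eta^c,d\eta^c$—replaces the coefficients by basic ones, giving $\bar\omega\in\langle\bar I\rangle_{\mathrm{diff}}$. The main obstacle will be the bookkeeping in this last stage: tracking the iterative substitution, verifying termination, and justifying the basic-coefficient reduction. The role of strong transversality (rather than mere transversality) is precisely to guarantee that $r$ of the frame elements can be chosen inside $I^{(1)}$; without this, the structure equation of stage 2, on which all of stage 3 depends, would not be available.
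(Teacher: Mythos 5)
The paper does not actually prove this statement: it is quoted from Anderson--Fels \cite{AndersonFelsGroupInvSol}, so there is no internal proof to compare against. Judged on its own, your outline is essentially the standard reduction argument and I see no genuine gap: the rank count $\rank(I\cap\ann\Gamma)=s-r$ from (mere) transversality plus local freeness, the choice of $\theta^1,\ldots,\theta^r\in I^{(1)}$ dual to the infinitesimal generators (this is exactly where strong transversality enters, and you identify its role correctly), the structure equation $d\theta^a\equiv 0\ \mathrm{mod}\ I$, and the elimination of $\theta$-terms from a semibasic element of the ideal are precisely the ingredients of the cited proof. Two remarks that would tighten the write-up. First, the iterative contract-and-resubstitute scheme in your third stage works but is unnecessary bookkeeping: complete $\{\theta^a,\eta^c\}$ to a coframe $\{\theta^a,\eta^c,\pi^\lambda\}$ with $\pi^\lambda$ semibasic; since $\omega=\pi^*\bar\omega$ is semibasic, its (unique) $\theta$-free component in this coframe equals $\omega$ itself, so discarding the $\theta$-containing components of your representation $\omega=\alpha_a\wedge\theta^a+\beta_c\wedge\eta^c+\gamma_c\wedge d\eta^c$ immediately gives $\omega=\beta^{(0)}_c\wedge\eta^c+\gamma^{(0)}_c\wedge d\eta^c$, with no induction on the number of $\theta$-factors. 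Second, ``taking the basic part'' of the coefficients is vague as stated (there is no averaging available for a noncompact $G$); the clean fix is to pull back by a local cross-section $\sigma$ of $\pi$: then $\bar\omega=\sigma^*\omega=(\sigma^*\beta_c)\wedge\bar\eta^c+(\sigma^*\gamma_c)\wedge d\bar\eta^c$, which needs no invariance of the coefficients at all. Finally, be explicit that you are using the standing hypotheses of this chapter (free, regular action on suitably small open sets) so that the orbits have dimension $\dim G$, the quotient is a manifold, and the flow-box coordinates $(y,g)$ exist; without some such hypothesis the pointwise rank count in your first stage is not available.
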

\begin{ex}\label{Quotient Example}
Consider the following 5 state and 2 control Pfaffian system,
\begin{align}
\omega=\langle \theta^1,\theta^2,\theta^3,\theta^4,\theta^5\rangle
\end{align}
with 
\begin{align}
\theta^1&= dx^1-((x^2)^2+x^1f(t,x^3,x^4,x^5,u^2))\,dt,\\
\theta^2&=dx^2-x^2f(t,x^3,x^4,x^5,u^2)\,dt,\\
\theta^{i+2}&=dx^{i+2}-g^i(t,x^3,x^4,x^5,u^2)\left(x^2e^{-u^1}\right)^{a_i}\,dt\,\,\mathrm{ for }\,\,1\leq i\leq 3,
\end{align}
where $a_i$ are constants not all zero. Then the Lie algebra 
\begin{equation}\label{affine alg}
\Gamma=\{x^2\partial_{x^1},2x^1\partial_{x^1}+x^2\partial_{x^2}+\partial_{u^1}\}
\end{equation}
of infinitesimal symmetries of $\omega$ is strongly transverse to $\omega$ (for generic functions $f$ and $g^i$), and hence the quotient system is a Pfaffian system as well. 
\end{ex}
It is reasonably direct to check that $\Gamma$ forms a Lie algebra of infinitesimal symmetries for $\omega$. Denote the generating vector fields of \ref{affine alg} as $X_1$ and $X_2$ respectively. Then
\begin{align}
\mathcal{L}_{X^1}\theta^1&=dx^2-x^2f(t,x^3,x^4,x^5,u^2)\,dt=\theta^2,\\
\mathcal{L}_{X^2}\theta^1&=2dx^1-2((x^2)^2+x^1f(t,x^3,x^4,x^5,u^2))\,dt=2\theta^1,\\
\mathcal{L}_{X^1}\theta^2&=0,\\
\mathcal{L}_{X^2}\theta^2&=dx^2-x^2f(t,x^3,x^4,x^5,u^2)\,dt=\theta^2,\\
\mathcal{L}_{X^i}\theta^{j+2}&=0\text{ for }i=1,2,\text{ and }j=1,2,3,
\end{align}
the right hand sides of which all clearly belong to $\omega$. Next we need to check that the symmetries are strongly transverse to $\omega$. Let $\mathcal{V}=\mathrm{ann}\,\omega$, $A^i=g^i(t,x^3,x^4,x^5,u^2)\left(x^2e^{-u^1}\right)^{a_i}$, and notice that 
\begin{multline}
\mathcal{V}=\{\partial_t+((x^2)^2+x^1f(t,x^3,x^4,x^5,u^2))\partial_{x^1}+x^2f(t,x^3,x^4,x^5,u^2)\partial_{x^2}\\
+A^3\partial_{x^3}+A^4\partial_{x^4}+A^5\partial_{x^5},\partial_{u^1},\partial_{u^2}\}.
\end{multline}
Next we compute the derived system of $\mathcal{V}$:
\begin{equation}
\mathcal{V}^{(1)}=\mathcal{V}+\{A^3_{u^1}\partial_{x^3}+A^4_{u^1}\partial_{x^4}+A^5_{u^1}\partial_{x^5},\,x^1f_{u^2}\partial_{x^1}+x^2f_{u^2}\partial_{x^2}+A^3_{u^2}\partial_{x^3}+A^4_{u^2}\partial_{x^4}+A^5_{u^2}\partial_{x^5}\}.
\end{equation}
So for sufficiently generic functions $g^1,g^2,$ and $g^3$, we can see that $\Gamma$ is strongly transverse to $\omega$. If $G$ is the 2-dimensional Lie group whose action on $M$ is defined by the flows of $\Gamma$, then $\omega/G$ on $M/G$ will be a Pfaffian system. Local coordinates on $M/G$ can be defined in terms of the invariant functions of $\Gamma$. Indeed, 
\begin{equation}
\mathrm{Inv}\,\Gamma=\left\{t, y^1=x^3, y^2=x^4, y^3=x^5, v^1=u^2,v^2=x^2e^{-u^1}\right\}
\end{equation}
and hence they may also be chosen to represent local coordinates on the quotient manifold $M/G$. In these coordinates on $M/G$ we find that 
\begin{equation}\label{quotient h}
\omega/G=\langle dy^1-h^1(v^2)^{a_1}\,dt,  dy^2-h^2(v^2)^{a_2}\,dt, dy^3-h^3(v^2)^{a_3}\,dt\rangle,
\end{equation}
where $h^i=h^i(t,y^1,y^2,y^3,v^1)$, so that $\pi^*h^i=g^{i}$. Not only is $\omega/G$ a Pfaffian system, but it is also representative of a control system on $M/G$ with 3 states and 2 controls. For this example, one can pick the $a^i$ and $h^i$ to make $\omega/G$ fit into nearly any of the normal forms presented in \cite{Wilkens3s2cEquiv} (the only exceptions are normal form III of Theorem 2 and possibly the classes determined by case IV of Theorems 2 and case III of Theorem 1). Example \ref{Quotient Example} has, or can be made to have, other nice properties which we explore in the forthcoming sections of this chapter. 
\subsection{Control Admissible Symmetry Groups}
Given a control system that can be represented by a completely nonintegrable distribution or Pfaffian system on a manifold $M$, there may be many different kinds of symmetry groups. However, we are interested in a particular class of symmetries that are specific to the study of control systems. We want to make sure that any action by a control system's symmetries will not mix up time, state variables, and control variables in any way inappropriate for control theory purposes. To be precise, we present the following definition. 
\begin{defn}\cite{VassiliouSICON}
Let $M\cong_{loc}\mathbb{R}\times\bX(M)\times\bU(M)$, $\omega$ a Pfaffian system representing a control system, and let $\mu:G\times M\to M$ be a Lie transformation group with Lie algebra $\Gamma$ that has the following properties:
\begin{enumerate}\label{control symmetries}
\item $\Gamma$ is a Lie algebra of infinitesimal symmetries of $\omega$,
\item the action of $G$ on $M$ is free and regular,
\item $\mu^*_gt=t$, for all $g\in G$, where $\mu_g(x)=\mu(g,x)$,
\item if $\pi:M\to\mathbb{R}\times\bX(M)$ is the projection map, then $\mathrm{rank}\,\left({d\pi(\Gamma)}\right)=\dim(G)$.
\end{enumerate}
We say that such a group $G$ is a \textbf{control admissible symmetry group}. We may abuse this language somewhat by using the word ``symmetries'' to reference either a control admissible symmetry group or its infinitesimal generators. 
\end{defn}
In particular, items (3) and (4) of Definition \ref{control symmetries} force elements of $G$ to act as ESFTs. It turns out that we have already encountered an example of such a symmetry group. 
\begin{ex}\label{quotient action}
The symmetry group $G$ generated by $\Gamma=\{x^2\partial_{x_1},2x^1\partial_{x^1}+x^2\partial_{x^2}+\partial_{u^1}\}$ for the control system $\omega$ in Example \ref{Quotient Example} is an example of a control admissible symmetry group.  
\end{ex}
Let $(\epsilon_1,\epsilon_2)$ be local coordinates on the Lie group associated to $\Gamma$. To compute the group action from $\Gamma$, we simply find the flows of each generator in $\Gamma$ on $M$. These flows are
\begin{align}
\Phi^1_{\epsilon_1}(t,\bx,\bu)&=(t, x^1+\epsilon_1x^2,x^2,x^3,x^4,x^5,u^1,u^2),\\
\Phi^2_{\epsilon_2}(t,\bx,\bu)&=(t,x^1e^{2\epsilon_2}, x^2e^{\epsilon_2}, x^3,x^4,x^5,u^1+\epsilon_2,u_2),
\end{align}
so that the action may be written as the composition
\begin{align}
\mu((\epsilon_1,\epsilon_2),(t,\bx,\bu))&=\Phi^1_{\epsilon_1}\circ\Phi^2_{\epsilon_2}(t,\bx,\bu)\\
\,&=(t,x^1e^{2\epsilon_2}+x^2\epsilon_1e^{\epsilon_2},x^2e^{\epsilon_2},x^3,x^4,x^5,u^1+\epsilon_2,u^2).
\end{align}
This action has exactly the form of an ESFT for any $\epsilon_1$ and $\epsilon_2$. Thus, the action by any element $g\in G$ on $M$ is by an ESFT. Furthermore, the control admissible symmetry group $G$ is diffeomorphic to $\text{Aff}(\mathbb{R})$, since the 2D Lie algebra $\Gamma$ is not abelian. We remark that this group $G$ may not be the entire (possibly pseudo-) group of control admissible symmetries for $\omega$. A class of examples of such systems are those that are ESFL, since they will necessarily be invariant under the pseudogroup of contact transformations of their equivalent Brunovsk\'y normal form. Interestingly, there is at least one control system that is provably \textit{not} ESFL and has an infinite dimensional control admissible symmetry group. Consider the control system in 7 states and 3 controls which is given as Example 2 in \cite{BC3control}.
\begin{ex}\label{BC system} The Battilotti-Califano (BC) system is the 7 state, 3 control, Pfaffian system generated by the forms
\begin{align}
\,&\theta^1=dx^1-u^1\,dt,\,&\theta^2=dx^2-x^1\,dt,\,&\,\phantom{==}&\theta^3=dx^3-(x^2+x^6+x^2u^1)\,dt&\,,\\
\,&\theta^4=dx^4-(u^2+x^1u^3)\,dt,\,&\theta^5=dx^5-x^4\,dt,\,&\,\phantom{==}&\theta^6=dx^6-(x^5+x^2x^4)\,dt&\,,\\
\,&\theta^7=dx^7-u^3\,dt.\,&\,&\,&\,&\,
\end{align}
The control admissible symmetry group of the BC system is generated by the infinitesimal symmetries $\Gamma=\{X_1,X_2,X_3\}$, where
\begin{align}
X_1=&\frac{t^2}{2}\partial_{x^3}+\partial_{x^5}+t\partial_{x^6}+F\partial_{x^7}-x^1K\partial_{u^2}-K\partial_{u^3},\\
X_2=&t\partial_{x^3}+\partial_{x^6}+F\partial_{x^7}-x^1K\partial_{u^2}-K\partial_{u^3},\\
X_3=&\partial_{x^3}+F\partial_{x^7}-x^1K\partial_{u^2}-K\partial_{u^3},
\end{align}
and 
\begin{equation}
K=(x^2+x^2u^1+x^6)F_3+(u^3x^1+u^2)F_4+(x^2x^4+x^5)F_6+u^1F_1+x^1F_2+x^4F_5+u^3F_7+F_t)
\end{equation}
with $F$ any real-valued smooth function on $M$ that has no dependence on the controls, $F_i=\partiald{F}{x^i}$, and $F_t=\partiald{F}{t}$.
\end{ex}
Thus we see that the BC system has an infinite dimensional control admissible symmetry group due to the dependence on $F$. Furthermore, using procedure \textbf{contact} in Maple, we find that the refined derived type of the BC system is [[4,0],[7,3,4],[9,5,5],[11,11]], which does not agree with the refined derived type of a Goursat bundle presented in Proposition \ref{refined derived type numbers}. Hence the BC system is not ESFL. 

The following theorem is an important result that guarantees that the quotient of a Pfaffian system by a control admissible symmetry group is again representative of a control system. 
\begin{thm}\cite{VassiliouSICON}
Let $G$ be a control admissible symmetry group of a control system $\omega$ on a manifold $M$ such that $G$ is strongly transverse to $\omega$ and $\dim(G)<\dim\bX(M)$. Then the quotient system $\omega/G$ is a control system on $M/G$ and has the same number of controls as $\omega$. 
\end{thm}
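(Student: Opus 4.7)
The plan is to verify each requirement of Definition \ref{geometric control system} for $\omega/G$ on $M/G$, starting from the preceding theorem, which already guarantees that $\omega/G$ is a Pfaffian system on $M/G$. A preliminary observation used repeatedly is that strong transversality, $\Gamma \cap \ann I^{(1)} = \{0\}$, is equivalent to $\Gamma \cap \mathcal{V}^{(1)} = \{0\}$, and since $\mathcal{V} \subseteq \mathcal{V}^{(1)}$ this yields the weaker statement $\Gamma \cap \mathcal{V} = \{0\}$.

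First I would establish that $M/G$ carries a natural product structure $\mathbb{R} \times \bX(M/G) \times \bU(M/G)$. Condition (3) of the control admissible definition gives $\mu_g^* t = t$, so $t$ is $G$-invariant and descends to a coordinate on $M/G$. Condition (4), $\mathrm{rank}(d\pi(\Gamma)) = \dim G$ for $\pi : M \to \mathbb{R} \times \bX(M)$, is equivalent to saying $\Gamma$ meets the $\partial_{u^a}$-directions trivially, so the control variables survive to $M/G$. Together with the freeness and regularity in condition (2), this lets me adapt local coordinates on $M$ so that the $G$-orbits are level sets of functions of $(t,\bx)$ alone; the quotient then has local coordinates $(t, \tilde{\bx}, \bu)$ with $\dim \tilde{\bx} = n - r$ and $\dim \bu = m$, where $r = \dim G$. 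Using $\Gamma \cap \mathcal{V} = \{0\}$, the map $d\pi : \mathcal{V} \to T(M/G)$ is fiberwise injective, so $\mathcal{V}/G := d\pi(\mathcal{V})$ has rank $m + 1$, forcing $\mathrm{rank}(\omega/G) = \dim(M/G) - (m+1) = n - r$. This matches $\dim \tilde{\bx}$, so exactly $m$ control directions remain and the number of controls is preserved. The 1-form $dt$ is $G$-invariant and descends to supply the independence condition, while the linear Pfaffian congruence $d\theta \equiv 0 \mod I \oplus \langle dt\rangle$ pushes down because pullback by the orbit projection commutes with exterior differentiation.

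The main obstacle will be verifying $\mathrm{Char}(\mathcal{V}/G) = \{0\}$, since Cauchy bundles can in general grow under projection. My intended argument is to suppose $X \in \mathrm{Char}(\mathcal{V}/G)$ is nonzero, choose a local lift $\tilde X \in \mathcal{V} + \Gamma$, and decompose it uniquely as $\tilde X = V + W$ with $V \in \mathcal{V}$, $W \in \Gamma$ (uniqueness from $\Gamma \cap \mathcal{V} = \{0\}$). For any $\tilde Y \in \mathcal{V}$, the bracket $[\tilde X, \tilde Y]$ projects to $[X, d\pi(\tilde Y)] \in \mathcal{V}/G$, hence lies in $\mathcal{V} + \Gamma$; since $W \in \Gamma$ is an infinitesimal symmetry one has $[W, \tilde Y] \in \mathcal{V}$, so $[V, \tilde Y] \in (\mathcal{V} + \Gamma) \cap \mathcal{V}^{(1)}$. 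The strong transversality hypothesis forces this intersection to equal $\mathcal{V}$: if $v + g \in \mathcal{V}^{(1)}$ with $v \in \mathcal{V}$ and $g \in \Gamma$, then $g \in \mathcal{V}^{(1)} \cap \Gamma = \{0\}$. Hence $V \in \mathrm{Char}(\mathcal{V})$, and the triviality of $\mathrm{Char}(\mathcal{V})$ built into Definition \ref{geometric control system} gives $V = 0$, whence $\tilde X \in \Gamma$ projects to $X = 0$, the desired contradiction.
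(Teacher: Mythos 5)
The paper itself gives no proof of this statement (it is quoted from \cite{VassiliouSICON}), so I am judging your argument on its own terms. Your skeleton is the right one, and your final paragraph is genuinely good: the decomposition $\tilde X = V + W$, the fact that infinitesimal symmetries satisfy $[\Gamma,\mathcal{V}]\subseteq\mathcal{V}$, and the observation that strong transversality forces $(\mathcal{V}+\Gamma)\cap\mathcal{V}^{(1)}=\mathcal{V}$ give a clean proof that $\mathrm{Char}(\mathcal{V}/G)$ is trivial, modulo only the routine point that you should work with $\pi$-projectable lifts $\tilde X$, $\tilde Y$ so that brackets actually descend.

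The genuine gap is at the central claim that $\omega/G$ \emph{is a control system with $m$ controls}. By Definition \ref{geometric control system} this means there are adapted coordinates $(t,\tilde{x}^j,v^a)$ on $M/G$ in which $\omega/G=\langle d\tilde{x}^j-g^j(t,\tilde{x},v)\,dt\rangle$, equivalently that $\ann(\omega/G)$ contains the full rank-$m$ vertical bundle $\{\partial_{v^a}\}$. Your argument for this is the dimension count (``$\mathrm{rank}(\omega/G)=n-r$ matches $\dim\tilde{x}$, so exactly $m$ control directions remain''), but that does not follow: a rank-$(n-r)$ Pfaffian system with $dt$ excluded on a $(1+(n-r)+m)$-dimensional product need not have control form relative to the given time; for instance $\langle d\tilde{x}-v^1\,dv^2\rangle$ has the right rank and omits $dt$, yet satisfies $d\theta\wedge\theta\wedge dt\neq0$ and so is not $\langle d\tilde{x}-g\,dt\rangle$ in any coordinates preserving $t$. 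Likewise, saying the linear Pfaffian congruence ``pushes down because pullback commutes with $d$'' asserts exactly the point at issue: $\pi^*(d\bar\theta)$ lying in the ideal of $I+\langle dt\rangle$ upstairs does not by itself place $d\bar\theta$ in the ideal of $\bar I+\langle dt\rangle$ downstairs. The missing step is a short lemma you already have all the ingredients for: since conditions (3)--(4) of Definition \ref{control symmetries} make the symmetries act by ESFTs, $\mathrm{span}\{\partial_{u^a}\}+\Gamma$ is $\Gamma$-invariant, so $d\pi(\mathrm{span}\{\partial_{u^a}\})$ is a well-defined subbundle of $\mathcal{V}/G$; it has rank $m$ because condition (4) gives $\Gamma\cap\mathrm{span}\{\partial_{u^a}\}=\{0\}$, and it is annihilated by $dt$ and by the differentials of the state invariants $\tilde{x}^j$ (chosen to depend only on $(t,\bx)$), hence it coincides with $\mathrm{span}\{\partial_{v^a}\}$. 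That forces $\omega/G\subseteq\mathrm{span}\{dt,d\tilde{x}^j\}$, and together with $dt\notin\omega/G$ and your rank count yields the control form with exactly $m$ controls. Relatedly, your sentence ``the $G$-orbits are level sets of functions of $(t,\bx)$ alone'' is not correct as stated---the orbits are $r$-dimensional, so their defining equations must also involve $u$-dependent invariants; what condition (4) actually buys you, and what you need, is that $n-r$ of the invariants can be chosen to depend on $(t,\bx)$ alone.
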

One can easily verify that Example \ref{Quotient Example} has the property that its quotient system is again a control system. It is also true that there are subgroups of the infinite dimensional control symmetry group of the BC system that are strongly transverse to the BC system. Indeed, choose $H$ to be the subgroup of control admissible symmetries of the BC system are generated by
\begin{equation}\label{BC sym}
\Gamma_H=\left\{\frac{t^2}{2}\partial_{x^3}+\partial_{x^5}+t\partial_{x^6}+\partial_{x^7},t\partial_{x^3}+\partial_{x^6}+\partial_{x^7},\partial_{x^3}+\partial_{x^7}\right\},
\end{equation}
which arises from choosing $F=1$. The annihilator of $\omega^{BC}$ is given by
\begin{equation}
\mathcal{W}=\{X,\partial_{u^1},\partial_{u^2},\partial_{u^3}\},
\end{equation}
where
\begin{multline}
X=\partial_t+u^1\partial_{x^1}+x^1\partial_{x^2}+(x^2+x^2u^1+x^6)\partial_{x^3}+(x^1u^3+u^2)\partial_{x^4}+x^4\partial_{x^5}\\
+(x^2x^4+x^5)\partial_{x^6}+u^3\partial_{x^7}.
\end{multline}
Thus, 
\begin{equation}
\mathcal{W}^{(1)}=\mathcal{W}+\{\partial_{x^1}+x^2\partial_{x^3},\,\partial_{x^4},\,x^1\partial_{x^4}+\partial_{x^7}\},
\end{equation}
and one can now see that $\Gamma_H$ is strongly transverse to $\omega^{BC}$. Hence $\omega^{BC}/H$ is a control system of 4 states and 3 controls. We see also that the ESFTs generated by $\Gamma_H$ are
\begin{multline}\label{BC Flow}
\Phi^{BC}_{\epsilon_1,\epsilon_2,\epsilon_3}(t,\bx,\bu)=(t,x^1,x^2,\frac{t^2}{2}\epsilon_1+t\epsilon_2+\epsilon_3+x_3,x^4,\epsilon_1+x^5,t\epsilon_1+\epsilon_2+x^6,\\
\epsilon_1+\epsilon_2+\epsilon_3+x^7,u^1,u^2,u^3).
\end{multline}
Interestingly, these maps are necessarily ESFTs, as opposed to SFTs, despite the fact that the original system is autonomous. 
\subsection{ESFL Quotient Systems}
In general, we have seen that we can find reductions of control systems and obtain control systems again provided that the group action belongs to the Lie psuedogroup of ESFTs. The resulting control systems in Example \ref{Quotient Example} are of 3 states and 2 controls and can therefore be classified according to \cite{Wilkens3s2cEquiv}. However, we need not restrict ourselves to control systems whose quotients will fit into a broad classification scheme (as in general, there are presently none for higher numbers of states and controls). There is, however, a nice class of control systems that were introduced in Chapter 1 that are generated by Brunovsk\'y/contact differential forms as discussed in Chapter 2. Given a control system $\omega$ with control admissible symmetry group $G$, one may wonder when the resulting control system $\omega/G$ is ESFL. This is the content of \cite{VassiliouSICON}, and we will state some of those key results here. 
\begin{defn}\cite{VassiliouSICON}
A \textbf{relative Goursat bundle} $\mathcal{V}\subset TM$ is a distribution of derived length $k>1$ that has the following properties:
\begin{enumerate}
\item the type numbers satisfy the same relations as those listed in Proposition \ref{refined derived type numbers},
\item the $\inChar{V}{i}$ are all integrable,
\item if $\Delta_k>1$, then $\mathcal{V}^{(k-1)}$ determines an integrable Weber structure whose resolvent bundle is of rank $\Delta_k+\chi^{k-1}$.
\end{enumerate}
\end{defn}
Note that a relative Goursat bundle may have a non-trivial Cauchy bundle. 
Next we state the most important theorem of this section. 
\begin{thm}\label{ESFL Quotient}\cite{VassiliouSICON}
Let $\Gamma$ be the Lie algebra of a strongly transverse, control admissible symmetry group $G$ of a control system $\mathcal{V}$. If $\mathrm{Char}(\mathcal{V})=\{0\}$ and $\hat{\mcal{V}}:=\mathcal{V}\oplus\Gamma$ is a relative Goursat bundle, then the quotient system $\omega/G$ is a control system that is locally equivalent to a Brunovsk\'y normal form via a diffeomorphism. Furthermore, one can choose the diffeomorphism to be an ESFT if points (1) and (2) of Theorem \ref{Goursat ESFL} are true for $\hat{\mcal{V}}$.
\end{thm}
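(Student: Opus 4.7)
The plan is to recognize $\Gamma$ as a subbundle of the Cauchy bundle of $\hat{\mcal{V}}$, then push $\hat{\mcal{V}}$ down through $\pi:M\to M/G$ to obtain a genuine Goursat bundle on $M/G$ whose annihilator is $\omega/G$. Theorem \ref{Generalized Goursat Normal Form} will then supply the local diffeomorphism to a Brunovsk\'y normal form, and Theorem \ref{Goursat ESFL} will upgrade it to an ESFT under the additional hypotheses.

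First I would verify that $\Gamma\subseteq \mathrm{Char}\,\hat{\mcal{V}}$. For $X\in\Gamma$, $Y\in\mcal{V}$, and any $\theta\in I=\ann\mcal{V}$, the identity $(\mcal{L}_X\theta)(Y)=X(\theta(Y))-\theta([X,Y])$, combined with $\theta(Y)=0$ and $\mcal{L}_X\theta\in I$, yields $[X,\mcal{V}]\subseteq\mcal{V}$; together with $[X,\Gamma]\subseteq\Gamma$ this gives $[X,\hat{\mcal{V}}]\subseteq\hat{\mcal{V}}$. A Jacobi-identity induction then extends this to $\Gamma\subseteq \mathrm{Char}\,\hat{\mcal{V}}^{(i)}$ for every $i\geq 0$. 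Strong transversality makes $\hat{\mcal{V}}=\mcal{V}\oplus\Gamma$ a bundle of the expected constant rank, while control admissibility forces $\Gamma=\ker d\pi$ with $\pi$ a surjective submersion onto the manifold $M/G$. Because $\Gamma\subseteq\ker d\pi$ we have $\pi_*\hat{\mcal{V}}=\pi_*\mcal{V}$, which is by definition $\ann(\omega/G)$.

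Next I would transfer the relative Goursat structure of $\hat{\mcal{V}}$ to $\pi_*\hat{\mcal{V}}$. Since each $X\in\Gamma$ is simultaneously a symmetry and a Cauchy characteristic at every level of the derived flag, the Lie derivative by $X$ preserves each $\mathrm{Char}\,\hat{\mcal{V}}^{(i)}$, each intersection $\mathrm{Char}\,\hat{\mcal{V}}^{(i)}\cap\hat{\mcal{V}}^{(i-1)}$, and (when $\Delta_k>1$) the Weber structure and its resolvent on $\hat{\mcal{V}}^{(k-1)}$; moreover each of these objects contains $\Gamma$. All of them therefore descend through $\pi_*$ to bundles on $M/G$ whose ranks are reduced by $\dim\Gamma$. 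The three defining conditions of a relative Goursat bundle for $\hat{\mcal{V}}$ then match exactly the three conditions of Definition \ref{Goursat} for $\pi_*\hat{\mcal{V}}$ with $\kappa=\mathrm{deccel}(\hat{\mcal{V}})$, so $\pi_*\hat{\mcal{V}}$ is a Goursat bundle on $M/G$, and Theorem \ref{Generalized Goursat Normal Form} yields the local diffeomorphism to $\mcal{C}^\kappa_m$. For the final assertion, conditions (1)--(2) of Theorem \ref{Goursat ESFL} for $\hat{\mcal{V}}$ involve only $dt$ and the vertical control vector fields $\partial_{u^a}$, both of which are preserved by the control-admissible action of $G$, so those conditions push forward to $\pi_*\hat{\mcal{V}}$ and Theorem \ref{Goursat ESFL} upgrades the diffeomorphism to an ESFT.

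The main obstacle I expect is the descent bookkeeping: one must show that $\Gamma$ accounts for \emph{all} of the vertical contribution to $\mathrm{Char}\,\hat{\mcal{V}}^{(i)}$, so that $\pi_*\hat{\mcal{V}}$ inherits a trivial Cauchy bundle as is required of a control system on $M/G$, and one must track how the singular variety and resolvent bundle of $\hat{\mcal{V}}^{(k-1)}$ descend with exactly the rank prescribed in Definition \ref{Goursat}(3). The hypothesis $\mathrm{Char}(\mcal{V})=\{0\}$, strong transversality, and the ambient relative Goursat structure of $\hat{\mcal{V}}$ are precisely the ingredients that should make these rank counts balance under $\pi_*$.
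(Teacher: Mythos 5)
This theorem is imported from \cite{VassiliouSICON}; the thesis states it without proof, so there is no internal argument to compare against. Judged on its own, your outline follows the same symmetry-reduction route as the cited source and is sound in structure: $\Gamma$ lies in $\mathrm{Char}\,\widehat{\mathcal{V}}^{(i)}$ for every $i$, so the derived flag, its Cauchy bundles, the intersections $\widehat{\mathcal{V}}^{(i-1)}\cap\mathrm{Char}\,\widehat{\mathcal{V}}^{(i)}$, and (when $\Delta_k>1$) the resolvent bundle all descend through $\pi:M\to M/G$; the quotient distribution is then a Goursat bundle of type $\mathrm{deccel}(\widehat{\mathcal{V}})$, and Theorems \ref{Generalized Goursat Normal Form} and \ref{Goursat ESFL} finish.

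Two steps deserve more care than you give them. First, your justification that conditions (1)--(2) of Theorem \ref{Goursat ESFL} descend because ``$dt$ and $\partial_{u^a}$ are preserved by the action'' is not right as stated: the $\partial_{u^a}$ need not be projectable under $\pi$, since control admissible generators may have $u$-dependent coefficients (see Example \ref{BC system}), and the controls of $\omega/G$ are new invariant functions $v^a$, not images of the $u^a$. The correct argument is that $\mathrm{Char}\,\widehat{\mathcal{V}}^{(1)}_0$ is a $G$-invariant integrable bundle containing both $\Gamma$ and the span of the $\partial_{u^a}$, hence it descends to a bundle containing the control directions of the quotient system; the $dt$ condition descends simply because $t$ is a $G$-invariant function. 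Second, as you acknowledge in your last paragraph, the substantive content is the rank bookkeeping: one must show $\mathrm{Char}\,\widehat{\mathcal{V}}=\Gamma$ exactly and that each $\chi^i$ and $\chi^i_{i-1}$ drops by $\dim G$ under $\pi_*$, so that the quotient has trivial Cauchy bundle (as a control system must) and the refined derived type of a partial prolongation. That count is exactly what the relative Goursat hypotheses together with $\mathrm{Char}(\mathcal{V})=\{0\}$ and strong transversality are designed to deliver, and spelling it out is the part of the proof in \cite{VassiliouSICON} that your sketch defers rather than supplies.
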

This gives a direct way to check whether a control system $\omega$ with control admissible symmetry group $G$ admits ESFL quotients. 
\begin{defn}
A relative Goursat bundle will be called an \textbf{ESF relative Goursat bundle} if it satisfies the points (1) and (2) of Theorem \ref{Goursat ESFL}. 
\end{defn}
We will once again use the examples that have previously been explored in this chapter. Indeed, let $\omega$ be the control system from Example \ref{Quotient Example} with $f=e^{u^2}$, $g^1=\ln\left(1+(u^2)^2\right)$, $g^2=\sin(x^5)$, $g^3=x^3$, $a_1=5,a_2=0$, and $a_3=1$. The control system is now generated by
\begin{align}
\theta^1&= dx^1-((x^2)^2+x^1e^{u^2})\,dt,\\
\theta^2&=dx^2-x^2e^{u^2}\,dt,\\
\theta^3&=dx^3-\ln\left(1+(u^2)^2\right)(x^2)^5e^{-5u^1}\,dt,\\
\theta^4&=dx^4-\sin(x^5)\,dt,\\
\theta^5&=dx^5-x^2x^3e^{-u^1}\,dt,
\end{align}
and has annihilator given by $\mathcal{V}=\{X,\partial_{u^1},\partial_{u^2}\}$, where
\begin{multline}
X=\partial_t+((x^2)^2+x^1e^{u^2})\partial_{x^1}+x^2e^{u^2}\partial_{x^2}+\ln\left(1+(u^2)^2\right)(x^2)^5e^{-5u^1}\partial_{x^3}
+\sin(x^5)\partial_{x^4}\\+x^2x^3e^{-u^1}\partial_{x^5}.
\end{multline}
Next we need to calculate the refined derived type of $\widehat{\mathcal{V}}=\mathcal{V}\oplus\Gamma$ in order to apply Theorem \ref{ESFL Quotient}. We can once again use procedure \textbf{contact} to determine the refined derived type of $\widehat{\mathcal{V}}$. Doing so, we find 
\begin{equation}
\mathfrak{d}_r(\mathcal{\widehat{V}})=[[5,2],[7,4,5],[8,8]].
\end{equation}
We then discover that the associated type numbers are those of a relative Goursat bundle by checking the conditions in Proposition \ref{refined derived type numbers}. Next we check that $\hinCharOne{V}$ is integrable. To do so, we use MAPLE to calculate the derived flag and the associated Cauchy bundles. We find that 
\begin{equation}
\hChar{V}{1}=\{\partial_{x^1},\partial_{x^2},\partial_{x^3},\partial_{u^1},\partial_{u^2}\},
\end{equation}
and since $\Gamma\oplus\{\partial_{u^1},\partial_{u^2}\}=\{\partial_{x^1},\partial_{x^2},\partial_{u^1},\partial_{u^2}\}\subset \widehat{\mcal{V}}$, we have
\begin{equation}
\inCharOne{\widehat{V}}=\Gamma\oplus\{\partial_{u^1},\partial_{u^2}\}.
\end{equation}
Hence $\hChar{V}{1}_0$ is integrable, and thus $\widehat{\mcal{V}}$ is a relative Goursat bundle. Therefore, by Theorem \ref{ESFL Quotient} we can conclude that $\omega/G$ is a Pfaffian system representing a control system on $M/G$ that is diffeomorphism equivalent to a Brunovsk\'y normal form. Furthermore, the diffeomorphism equivalence is an ESFT since $dt\in\coder{\Xi}{1}$ and $\{\partial_{u^1},\partial_{u^2}\}\subset \hinCharOne{V}$. Of course, one can see this directly by explicitly constructing $\omega/G$ as in equation (\ref{quotient h}), and then checking  the refined derived type of $\omega/G$.  However, Theorem \ref{ESFL Quotient} provides a much simpler determination when an example presents itself with several control admissible symmetry groups. Thus Theorem \ref{ESFL Quotient} allows us to avoid needless computation--and subsequent ESFL testing of--several different quotient systems. 
 
The quotient system for this example is
\begin{equation}\label{quotient f1}
\omega/G=\{dy^1-(v^2)^5\ln\left(1+(v^1)^2\right)\,dt,dy^2-\sin\left(y^3\right)\,dt, dy^3-y^1v^2\,dt\}.
\end{equation}
We can explicitly find the ESF linearization of (\ref{quotient f1}) via procedure \textbf{contact} in MAPLE. The ESFT is given by
\begin{equation}
(t,y^1,y^2,y^3,v^1,v^2)\mapsto (t,z^1_0,z^1_1,z^2_0,z^2_1,z^2_2),
\end{equation}
where
\begin{align}
\,&\,&z^2_2&=y^1v^2\cos\left(y^3\right),\\
z^1_1&=(v^2)^5\ln\left(1+(v^1)^2\right),&\,z^2_1&=\sin\left(y^3\right),\\
z^1_0&=y^1,\,&z^2_0&=y^2.
\end{align}

The BC system also admits an ESFL quotient. Let $\mcal{W}=\ann \omega^{BC}$ and $\Gamma_H$ be as in (\ref{BC sym}). Furthermore, denote $\widehat{\mcal{W}}=\mcal{W}\oplus \Gamma_H$. Using MAPLE to calculate the refined derived type of $\wh{\mcal{W}}$, we find 
\begin{equation}
\mathfrak{d}_r(\widehat{\mcal{W}})=[[7, 3], [10, 6, 8], [11, 11]],
\end{equation}
which are precisely the type numbers of a relative Goursat bundle. We also find that 
\begin{equation}
\hinCharOne{W}=\{\partial_{u^1},\partial_{u^2},\partial_{u^3}\}\oplus\Gamma_H,
\end{equation}
and we easily see that $\hinCharOne{W}$ is integrable and is annihilated by $dt$. Hence by Theorem \ref{ESFL Quotient}, we find that $\omega/H$ is ESFL. We will further confirm that $\omega/H$ is ESFL by constructing an explicit ESFT to a Brunovsk\'y normal form. First, the invariant functions of $\Gamma_H$ are 
\begin{multline}
\mathrm{Inv}\,\Gamma_H=\{t, y^1=x^1, y^2=x^2, y^3=\frac{1}{2}((t-1)^2+1)x^5+(t-1)x^6+x^7-x^3,\\
y^4=x^4, v^1=u^1,v^2= u^2, v^3=u^3\}, 
\end{multline}
and they form a local coordinate system for the quotient manifold $M/H$. In these coordinates, the quotient system is given by $\omega/H=\langle \theta^1,\ldots,\theta^4\rangle$, where
\begin{align}
\theta^1&=dy^1-v^1\,dt,\\
\theta^2&=dy^2-y^1\,dt,\\
\theta^3&=dy^3-\left(\left((t-1)y^2-\frac{1}{2}((t-1)^2+1)\right)y^4-(1+v^1)y^2+v^3\right)\,dt,\\
\theta^4&=dy^4-(y^1v^3+v^2)\,dt.
\end{align}
The annihilator of $\omega/H$ will be denoted $\mcal{W}/H=\{X_H,\partial_{u^1},\partial_{u^2},\partial_{u^3}\}$, where
\begin{multline}
X_H=\partial_t+v^1\partial_{y^1}+y^1\partial_{y^2}+\left(\left((t-1)y^2-\frac{1}{2}((t-1)^2+1)\right)y^4-(1+v^1)y^2+v^3\right)\partial_{y^3}\\
+(y^1v^3+v^2)\partial_{y^4}.
\end{multline}
Checking the refined derived type, we find 
\begin{equation}
\mathfrak{d}_r(\mcal{W}/H)=[[4, 0], [7, 3, 5], [8, 8]],
\end{equation}
which are the type numbers of the canonical contact system on $J^{\langle 2,1\rangle}$. Since $\rho_2=1$, we construct the filtration (\ref{filt fun}) for $\mcal{W}/H$ and find that
\begin{align}
\left(\Pi^1\right)^\perp&=\{dt,dy^2\},\\
\coder{\Xi}{1}&=\{dt,dy^1,dy^2\},\\
\coder{\Xi}{1}_0&=\{dt,dy^1,dy^2,dy^3,dy^4\}.
\end{align}
All these bundles are integrable, and thus we have confirmed that $\mcal{W}/H$ is ESFL. Next, we use procedure contact B to build an ESFT between $\beta^{\langle 2,1\rangle}$ and $\omega/H$. Indeed, the invariants of  $\coder{\Xi}{1}_0/\coder{\Xi}{1}=\{dy^3,dy^4\}$ are $y^3$ and $y^4$, while the needed invariant of $\Pi^1$ is $y^2$. Hence, the ESFT is given by
\begin{align}\label{BC quotient ESFL}
z^1_0&=y^2,&z^2_0&=y^3,&z^3_0&=y^4,\\
z^1_1&=y^1,&z^2_1&=\left((t-1)y^2-\frac{1}{2}((t-1)^2+1)\right)y^4-(1+v^1)y^2+v^3,&z^3_1&=(y^1v^3+v^2),\\
z^1_2&=v^1.&\,&\,&\,&\,
\end{align}
\subsection{Reconstruction of Integral Manifolds and the Contact Sub-connection}
In the previous sections of this chapter we have seen when and how one can perform symmetry reduction of a control system so that the resulting reduced system is again a control system. Now, we want to be able to construct solutions to the original control system using the reduced control system. This is the content of \cite{AndersonFelsEDSwithSymmetry},  \cite{AndersonFelsBacklund}, \cite{AndersonFelsGroupInvSol} for general Pfaffian systems, and \cite{VassiliouCascadeI} when applied to control systems.

We'll start with the following definition. 
\begin{defn}\cite{VassiliouCascadeI}
Let $G$ be a Lie group and let $M$ and $M_G$ be manifolds such that $\pi: M\to M_G$ is a right principal $G$-bundle, and let $VM$ be the vertical bundle $\ker \pi_*$. Let $\Pi^G\subset TM_G$ be a subbundle. A constant rank distribution $u\mapsto H_u\subset T_uM$ is called a \textbf{principal sub-connection relative to} $\Pi^G$ if the following are true:
\begin{enumerate}
\item $H_u\cap V_uM=\{0\}$,
\item $d\pi(H_u)=\Pi^G_{\pi(u)}$,
\item $(\mu_g)_*H_u=H_{u\cdot g}$, for all $g\in G$,
\item $u\mapsto H_u$ is smooth.
\end{enumerate}
\end{defn}
If $\Pi^G=T(M_G)$ then this is the usual definition of a connection on a principal $G$-bundle. We state the following proposition about principal sub-connections. 
\begin{prop}
Let $\pi:M\to M_G$ be a right principal $G$-bundle with principal sub-connection $H$ relative to a subbundle $\Pi^G$ of $TM_G$. If $c:[a,b]\to M_G$ is any integral curve of $\Pi^G$, then for any $u\in \pi^{-1}(c(a))$ there is a unique curve $\tilde{c}: [a,b]\to M$ such that $\tilde{c}(a)=u$ and $\tilde{c}(t)$ is an integral curve of $H$ and $(\pi\circ \tilde{c})(t)=c(t)$. 
\end{prop}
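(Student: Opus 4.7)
The plan is to construct $\tilde{c}$ as the integral curve of a naturally defined time-dependent vector field obtained by horizontally lifting $\dot{c}(t)$, and then to extend this local solution to all of $[a,b]$ using the principal bundle structure.

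First, I would observe that conditions (1) and (2) of the definition of a principal sub-connection force $d\pi|_{H_u} \colon H_u \to \Pi^G_{\pi(u)}$ to be a linear isomorphism at every $u \in M$: condition (2) provides surjectivity, while condition (1) identifies the kernel with $H_u \cap V_u M = \{0\}$. Consequently there is a well-defined \emph{horizontal lift} operator $\mathrm{hor}_u \colon \Pi^G_{\pi(u)} \to H_u$ inverting $d\pi|_{H_u}$, and by the smoothness condition (4), $u \mapsto \mathrm{hor}_u$ is smooth. Define a time-dependent vector field $\tilde{X}$ on $\pi^{-1}(c([a,b]))$ by $\tilde{X}(t,u) = \mathrm{hor}_u(\dot{c}(t))$ for $t \in [a,b]$ and $u \in \pi^{-1}(c(t))$. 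Any integral curve $\tilde{c}$ of $\tilde{X}$ is automatically tangent to $H$, and its projection $\pi \circ \tilde{c}$ satisfies the same ODE as $c$ with the same initial value $c(a)$, so uniqueness of ODE solutions in $M_G$ yields $\pi \circ \tilde{c} = c$; conversely, any curve satisfying the three conclusions of the proposition must be an integral curve of $\tilde{X}$, since its velocity is horizontal and projects to $\dot{c}(t)$, hence equals $\mathrm{hor}_{\tilde{c}(t)}(\dot{c}(t))$. Local existence and uniqueness at each $t \in [a,b]$ therefore follow from the standard ODE theorem.

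The main obstacle is extending the local solution to the entire interval $[a,b]$. To overcome this, I would pass to the pullback principal bundle $c^*M \to [a,b]$: since $[a,b]$ is contractible, this bundle admits a global smooth trivializing section $\sigma \colon [a,b] \to c^*M$, yielding an identification $c^*M \cong [a,b] \times G$. The equivariance condition (3) ensures that in this trivialization the horizontal lift of $\dot{c}(t)$ at the point $\sigma(t) \cdot g$ is the right-translate by $g$ of the horizontal lift at $\sigma(t)$. Writing $\tilde{c}(t) = \sigma(t) \cdot g(t)$, the horizontality condition therefore reduces to an ODE on $G$ of the form $\dot{g}(t) = (R_{g(t)})_* Z(t)$, where $Z \colon [a,b] \to \mathfrak{g}$ is smooth and determined by $\sigma$ and the horizontal lift. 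Such a time-dependent right-invariant ODE on the Lie group $G$ admits a unique solution defined on all of $[a,b]$ for any prescribed initial value $g(a)$ (its integral curves cannot escape in finite time, as right translation is a global diffeomorphism and the flow can be built as a time-ordered product). Translating back via $\sigma$ produces the required lift $\tilde{c}$ on all of $[a,b]$, and uniqueness on $[a,b]$ is inherited from uniqueness of this group-theoretic ODE.
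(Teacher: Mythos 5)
Your proof is correct. Note that the paper itself states this proposition without proof (it is the standard lifting/parallel-transport result for principal connections, restricted to curves tangent to $\Pi^G$), so there is no in-paper argument to compare against; what you give is the classical route, and it works. Two points are worth tightening. First, the ``time-dependent vector field'' $\tilde{X}(t,u)=\mathrm{hor}_u(\dot{c}(t))$ of your opening paragraph is only defined for $u$ in the fiber over $c(t)$, not on an open subset of $M$, so the standard ODE existence/uniqueness theorem does not apply there directly; this is exactly repaired by your passage to the pullback bundle $c^*M\to[a,b]$, so the first paragraph should be read as identifying the equation any lift must satisfy (and giving uniqueness of the velocity, hence of the lift, once existence is settled) rather than as an independent existence argument. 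Second, for global existence on $[a,b]$ of the Lie-type equation $\dot{g}(t)=(R_{g(t)})_*Z(t)$, the clean justification is the usual translation argument: if the solution with $g(a)$ prescribed existed only on a maximal subinterval $[a,T)$ with $T\le b$, take a local solution $h$ near $T$ with $h(T)=e$; right-invariance shows $t\mapsto h(t)k$ is again a solution for every constant $k\in G$, and matching the constant with $g$ at a time slightly before $T$ extends $g$ past $T$, a contradiction. Your parenthetical ``time-ordered product'' remark gestures at this but is not itself a proof. With these refinements the argument is complete; in particular your reduction $\tilde{c}(t)=\sigma(t)\cdot g(t)$, with $Z(t)$ coming from the vertical part of $\dot{\sigma}(t)-\mathrm{hor}_{\sigma(t)}(\dot{c}(t))$ via the identity $(\mu_g)_*\lambda_u(\xi)=\lambda_{u\cdot g}(\mathrm{Ad}_{g^{-1}}\xi)$ for the fundamental vector fields, does yield precisely an equation of the stated right-invariant form, and uniqueness of $\tilde{c}$ follows from uniqueness of $g$ together with freeness of the action on the fiber.
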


First we present a result which gives conditions for when an integral manifold of an EDS descends to an integral manifold of a quotient system. 
\begin{prop}\label{int q int}\cite{AndersonFelsEDSwithSymmetry}
Let $G$ be a symmetry group of the EDS $\mcal{I}$ whose action is regular and strongly transverse to $\mcal{I}$. If $s:N\to M$ is an integral manifold of $\mcal{I}$, then $\pi\circ s$ is an integral manifold of $\mcal{I}/G$, where $\pi:M\to M/G$ is the quotient map.
\end{prop}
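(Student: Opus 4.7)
The plan is to verify both clauses of the definition of integral manifold for the map $\pi \circ s \colon N \to M/G$: namely, that it is an injective immersion, and that it pulls back every form in $\mathcal{I}/G$ to zero. The second clause is essentially a one-line consequence of the definition of the reduced system, while the first clause is where the hypotheses on $G$ really get used.

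First I would handle the pullback of forms, since it is the most direct. By functoriality of pullback, for any $\theta \in \mathcal{I}/G$ one has
\begin{equation*}
(\pi \circ s)^*\theta = s^*(\pi^*\theta).
\end{equation*}
By the definition $\mathcal{I}/G = \{\theta \in \Omega(M/G) \colon \pi^*\theta \in \mathcal{I}\}$, the form $\pi^*\theta$ lies in $\mathcal{I}$, so $s^*(\pi^*\theta) = 0$ because $s$ is an integral manifold of $\mathcal{I}$. This argument does not use the degree of $\theta$, so it simultaneously handles all generators of the reduced Pfaffian system.

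Next I would argue that $\pi \circ s$ is an immersion. Write $\mcal{V} = \ann I$ and recall that the tangent space at any point of the integral manifold $s$ lies in $\mcal{V}$, because $s^*\theta = 0$ for every $\theta \in \Gamma(I)$. Strong transversality of $\Gamma$ to $\mathcal{I}$ gives $\Gamma \cap \mcal{V}^{(1)} = \{0\}$; since $\mcal{V} \subseteq \mcal{V}^{(1)}$, this forces $\Gamma_p \cap T_p s(N) = \{0\}$ at every $p$. Because the action of $G$ is regular and free, $\Gamma_p$ coincides with $\ker (d\pi)_{s(p)}$, i.e.\ the vertical tangent space of the principal bundle $\pi \colon M \to M/G$. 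Therefore $d(\pi \circ s) = d\pi \circ ds$ has trivial kernel on each tangent space of $N$, which is exactly the immersion condition. (Note that one actually only needs the weaker transversality $\Gamma \cap \mcal{V} = \{0\}$ for this step, but the stronger hypothesis of the proposition comfortably implies it.)

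The one genuinely nontrivial step to worry about is injectivity: even though $s$ is injective, its image might intersect a single $G$-orbit in several points, so that $\pi \circ s$ fails to be one-to-one globally. The standard remedy, consistent with the local viewpoint already adopted throughout the paper, is to pass to a sufficiently small open subset $N_0 \subseteq N$ on which $\pi \circ s$ is injective; this is possible because $\pi \circ s$ is already an immersion, and the orbits of $G$ are regular submanifolds by the regularity hypothesis, so the image $s(N)$ meets each orbit transversally and hence in isolated points near any fixed $p \in N$. Assembling these observations, $\pi \circ s|_{N_0}$ is an injective immersion whose pullback annihilates every form in $\mathcal{I}/G$, which is the desired conclusion.
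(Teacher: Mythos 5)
Your proof is correct. The paper itself gives no argument for this proposition---it is quoted from Anderson--Fels with a citation---so there is nothing internal to compare against; your argument is the standard one: functoriality of pullback together with the definition of $\mcal{I}/G$ handles the form condition, and transversality plus the identification of $\ker d\pi$ with the span $\Gamma$ of the infinitesimal generators handles the immersion condition, with injectivity obtained by shrinking $N$, which is consistent with the paper's standing convention of working on sufficiently small open sets. Two minor remarks: as you observe, only plain transversality $\Gamma\cap\ann I=\{0\}$ is needed for this statement (strong transversality is what guarantees the quotient is again a Pfaffian system); and the phrase ``meets each orbit transversally'' is loose---the tangent spaces merely intersect trivially---but nothing is lost, since local injectivity already follows from the immersion property alone.
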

A more interesting result concerns when one can reverse this process. Given an integral manifold of the quotient EDS can it be used to construct an integral manifold to the original EDS?
\begin{thm}\label{lifted solutions}\cite{AndersonFelsEDSwithSymmetry}
Let $\mcal{I}$ be an EDS with strongly transverse symmetry group $G$ acting freely and regularly on a manifold $M$ with quotient map $\pi: M\to M/G$, and let $s:N\to M$ be an integral manifold of $\mcal{I}$. Then for every point $s(x_0)\in s(N)$, there exists a $G$-invariant open neighborhood $U$ of $s(x_0)$ and a cross-section $\sigma:U/G\to U$ such that $s(x)=\mu(\epsilon(x), \sigma\circ s_G(x))$ for all $x\in s^{-1}(U)$, where
\begin{enumerate}
\item $s_G=\pi\circ s$ is an integral manifold of $\mcal{I}/G\big|_{U/G}$, and
\item $\epsilon:s^{-1}(U)\to G$, where the graph of $\epsilon$ satisfies a completely integrable Pfaffian system on $s^{-1}(U)\times G$. 
\end{enumerate}
\end{thm}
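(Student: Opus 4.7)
The plan is to exploit the principal $G$-bundle structure $\pi:M\to M/G$ coming from the free and regular action of $G$, obtain $\epsilon$ pointwise from freeness, and then prove complete integrability of the governing Pfaffian system using $G$-invariance of $\mcal{I}$ together with the Maurer--Cartan structure equations on $G$.

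First I would shrink to a $G$-invariant neighborhood $U$ of $s(x_0)$ small enough that $\pi:U\to U/G$ admits a smooth local cross-section $\sigma:U/G\to U$; this is possible because $G$ acts freely and regularly, so $\pi$ is locally trivial. By Proposition \ref{int q int}, $s_G=\pi\circ s$ is then an integral manifold of $\mcal{I}/G\big|_{U/G}$, which gives (1). For each $x\in s^{-1}(U)$, the points $s(x)$ and $\sigma(s_G(x))$ both project to $s_G(x)$ under $\pi$, so they lie in the same $G$-orbit; freeness of the action yields a unique $\epsilon(x)\in G$ with $s(x)=\mu(\epsilon(x),\sigma(s_G(x)))$. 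Smoothness of $\epsilon$ follows from smoothness of $\mu$ and the implicit function theorem applied to the free action map, shrinking $U$ further if necessary.

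To set up the Pfaffian system on $s^{-1}(U)\times G$, I would introduce the reconstruction map
\begin{equation}
\Phi: s^{-1}(U)\times G\to M,\qquad \Phi(x,g)=\mu(g,\sigma\circ s_G(x)),
\end{equation}
and define $\mcal{J}=\Phi^*\mcal{I}$. Since $\Phi$ composed with $(x,\epsilon(x))$ recovers $s(x)$ and $s$ is an integral manifold of $\mcal{I}$, the graph of $\epsilon$ is automatically an integral manifold of $\mcal{J}$. What remains is to show $\mcal{J}$ is Frobenius. Here I would decompose $\Phi^*\theta$ for $\theta\in \mcal{I}\cap\Omega^1(M)$ into a horizontal part pulled back from $s^{-1}(U)$ (which vanishes on the lift of $s_G$ by Proposition \ref{int q int}) and a vertical part that can be expressed through the (right) Maurer--Cartan form $\omega_G$ paired with elements of $\mathfrak{g}$ obtained from $G$-invariance. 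The condition $\mcal{L}_X\mcal{I}\subset\mcal{I}$ for every infinitesimal generator $X$ of the $G$-action, combined with the Maurer--Cartan equation $d\omega_G+\tfrac{1}{2}[\omega_G,\omega_G]=0$ and the Jacobi identity in $\mathfrak{g}$, then ensures $d\theta\equiv 0\,\mathrm{mod}\,\mcal{J}$ for each generator $\theta$ of $\mcal{J}$.

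The main obstacle is this final integrability check: one must track how the vertical and horizontal components of $\Phi^*\mcal{I}$ interact under $d$, and verify that the strong transversality hypothesis plus $G$-invariance precisely cancels the obstructions from the Maurer--Cartan term. Granted that, Frobenius' theorem (Theorem \ref{Frobenius}) produces a unique maximal integral manifold of $\mcal{J}$ of the appropriate dimension through $(x_0,\epsilon(x_0))$, and by uniqueness this manifold coincides with the graph of the $\epsilon$ constructed above. This establishes (2) and completes the decomposition $s(x)=\mu(\epsilon(x),\sigma\circ s_G(x))$ as arising from a Frobenius-integrable reconstruction system, which is exactly the feature used in the cascade feedback linearization process in later chapters.
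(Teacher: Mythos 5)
You should first note that the thesis does not actually prove Theorem \ref{lifted solutions}: it is quoted from Anderson--Fels \cite{AndersonFelsEDSwithSymmetry} and then only \emph{used} (e.g.\ in deriving the system (\ref{lie type})), so there is no in-paper proof to compare against. Judged on its own, your skeleton is the standard reconstruction argument and is consistent with how the paper applies the theorem: local triviality of the free, regular action gives the $G$-invariant $U$ and the cross-section $\sigma$; Proposition \ref{int q int} gives item (1); freeness gives a unique, smooth $\epsilon$ with $s(x)=\mu(\epsilon(x),\sigma\circ s_G(x))$; and the graph of $\epsilon$ is indeed an integral manifold of $\mcal{J}=\Phi^*\mcal{I}$ because $\Phi(x,\epsilon(x))=s(x)$.

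The genuine gap is precisely the claim that carries all the content of item (2): complete integrability of $\mcal{J}$. You defer it (``Granted that\ldots''), and the mechanism you sketch --- that $\mcal{L}_X\mcal{I}\subset\mcal{I}$ together with the Maurer--Cartan equation and the Jacobi identity ``cancels the obstructions'' --- is not by itself a proof and is not where the cancellation actually comes from. In the trivialization $U\cong (U/G)\times G$ furnished by $\sigma$, invariance and transversality let you write the $1$-forms of $\mcal{J}$ in Lie-type form, a Maurer--Cartan form on $G$ minus a form pulled back from $s^{-1}(U)$; when you differentiate, the quadratic Maurer--Cartan term is handled modulo $\mcal{J}$, but what remains is a \emph{semi-basic} $2$-form obstruction. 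The reason it vanishes is not Lie-algebraic: since $d\mcal{I}\subset\mcal{I}$, that semi-basic obstruction lies in (the pullback of) the quotient ideal $\mcal{I}/G$, and it dies on restriction exactly because $s_G$ is an integral manifold of $\mcal{I}/G$ --- i.e.\ item (1) must be fed into the proof of item (2), which your sketch never does. Strong transversality is what guarantees the quotient is a Pfaffian system of the right rank so that this bookkeeping works. Note also that for $\dim N>1$ Frobenius integrability is a genuine condition (it is only automatic in the curve case used later in the thesis), so the theorem is false without this step being carried out; and your final appeal to ``the unique maximal integral manifold of $\mcal{J}$ of the appropriate dimension'' should be phrased as the graph of $\epsilon$ lying inside the Frobenius leaf through $(x_0,\epsilon(x_0))$, since a priori $\mathrm{rank}\,\mcal{J}$ need not equal $\dim G$.
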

We would like to emphasize that this theorem may be used to construct \textit{any} integral manifold of $\omega$.

We will now work through a simple example. We will use Example \ref{Quotient Example}, but for ease of demonstration we now choose $f=e^{u^2},g^1=u^2, g^2=x^5, g^3=1$ and $a_1=a_2=0, a_3=1$. It then follows that the quotient system in Equation (\ref{quotient h}) takes the form
\begin{equation}
\omega/G=\langle dy^1-v^1\,dt,dy^2-y^3\,dt,dy^3-v^2\,dt\rangle,
\end{equation}
which is precisely in Brunovsk\'y normal form and has type $\langle 1,1\rangle$. Hence, integral curves are given by
\begin{equation}
s_G(t)=(t,F_1(t), F_2(t), \dot{F}_2(t), \dot{F}_1(t), \ddot{F}_2(t)),
\end{equation}
where $F_1$ and $F_2$ are arbitrary smooth functions of $t$. To compute a lifted integral manifold of $\omega$ using $s_G$, we need an appropriate cross-section of $M\to M/G$, and then we will need to find a curve $\epsilon(t)$ in $G$ in order to construct $s(t)$. We may pick $\sigma$ to be the cross section given by
\begin{equation}
\sigma: (t,y^1,y^2,y^3,v^1,v^2)\mapsto (t,0,1,y^1,y^2,y^3,-\ln(v^2), v^1).
\end{equation}
Using this cross-section to lift $s_G(t)$ to $M$, we find
\begin{equation}
(\sigma\circ s_G)(t)=(t,0,1,F_1(t),F_2(t),\dot{F}_2(t),-\ln(\ddot{F}_2(t)), \dot{F}_1(t)).
\end{equation}
Now we explicitly use the action of $G$ on $M$. Recall that we have already computed the action in Example \ref{quotient action}. Applying this action to $\sigma\circ s_G(t)$ with $\gamma=(\epsilon_1,\epsilon_2)$, we find
\begin{equation}\label{pre lie equation}
\mu(\epsilon,(t,\bx,\bu))=(t,\epsilon_1e^{\epsilon_2}, e^{\epsilon_2},F_1(t),F_2(t),\dot{F}_2(t),\epsilon_2-\ln(\ddot{F}_2(t)), \dot{F}_1(t)). 
\end{equation}
Since we want (\ref{pre lie equation}) to be an integral manifold of $\omega$, we need to pull back $\omega$ by (\ref{pre lie equation}) to determine the necessary conditions on $\epsilon_1$ and $\epsilon_2$. Doing so gives
\begin{equation}\label{lie type}
\mu^*_t(\omega)=\langle d\epsilon_1-e^{\epsilon_2+\dot{F}_1(t)}\,dt,d\epsilon_2-e^{\dot{F}_1(t)}\,dt\rangle,
\end{equation} 
where $\mu_t$ denotes (\ref{pre lie equation}). Therefore, in order for $s(t)=\mu_t$ to be an integral manifold for $\omega$, we need the curve $\epsilon: s^{-1}(U)\to G$ to solve the Frobenius system (\ref{lie type}). Putting everything together, we can explicitly write down a formula for an integral manifold of $\omega$ given an integral manifold to $\omega/G$. Indeed, let $I(t)=\int_{t_0}^te^{\dot{F}_1(\tau)}\,d\tau$ and $J(t)=\int_{t_0}^te^{I(\tau)+\dot{F}_1(\tau)}\,d\tau$; then
\begin{equation}
s(t)=\left(t,J(t)e^{I(t)},e^{I(t)},F_1(t),F_2(t),\dot{F}_2(t),e^{I(t)}-\ln(\ddot{F}_2(t)), \dot{F}_1(t) \right)
\end{equation}
is an integral curve of $\omega$. The use of several quadratures here is undesirable from a control theory perspective, since one must compute these integrals for each individual trajectory from the quotient manifold.

We now present one of the main theorems of \cite{VassiliouCascadeI}, which is essentially an application of Theorem \ref{lifted solutions} in the case that a control system with control admissible symmetry group admits an ESFL quotient system. 
\begin{figure}[h]\label{reconstruction diagram}
\centering
\begin{tikzcd}
{(M,\omega)} \arrow[dd, "\pi"] \arrow["G"', loop, distance=2em, in=215, out=145] &  & {(J^\kappa\times G,\gamma^G)} \arrow[dd, "\pi_0"] \arrow[ll, "\tilde{\varphi}"'] &                                                      \\
                                                                               &  &                                                                                &                                                      \\
{(M/G,\omega/G)}                                                               &  & {(J^\kappa, \beta^\kappa)} \arrow[ll, "\varphi"']                              & \mathbb{R} \arrow[l, "c"'] \arrow[luu, "\tilde{c}"']
\end{tikzcd}
\caption{Decomposition of integral curves to a non-ESFL control system with control symmetry $G$ and ESFL quotient.}
\end{figure}
\begin{thm}\label{control reconstruction}\cite{VassiliouCascadeI}
Let $(M,\omega)$ be a non-ESFL control system invariant under the Lie group $G$ acting on $M$ freely, regularly, and strongly transversely, via $\mu: G\times M\to M$, and with Lie algebra of infinitesimal symmetries $\Gamma$. Denote $\mcal{V}=\ann \omega$; let $\widehat{\mcal{V}}=\mcal{V}\oplus\Gamma$ be an ESF relative Goursat bundle of type $\kappa$, and let $\varphi$ and $\tilde{\varphi}$ be the principal bundle maps of Figure 3.1, which satisfy $(\varphi^{-1})_*(\mcal{V}/G)=\mcal{C}^\kappa_m$. Then there is a principal sub-connection $\mcal{H}_G$ for $\pi_0:J^\kappa\times G\to J^\kappa$ whose lift $\tilde{c}$ of a contact curve $c:\mathbb{R}\to J^\kappa$ is such that $t\mapsto (\mu\circ \Gamma_\sigma\circ \tilde{c})(t)$ is an integral manifold of $(M,\omega)$. Here $\Gamma_\sigma: J^\kappa\times G\to M\times G$ via $(\bz,g)\mapsto (\sigma\circ \varphi(\bz), g)$ where $\sigma$ is a cross-section of $\pi:M\to M/G$. Furthermore, the principal sub-connection is of the form
\begin{equation}\label{contact connection}
\mcal{H}_G=\left\{\partial_t+\sum_{i=1}^m\sum_{l_i=0}^{\sigma_i-1}z^i_{l_i+1}\partial_{z^i_{l^i}}+\sum_{a=1}^rp^a(t,\bz)R_a, \partial_{z^i_{\sigma_i}}\right\},
\end{equation}
where $\{R_a\}$ spans the right invariant vector fields on $G$. 
\end{thm}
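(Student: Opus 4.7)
The plan is to obtain the sub-connection $\mcal{H}_G$ as the geometric encoding of the Frobenius/Lie-type system from Theorem \ref{lifted solutions}, once the ESFL quotient and the principal bundle structure are in place. First I would invoke Theorem \ref{ESFL Quotient}: since $\widehat{\mcal{V}}=\mcal{V}\oplus\Gamma$ is an ESF relative Goursat bundle of type $\kappa$, the quotient system $\omega/G$ on $M/G$ is ESFE to the Brunovsk\'y normal form $\beta^\kappa_m$ on $J^\kappa$ via the map $\varphi$ in the diagram. Contact curves $c:\mathbb{R}\to J^\kappa$ are precisely the integral manifolds of $\beta^\kappa_m$, so $s_G:=\varphi\circ c$ is an integral manifold of $\omega/G$ by construction, and $s_G$ respects the independent variable $t$ because $\varphi$ is an ESFT.

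Next I would apply Theorem \ref{lifted solutions} to lift $s_G$ to an integral curve $s$ of $\omega$. That theorem gives a cross-section $\sigma:M/G\to M$ on a $G$-invariant neighborhood and a curve $\epsilon:\mathbb{R}\to G$ such that $s(t)=\mu(\epsilon(t),\sigma(s_G(t)))$, with the graph of $\epsilon$ cut out by a completely integrable Pfaffian system on $\mathbb{R}\times G$. Because $G$ acts on $M$ by control-admissible symmetries and $\Gamma$ consists of infinitesimal generators, the differential equation satisfied by $\epsilon$ is a right-invariant Lie equation of the form $\dot{\epsilon}(t)=\sum_{a=1}^r p^a(t)\,R_a\big|_{\epsilon(t)}$, where the $R_a$ are right-invariant vector fields on $G$ and the coefficients $p^a$ are determined by pulling back $\omega$ through $\mu\circ\Gamma_\sigma$ and solving for the non-integrable residue modulo $\omega/G$. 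Regularity of the $G$-action and the assumption that $\widehat{\mcal{V}}$ is of relative Goursat type guarantee that these coefficients are smooth and depend on $(t,\bz)\in J^\kappa$ only through the contact coordinates via $\sigma\circ\varphi$, so one may write $p^a=p^a(t,\bz)$.

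I would then define $\mcal{H}_G$ on $J^\kappa\times G$ as in (\ref{contact connection}): the ``horizontal'' vector field $\partial_t+\sum_i\sum_{l_i=0}^{\sigma_i-1}z^i_{l_i+1}\partial_{z^i_{l_i}}+\sum_a p^a(t,\bz)R_a$ is the $\Gamma_\sigma$-lift of the total-derivative vector field on $J^\kappa$ corrected by the Lie equation for $\epsilon$, and the vertical controls $\partial_{z^i_{\sigma_i}}$ generate the kernel of $d\pi_0$ tangent to $\mcal{C}^\kappa_m$. The four properties of a principal sub-connection relative to $\Pi^G=\mcal{C}^\kappa_m$ are then checked in order: the intersection with the vertical bundle $VG=\mathrm{Span}\{R_a\}$ is trivial (the $R_a$-direction in $\mcal{H}_G$ is locked to $\partial_t$); the projection $d\pi_0(\mcal{H}_G)=\mcal{C}^\kappa_m$ holds by the explicit formula; the right $G$-invariance follows from right-invariance of the $R_a$ and the fact that $p^a(t,\bz)$ is independent of the fiber coordinate; and smoothness is clear.

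Finally, given any contact curve $c$ and a point in its fiber, integrating the vector field in the first bracket of $\mcal{H}_G$ produces the unique lift $\tilde{c}$, and by construction $(\mu\circ\Gamma_\sigma\circ\tilde{c})(t)=\mu(\epsilon(t),\sigma(\varphi(c(t))))$ coincides with the $s(t)$ furnished by Theorem \ref{lifted solutions}, hence is an integral manifold of $\omega$. The main obstacle in this plan is verifying that the coefficients $p^a$ really are well-defined functions on $J^\kappa$ (as opposed to depending on the choice of cross-section $\sigma$ in an uncontrolled way) and that the resulting Pfaffian system on $J^\kappa\times G$ defining $\mcal{H}_G$ is Frobenius along fibers, equivalently that the sub-connection is compatible with the contact distribution. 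Both reduce to the identity $(\Gamma_\sigma)^*\omega\subset\beta^\kappa_m\oplus\langle R_a^*\rangle$ modulo the Lie equation, which is the infinitesimal version of $G$-equivariance of $\sigma$ combined with the strong transversality of $\Gamma$ to $\omega$; once this identity is established the remaining checks are formal.
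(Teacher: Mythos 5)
Your overall strategy is the one the paper itself points to for this result (which it imports from \cite{VassiliouCascadeI} without reproducing a proof): use Theorem \ref{ESFL Quotient} to get the ESFL quotient realized by $\varphi$, apply the Anderson--Fels reconstruction of Theorem \ref{lifted solutions} to lift integral curves of $\omega/G$, recognize the reconstruction equation as an equation of Lie type, and repackage the data as a principal sub-connection on $J^\kappa\times G$. That matches the intended route, and your check of the four sub-connection axioms is the right bookkeeping.

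The genuine soft spot is the step you flag at the end but then dispatch with the wrong mechanism: why $\tilde{\varphi}^*\omega$, with $\tilde{\varphi}=\mu\circ\Gamma_\sigma$, has exactly the shape $\beta^\kappa_m\oplus\langle d\epsilon_a-p^b(t,\bz)\rho^a_b(\epsilon)\,dt\rangle$ --- equivalently, why in (\ref{contact connection}) the $R_a$-correction appears only along the $\partial_t$-direction while the lifts of the control directions $\partial_{z^i_{\sigma_i}}$ carry no $R_a$-component, and why the $p^a$ are genuine functions on $\mathbb{R}\times J^\kappa$, independent of the fiber coordinate and of the particular contact curve used to compute them. Your appeal to ``infinitesimal $G$-equivariance of $\sigma$'' is not available: a local cross-section of a principal bundle is not equivariant and need not be; what is actually used is the $G$-invariance of $\omega$ together with the automatic equivariance of the trivialization $(\bz,g)\mapsto\mu(g,\sigma(\varphi(\bz)))$, which is what absorbs the fiber dependence into the right-invariant frame and makes the $p^a$ curve-independent. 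Moreover, strong transversality only yields that the conjugated distribution meets $\ker d\pi_0$ trivially; it does not exclude $dz^i_{\sigma_i}$-terms in $\Theta^G$. That exclusion comes from the control-admissibility of $G$ (items (3)--(4) of the definition) together with the ESF conditions built into the hypothesis that $\wh{\mcal{V}}$ is an \emph{ESF} relative Goursat bundle (the conditions of Theorem \ref{Goursat ESFL}, e.g.\ $\{\partial_{u^a}\}\subset\hinCharOne{V}$ and the $dt$ condition), which make $\tilde{\varphi}$ an ESFT identifying the controls upstairs with the top-order coordinates $z^i_{\sigma_i}$; without this the formula (\ref{contact connection}) could fail even though a sub-connection exists. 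Two smaller slips in the same vein: $\ker d\pi_0$ is spanned by the $R_a$, not by the ``vertical controls'' $\partial_{z^i_{\sigma_i}}$ (those are horizontal lifts, and their being $R_a$-free is precisely what needs the argument above), and the identity you invoke should be stated for $(\mu\circ\Gamma_\sigma)^*\omega$, since $\Gamma_\sigma$ alone maps into $M\times G$ rather than $M$.
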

\begin{defn}\label{contact sub-connection def}
The principal sub-connection given in (\ref{contact connection}) is called a \textbf{contact sub-connection}. It is a principal sub-connection relative to the subbundle
\begin{equation}
\Pi^G=\left\{\partial_t+\sum_{i=1}^m\sum_{l_i=0}^{\sigma_i-1}z^i_{l_i+1}\partial_{z^i_{l^i}},\partial_{z^i_{\sigma_i}}\right\},
\end{equation}
and may equivalently be written as
\begin{equation}
\mcal{H}_G=\left\{\partial_t+\sum_{i=1}^m\sum_{l_i=0}^{\sigma_i-1}z^i_{l_i+1}\partial_{z^i_{l^i}}+\sum_{a,b=1}^rp^a(t,\bz)\rho_a^b(\epsilon)\partial_{\epsilon_b}, \partial_{z^i_{\sigma_i}}\right\},
\end{equation}
where $R_a=\rho_a^b(\epsilon)\partial_{\epsilon_b}$. As a Pfaffian system, it may also be written as
\begin{equation}
\gamma^G=\beta^\kappa\oplus \Theta^G,
\end{equation}
where $\Theta^G=\langle d\epsilon_a-p^b(t,\bz)\rho_b^a(\epsilon)\,dt\rangle$. 
\end{defn}
Theorem \ref{control reconstruction} says that the control system may be ``decomposed" to a Brunovsk\'y normal form plus an underdetermined \textit{equation of Lie type}, that is, an underdetermined ODE arising from the action of a Lie group on a manifold. Although equations of Lie type have very nice properties, as explained in \cite{BryantLieSymplectic}, most of these properties only apply in the case that the associated group has nontrivial isotropy subgroups, i.e. the group does not act freely on $M$. The group actions appearing in this thesis are free and hence much of the larger theory of equations of Lie type does not apply. The only exception to this is the case when the Lie group $G$ is solvable. In this situation we are guaranteed to be able to find solutions with a finite number of integrations, but only for fixed trajectories of the quotient system. 

Notice that the map $\tilde{\varphi}$ in Theorem \ref{control reconstruction} is given by $\mu\circ \Gamma_\sigma$, and hence $\tilde{\varphi}$ is an ESFT since $\varphi$ is an ESFT and $G$ is an admissible control symmetry group. 

Before starting an example, we wish to emphasize the difference between Theorem \ref{control reconstruction} and Theorem \ref{lifted solutions}, as well as the importance of the contact sub-connection. Theorem \ref{lifted solutions} should be thought of as a ``decomposition" of an integral manifold to an EDS $\mcal{I}$ via an integral manifold of the quotent system and a Frobenius system induced by both the group action and the integral manifold of the quotient system. In Theorem \ref{control reconstruction}, a normal form for the given control system $\omega$ is given via the contact sub-connection. One may still consider Theorem \ref{control reconstruction} as providing a ``decomposition" of integral manifolds, or in this case, trajectories; however, it is the special structure of the quotient system that allows for an explicit formulation of $\Theta^G$ in coordinates. 

We now construct the contact connection $\mcal{H}_G$ for the BC system from Example \ref{BC system}. We already have the action of the group $H$ from (\ref{BC Flow}), as well as the projection map $\pi:M\to M/H$ defined by $\mathrm{Inv}\,\Gamma_H$. So we pick a cross-section of the projection map to be 
\begin{equation}
\sigma:(t,y^1,y^2,y^3,y^4,v^1,v^2,v^3)\mapsto (t,y^1,y^2,-y^3,y^4,0,0,0,v^1,v^2,v^3).
\end{equation}
Now by (\ref{BC quotient ESFL}), we can conclude that integral curves of $\omega/H$ are given by
\begin{equation}
c(t)=(t,y^1=\dot{F}_1(t),y^2=F_1(t),y^3=F_2(t),y^4=F_3(t),v^1=\ddot{F}_1(t),v^2=A(t),v^3=B(t)),
\end{equation}
where
\begin{align}
A(t)&=\dot{F}_3(t)-\dot{F}_1(t)\left(\dot{F}_2(t)+(1+\ddot{F}_1(t))F_1(t)-\left((t-1)F_1(t)-\frac{1}{2}((t-1)^2+1)\right)F_3(t)\right),\\
B(t)&=\dot{F}_2(t)+(1+\ddot{F}_1(t))F_1(t)-\left((t-1)F_1(t)-\frac{1}{2}((t-1)^2+1)\right)F_3(t), 
\end{align}
and $F_1(t), F_2(t),$ and $F_3(t)$ are arbitrary smooth functions. Next we wish to construct the contact connection $\mcal{H}_H$. We will use the dual form of the contact connection, which in this case is given by 
\begin{equation}
\gamma^H:=\beta^\kappa\oplus \Theta^H,
\end{equation}
where $\Theta^H=\langle d\epsilon^a-p^a(t,\bz)\,dt \rangle$ for $1\leq a\leq 3$. The form of $\Theta^H$ follows from the fact that $H$ is abelian. Hence, we need only determine the functions $p^a(t,\bz)$. In order to find these functions, we use MAPLE to compute $\mu_t^*(\omega^{BC})$, where $\mu_t=(\mu\circ \Gamma_{H,\sigma}\circ \tilde{c})(t)$, and this leads to
\begin{align}\label{BC p(t) 1}
(p^1(t,\bz)\circ c)(t)&=F_3(t),\\
(p^2(t,\bz)\circ c)(t)&=F_3(t)(F_1(t)-t),\\
(p^3(t,\bz)\circ c)(t)&=B(t)-F_3(t)(F_1(t)-t+1).\label{BC p(t) 2}
\end{align}
Each of (\ref{BC p(t) 1})-(\ref{BC p(t) 2}) gives the form of one of the $p^a(t,\bz)$. Indeed, 
\begin{align}
p^1(t,\bz)&=z^3_0,\\
p^2(t,\bz)&=z^3_0(z^1_0-t),\\
p^3(t,\bz)&=B(t,\bz)-z^3_0(z^1_0+1-t),
\end{align}
where
\begin{equation}
B(t,\bz)=z^3_1+(z^1_2-1)z^1_0-\left((t-1)z^1_0-\frac{1}{2}((t-1)^2+1)\right)z^2_0.
\end{equation}
Hence the contact connection for the BC system with respect to symmetry group $H$ is
\begin{equation}
\mcal{H}_H=\{\partial_t+\sum_{i=1}^3\sum_{l_i=0}^{\sigma_i-1}z^i_{l_i+1}\partial_{z^i_{l_i}}+z^3_0\partial_{\epsilon_1}+z^3_0(z^1_0-t)\partial_{\epsilon_2}+\left(B(t,\bz)-z^3_0(z^1_0+1-t)\right)\partial_{\epsilon_3},\partial_{z^i_{\sigma_i}}\},
\end{equation}
or the dual form,
\begin{equation}
\gamma^H=\beta^{\langle 2,1\rangle}\oplus\langle d\epsilon_1-z^3_0\,dt,\,d\epsilon_2-z^3_0(z^1_0-t)\,dt,\,d\epsilon_3-(B(t,\bz)-z^3_0(z^1_0+1-t))\,dt\rangle.
\end{equation}

In the next chapter, we will introduce another linearization that may arise from the contact sub-connection. In particular, in Chapter 4 we will prove new results on the form of the contact sub-connection $\gamma^G$ that allow one to determine whether $\gamma^G$--and hence $\omega$--is or is not EDFL. 

\section{Cascade Feedback Linearization and the Truncated Euler Operator\label{chap:four}}
\subsection{Overview of Cascade Feedback Linearization} 
\tab In the previous chapter we learned that a control system $(M,\omega)$ with control admissible symmetries can be put into a normal form adapted to said symmetries. In particular, if there is a quotient system $(M/G,\omega_G)$ that is ESFL, then the original control system is ESFT equivalent to a ``linear" system plus an equation of Lie type. In this chapter we will explore the fourth item in the following definition of Cascade Feedback Linearization:
\begin{defn}\label{CFL def}
Let $(M,\omega)$ be a control system with control admissible symmetry group $G$. Then we say that $(M,\omega)$ is \textbf{cascade feedback linearizable} (CFL) if:
\begin{enumerate}
\item The right group action of $G$ on $M$ is such that the orbit space $M/G$ is again a manifold and the associated \textit{quotient system} $\omega_G$ is again a linear Pfaffian system with the same number of controls.
\item $(M/G,\omega_G)$ is equivalent to a Brunovsk\'y normal form on the partial prolongation of a jet space $(J^\kappa,\beta^\kappa)$ via an ESFT, where $\beta^\kappa$ is the Pfaffian system of Brunovsk\'y normal forms, i.e. the canonical contact system on $J^\kappa$.
\item The original control system $(M,\omega)$ is ESFT equivalent to a normal form $(J^\kappa\times G, \gamma^G)$, where $\gamma^G= \beta^\kappa\oplus\Theta^G$ with $\Theta^G$ a 1-form associated to the action of $G$ on $M$. This may be interpreted as the local trivialization of a principal $G$-bundle over $J^\kappa$ with \textit{contact sub-connection} 1-form $\gamma^G$.
\item The restrictions of $(J^\kappa\times G, \gamma^G)$ to a certain family of submanifolds known as \textit{partial contact curves} become ESFT equivalent to a Brunovsk\'y normal form. 
\end{enumerate}
\end{defn} 
The last item in the definition for a CFL system is possibly the most mysterious, and the main results in the remainder of this thesis concern necessary and sufficient conditions for a control system to have this property. It turns out that, at least in the case $\mathrm{dim}\,G=1$, the last step is related to \textit{truncated} versions of familiar operators from the calculus of variations. These operators and some of their properties are described in Section 4.3 below.
\subsection{Partial Contact Curve Reduction}
\tab The final requirement for cascade feedback linearization is ESFT equivalence to Brunovsk\'y normal forms when the system on the principal $G$-bundle is restricted to what may be called ``partial integral manifolds" of $\gamma^G$ on $J^\kappa(\mathbb{R},\mathbb{R}^m)\times G$. For $m\geq 2$, we can always rewrite a Brunovsk\'y normal form as $\beta^\kappa=\beta^\nu\oplus\beta^{\nu^\perp}$, where $\kappa=\nu+\nu^\perp$ and $m=m_\nu+m_{\nu^\perp}$, so that $\beta^\nu$ and $\beta^{\nu^\perp}$ are the canonical contact systems on $J^\nu(\mathbb{R},\mathbb{R}^{m_\nu})$ and $J^{\nu^\perp}(\mathbb{R},\mathbb{R}^{m_{\nu^\perp}})$, respectively.
\begin{defn}
We say that a submanifold $\Sigma^\nu_f\subset J^\kappa\times G$ is a \textbf{codimension $s$ partial contact curve} of $\beta^\kappa=\beta^\nu\oplus\beta^{\nu^\perp}$ if $\Sigma^\nu_f$ is an integral manifold of $\beta^\nu$ and $s$ is the sum of the entries in $\nu^\perp$. It is described by the image of a map $C_f^{\nu}=j^{\nu}f\times Id_{J^{\nu^\perp}}\times Id_G:\mathbb{R}\times J^{\nu^\perp}\times G\to J^\kappa\times G$ for a choice of sufficiently differentiable $f:\mathbb{R}\to\mathbb{R}^{m_\nu}$. In particular, we refer to a system $\gamma^G$ restricted to a family of such submanifolds  of the form $\{\Sigma_f^\nu\colon f\in C^\infty(\mathbb{R},\mathbb{R}^{m_\nu})\}$ as a \textbf{partial contact curve reduction} of $\gamma^G$ and denote it by $\bar{\gamma}^G$. 
\end{defn}
One may find it odd that we will be restricting our control system to submanifolds. However, we note that the definition for a partial contact curve leaves open an arbitrary choice for the function $f$. Indeed, restriction to a particular partial contact curve is equivalent to a choice of $m_\nu$ controls and the states determined by that choice. If the resulting system $\bar{\gamma}^G$ is ESFL, then integral curves of $\bar{\gamma}^G$ can be expressed in terms of an arbitrary (up to some mild genericity conditions) function $g:\mathbb{R}\to\mathbb{R}^{m_{\nu^\perp}}$, as well as in terms of the arbitrary choice of partial contact curve (again up to mild genericity conditions to be elaborated on later). Thus no real freedom of choice for the controls is lost by this process, since the choice of a partial contact curve is a choice of $m-s$ controls. 

We now present an example. We will again use the BC system from Example \ref{BC system}. At the end of Chapter 3, we found the contact sub-connection of the BC system associated to the admissible control symmetry group $H$ whose infinitesimal generators are given by (\ref{BC sym}). The contact sub-connection has the Brunovsk\'y normal form $\beta^{\langle 2,1\rangle}$ as a component. we will decompose this Brunovsk\'y normal form as $\beta^{\langle 2,1\rangle}=\beta^{\langle 0,1\rangle}\oplus \beta^{\langle 2 \rangle}$. Specifically, we will be choosing our reduction along the copy of $\beta^{\langle 2\rangle}$ given by $j^2(z^1_0)$ by choosing $z^1_0=f(t)$ for an arbitrary smooth function $f$. This defines a codimension 2 partial contact curve and leads to the following reduced contact sub-connection on $J^{\langle 0,1\rangle}\times H$:
\begin{equation}\label{BC reduced}
\bar{\gamma}^H=\beta^{\langle 2\rangle}\oplus\langle d\epsilon_1-z^3_0\,dt,\,d\epsilon_2-z^3_0(f(t)-t)\,dt,\,d\epsilon_3-(B(t,j^2f(t),\bz)-z^3_0(f(t)+1-t))\,dt \rangle,
\end{equation}
where 
\begin{equation}
B(t,j^2f(t),\bz)=z^3_1+(\ddot{f}(t)-1)f(t)-\left((t-1)f(t)-\frac{1}{2}((t-1)^2+1)\right)z^2_0.
\end{equation}
The refined derived type of the reduced contact sub-connection is $[[3, 0], [5, 2, 3], [6, 4, 4], [7,5,5], [8, 8]]$, which agrees with a Brunovsk\'y normal form with type $\bar{\kappa}=\langle 1,0,0,1\rangle$. Since $\bar{\rho}_3=1$,  we can construct the filtration (\ref{filt fun}):
\begin{equation}\label{BC reduced filt}
\begin{aligned}
(\bar{\Pi}^3)^\perp&=\{dt,dz^3_0-\left(f(t)^2-tf(t)+\frac{1}{2}t^2\right)d\epsilon_1+(f(t)-t)d\epsilon_2-d\epsilon_3\},\\
\coder{\Xi}{3}&=\coder{\Xi}{3}_2=\{dt,dz^2_0-\frac{1}{2}t(2f(t)-t)d\epsilon_1-d\epsilon_3,(f(t)-t)d\epsilon_1-d\epsilon_2\},\\
\coder{\Xi}{2}&=\coder{\Xi}{2}_1=\{dt,d\epsilon_1,d\epsilon_2,dz^2_0-d\epsilon_3\},\\
\coder{\Xi}{1}&=\{dt,d\epsilon_1,d\epsilon_2,dz^2_0-d\epsilon_3,dz^3_0\},\\
\coder{\Xi}{1}_0&=\{dt,d\epsilon_1,d\epsilon_2,d\epsilon_3,dz^2_0,dz^3_0\},
\end{aligned}
\end{equation}
and observe that all subbundles of the filtration are integrable and contain $dt$. Therefore, we can conclude that $\bar{\gamma}^G$ is ESFL to a Brunovsk\'y normal form of signature $\langle 1,0,0,1\rangle$ by Definition \ref{Goursat}, Theorem \ref{Generalized Goursat Normal Form}, and Theorem \ref{Goursat ESFL}. Thus we conclude that the BC system is CFL with respect to the symmetry group $H$. 

We now mention an important consequence of a control system satisfying Definition \ref{CFL def}. 
\begin{thm}\label{CFL is EI}\cite{VassiliouCascadeI}
If a control system $\omega$ on a smooth manifold $M$ is cascade feedback linearizable, then it is explicitly integrable. 
\end{thm}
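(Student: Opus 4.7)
The plan is to chain together the four items of Definition \ref{CFL def} so that generic trajectories of $\omega$ get expressed algebraically in terms of finitely many arbitrary smooth functions of $t$ and their derivatives; this is exactly the content of being EI.

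By item (3), there is an ESFT identifying $(M,\omega)$ with $(J^\kappa\times G,\gamma^G)$, where $\gamma^G=\beta^\kappa\oplus\Theta^G$. Because ESFTs preserve the source variable $t$ and are algebraic in the state/control coordinates, they carry EI families to EI families, so it suffices to realise $\gamma^G$ as EI. Using the splitting $\beta^\kappa=\beta^\nu\oplus\beta^{\nu^\perp}$ from item (4), I would fix an arbitrary smooth $f\colon\mathbb{R}\to\mathbb{R}^{m_\nu}$. Then $\Sigma_f^\nu=\mathrm{image}(C_f^\nu)$ is automatically an integral manifold of $\beta^\nu$ and prescribes the $\nu$-block jet coordinates as $z^i_{l_i}=(f^i)^{(l_i)}(t)$.

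Restricting $\gamma^G$ along $C_f^\nu$ yields the reduced system $\bar\gamma^G$ on $\mathbb{R}\times J^{\nu^\perp}\times G$, whose coefficients now depend on $t$ and on the jet of $f$. By item (4), $\bar\gamma^G$ is ESFT equivalent to a Brunovsk\'y normal form $\beta^{\bar\kappa}$, whose generic integral curves are simply $j^{\bar\kappa}g$ for an arbitrary $g\colon\mathbb{R}\to\mathbb{R}^{m_{\nu^\perp}}$. Pulling such a curve back through the linearizing ESFT provides explicit algebraic formulas for both the remaining $\nu^\perp$ jet coordinates and the group coordinates $\epsilon(t)\in G$, expressed purely in $t$ and derivatives of $f$ and $g$. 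Reassembling these with the prescribed $j^\nu f(t)$ on $\Sigma_f^\nu$ produces an integral curve of $\gamma^G$. Since $f$ and $g$ are independently arbitrary and together provide $m_\nu+m_{\nu^\perp}=m$ arbitrary functions, a final application of the ESFT from item (3) displays $\omega$ as EI.

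The main obstacle is ensuring that no quadratures slip in anywhere along this chain, particularly when extracting $\epsilon(t)$ out of the $\Theta^G$ block. This rests on two clean facts: ESFTs are purely algebraic in the state and control variables, and integral curves of any Brunovsk\'y form are presented without integration as jets of arbitrary functions. A subsidiary point I would verify is that this construction exhausts a generic family of trajectories rather than a proper subfamily: choosing a partial contact curve amounts exactly to specifying the $m_\nu$ controls tied to the $\beta^\nu$ block, so as $f$ ranges over smooth functions one recovers every admissible profile for that block, while varying $g$ independently supplies the remaining $m_{\nu^\perp}$ controls.
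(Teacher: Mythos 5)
Your argument is correct and follows essentially the same route as the paper's treatment (which defers the formal proof to the cited reference but carries out exactly this chain on the BC system): restrict $\gamma^G$ to a partial contact curve determined by an arbitrary $f$, use the ESFL property of the reduction to write the remaining jet coordinates and the group coordinates $\epsilon(t)$ algebraically in the jets of arbitrary $f$ and $g$, and transport back through the ESFT of item (3) of Definition \ref{CFL def}. No substantive difference from the paper's approach.
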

As example demonstrating Theorem \ref{CFL is EI}, we once again use the BC system. First we construct the ESF linearization for (\ref{BC reduced}) via procedure \textbf{contact} B since $\bar{\rho}_k=1$. Using (\ref{BC reduced filt}), we find that $\coder{\Xi}{1}_0/\coder{\Xi}{1}=\{d\epsilon_3\}$. Thus the new contact coordinates on $J^{\langle 1,0,0,1\rangle}$ can be given by
\begin{equation}\label{BC reduced ESF}
\begin{aligned}
w^1_0&=z^2_0,&w^2_0&=z^2_0-\left(f(t)^2-tf(t)+\frac{1}{2}t^2\right)\epsilon_1+(f(t)-t)\epsilon_2-\epsilon_3,\\
w^1_1&=z^2_1,&w^2_1&=(f(t)-t)\epsilon_1-\epsilon_2-f(t)(1+f(t)\ddot{f}(t)),\\
\,&\,&w^2_2&=-\epsilon_1,\\
\,&\,&w^2_3&=-z^3_0,\\
\,&\,&w^2_4&=-z^3_1.
\end{aligned}
\end{equation} 
Let $j^1g_1(t)$ and $j^4g_2(t)$ be arbitrary smooth solutions to the canonical contact system on $J^{\langle 1,0,0,1 \rangle}$. Inverting (\ref{BC reduced ESF}) and solving for $(\bz,\epsilon)$ in terms of $j^1g_1(t)$ and $j^4g_2(t)$, we find the following solution to $\bar{\gamma}^H$:\begin{equation}
\begin{aligned}
z^2_0(t)&= g_1(t), & \epsilon_1(t) &=-\ddot{g}_2(t),\\
z^2_1(t)&=\dot{g}_1(t), & \epsilon_2(t) & = -(f(t)-t)\ddot{g}_2-\dot{g}_2-f(t)(1+f(t)\ddot{f}(t)),\\
z^3_0(t)&= -\dddot{g_2}(t), & \epsilon_3(t) &= g_1(t)+\left(f(t)^2-tf(t)+\frac{1}{2}t^2\right)\ddot{g}_2(t)\\
z^3_1(t) &= -\ddddot{g_2}(t), &\, &-(f(t)-t)((f(t)-t)\ddot{g}_2+\dot{g}_2\\
\,&\,&\,&+f(t)(1+f(t)\ddot{f}(t)))-g_2(t).
\end{aligned}
\end{equation}
The contact curve reduction depends on an arbitrary smooth function $f(t)$. Thus, if we append $j^2z^1_0(t)=j^2f(t)$ to our above solution, then we have described all integral curves to $\gamma^H$. Furthermore, if we pass through the ESFT $\tilde{\varphi}$ from Theorem \ref{control reconstruction}, then we can find an explicit solution to the BC system in terms of arbitrary functions and their derivatives alone. That is, no integration is required to describe the trajectories of the BC system. 
\subsection{Truncated Euler Operator}
\tab In the last step of the cascade feedback linearization process, we want to know \textit{why} a contact sub-connection $\gamma^G$ admits an ESFL partial contact curve reduction. Towards this goal, we will explore the structure of ESFL reductions more closely by directly analyzing the PDEs associated to the calculation of the refined derived type. Recall that a contact sub-connection on $J^\kappa(\mathbb{R},\mathbb{R}^m)\times G$ has the form
\begin{equation}
\mathcal{H}_G=\{X,\partial_{u^a}\},
\end{equation}
where $1\leq a\leq m$ and 
\begin{equation}
X=\partial_t+\sum_{i=1}^m\sum_{l_i=0}^{\sigma_i-1}z^i_{l_{i}+1}\partiald{\,}{z^i_{l_i}}+\sum_{a,b}\rho^a_b(\epsilon)p^b(z)\partial_{\epsilon^a}.
\end{equation}
Note that the terms involving only jet bundle coordinates look very similar to the notion of a total derivative on an infinite jet bundle \cite{VarBi}. In computing the derived flag of an ESFL reduction for this contact sub-connection, one finds that this \textit{truncated} total derivative operator is iterated in such a way that there is a \textit{truncated} Euler operator that naturally appears. Hence, to better understand how the properties of these operators impact the refined derived type of an ESFL reduction, we will spend this section building some results about these types of operators. These truncated operators have similar--but not exactly the same--properties as their full infinite jet bundle analogues. The difference between the truncated versions and non-truncated versions arise naturally in proofs of results in this chapter, especially in Theorem \ref{sufficiency}. 
\begin{defn}
The \textbf{truncated total derivative} and \textbf{truncated sub-fiber total derivative} operators on $J^\kappa(\mathbb{R},\mathbb{R}^m)$ are given by
\begin{align*}
\sD_t:=&\partial_t+\sum_{i=1}^m\sum_{l_i=0}^{\sigma_i-1}z^i_{l_{i}+1}\partiald{\,}{z^i_{l_i}},\\
\sD_{t,i}:=&\partial_t+\sum_{l_i=0}^{\sigma_i-1}z^i_{l_i+1}\partiald{\,}{z^i_{l_i}}
\end{align*}
respectively, where $\sigma_i$ is the order of the jet for each $z^i_0$. 
\end{defn}
\begin{prop}\label{Dt inv 1}
The first integrals of the truncated total derivative operator $\sD_{t}$ are generated by
\begin{equation}
I^i_k(t,z)=z^i_{\sigma_i-k}-\int_{t_k}^tI^i_{k-1}(s,z)\,ds,
\end{equation}
where $I^i_0=z^i_{\sigma_i}$, $I^i_{-1}=0$, $0\leq k\leq \sigma_i$, $1\leq i\leq m$ and the $t_k$ are arbitrary constants.
\end{prop}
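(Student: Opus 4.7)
The plan is to verify the proposition by (i) a direct induction on $k$ showing $\sD_t I^i_k = 0$, and (ii) a functional independence plus dimension-count argument showing that the family $\{I^i_k\}$ is a complete set of generators of the ring of first integrals. The setup is aided by the key structural observation that $\sD_t$, despite superficial resemblance to the infinite-jet total derivative, has no $\partial_{z^i_{\sigma_i}}$ component because the inner sum terminates at $l_i = \sigma_i - 1$; this is precisely why each top coordinate $z^i_{\sigma_i}$ is automatically a first integral and provides the base case.

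For the induction, the base case $k=0$ gives $I^i_0 = z^i_{\sigma_i}$, so $\sD_t I^i_0 = 0$ as just noted (using the convention $I^i_{-1} = 0$). For the inductive step, I would take one of two equivalent routes. The direct route applies $\sD_t$ to $I^i_k = z^i_{\sigma_i - k} - \int_{t_k}^t I^i_{k-1}(s,z)\,ds$, using the fundamental theorem of calculus to obtain $\partial_t I^i_k = -I^i_{k-1}(t,z)$, and differentiation under the integral sign for the $\partial_{z^j_l}$ terms. The leading $\partial_{z^i_{\sigma_i - k}} I^i_k = 1$ paired with its coefficient $z^i_{\sigma_i - k + 1}$ in $\sD_t$ must exactly cancel the $-I^i_{k-1}(t,z)$ contribution. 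The cleanest way to make this cancellation transparent is the alternative route: expand the recursion to a closed form, obtaining (when $t_0 := t_1 = t_2 = \cdots$)
\begin{equation}
I^i_k(t,z) \;=\; \sum_{j=0}^{k} \frac{(-1)^j (t - t_0)^j}{j!}\, z^i_{\sigma_i - k + j},
\end{equation}
and verifying by reindexing $j \mapsto j-1$ that the $\partial_t$ sum and the $\sum_l z^i_{l+1}\partial_{z^i_l}$ sum are negatives of one another.

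For functional independence, I would order the $I^i_k$ lexicographically by $i$ then $k$, and the coordinates $z^i_l$ in parallel. Since $I^i_k$ depends only on coordinates $z^i_{\sigma_i - k}, \ldots, z^i_{\sigma_i}$ with the same index $i$, the Jacobian is block diagonal in $i$, and inside each block the closed form makes the matrix lower-triangular with $1$'s on the diagonal. For completeness, I would dimension count: $\dim J^\kappa = 1 + \sum_{i=1}^m (\sigma_i + 1)$, and since $\sD_t$ spans a rank-one integrable (in fact one-dimensional) distribution, the space of functionally independent first integrals has dimension $\sum_{i=1}^m (\sigma_i + 1)$. The family $\{I^i_k : 1 \le i \le m,\, 0 \le k \le \sigma_i\}$ has exactly this cardinality, so together with independence this yields the claim that the $I^i_k$ generate all first integrals.

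The main obstacle is a subtle bookkeeping issue with the integration constants $t_k$: a brute-force expansion shows that taking the $t_k$ distinct produces a term proportional to $(t_2 - t_1)\,z^i_{\sigma_i}$ in $\sD_t I^i_2$, which is nonzero. Thus the recursion as written yields genuine first integrals only when the $t_k$ are compatibly chosen (for instance all equal to a common $t_0$), or else after modifying each $I^i_k$ by additive constants derived from the lower-order $I^i_{k'}$, $k' < k$. I would handle this by fixing $t_k = t_0$ throughout the proof, which gives the closed form above and keeps the argument uniform; the resulting first integrals still generate the full ring of first integrals since constants and any valid choices differ only by translations, which do not affect the rank of the Jacobian.
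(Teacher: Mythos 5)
Your proof is correct and follows essentially the same route as the paper: induction on $k$ (the paper computes $\sD_t I^i_k = z^i_{\sigma_i-k+1}-I^i_{k-1}(t_k,z)$ and then invokes the recursion), followed by linear independence of the $dI^i_k$ and a dimension count, with your closed-form expansion serving as an equivalent way to see the cancellation. Your point about the constants is a genuine catch rather than a defect in your argument: the paper's inductive step silently uses $I^i_{k-1}(t_k,z)=z^i_{\sigma_i-k+1}$, which is valid only when the base points are compatibly chosen (e.g.\ all equal), exactly the $(t_2-t_1)\,z^i_{\sigma_i}$ obstruction you computed, and the explicit formulas the paper lists right after the proposition indeed take every $t_k=0$, consistent with your fix.
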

\begin{proof}
The proof follows by induction on $k$ for each $i$. It is immediate that $\sD_t(I_0^i)=0$, thus establishing the base case. Now assume that the above identity holds for some $k>0$ and that $\sD_t(I^i_{k-1})=0$. Then 
\begin{equation}
\begin{aligned}
\sD_t(I^i_k(t,z))&=z^i_{\sigma_i-k+1}-I^i_{k-1}(t,z)-\int_{t_k}^t \sD_s(I^i_{k-1}(s,z))-\partiald{I^i_{k-1}(s,z)}{s}\,ds\\
\,&=z^i_{\sigma_i-k+1}-I^i_{k-1}(t,z)+\int_{t_k}^t\partiald{I^i_{k-1}(s,z)}{s}\,ds\\
\,&=z^i_{\sigma_i-k+1}-I^i_{k-1}(t_k,z).
\end{aligned}
\end{equation}
In the last line we have the invariant $I^i_{k-1}$ evaluated at $t=t_k$. The induction hypothesis means that the recursive formula is true for $k-1$ and hence $I^i_{k-1}(t_k,z)=z^i_{\sigma_i-k+1}$. Thus, $\sD_t(I^i_k(t,z))=0$ for all $k\leq \sigma_i$. It is easy to see that the all $dI^i_k$ are linearly independent of each other, and upon a quick dimension count we find that there are precisely $\sum_{i\leq m}(\sigma_i+1)$ functions $I^i_k$. The total dimension of $J^\kappa$ is $1+\sum_{i\leq m}(\sigma_i+1)$; hence, the $I^i_k$ are a complete set of invariants for $\sD_t$. 
\end{proof}
Note that we can explicitly write formulas for the invariant functions in Proposition \ref{Dt inv 1}. Indeed, for each $t_k=0$,  
\begin{equation}
\begin{aligned}
I^i_0&=z^i_{\sigma_i},\\
I^i_1&=z^i_{\sigma_i-1}-tz^i_{\sigma_i},\\
I^i_2&=z^i_{\sigma_i-2}-tz^i_{\sigma_i-1}+\frac{1}{2}t^2z^i_{\sigma_i},\\
\,&\vdots\\
I^i_k&=\sum_{a=0}^k\frac{(-1)^{k-a}}{(k-a)!}t^{k-a}z^i_{\sigma_i-a},\\
\,&\vdots\\
I^i_{\sigma_i}&=\sum_{a=0}^{\sigma_i}\frac{(-1)^{\sigma_i-a}}{(\sigma_i-a)!}t^{\sigma_i-a}z^i_{\sigma_i-a}.
\end{aligned}
\end{equation}
An important observation about the proposition above is that if $f:J^\kappa\to\mathbb{R}$ is a differentiable function such that $\sD_t(f)=0$, then $f$ cannot have dependence on arbitrary functions of $t$ alone. With this in mind, we will now characterize the time-independent invariant functions of $\sD_t$. The first two of these $t$-independent invariants are
\begin{align}
J^i_2&=-\frac{1}{2}(z^i_{\sigma_i-1})^2+z^i_{\sigma^i-2}z^i_{\sigma_i},\\
J^i_3&=\frac{1}{3}(z^i_{\sigma_i-1})^3-z^i_{\sigma_i-2}z^i_{\sigma_i-1}z^i_{\sigma_i}+z^i_{\sigma_i-3}(z^i_{\sigma_i})^2.
\end{align}
\begin{prop}\label{t indep tot invs}
The $t$-independent invariant functions of $\sD_t$ are generated by the functions $J^i_1:=z^i_{\sigma_i}$ together with the functions
\begin{equation}
J^i_k=(-1)^{k-1}\frac{k-1}{k!}(z^i_{\sigma_i-1})^k+\sum_{a=2}^k\frac{(-1)^{k-a}}{(k-a)!}z^i_{\sigma_i-a}(z^i_{\sigma_i})^{a-1}(z^i_{\sigma_i-1})^{k-a}
\end{equation}
for $2\leq k\leq \sigma_i$ and $i\leq m$. 
\end{prop}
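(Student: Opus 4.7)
The plan is to establish invariance of each $J^i_k$ under $\sD_t$ by reducing the calculation to a single chain and then counting dimensions. To begin, I would observe that $J^i_k$ depends only on variables from chain $i$ and has no explicit $t$-dependence, so the condition $\sD_t J^i_k = 0$ reduces to the requirement that the spatial single-chain operator $\sD_{t,i}' := \sum_{l=0}^{\sigma_i-1} z^i_{l+1}\partial_{z^i_l}$ annihilates $J^i_k$: the contributions of $\partial_t$ and of $\sD_{t,j}'$ for $j\neq i$ vanish automatically. Hence the entire invariance check proceeds one chain at a time.

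For the verification I would introduce on chain $i$ the reversed-index coordinates $w_a := z^i_{\sigma_i - a}$, so that $\sD_{t,i}' = \sum_{a=1}^{\sigma_i} w_{a-1}\partial_{w_a}$, and then homogenize by setting $v_a := w_a/w_0$ for $1 \leq a \leq \sigma_i$, working on the open set where $w_0 = z^i_{\sigma_i}\neq 0$. In these coordinates $\sD_{t,i}'$ takes the clean form
$$\partial_{v_1} + v_1\partial_{v_2} + v_2\partial_{v_3} + \cdots + v_{\sigma_i-1}\partial_{v_{\sigma_i}},$$
and a direct calculation rewrites $J^i_k = (z^i_{\sigma_i})^k\,\mu_k$ with
$$\mu_k \;=\; (-1)^{k-1}\frac{k-1}{k!}\,v_1^k \;+\; \sum_{a=2}^{k} \frac{(-1)^{k-a}}{(k-a)!}\,v_a\,v_1^{k-a}.$$
Applying $\sD_{t,i}'$ to $\mu_k$, the differentiation of the $v_a$ factors produces a shifted sum whose monomials pair off, after reindexing $a\mapsto a-1$, with those obtained from differentiating the $v_1^{k-a}$ factors. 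The only surviving term is a multiple of $v_1^{k-1}$ whose coefficient is proportional to $(-1)^{k-1} + (-1)^{k-2}$, which vanishes. Hence $\sD_{t,i}'\mu_k = 0$, and since $\sD_{t,i}' w_0 = 0$, this gives $\sD_t J^i_k = 0$.

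For functional independence, the monomial $z^i_{\sigma_i-k}(z^i_{\sigma_i})^{k-1}$ appears in $J^i_k$ but in no other $J^i_j$ on chain $i$, so the differentials $\{dJ^i_k\}_k$ are linearly independent there; across distinct chains the variable sets are disjoint. For completeness, observe that on each chain $i$ the operator $\sD_{t,i}'$ is a rank-one vector field on the $(\sigma_i + 1)$-dimensional coordinate space, hence admits exactly $\sigma_i$ functionally independent invariants. Since $J^i_1,\ldots,J^i_{\sigma_i}$ provide this many independent invariants on chain $i$, they generate all $t$-independent invariants of $\sD_t$ built from that chain's coordinates.

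The main technical obstacle is the combinatorial identity $\sD_{t,i}'\mu_k = 0$, which requires careful bookkeeping of signs and factorials across two sums of almost equal length. The homogenizing substitution $v_a = w_a/w_0$ is the decisive simplification here: without it, the corresponding polynomial identity in the original $z^i_l$ coordinates is substantially harder to organize, whereas in the $v_a$ coordinates the cancellation becomes a transparent pairwise sign argument.
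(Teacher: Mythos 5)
Your proof is correct, and the invariance verification takes a genuinely different route from the paper's. The paper proves $\sD_t(J^i_k)=0$ by differentiating the defining formula directly in the jet coordinates and observing that the resulting sum telescopes, with the two leftover terms cancelling; you instead reverse the chain, pass to the homogenized coordinates $v_a=z^i_{\sigma_i-a}/z^i_{\sigma_i}$ on the set $z^i_{\sigma_i}\neq 0$, where the single-chain operator becomes $\partial_{v_1}+v_1\partial_{v_2}+\cdots+v_{\sigma_i-1}\partial_{v_{\sigma_i}}$ and $J^i_k=(z^i_{\sigma_i})^k\mu_k$, so the cancellation is a transparent pairwise sign argument (I checked it; the surviving $v_1^{k-1}$ coefficients do cancel). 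What your route buys is this transparency and the explicit weighted homogeneity of $J^i_k$ in $z^i_{\sigma_i}$; its small cost is the restriction to $z^i_{\sigma_i}\neq0$, which you should close with one sentence: $\sD_t J^i_k$ is a polynomial vanishing on a dense open set, hence vanishes identically. Your independence argument (each $J^i_k$ introduces the new variable $z^i_{\sigma_i-k}$ via the monomial $z^i_{\sigma_i-k}(z^i_{\sigma_i})^{k-1}$) sharpens the paper's bare assertion that the $dJ^i_k$ are independent. Your completeness step is the same local dimension count the paper makes, and your phrasing --- that the $J^i_k$ generate the $t$-independent invariants built from chain $i$'s coordinates --- is in fact the correct scope for that count: for $m\geq 2$ there exist joint invariants of $\partial_t$ and $\sD_t$ mixing distinct chains (for instance $z^1_1z^2_0-z^2_1z^1_0$ when $\sigma_1=\sigma_2=1$) that are not functions of the listed generators, so the per-chain reading is what both arguments actually establish; your hedge matches, and slightly improves on, the paper's own counting step rather than falling short of it.
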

\begin{proof}
To prove that each $J^i_k$ is an invariant of $\sD_t$, we will simply compute $\sD_t(J^i_k)$ for all $1\leq i\leq m$ and $2\leq k\leq \sigma_i$. Indeed, since 
\begin{align}
\sD_t(z^i_{l_i})&=z^i_{{l_i}+1},\\
\sD_t(z^i_{\sigma_i})&=0
\end{align}
for all $0\leq l_i\leq \sigma_i-1$ and $1\leq i\leq m$, then
\begin{equation}\label{telescope} 
\begin{aligned}
\sD_t(J^i_k)&=(-1)^{k-1}\frac{1}{(k-2)!}z^i_{\sigma_i}\left(z^i_{\sigma_i-1}\right)^{k-1}\\
\,&+\sum_{a=2}^k\frac{(-1)^{k-a}}{(k-a)!}\left(z^i_{\sigma_i-a+1}\left(z^i_{\sigma_i}\right)^{a-1}\left(z^i_{\sigma_i-1}\right)^{k-a}+(k-a)z^i_{\sigma_i-a}(z^i_{\sigma_i})^a(z^i_{\sigma_i-1})^{k-a-1}\right)\\
\,&=(-1)^{k-1}\frac{1}{(k-2)!}z^i_{\sigma_i}\left(z^i_{\sigma_i-1}\right)^{k-1}\\
\,&+\sum_{a=2}^{k-1}\left(\frac{(-1)^{k-a}}{(k-a)!}z^i_{\sigma_i-a+1}\left(z^i_{\sigma_i}\right)^{a-1}\left(z^i_{\sigma_i-1}\right)^{k-a}-\frac{(-1)^{k-a}(k-a-1)}{(k-a)!}z^i_{\sigma_i-a}(z^i_{\sigma_i})^a(z^i_{\sigma_i-1})^{k-a-1}\right).\\
\end{aligned}
\end{equation}
The sum in equation (\ref{telescope}) is telescopic and the last term vanishes. Hence,
\begin{equation}
\begin{aligned}
\sD_t(J^i_k)&=(-1)^{k-1}\frac{1}{(k-2)!}z^i_{\sigma_i}\left(z^i_{\sigma_i-1}\right)^{k-1}+(-1)^{k-2}\frac{1}{(k-2)!}z^i_{\sigma_i-1}z^i_{\sigma_i}\left(z^i_{\sigma_i-1}\right)^{k-2}\\
\,&=0.
\end{aligned}
\end{equation}
We finish with a dimension count. The $J^i_k$ are invariants of both $\sD_t$ and $\partial_t$. Thus, for each $i=1,2,\ldots,m$, there are precisely $\sigma_i$ independent invariants of $\sD_t$ and $\partial_t$ that may be chosen to generate all invariants of $\sD_t$ and $\partial_t$. It is clear that all the $dJ^i_k$ are linearly independent. Therefore, the $J^i_k$ will generate the $t$-independent invariants of $\sD_t$. 
\end{proof}

Next, we define the truncated Euler operator and prove some properties about its kernel. Its definition is in terms of the truncated total derivative operators. The truncated Euler operator appears naturally when computing terms arising in the derived flag of a contact sub-connection. In particular, an understanding of its kernel will lead to insight about how the symmetry group of a control system can act on the manifold and have the property that the control system is CFL with respect to that symmetry. 
\begin{defn}
For each $\tau_i\leq \sigma_i$, define the \textbf{truncated Euler operator} $\sE_{\tau_i}$ on $J^\kappa(\mathbb{R},\mathbb{R}^m)$ to be
\begin{equation}
\sE_{\tau_i}(f)=\partiald{f}{z^i_0}-\sD_t\left(\partiald{f}{z^i_1}\right)+\sD_t^2\left(\partiald{f}{z^i_2}\right)-\cdots+(-1)^{\tau_i}\sD_t^{\tau_i}\left(\partiald{f}{z^i_{\tau_i}}\right)
\end{equation}
for any sufficiently differentiable function $f$ on $J^\kappa$, where $\sD_t$ is the truncated total derivative operator on $J^\kappa$. We can also define the \textbf{truncated sub-fiber Euler operator} $\sE_{\tau_i,j}$ by using the truncated sub-fiber total derivative operator $\sD_{t,j}$ in place of $\sD_t$. 
\end{defn}
\begin{prop}\label{EO kernel}
The kernel of the truncated Euler operator of any order $\tau_i\leq \sigma_i$ on $J^\kappa(\mathbb{R},\mathbb{R}^m)$ contains the set
\begin{equation}
K_{\sE_{\tau_i}}:=\left\{f(z)=\sD_tg(z)\,|\,\,g\in C^1(J^\kappa(\mathbb{R},\mathbb{R}^m))\,\text{and}\,\partiald{g}{z^i_{\tau_i}}\in\ker\,\sD_t^{\tau_i+1}\right\},
\end{equation}
where $\sD_t$ is the truncated total derivative operator on $J^\kappa(\mathbb{R},\mathbb{R}^m)$. 
\end{prop}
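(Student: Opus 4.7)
The plan is to directly compute $\sE_{\tau_i}(\sD_t g)$ and isolate the single surviving boundary term, which turns out to be exactly $(-1)^{\tau_i}\sD_t^{\tau_i+1}(\partial g/\partial z^i_{\tau_i})$. Hence the containment $K_{\sE_{\tau_i}}\subseteq \ker \sE_{\tau_i}$ reduces to requiring this boundary term to vanish, which is the condition defining $K_{\sE_{\tau_i}}$.

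First I would record the basic commutator identity
\begin{equation}
\left[\frac{\partial}{\partial z^i_k},\,\sD_t\right]=\begin{cases}\frac{\partial}{\partial z^i_{k-1}} & 1\leq k\leq \sigma_i,\\ 0 & k=0,\end{cases}
\end{equation}
which follows immediately from the coordinate formula for $\sD_t$ together with $\partial z^j_{l+1}/\partial z^i_k=\delta^j_i\delta^{l+1}_k$. Applying this to a $C^1$ function $g$ on $J^\kappa(\mathbb{R},\mathbb{R}^m)$ gives
\begin{equation}
\frac{\partial(\sD_t g)}{\partial z^i_0}=\sD_t\frac{\partial g}{\partial z^i_0},\qquad \frac{\partial(\sD_t g)}{\partial z^i_k}=\sD_t\frac{\partial g}{\partial z^i_k}+\frac{\partial g}{\partial z^i_{k-1}}\ \ (1\leq k\leq \tau_i).
\end{equation}

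Next I would substitute these into the definition of $\sE_{\tau_i}(\sD_t g)=\sum_{k=0}^{\tau_i}(-1)^k\sD_t^k\bigl(\partial(\sD_t g)/\partial z^i_k\bigr)$, split the $k\geq 1$ terms into their two pieces, and reindex the second piece via $j=k-1$. The resulting expression is
\begin{equation}
\sE_{\tau_i}(\sD_t g)=\sD_t\frac{\partial g}{\partial z^i_0}+\sum_{k=1}^{\tau_i}(-1)^k\sD_t^{k+1}\frac{\partial g}{\partial z^i_k}-\sum_{j=0}^{\tau_i-1}(-1)^j\sD_t^{j+1}\frac{\partial g}{\partial z^i_j}.
\end{equation}
The $j=0$ contribution of the final sum exactly cancels the first term, and for $1\leq j=k\leq \tau_i-1$ the two remaining sums have identical summands of opposite sign and cancel term-by-term. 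The only surviving contribution is the $k=\tau_i$ term of the middle sum, yielding
\begin{equation}
\sE_{\tau_i}(\sD_t g)=(-1)^{\tau_i}\,\sD_t^{\tau_i+1}\!\left(\frac{\partial g}{\partial z^i_{\tau_i}}\right).
\end{equation}

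From this identity the claim is immediate: if $\partial g/\partial z^i_{\tau_i}\in\ker\sD_t^{\tau_i+1}$ then $\sE_{\tau_i}(\sD_t g)=0$, proving $K_{\sE_{\tau_i}}\subseteq \ker \sE_{\tau_i}$. There is no real obstacle, since the computation is a direct telescoping argument; the only delicate point is keeping track of the index shift in the commutator identity, which is precisely what distinguishes the truncated operator from the full Euler operator and is why the boundary term $(-1)^{\tau_i}\sD_t^{\tau_i+1}(\partial g/\partial z^i_{\tau_i})$ fails to vanish automatically as it would in the infinite jet setting. I would also remark (without proof here) that this identity strongly suggests $K_{\sE_{\tau_i}}$ is in fact \emph{all} of $\ker\sE_{\tau_i}$ modulo constants, paralleling the classical characterization of $\ker E$ cited earlier from \cite{OlverLieBook}, though the present proposition only asserts the containment.
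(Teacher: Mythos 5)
Your computation is correct and follows essentially the same route as the paper's proof: both use the commutation identity $\partiald{\,}{z^i_{k}}\circ\sD_t=\partiald{\,}{z^i_{k-1}}+\sD_t\circ\partiald{\,}{z^i_{k}}$ (trivial for $k=0$) and telescope $\sE_{\tau_i}(\sD_t g)$ down to the single boundary term $(-1)^{\tau_i}\sD_t^{\tau_i+1}\left(\partiald{g}{z^i_{\tau_i}}\right)$, after which the hypothesis $\partiald{g}{z^i_{\tau_i}}\in\ker\sD_t^{\tau_i+1}$ gives the containment. Your closing speculation that $K_{\sE_{\tau_i}}$ exhausts the kernel is not asserted by the proposition and is not needed, so it does not affect the argument.
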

\begin{proof}
The result above effectively follows from repeated applications of the identity 
\begin{equation}
\partiald{\,}{z^i_{l_i+1}}\circ \sD_t=\partiald{\,}{z^i_{l_i}}+\sD_t\circ \partiald{\,}{z^i_{l_i+1}}.
\end{equation}
In the case that $l_i=0$, the identity is $\partiald{\,}{z^i_0}\circ \sD_t=\sD_t\circ \partiald{\,}{z^i_0}$. Applying these identities, we have
\begin{equation}
\begin{aligned}
\sE_{\tau_i}(f)&=\sD_t\left(\partiald{g}{z^i_0}\right)-\sD_t\left(\partiald{g}{z^i_0}+\sD_t\left(\partiald{g}{z^i_1}\right)\right)+\sD_t^2\left(\partiald{g}{z^i_1}+\sD_t\left(\partiald{g}{z^i_2}\right)\right)-\cdots\\
\,&+(-1)^{\tau_i-1}\sD_t^{\tau_i-1}\left(\partiald{g}{z^i_{\tau_i-2}}+\sD_t\left(\partiald{g}{z^i_{\tau_i-1}}\right)\right)+(-1)^{\tau_i}\sD_t^{\tau_i}\left(\partiald{g}{z^i_{\tau_i-1}}+\sD_t\left(\partiald{g}{z^i_{\tau_i}}\right)\right),
\end{aligned}
\end{equation}
which reduces to 
\begin{equation}
\sE_{\tau_i}(f)=(-1)^{\tau_i}\sD_t^{\tau_i+1}\left(\partiald{g}{z^i_{\tau_i}}\right).
\end{equation}
But since $\partiald{g}{z^i_k}$ is in the kernel of $\sD_t^{\tau_i+1}$, this implies that $\sE_{\tau_i}(\sD_tg)=0$.
\end{proof}
\begin{cor}\label{kernel cor}
If $f=\sD_tg$, where $\sD_t$ is the truncated total derivative operator on the jet space $J^\kappa(\mathbb{R},\mathbb{R}^m)$, $g$ is a function on $J^\kappa(\mathbb{R},\mathbb{R}^m)$, and $\sE_{\tau_i}$ is the truncated Euler operator of order $\tau_i\leq\sigma_i$, then 
\begin{equation}
\sE_{\tau_i}(f)=(-1)^{\tau_i}\sD_t^{\tau_i+1}\left(\partiald{g}{z^i_{\tau_i}}\right).
\end{equation}
\end{cor}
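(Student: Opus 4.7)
The plan is to observe that the claimed identity is precisely the unconditional intermediate formula that already appeared inside the proof of Proposition \ref{EO kernel}, before the kernel hypothesis on $\partial g/\partial z^i_{\tau_i}$ was invoked. So the corollary is obtained by repeating that computation and simply stopping one step earlier, without imposing that $\sD_t^{\tau_i+1}\bigl(\partial g/\partial z^i_{\tau_i}\bigr)$ vanishes.

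First I would record the key commutator identity on $J^\kappa(\mathbb{R},\mathbb{R}^m)$: for $0\le l\le\sigma_i-1$,
\begin{equation}
\partiald{\,}{z^i_{l+1}}\circ\sD_t \;=\; \partiald{\,}{z^i_{l}} + \sD_t\circ\partiald{\,}{z^i_{l+1}},
\end{equation}
together with the boundary relation $\partial_{z^i_0}\circ\sD_t=\sD_t\circ\partial_{z^i_0}$. Both are immediate coordinate computations from $\sD_t=\partial_t+\sum_{j,l_j} z^j_{l_j+1}\partial_{z^j_{l_j}}$, using that no $z^i_{l+1}$ appears as a coefficient of anything that $\partial_{z^i_{l+1}}$ could hit except the $z^i_{l+1}\partial_{z^i_l}$ term itself.

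Next, substitute $f=\sD_t g$ into the definition
\begin{equation}
\sE_{\tau_i}(f)=\sum_{k=0}^{\tau_i}(-1)^k\sD_t^{k}\!\left(\partiald{(\sD_t g)}{z^i_k}\right).
\end{equation}
Apply the commutator identity term by term to rewrite $\partial_{z^i_k}\sD_t g = \partial_{z^i_{k-1}}g + \sD_t\bigl(\partial_{z^i_k}g\bigr)$ for $1\le k\le \tau_i$, and $\partial_{z^i_0}\sD_t g = \sD_t(\partial_{z^i_0}g)$ for $k=0$. Substituting produces a telescoping sum in which the $(-1)^k\sD_t^{k}\sD_t(\partial_{z^i_k}g)$ contribution at order $k$ cancels the $(-1)^{k+1}\sD_t^{k+1}(\partial_{z^i_k}g)$ contribution at order $k+1$, for every $0\le k\le \tau_i-1$. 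The only surviving term is the top boundary, giving
\begin{equation}
\sE_{\tau_i}(f)=(-1)^{\tau_i}\sD_t^{\tau_i+1}\!\left(\partiald{g}{z^i_{\tau_i}}\right),
\end{equation}
as claimed.

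There is no genuine obstacle here; the only thing to be careful about is the bookkeeping at the two boundaries of the telescoping sum, ensuring that the $k=0$ term supplies no residual $\partial_{z^i_{-1}}g$ (handled by the boundary commutator) and that the $k=\tau_i$ term contributes the surviving $(-1)^{\tau_i}\sD_t^{\tau_i+1}(\partial_{z^i_{\tau_i}}g)$. Because this entire calculation is already present in the proof of Proposition \ref{EO kernel}, the corollary can also be stated as an immediate consequence of that proof rather than reproved, though writing it out explicitly as above makes the role of the intermediate identity transparent.
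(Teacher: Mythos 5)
Your proposal is correct and follows essentially the same route as the paper: the corollary is exactly the intermediate identity obtained in the proof of Proposition \ref{EO kernel} via the commutator $\partiald{\,}{z^i_{l+1}}\circ\sD_t=\partiald{\,}{z^i_{l}}+\sD_t\circ\partiald{\,}{z^i_{l+1}}$ (with $\partial_{z^i_0}\circ\sD_t=\sD_t\circ\partial_{z^i_0}$ at the bottom) and the resulting telescoping sum, before the kernel hypothesis is imposed. Your bookkeeping at both boundaries is accurate, so nothing further is needed.
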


This is in contrast to the classical theory of calculus of variations, which has a modern geometric formulation on $J^\infty$ outlined wonderfully in \cite{VarBi}. In that work, the kernel of the Euler operator is the space of all functions on $J^\infty$ that are equal to the total derivative of some other function on $J^\infty$. Theorem \ref{EO kernel} and Corollary \ref{kernel cor} above highlight the difference between the kernel of the full Euler operator and that of our truncated version. This discrepancy is necessary for our work, and for finding examples of CFL systems. Theorem \ref{sufficiency} below makes this fact clear. One more important remark about the truncated Euler operator is required here. If $f\in C^\infty(J^{2n}(\mathbb{R},\mathbb{R}^m), \mathbb{R})$ for some $n>0$, then
\begin{equation}
\sE_{2n}(f)=E(f)
\end{equation}
if and only if either $f$ is constant or $\partiald{f}{z^i_k}=0$ for all $k\geq  n$ and $1\leq i\leq m$.

\subsection{PDEs for the Refined Derived Type} In the first chapter, the idea of the ``refined derived type" of a distribution/EDS was discussed. Recall that a given distribution admits a local normal form via diffeomorphism to a standard Goursat bundle if and only if both the refined derived type of the distribution in question is the same as one for a Goursat bundle and the appropriate filtration, given by either \ref{filt fun} and \ref{filt res}, is integrable. Furthermore, to ensure that the diffeomorphism is an ESF transformation, the control directions must be contained in the Cauchy characteristic bundle of the first derived distribution, and the 1-form $dt$ must either annihilate the Cauchy characteristic bundle of the penultimate derived distribution or annihilate the resolvent bundle, whichever applies by procedure \textbf{contact}.
In this section, we will look at the equivalent PDE conditions for the refined derived type of distributions on $J^{\sigma_1}(\mathbb{R},\mathbb{R})\times G$ that are ESF equivalent to the Goursat bundle associated with some $J^{\sigma_1+1}(\mathbb{R},\mathbb{R})$ when $\mathrm{dim}\,G=1$. For this case in particular, we mention that the integrable filtration condition will be satisfied automatically due to the necessary rank conditions on the derived flags of our distributions. 
\begin{remark}
We emphasize here that the above-mentioned $J^{\sigma_1+1}(\mathbb{R},\mathbb{R})$ with contact distribution is \textbf{not} a prolongation of the contact distribution on $J^{\sigma_1}(\mathbb{R},\mathbb{R})$. It is better to think of the relationship as ``anti-prolongation," in that, instead of a derivative being added to the represented control system in Brunovsk\'y normal form, a new state is being added via some kind of anti-differentiation. 
\end{remark}

Below is a proposition that is equivalent to a special case of Theorem 13 in \cite{VassiliouSICON}. The purpose of this proposition is to recognize explicit PDE conditions that will be necessary and sufficient for the reduction of a contact sub-connection to be ESFL. As will be seen in the next section, the specific PDE conditions to be satisfied will be conditions on \textit{truncated} Euler operators of the function associated to the right hand side of the equation of Lie type.
\begin{prop}\label{PDE form}
Let
\begin{equation}
\mathcal{H}_G=\{X,\partial_{z^1_{\sigma_1}},\ldots,\partial_{z^m_{\sigma_m}}\}
\end{equation}
be a contact sub-connection on $J^{\sigma_1}(\mathbb{R},\mathbb{R}^m)\times G$, with $\text{dim}\,G=1$ and with
\begin{equation}
X=\sD_t+p(z)\,\partial_\epsilon.
\end{equation}
If 
\begin{equation}
\bar{\mathcal{H}}_G=\{\bar{X},\partial_{z^1_{\sigma_1}}\}
\end{equation}
is the partial contact curve reduction along the partial contact curves that annihilate the Brunovsk\'y forms $\theta^i_{l_i}$ for $2\leq i\leq m$, then $\bar{\mathcal{H}}_G$ is ESFL if and only if the following hold:
\begin{itemize}
\item For $1\leq k\leq \sigma_1$, the $k$th derived flag of the reduced contact sub-connection is
\begin{align}
\bar{\mathcal{H}}_G^{(k)}=\{\bar{X},\bar{Y}_0,\bar{Y}_1,\cdots,\bar{Y}_k\},
\end{align}
where
\begin{equation}
\begin{aligned}
\bar{Y}_k&=[\bar{X},\bar{Y}_{k-1}]\\
\,&=Q_{k}\partial_\epsilon+(-1)^k\partial_{z_{\sigma_1-k}^1}
\end{aligned}
\end{equation}
with $\bar{Y}_0=\partial_{z^1_{\sigma_1}}$, and the coefficients $Q_k$ defined recursively by
\begin{align}
Q_{k}&=\bar{\sD}_t\left(Q_{k-1}\right)+(-1)^{k}\partiald{\bar{p}}{z_{\sigma_1-k+1}^1},
\end{align}
initialized with $Q_0=0$. For $k=\sigma_1+1$, we have 
\begin{equation}\label{Qn+1 not 0}
\begin{aligned}
\bar{Y}_{\sigma_1+1}&=[\bar{X},\bar{Y}_{\sigma_1}]\\
\,&=Q_{\sigma_1+1}\partial_{\epsilon},
\end{aligned}
\end{equation}
with
\begin{equation}
Q_{\sigma_1+1}=\bar{\sD}_t\left(Q_{\sigma_1}\right)+(-1)^{\sigma_1+1}\partiald{\bar{p}}{z^1_0}.
\end{equation}
Furthermore, $\bar{\mathcal{H}}^{(\sigma_1+1)}_G=TU$ for some open $U\subset J^{\sigma_1}(\mathbb{R},\mathbb{R})\times G$. That is, $\bar{\mathcal{H}}_G$ is bracket generating. 
\item For each $0\leq k \leq \sigma_1-1$, the Cauchy characteristics of $\bar{\mathcal{H}}^{(k+1)}_G$ are spanned by $\{\bar{Y}_l\}_{l=0}^{k}$, and hence the system of PDE
\begin{equation}\label{CC PDE}
(-1)^i\partiald{Q_k}{z_{\sigma_1-i}^1}+(-1)^{k+1}\partiald{Q_i}{z_{\sigma_1-k}^1}=0
\end{equation}
must be satisfied for all $0\leq i\leq k\leq \sigma_1$.
\end{itemize}
\end{prop}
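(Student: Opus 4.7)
The plan is to prove the equivalence by an explicit computation of the derived flag of $\bar{\mcal{H}}_G$ together with an identification of its Cauchy characteristics at each stage, matching these against the Generalized Goursat Normal Form (Theorem \ref{Generalized Goursat Normal Form}) and the ESFL criterion of Theorem \ref{Goursat ESFL}. Since $\dim G = 1$ and $\bar{p}$ depends only on $z$, the recursive coefficients $Q_k$ are automatically independent of $\epsilon$, so every bracket calculation collapses to manipulations of $\bar{\sD}_t$ and the partial derivatives $\partial_{z^1_l}$.

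First I would establish the derived flag formula by induction on $k$, using the basic identity $[\bar{\sD}_t, \partial_{z^1_j}] = -\partial_{z^1_{j-1}}$ for $1 \leq j \leq \sigma_1$ (and $[\bar{\sD}_t, \partial_{z^1_0}] = 0$). The base case $k = 0$ is immediate. For the induction step, I compute $[\bar{X}, \bar{Y}_{k-1}]$, which splits into a jet part yielding $(-1)^k \partial_{z^1_{\sigma_1-k}}$ and a symmetry part yielding $\bigl(\bar{\sD}_t(Q_{k-1}) + (-1)^k \partial_{z^1_{\sigma_1-k+1}}(\bar{p})\bigr)\partial_\epsilon$, which exactly matches the recursive definition of $Q_k$. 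When $k = \sigma_1 + 1$, the jet index vanishes and only the $\partial_\epsilon$ component survives, giving (\ref{Qn+1 not 0}). The bracket-generating condition $\bar{\mcal{H}}_G^{(\sigma_1+1)} = TU$ is then equivalent to $Q_{\sigma_1+1}$ being nonvanishing on $U$, which follows from a dimension count on the $\sigma_1 + 3$ vectors $\bar{X}, \bar{Y}_0, \ldots, \bar{Y}_{\sigma_1+1}$.

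For the rank growth to be exactly one at each step and for $\{\bar{Y}_l\}_{l=0}^{k}$ to form the Cauchy bundle of $\bar{\mcal{H}}_G^{(k+1)}$, the mixed brackets $[\bar{Y}_i, \bar{Y}_j]$ must not introduce directions outside the current flag. A direct calculation shows
\begin{equation}
[\bar{Y}_i, \bar{Y}_j] = \bigl((-1)^i \partial_{z^1_{\sigma_1-i}}(Q_j) - (-1)^j \partial_{z^1_{\sigma_1-j}}(Q_i)\bigr)\partial_\epsilon,
\end{equation}
a pure $\partial_\epsilon$ direction that is not contained in $\bar{\mcal{H}}_G^{(k+1)}$ for $i, j \leq k$ unless it vanishes. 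Setting these brackets to zero gives precisely the PDE system (\ref{CC PDE}) for all $0 \leq i \leq k \leq \sigma_1$. Conversely, once the PDEs hold, each $\bar{Y}_l$ with $l \leq k$ commutes with every other $\bar{Y}_j$ in $\bar{\mcal{H}}_G^{(k+1)}$, so a dimension count via Proposition \ref{refined derived type numbers} identifies $\mathrm{Char}\,\bar{\mcal{H}}_G^{(k+1)}$ with $\{\bar{Y}_l\}_{l=0}^k$.

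With these computations, the equivalence with ESFL follows by matching refined derived types. In the forward direction, ESFL of $\bar{\mcal{H}}_G$ forces the refined derived type of a rank-2 Goursat bundle of length $\sigma_1 + 1$, which in turn compels both the derived flag formulas and the Cauchy structure from which the PDEs follow. In the reverse direction, the derived flag and PDE conditions yield the required refined derived type and integrable filtration (\ref{filt fun}); the ESFL hypotheses of Theorem \ref{Goursat ESFL} in the $\Delta_k = 1$ case are then immediate, since $\partial_{z^1_{\sigma_1}} \in \mathrm{Char}\,\bar{\mcal{H}}_G^{(1)} \cap \bar{\mcal{H}}_G$ and $dt$ annihilates $\mathrm{Char}\,\bar{\mcal{H}}_G^{(\sigma_1)} = \{\bar{Y}_0, \ldots, \bar{Y}_{\sigma_1-1}\}$ by inspection. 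The main obstacle will be the careful bookkeeping in the double induction on $k$ and Cauchy rank: specifically, verifying that the family of PDE conditions is self-consistent with the recursive definition of the $Q_k$ and that no additional obstructions arise beyond those captured by (\ref{CC PDE}) — a subtlety that appears because the truncated operator $\bar{\sD}_t$ does not share the full kernel structure of the ordinary total derivative, so the $Q_k$ recursion genuinely involves higher-order derivatives of $\bar{p}$ rather than trivializing.
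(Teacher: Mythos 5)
Your proposal is correct and follows essentially the same route as the paper's proof: induction on the derived flag via the identity $[\bar{\sD}_t,\partial_{z^1_l}]=-\partial_{z^1_{l-1}}$, computation of the mixed brackets $[\bar{Y}_i,\bar{Y}_j]$ to extract the PDE system (\ref{CC PDE}), identification of the Cauchy bundles by a maximality/dimension argument, and the reduction of the ESFL equivalence to the conditions of Theorem \ref{Goursat ESFL} (control direction in $\inCharOne{\bar{H}}$ and $dt$ annihilating $\Char{\bar{H}}{\sigma_1}$). No gaps beyond the bookkeeping you already flag.
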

\begin{proof}
We start by noting that for the first bullet point, the requirement that the rank of the derived flag of $\bar{\mathcal{H}}_G$ increases by one at each step is a necessary condition for ESF linearizability. Indeed, this condition would be sufficient to show that $\bar{\mathcal{H}}_G$ is local diffeomorphism equivalent to a Goursat bundle. However, for the stricter class of ESF equivalence, the second bullet point adds an additional condition to guarantee sufficiency and necessity. The second bullet point is equivalent to the condition from Theorem \ref{Goursat ESFL} that $\partial_{z^1_{\sigma_1}}$ is a Cauchy characteristic for $\bar{\mathcal{H}}_G^{(1)}$ and $dt\in\ann\left(\text{Char}\left(\bar{\mathcal{H}}_G^{(\sigma_1)}\right)\right)$.  

To prove the form above for the $\bar{Y}_k$, we use induction. First, we have
\begin{equation}
\begin{aligned}
\bar{Y}_1&=[\bar{X},\bar{Y}_0]\\
\,&=\partiald{\bar{p}}{z^1_{\sigma_1}}\,\partial_\epsilon-\partial_{z^1_{\sigma_1-1}},
\end{aligned}
\end{equation}
thus establishing the base case. Now notice that 
\begin{equation}\label{bracket fact}
[\bar{\sD}_t,\partial_{z^1_l}]=-\partial_{z^1_{l-1}}
\end{equation} 
for $1\leq l \leq \sigma_1$. Hence equation (\ref{bracket fact}) tells us that $[\bar{X},\bar{Y}_l]$ will be linearly independent from $\bar{X}$ and all the previous $\bar{Y}_i$ with $i\leq l$. For the inductive step, we assume that for $1\leq l \leq k-1$, 
\begin{equation}
\bar{Y}_l=Q_{l}\partial_\epsilon+(-1)^l\partial_{z_{\sigma_1-l}^1}.
\end{equation}
Computing $[\bar{X},\bar{Y}_{k-1}]$, we find 
\begin{equation}
\begin{aligned}
[\bar{X},\bar{Y}_{k-1}]&=[\bar{\sD}_t+\bar{p}\,\partial_\epsilon,\,Q_{k-1}\partial_\epsilon+(-1)^{k-1}\partial_{z_{\sigma_1-k+1}^1}]\\
\,&=\bar{\sD}_t(Q_{k-1})\partial_\epsilon-(-1)^{k-1}\partial_{z^1_{\sigma_1-k}}+[\bar{p}\partial_\epsilon,\,Q_{k-1}\partial_\epsilon+(-1)^{k-1}\partial_{z_{\sigma_1-k+1}^1}]\\
\,&=\left(\bar{\sD}_t\left(Q_{k-1}\right)+(-1)^{k}\partiald{p}{z_{\sigma_1-k+1}^1}\right)\partial_\epsilon+(-1)^k\partial_{z^1_{\sigma_1-k}}\\
\,&=Q_{k}\partial_\epsilon+(-1)^k\partial_{z^1_{\sigma_1-k}}.
\end{aligned}
\end{equation}
Next we address the second bullet. We need to show that all $\bar{Y}_l$ for $1\leq l\leq k$ are Cauchy characteristics for each $\bar{\mathcal{H}}_G^{(k)}$. In particular, the requirement that $dt\in\ann\left(\text{Char}\left(\bar{\mathcal{H}}_G^{(\sigma_1)}\right)\right)$ is equivalent to the condition that the $\bar{X}$-projection of any element of $\Char{\bar{H}}{\sigma_1}$ is zero (except in the last step of the derived flag, since it generates the tangent bundle). To see this, notice that in order for $\bar{\mcal{H}}_G$ to be bracket generating, we must have $\text{Char}\left(\bar{\mathcal{H}}^{(l)}_G\right)\subset\text{Char}\left(\bar{\mathcal{H}}^{(l+1)}_G\right)$ for all $l\leq n-1$, and hence $\ann\left(\text{Char}\left(\bar{\mathcal{H}}^{(l+1)}_G\right)\right)\subset\ann\left(\text{Char}\left(\bar{\mathcal{H}}^{(l)}_G\right)\right)$. Therefore, if $\bar{X}$ is not a Cauchy characteristic for any of the derived distributions save the last, then $dt\in\ann\left(\text{Char}\left(\bar{\mathcal{H}}_G^{(\sigma_1)}\right)\right)$ and vice versa. 

Next, we observe that 
\begin{equation}
[\bar{Y}_i,\bar{Y}_k]=\left((-1)^i\partiald{Q_k}{z_{\sigma_1-i}^1}+(-1)^{k+1}\partiald{Q_i}{z_{\sigma_1-k}^1}\right)\partial_\epsilon,
\end{equation}
so in order to not violate the condition that the dimension at each step in the derived flag increases by only one we must have
\begin{equation}\label{4.49}
(-1)^i\partiald{Q_k}{z_{\sigma_1-i}^1}+(-1)^{k+1}\partiald{Q_i}{z_{\sigma_1-k}^1}=0.
\end{equation}
Since (\ref{4.49}) holds by assumption, it is evident that $[\bar{Y}_i,\bar{Y}_k]=0$ for all $i\leq k$ in $\bar{\mathcal{H}}_G^{(k+1)}$ for all $0<k\leq \sigma_1$. Since $[\bar{X},\bar{Y}_i]=\bar{Y}_{i+1}\in\bar{\mathcal{H}}_G^{(k+1)}$ for $0\leq i\leq k$, there exist a maximal number of linearly independent vector fields in $\bar{\mathcal{H}}_G^{(k+1)}$ that are Cauchy characteristics. Were it not a maximal number, then $\bar{\mathcal{H}}^{(k+1)}_G$ would be Frobenius, contradicting the non-integrability of $\bar{\mathcal{H}}^{(k+1)}_G$. Therefore, we conclude that for each $k\leq \sigma_1$, 
\begin{equation}
\text{Char}\left(\bar{\mathcal{H}}_G^{(k+1)}\right)=\{\bar{Y}_0,\bar{Y}_1,\ldots,\bar{Y}_{k}\}.
\end{equation}
\end{proof}
\subsection{Necessity for ESFL Reductions}
\tab In this section we give an important necessary condition for a contact sub-connection to be ESFL via partial contact curve reduction. It is a condition on the truncated Euler operator applied to a function arising from the group action. we will prove the following theorem:
\begin{thm}\label{necessity}
Let $(t,z,\epsilon)=(t,j^{\sigma_1}z_0^1,\ldots,j^{\sigma_m}z_0^m,\epsilon)$ be local coordinates for $J^\kappa\times G$, where $\mathrm{dim}\,G=1$. Furthermore, assume that $\Theta^G=d\epsilon-p(z)\,dt$, and let $\bar{\gamma}^G$ be the reduction of $\gamma^G$ by codimension 1 partial contact curves defined by $j^{\sigma_l}z^l_0=j^{\sigma_l}f_l(t)$, for all $l\neq i$ for some $1\leq i\leq m$ and $m-1$ arbitrary smooth functions $f_l(t)$. If $\bar{\gamma}^G$ is ESFL, then the truncated Euler operator applied to $\bar{p}(z)$ must be degenerate in the sense that $\bar{\sE}_{\sigma_i}(\bar{p}(z))$ has no dependence on $z^i_k$ for $k\geq 1$. 
\end{thm}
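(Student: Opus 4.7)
The plan is to apply Proposition~\ref{PDE form} to the reduced contact sub-connection $\bar{\gamma}^G$. After the substitution $j^{\sigma_l}z^l_0=j^{\sigma_l}f_l(t)$ for $l\neq i$, the reduced system on $J^{\sigma_i}(\mathbb{R},\mathbb{R})\times G$ has Pfaffian representation $\bar{\gamma}^G=\beta^{\sigma_i}\oplus\langle d\epsilon-\bar{p}(t,z^i)\,dt\rangle$, where $\bar{p}$ is the evaluation of $p$ along the chosen partial contact curve. Its annihilator $\bar{\mathcal{H}}_G=\{\bar{X},\partial_{z^i_{\sigma_i}}\}$, with $\bar{X}=\bar{\sD}_{t,i}+\bar{p}(t,z^i)\partial_\epsilon$, fits the hypotheses of Proposition~\ref{PDE form} verbatim, with the remaining jet index $i$ playing the role of the index $1$ there.

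First I would unroll the recursion $Q_k=\bar{\sD}_t(Q_{k-1})+(-1)^k\,\partial_{z^i_{\sigma_i-k+1}}\bar{p}$ from $Q_0=0$. A short induction on $k$ produces the closed form
\begin{equation*}
Q_k=\sum_{l=0}^{k-1}(-1)^{k-l}\,\bar{\sD}_t^{\,l}\!\left(\partial_{z^i_{\sigma_i-k+l+1}}\bar{p}\right),
\end{equation*}
and specialization to $k=\sigma_i+1$ yields $Q_{\sigma_i+1}=(-1)^{\sigma_i+1}\,\bar{\sE}_{\sigma_i}(\bar{p})$. Hence proving the theorem is equivalent to showing that $\partial_{z^i_j}Q_{\sigma_i+1}=0$ for every $j\geq1$.

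For the main step I would invoke the ESFL hypothesis via the PDEs (\ref{CC PDE}) of Proposition~\ref{PDE form}. Setting $k=\sigma_i$ and using $Q_0=0$ produces the identities
\begin{equation*}
\partial_{z^i_j}Q_{\sigma_i}=(-1)^j\,\partial_{z^i_0}Q_{\sigma_i-j},\qquad 1\leq j\leq\sigma_i.
\end{equation*}
Differentiating $Q_{\sigma_i+1}=\bar{\sD}_t(Q_{\sigma_i})+(-1)^{\sigma_i+1}\partial_{z^i_0}\bar{p}$ by $\partial_{z^i_j}$ and using the commutator $[\partial_{z^i_j},\bar{\sD}_t]=\partial_{z^i_{j-1}}$ (for $1\leq j\leq\sigma_i$), I substitute the displayed identity for $\partial_{z^i_j}Q_{\sigma_i}$ and, when $j\geq 2$, for $\partial_{z^i_{j-1}}Q_{\sigma_i}$. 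I then apply the recursion once more to rewrite $\partial_{z^i_0}Q_{\sigma_i-j+1}$ as $\bar{\sD}_t(\partial_{z^i_0}Q_{\sigma_i-j})+(-1)^{\sigma_i-j+1}\partial_{z^i_0}\partial_{z^i_j}\bar{p}$, using that $\partial_{z^i_0}$ commutes with $\bar{\sD}_t$. The two $\bar{\sD}_t$-contributions cancel in pairs, leaving only $(-1)^{\sigma_i}\bigl(\partial_{z^i_0}\partial_{z^i_j}\bar{p}-\partial_{z^i_j}\partial_{z^i_0}\bar{p}\bigr)$, which vanishes by equality of mixed partials. The case $j=1$ is treated separately: here $\partial_{z^i_{j-1}}Q_{\sigma_i}=\partial_{z^i_0}Q_{\sigma_i}$ is rewritten directly through the recursion (no PDE identity needed), and the same cancellation occurs.

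The main obstacle will be the sign bookkeeping across the recursion, the PDE identity, and the commutator; once those are aligned the cancellation is essentially forced. With $\partial_{z^i_j}\bar{\sE}_{\sigma_i}(\bar{p})=0$ established for every $j\geq 1$, the truncated Euler operator applied to $\bar{p}$ depends only on $(t,z^i_0)$, which is precisely the asserted degeneracy.
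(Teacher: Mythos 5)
Your argument is correct, and all the pieces check out: the closed form $Q_k=\sum_{l=0}^{k-1}(-1)^{k-l}\bar{\sD}_t^{\,l}(\partial_{z^i_{\sigma_i-k+l+1}}\bar{p})$ follows from the recursion, its specialization gives $Q_{\sigma_i+1}=(-1)^{\sigma_i+1}\bar{\sE}_{\sigma_i}(\bar{p})$ (this is in substance the $k=\sigma_i$ case of Lemma \ref{EOQ}, which the paper only states later, in the sufficiency section), the $k=\sigma_i$ instances of the PDEs (\ref{CC PDE}) yield $\partial_{z^i_j}Q_{\sigma_i}=(-1)^j\partial_{z^i_0}Q_{\sigma_i-j}$, and the commutator/recursion substitution does force the cancellation down to $(-1)^{\sigma_i}(\partial_{z^i_0}\partial_{z^i_j}\bar{p}-\partial_{z^i_j}\partial_{z^i_0}\bar{p})=0$, including at the boundary cases $j=1$ and $j=\sigma_i$ (where $Q_0=0$). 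Your route differs from the paper's in execution, though not in underlying content. The paper works on the dual side: it forms the annihilator of the fundamental bundle $\Pi^{\sigma_1}$ from filtration (\ref{filt fun}), writes it as $\langle dt, d\epsilon-\psi\rangle$ with $\psi$ a truncated Poincar\'e--Cartan form built from the same $Q_k$, extracts the Frobenius conditions, computes $d\psi=2\,d\bar{p}\wedge dt+(-1)^{\sigma_1}\bar{\sE}_{\sigma_1}(\bar{p})\,dz_0\wedge dt$, and concludes from $d^2\psi=0$; your mixed-partials cancellation is precisely the vector-field avatar of that closedness argument. What you do differently is to lean directly on Proposition \ref{PDE form} (only the top-level $k=\sigma_i$ Cauchy conditions plus the recursion, rather than the Frobenius structure of every $\Pi^k$), which makes the proof more elementary and self-contained given that proposition, and you implicitly and correctly handle the explicit $t$-dependence of $\bar{p}$ after reduction, since none of the commutators used see it. What the paper's formulation buys in exchange is the explicit appearance of the truncated Poincar\'e--Cartan form, which ties the obstruction to the variational-calculus machinery that the rest of Chapter 4 develops.
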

\begin{proof}
Without loss of generality, take $i=1$. For notational simplicity we will drop the superscript of `1' on all $z^1_l$ variables. Now recall the fundamental bundle in filtration (\ref{filt fun}) from Chapter 2. In this case, our fundamental bundle $\Pi^{\sigma_1}$ is defined recursively by
\begin{equation}
\Pi^{k+1}=\Pi^k+[\bar{X},\Pi^k],\text{ where } \Pi^0=\{\partial_{z^1_{\sigma_1}}\}.
\end{equation} 
In particular,
\begin{equation}
\begin{aligned}
\Pi^1&=\{\partial_{z_{\sigma_1}},\bar{Y}_1\},\\
\Pi^2&=\{\bar{Y}_0,\bar{Y}_1,\bar{Y}_2\},\\
\vdots&\,\\
\Pi^{\sigma_1}&=\{\bar{Y}_0,\ldots,\bar{Y}_{\sigma_1}\}.
\end{aligned}
\end{equation}
Each $\Pi^k$ for $k\leq \sigma_1$ must be Frobenius for an ESFL system. Consider the Pfaffian system defined by
\begin{equation}
\begin{aligned}
\mathcal{F}^{\sigma_1}&:=\langle \ann\left(\Pi^{\sigma_1}\right)\rangle\\
\,&=\langle dt,d\epsilon-\psi\rangle.
\end{aligned}
\end{equation}
Here $\psi$ is a \textit{truncated} version of the Poincar\'e-Cartan form from the calculus of variations \cite{CartanForm2} \cite{CartanForm3} and is defined by
\begin{equation}
\psi:=\bar{p}\,dt+\sum_{i=1}^{\sigma_1}(-1)^{i+1}Q_i\theta^{\sigma_1-i},
\end{equation}
where the $\theta^{\sigma_1-i}$ are the canonical contact forms on $J^{\sigma_1}(\mathbb{R},\mathbb{R})$.  We can actually say a bit more. Indeed, for each bundle $\Pi^k$ in the recursive definition of the fundamental bundle, we can similarly define 
\begin{equation}
\mathcal{F}^k:=\langle\ann(\Pi^k)\rangle=\langle dt,d\epsilon-\psi_{k},\theta^{0},\cdots,\theta^{\sigma_1-k-1} \rangle,
\end{equation}
where
\begin{equation}
\psi_{k}=\bar{p}\,dt+\sum_{i=1}^{k}(-1)^{i+1}Q_i\theta^{\sigma_1-i}.
\end{equation}
As a brief remark, this also means that $\bar{\gamma}^G=\langle \theta^a, d\epsilon-\psi\rangle$. Since $\mathcal{F}^{\sigma_1}$ must be Frobenius for an ESFL system, we know that $d(d\epsilon-\psi)=-d\psi=\alpha\wedge\,dt$ for some 1-form $\alpha$. We will determine what this 1-form is and then check the wonderful identity $d^2=0$ to make our conclusion. Indeed, 
\begin{equation}
\begin{aligned}
d\psi&=d\bar{p}\wedge dt+\sum_{i=1}^{\sigma_1}(-1)^{i}z_{\sigma_1-i+1}dQ_i\wedge dt+(-1)^iQ_idz_{\sigma_1-i+1}\wedge dt+(-1)^{i+1}dQ_i\wedge dz_{\sigma_1-i}\\
\,&=d\bar{p}\wedge dt+\sum_{i=1}^{\sigma_1}\sum_{j=0}^{\sigma_1}(-1)^iz_{\sigma_1-i+1}\partiald{Q_i}{z_{\sigma_1-j}}dz_{\sigma_1-j}\wedge dt+(-1)^iQ_idz_{\sigma_1-i+1}\wedge dt\\
\,&+(-1)^{i+1}\partiald{Q_i}{z_{\sigma_1-j}}dz_{\sigma_1-j}\wedge dz_{\sigma_1-i}+(-1)^{i}\partiald{Q_i}{t}dz_{\sigma_1-i}\wedge dt.
\end{aligned} 
\end{equation}
However, since $\mathcal{F}^{\sigma_1}$ is Frobenius, we must have
\begin{equation}\label{frob 1}
(-1)^{i+1}\partiald{Q_i}{z_{\sigma_1-j}}+(-1)^j\partiald{Q_j}{z_{\sigma_1-i}}=0
\end{equation}
and
\begin{equation}\label{frob 2}
\partiald{Q_i}{z_{\sigma_1}}=0.
\end{equation}
Now rearranging indices to collect like forms in $d\psi$ while applying (\ref{frob 2}) gives
\begin{equation}
\begin{aligned}
d\psi&=d\bar{p}\wedge dt-Q_1\,dz_n\wedge dt\\
\,&+\sum_{i=1}^{\sigma_1-1}\left((-1)^{i+1}Q_{i+1}+(-1)^i\partiald{Q_i}{t}+\sum_{j=1}^{\sigma_1}(-1)^{j}z_{\sigma_1-j+1}\partiald{Q_j}{z_{\sigma_1-i}}\right)dz_{\sigma_1-i}\wedge dt\\
\,&+\left((-1)^{\sigma_1}\partiald{Q_{\sigma_1}}{t}+\sum_{j=1}^{\sigma_1}(-1)^{j}z_{\sigma_1-j+1}\partiald{Q_j}{z_0}\right)dz_{0}\wedge dt.
\end{aligned}
\end{equation}
Then applying (\ref{frob 1}), we arrive at
\begin{equation}
\begin{aligned}
d\psi&=d\bar{p}\wedge dt-Q_1\,dz_n\wedge dt\\
\,&+\sum_{i=1}^{\sigma_1-1}(-1)^i\left(-Q_{i+1}+\partiald{Q_i}{t}+\sum_{j=1}^nz_{\sigma_1-j+1}\partiald{Q_i}{z_{\sigma_1-j}}\right)dz_{\sigma_1-i}\wedge dt\\
\,&+(-1)^{\sigma_1}\left(\partiald{Q_{\sigma_1}}{t}+\sum_{j=1}^nz_{\sigma_1-j+1}\partiald{Q_{\sigma_1}}{z_{\sigma_1-j}}\right)dz_{0}\wedge dt\\
\,&=d\bar{p}\wedge dt-Q_1\,dz_n\wedge dt\\
\,&+\sum_{i=1}^{\sigma_1-1}(-1)^i\left(-Q_{i+1}+\bar{\sD}_t(Q_i)\right)dz_{\sigma_1-i}\wedge dt+(-1)^{\sigma_1}\bar{\sD}_t(Q_{\sigma_1})dz_{0}\wedge dt.
\end{aligned}
\end{equation}
Now recalling the recursive formula $Q_{i+1}=\bar{\sD}_t\left(Q_i\right)+(-1)^{i+1}\bar{p}_{z_{\sigma_1-i}}$, we arrive at
\begin{equation}
\begin{aligned}
d\psi&=d\bar{p}\wedge dt+\partiald{\bar{p}}{z_{\sigma_1}}\,dz_n\wedge dt\\
\,&+\sum_{i=1}^{\sigma_1-1}\partiald{\bar{p}}{z_{\sigma_1-i}}\,dz_{\sigma_1-i}\wedge dt+\left(\partiald{\bar{p}}{z_0}+(-1)^{\sigma_1}\bar{\sE}_{\sigma_1}(\bar{p})\right)dz_0\wedge dt,
\end{aligned}
\end{equation}
and thus we have 
\begin{equation}
d\psi=2\,d\bar{p}\wedge dt +(-1)^{\sigma_1}\bar{\sE}_{\sigma_1}(\bar{p})\,dz_0\wedge dt.
\end{equation}
So, upon computing $d^2\psi=0$, we quickly conclude that $\bar{\sE}_{\sigma_1}(\bar{p})$ has no dependence on $z_i$ for $1\leq i\leq n$ and must therefore be degenerate. 
\end{proof}

Theorem \ref{necessity} provides, for the first time, a coordinate specific obstruction to ESFL partial contact curve reducibility. In particular, if an ESFL partial contact curve reduction exists, then computation of $\bar{\sE}_{\sigma_1}(\bar{p}(z))$ can inform one about an appropriate choice of codimension 1 partial contact curve reduction to achieve the ESF linearization. There is also the added bonus that computing $\bar{\sE}_{\sigma_1}(\bar{p}(z))$ is a straightforward, albeit potentially tedious, calculation. As an application of Theorem \ref{necessity}, recall Example \ref{reduction example} from Chapter 1. It already has the form of a contact sub-connection on $J^{\langle 0,2\rangle}\times G$, where $G\cong \mathbb{R}$. The contact sub-connection may be written as
\begin{equation} 
\gamma^G=\langle dz^i_0-z^i_1\,dt, dz^i_1-z^i_2\,dt,d\epsilon-e^{z^1_1z^2_0}\,dt\rangle,\, i=1,2. 
\end{equation}
Reduction along the $z^1_0$ jet coordinates does not produce an ESFL system. Let $z^2_l=f^{(l)}(t)$ for all $l=0,1,2$, so that $\bar{p}(z)=e^{z^1_1f(t)}$. Computing the truncated Euler operator of this function, we find
\begin{equation}
\bar{\sE}_2(\bar{p})=-e^{z^1_1 f(t)}(z^1_1f(t)\dot{f}(t)+z^1_2f(t)^2+\dot{f}(t)).
\end{equation}
Hence, by Theorem \ref{necessity}, $\bar{\gamma}^G$ is not ESFL since $d\bar{\sE}_2(\bar{p})\wedge dz^1_0\wedge dt\neq0$. However, $\bar{\gamma}^G$ does have the form of a Goursat bundle. Although it is a Goursat bundle, the last Cauchy bundle in filtration (\ref{filt fun}) has the form
\begin{equation}
\mathrm{Char}\bar{\mcal{H}}_G^{(2)}=\left\{\partial_t+\partial_{z^1_0}-\frac{\dot{f}(t)(z^1_1f(t)+1)}{f(t)^2}\partial_{z^1_1}+e^{z^1_1f(t)}\partial_{\epsilon},\partial_{z^1_2}\right\}, 
\end{equation}
where $\bar{\mcal{H}}_G=\ann \bar{\gamma}^G$. Notice that $dt\not\in\ann\mathrm{Char}\bar{\mcal{H}}_G^{(2)}$ and therefore $\bar{\gamma}^G$ cannot be ESFL by Theorem \ref{Goursat ESFL}. Thus, we have agreement with Theorem \ref{necessity}. 

However, if we reduce along the other contact coordinates $(z^2_0,z^2_1,z^2_2)$, then the reduced system \textit{is} ESFL. For this reduction $z^1_l=g^{(l)}(t)$ for all $l=0,1,2$, and hence $\bar{p}(z)=e^{\dot{g}(t)z^2_0}$. This time, not only is the reduced system a Goursat bundle, but its last Cauchy bundle is given by
\begin{equation}
\mathrm{Char}\bar{\mcal{H}}_G^{(2)}=\left\{\partial_{z^2_1},\partial_{z^2_2}\right\}. 
\end{equation}
Thus we see from Theorem \ref{Goursat ESFL} that the system is ESFL. Computing the truncated Euler operator of $\bar{p}(z)$, we find
\begin{equation}
\bar{\sE}_2(\bar{p})=\dot{g}(t)e^{\dot{g}(t)z^2_0},
\end{equation}
and so $d\bar{\sE}_2(\bar{p})\wedge dz^2_0\wedge dt=0$, which agrees with Theorem \ref{necessity}. 
\subsection{Sufficiency for ESFL Reductions}
The main theorem of this section gives the first known class of control systems that will always admit an ESFL reduction. The first lemma below establishes a relationship between the truncated Euler operators and the Cauchy characteristics of each $\bar{\mathcal{H}}_G^{(k)}$ for $0\leq k\leq \sigma_1$. In particular, it is important to notice that since $Q_{\sigma_1+1}$ must be nonzero from Proposition \ref{PDE form}, the lemma below implies that $\bar{\sE}_{\sigma_1}(\bar{p})$ must also be nonzero. 
\begin{lem}\label{EOQ}
Let each $Q_{k+1}$, $0\leq k\leq \sigma_1$ be as defined in Proposition \ref{PDE form}. Then 
\begin{equation}
(-1)^{\sigma_1-1}\bar{\sD}^{\sigma_1-k}_t(Q_{k+1})=\bar{\sE}_{\sigma_1}(\bar{p})-\bar{\sE}_{\sigma_1-k-1}(\bar{p}).
\end{equation}
\end{lem}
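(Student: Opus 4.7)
The plan is to obtain a closed form for $Q_{k+1}$ by unrolling the recursion from Proposition \ref{PDE form}, then apply $\bar{\sD}_t^{\sigma_1-k}$ and reindex to recognize a difference of truncated Euler operators. Concretely, starting from $Q_0=0$ and
\begin{equation*}
Q_{k+1}=\bar{\sD}_t(Q_k)+(-1)^{k+1}\frac{\partial \bar{p}}{\partial z^1_{\sigma_1-k}},
\end{equation*}
I would prove by induction on $k$ that
\begin{equation*}
Q_{k+1}=\sum_{i=0}^{k}(-1)^{i+1}\,\bar{\sD}_t^{\,k-i}\!\left(\frac{\partial \bar{p}}{\partial z^1_{\sigma_1-i}}\right).
\end{equation*}
The base case $k=0$ is immediate: the sum has a single term $-\partial_{z^1_{\sigma_1}}\bar{p}$, matching $Q_1$. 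For the inductive step, applying $\bar{\sD}_t$ to the sum shifts the exponent of $\bar{\sD}_t$ on every term by one, and the new ``$i=k+1$'' term $(-1)^{k+2}\partial_{z^1_{\sigma_1-k-1}}\bar{p}$ is supplied exactly by the $(-1)^{k+2}\partial_{z^1_{\sigma_1-(k+1)}}\bar{p}$ piece of the recursion, giving the claimed formula at $k+1$.

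With the closed form in hand, I would apply $\bar{\sD}_t^{\sigma_1-k}$ to obtain
\begin{equation*}
\bar{\sD}_t^{\,\sigma_1-k}(Q_{k+1})=\sum_{i=0}^{k}(-1)^{i+1}\,\bar{\sD}_t^{\,\sigma_1-i}\!\left(\frac{\partial \bar{p}}{\partial z^1_{\sigma_1-i}}\right),
\end{equation*}
then substitute $j=\sigma_1-i$ so that the range $0\le i\le k$ becomes $\sigma_1-k\le j\le \sigma_1$ and the sign $(-1)^{i+1}$ becomes $(-1)^{\sigma_1-j+1}$. Multiplying both sides by $(-1)^{\sigma_1-1}$ then converts the prefactor to $(-1)^{-j}=(-1)^{j}$, yielding
\begin{equation*}
(-1)^{\sigma_1-1}\bar{\sD}_t^{\,\sigma_1-k}(Q_{k+1})=\sum_{j=\sigma_1-k}^{\sigma_1}(-1)^{j}\,\bar{\sD}_t^{\,j}\!\left(\frac{\partial \bar{p}}{\partial z^1_{j}}\right).
\end{equation*}

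The final paragraph would recognize the right-hand side as a difference of truncated Euler operators. Since
\begin{equation*}
\bar{\sE}_{\tau}(\bar{p})=\sum_{j=0}^{\tau}(-1)^{j}\,\bar{\sD}_t^{\,j}\!\left(\frac{\partial \bar{p}}{\partial z^1_{j}}\right),
\end{equation*}
the sum over $\sigma_1-k\le j\le \sigma_1$ is exactly $\bar{\sE}_{\sigma_1}(\bar{p})-\bar{\sE}_{\sigma_1-k-1}(\bar{p})$, completing the proof. The only step requiring care is the induction, which is routine once one tracks the index shifts; the potential pitfall is a sign or off-by-one error in the reindexing $i\leftrightarrow j$, so I would double-check boundary cases $k=0$ (giving $(-1)^{\sigma_1-1}\bar{\sD}_t^{\sigma_1}(Q_1)=\bar{\sE}_{\sigma_1}(\bar{p})-\bar{\sE}_{\sigma_1-1}(\bar{p})$) and $k=\sigma_1$ (which, combined with $\bar{\sE}_{-1}(\bar{p})=0$, recovers the fact that $\bar{\sE}_{\sigma_1}(\bar{p})$ controls $Q_{\sigma_1+1}$ in (\ref{Qn+1 not 0})).
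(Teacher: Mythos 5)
Your proposal is correct and follows essentially the same route as the paper: unroll the recursion for $Q_{k+1}$ into the closed form $\sum_{i=0}^{k}(-1)^{i+1}\bar{\sD}_t^{\,k-i}\bigl(\partial\bar{p}/\partial z^1_{\sigma_1-i}\bigr)$, apply $(-1)^{\sigma_1-1}\bar{\sD}_t^{\sigma_1-k}$, and reindex to recognize the difference $\bar{\sE}_{\sigma_1}(\bar{p})-\bar{\sE}_{\sigma_1-k-1}(\bar{p})$. The only difference is cosmetic: you make the unrolling a formal induction, whereas the paper does it by direct iterated substitution.
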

\begin{proof}
This is by direct computation. Since the $Q_{k+1}$ are defined recursively, we can easily compute that
\begin{equation}\label{Qk+1 expanded}
\begin{aligned}
Q_{k+1}&=\bar{\sD}_t(Q_k)+(-1)^{k+1}\partiald{\bar{p}}{z^1_{\sigma_1-k}}\\
\,&=\bar{\sD}^2_t(Q_{k-1})+(-1)^{k}\bar{\sD}_t\left(\partiald{\bar{p}}{z^1_{\sigma_1-k+1}}\right)+(-1)^{k+1}\partiald{\bar{p}}{z^1_{\sigma_1-k}}\\
\,&\vdots\\
\,&=-\bar{\sD}^{k}_t\left(\partiald{\bar{p}}{z^1_{\sigma_1}}\right)+\bar{\sD}^{k-1}_t\left(\partiald{\bar{p}}{z^1_{\sigma_1-1}}\right)+\cdots+(-1)^{k}\bar{\sD}_t\left(\partiald{\bar{p}}{z^1_{\sigma_1-k+1}}\right)+(-1)^{k+1}\partiald{\bar{p}}{z^1_{\sigma_1-k}}.
\end{aligned}
\end{equation}
Applying $(-1)^{\sigma_1-1}\bar{\sD}^{\sigma_1-k}_t$ to \ref{Qk+1 expanded} gives
\begin{equation}
\begin{aligned}
(-1)^{\sigma_1-1}\bar{\sD}^{\sigma_1-k}_t(Q_{k+1})&=(-1)^{\sigma_1}\bar{\sD}^{\sigma_1}_t\left(\partiald{\bar{p}}{z^1_{\sigma_1}}\right)+\cdots+(-1)^{\sigma_1-k+1}\bar{\sD}^{\sigma_1-k-1}_t\left(\partiald{\bar{p}}{z^1_{\sigma_1-k+1}}\right)\\
\,&\phantom{=}\,+(-1)^{\sigma_1-k}\bar{\sD}^{\sigma_1-k}_t\partiald{\bar{p}}{z^1_{\sigma_1-k}},
\end{aligned}
\end{equation}
which is precisely the difference between the truncated Euler operators $\bar{\sE}_{\sigma_1}$ and $\bar{\sE}_{\sigma_1-k-1}$ acting on $\bar{p}$.  
\end{proof}
From Lemma \ref{EOQ} and Theorem \ref{necessity} we know that $\bar{\sE}_{\sigma_1}(\bar{p})$ must be both nonzero and depend on only $z^1_0$ and $t$, respectively. Given that the truncated Euler operator has nontrivial kernel, it is possible that there is some amount of freedom in the form of $\bar{p}(z)$. That is, it may have terms that are annihilated by the Euler operator. Such terms would not lead to a violation of Theorem \ref{sufficiency}. Thus, for control systems in the form of a contact sub-connection with a 1-dimensional control admissible symmetry, we would like to establish explicit forms for the $p(z)$ that will guarantee that there is an ESFL codimension 1 reduction by partial contact curves. To this end, we would like to know how codimension 1 partial contact curve reduction impacts the truncated total derivative as it applies to functions on $J^\kappa$ that have no dependence on higher order terms along some sub-fiber. 
\begin{lem}\label{reduced}
Let $p=\sD^r_{t,1}(A)$ for $A=A(z_0^1,j^{\sigma_2}z^2_0,j^{\sigma_3}z^3_0,\cdots,j^{\sigma_m}z^m_0)$ and any integer $r\geq0$. Then
\begin{equation}\label{reduced D eq} 
\bar{p}=\overline{\sD_{t,1}^r A}=\left(\bar{\sD}_t-\partiald{\,}{t}\right)^r\bar{A},
\end{equation}
or, in expanded form,
\begin{equation}
\bar{p}=\sum_{l=0}^{r}{r \choose l}(-1)^l\bar{\sD}_t^{r-l}\circ\frac{\partial^l}{\partial t^l}\bar{A},
\end{equation}
where $\bar{\cdot}$ denotes reduction along the jet coordinates $j^{\sigma_l}z^l_0$ for $2\leq l\leq m$.
\end{lem}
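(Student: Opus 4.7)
The plan is to prove the first equality $\overline{\sD_{t,1}^r A}=(\bar{\sD}_t-\partial_t)^r\bar A$ by induction on $r$, and then obtain the expanded form as a binomial expansion after verifying that $\bar{\sD}_t$ and $\partial_t$ commute as operators on $C^\infty(\bar J)$ where $\bar J$ is the reduced jet space with coordinates $(t, z^1_0,\dots,z^1_{\sigma_1})$.

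First I would record two structural facts that make the induction go through. (i) Because $A$ has no explicit $t$-argument, the partial $\partial_t A$ vanishes, and since $\sD_{t,1}$ never introduces new explicit $t$-dependence (its coefficients $z^1_{l+1}$ are independent of $t$), every iterate satisfies $\partial_t(\sD_{t,1}^r A)=0$. Consequently
\begin{equation*}
\sD_{t,1}\bigl(\sD_{t,1}^r A\bigr)=\sum_{l=0}^{\sigma_1-1} z^1_{l+1}\,\partial_{z^1_l}\bigl(\sD_{t,1}^r A\bigr).
\end{equation*}
(ii) The reduction operator $\bar{\cdot}$ substitutes $z^l_k\mapsto f_l^{(k)}(t)$ only for $l\geq 2$, so it commutes with each $\partial_{z^1_l}$ and with multiplication by the free coordinates $z^1_{l+1}$.

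For the inductive step, combining (i) and (ii) gives
\begin{equation*}
\overline{\sD_{t,1}^{r+1} A}=\sum_{l=0}^{\sigma_1-1} z^1_{l+1}\,\partial_{z^1_l}\overline{\sD_{t,1}^{r} A}=\bigl(\bar{\sD}_t-\partial_t\bigr)\overline{\sD_{t,1}^{r} A},
\end{equation*}
using that $\bar{\sD}_t=\partial_t+\sum_{l=0}^{\sigma_1-1} z^1_{l+1}\,\partial_{z^1_l}$ by definition. The base case $r=0$ is the identity $\bar A=\bar A$, so the inductive hypothesis $\overline{\sD_{t,1}^r A}=(\bar{\sD}_t-\partial_t)^r\bar A$ propagates cleanly, finishing the first equality.

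For the expanded form I would verify, by a direct computation of the commutator on an arbitrary smooth test function in $(t,z^1_0,\dots,z^1_{\sigma_1})$, that $[\partial_t,\bar{\sD}_t]=0$: the cross terms from $\partial_t^2$ and from $z^1_{l+1}\,\partial_t\partial_{z^1_l}$ versus $z^1_{l+1}\,\partial_{z^1_l}\partial_t$ agree by equality of mixed partials, and $\partial_t$ kills the coefficients $z^1_{l+1}$. Once commutativity is in hand, the binomial theorem applied to $(\bar{\sD}_t-\partial_t)^r$ delivers
\begin{equation*}
\bar p=\sum_{l=0}^{r}\binom{r}{l}(-1)^l\,\bar{\sD}_t^{\,r-l}\circ\frac{\partial^l}{\partial t^l}\bar A,
\end{equation*}
as claimed. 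The only conceptual subtlety is bookkeeping: keeping straight which coordinates are being held free (the $z^1_k$), which are being substituted by functions of $t$ (the $z^l_k$ for $l\geq 2$), and that the $t$-dependence of $\bar A$ arises entirely through the substitution $z^l_k\mapsto f_l^{(k)}(t)$ rather than from any explicit $t$-slot in $A$. That is the step I would expect to be the most error-prone, but there is no genuine obstacle since $\sD_{t,1}$ touches only the first fiber while the reduction touches only the others, and the two operations therefore commute in the sharp sense required.
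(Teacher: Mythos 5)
Your proposal is correct and follows essentially the same route as the paper: both rest on the single-step identity that for a $t$-independent function $F$ one has $\overline{\sD_{t,1}F}=\bar{\sD}_t\bar{F}-\partiald{\bar{F}}{t}$ (since reduction only substitutes the $z^l_k$ with $l\geq 2$ and so commutes with the $\partial_{z^1_l}$ and with multiplication by $z^1_{l+1}$), and then iterate, which is exactly your induction. Your explicit check that $[\partial_t,\bar{\sD}_t]=0$ to justify the binomial expansion is a small addition the paper leaves implicit, but it is not a different method.
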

\begin{proof}
Since $p=\sD^r_{t,1}(A)$ has no time dependence on $J^\kappa(\mathbb{R},\mathbb{R}^m)$, the reduction must have no time derivatives. To see this, notice that for any $t$-independent function $F$ on $J^\kappa$, 
\begin{equation}
\bar{F}(t,j^{\sigma_1}z^1_0)=F(j^{\sigma_1}z^1_0,j^{\sigma_2}f^2(t),j^{\sigma_3}f^3(t),\cdots,j^{\sigma_m}f^m(t)).
\end{equation}
This implies that
\begin{equation}
\overline{\partiald{F}{z^1_{l_1}}}=\partiald{\bar{F}}{z^1_{l_1}}
\end{equation}
for $0\leq l_1\leq \sigma_1$. Therefore, 
\begin{equation}\label{reduced degree 1}
\begin{aligned}
\overline{\sD_{t,i}F}&=\overline{\sum_{l_1=0}^{\sigma_1-1}z^1_{l_1+1}\partiald{F}{z^1_{l_1}}}\\
\,&=\sum_{l_1=0}^{\sigma_1-1}z^1_{l_1+1}\partiald{\bar{F}}{z^1_{l_1}}\\
\,&=\bar{\sD}_t\bar{F}-\partiald{\bar{F}}{t}.
\end{aligned}
\end{equation}
Now if $F=\sD_{t,1}G$ for another $t$-independent function $G$ on $J^\kappa$, then
\begin{equation}
\begin{aligned}
\overline{\sD_{t,i}F}&=\bar{\sD}_t\bar{F}-\partiald{\bar{F}}{t}\\
\,&=\bar{\sD}_t\left(\bar{\sD}_t\bar{G}-\partiald{\bar{G}}{t}\right)-\partiald{\,}{t}\left(\bar{\sD}_t\bar{G}-\partiald{\bar{G}}{t}\right)\\
\,&=\left(\bar{\sD}_t-\partiald{\,}{t}\right)^2\bar{F},
\end{aligned}
\end{equation}
where the second line is by application of equations (\ref{reduced degree 1}). Iterating this argument gives the general form in the lemma.  
\end{proof}
We are now ready to state and prove the second main result of this thesis: an explicit description of an entire class of control systems that admit an ESFL codimension 1 partial contact curve reduction.  
\begin{thm}\label{sufficiency}
Let
\begin{equation}
\mathcal{H}_G=\left\{\partial_t+\sum_{i=1}^{m}\sum_{l_{i}=0}^{\sigma_i-1}z^i_{l_i+1}\partial_{z^i_{l_i}}+p(z)\,\partial_\epsilon\right\}
\end{equation}
be a partial contact sub-connection on $J^\kappa(\mathbb{R},\mathbb{R}^m)\times G$ with $\text{dim}(G)=1$ such that
\begin{equation}\label{p(z) form}
p(z)=\sum_{l=0}^{N} \sD^l_{t,i}A_l
\end{equation}
for any $0\leq N\leq \sigma_i$ and some $i=1,\ldots,m$, where the $\{A_l\}_{l=0}^{\sigma_i}$ are functions on $J^\kappa(\mathbb{R},\mathbb{R}^m)$ with no dependence on $t$ or $j^l(z_0^i)$ for any $l\geq1$. Then the reduction of $\mathcal{H}_G$ by the codimension 1 partial contact curves that annihilate all Brunovsk\'y forms except $\beta^{\sigma_i}$ is an ESFL system. 
\end{thm}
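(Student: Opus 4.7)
The plan is to verify directly the two bulleted conditions in Proposition \ref{PDE form} for the reduced contact sub-connection $\bar{\mathcal{H}}_G$, namely the correct rank growth of the derived flag (encoded via the nonvanishing of $Q_{\sigma_i+1}$) and the Cauchy characteristic PDE system (\ref{CC PDE}). The two structural ingredients of Theorem \ref{Goursat ESFL}---that $\partial_{z^i_{\sigma_i}}$ lies in the first Cauchy bundle and that $dt$ lies in the appropriate annihilator---are manifest from the ambient form of a contact sub-connection, and the integrability of the intermediate bundles in filtration (\ref{filt fun}) is automatic because $\dim G = 1$. Thus only the two algebraic conditions on $Q_k$ require work.

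The first step is to pass the sum defining $p(z)$ through the reduction termwise via Lemma \ref{reduced}. Since each $A_l$ has no $t$-dependence and no dependence on $z^i_k$ for $k\ge 1$, each $\bar{A}_l$ is a function of $(t, z^i_0)$ alone, and
\begin{equation*}
\bar{p}(t, j^{\sigma_i}z^i_0) \;=\; \sum_{l=0}^{N}(\bar{\sD}_t - \partial_t)^l \bar{A}_l.
\end{equation*}
Because $\bar{\sD}_t$ and $\partial_t$ commute on $J^{\sigma_i}$ (the coefficients $z^i_{k+1}$ of $\bar{\sD}_t$ are $t$-independent), the binomial expansion $(\bar{\sD}_t - \partial_t)^l = \sum_r \binom{l}{r}(-1)^{l-r}\bar{\sD}_t^r\partial_t^{l-r}$ lets me collect the $r = 0$ contribution into a single function $C(t, z^i_0) = \sum_l (-1)^l \partial_t^l \bar{A}_l$ and absorb everything else into a single outer $\bar{\sD}_t$, yielding $\bar{p} = C(t,z^i_0) + \bar{\sD}_t G$, where $G$ has $z^i$-jet dependence only up to order $N-1$. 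In particular $\partial\bar{p}/\partial z^i_k = 0$ for $k>N$.

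The second step is to plug this information into the closed-form recursion (\ref{Qk+1 expanded}),
\begin{equation*}
Q_{k+1} \;=\; \sum_{j=0}^{k}(-1)^{j+1}\,\bar{\sD}_t^{\,k-j}\!\left(\partial\bar{p}/\partial z^i_{\sigma_i-j}\right),
\end{equation*}
and read off that $Q_{k+1} \equiv 0$ whenever $\sigma_i - k > N$, i.e.\ for $k \le \sigma_i - N - 1$. This disposes of the CC identities (\ref{CC PDE}) trivially when both $i'$ and $k$ are at most $\sigma_i - N$, and reduces the problem to checking (\ref{CC PDE}) for the surviving indices. For those, I will commute $\partial/\partial z^i_{\sigma_i-i'}$ through the iterated $\bar{\sD}_t^{\,k-j}$ in the formula for $Q_{k+1}$ using the bracket identity $[\bar{\sD}_t,\partial/\partial z^i_m] = -\partial/\partial z^i_{m-1}$, after which the symmetry $\partial^2\bar{p}/\partial z^i_r\partial z^i_s = \partial^2\bar{p}/\partial z^i_s\partial z^i_r$ collapses the two sides of (\ref{CC PDE}). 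Finally, bracket generation comes from Lemma \ref{EOQ}: since $\partial G/\partial z^i_{\sigma_i} = 0$, Corollary \ref{kernel cor} gives $\bar{\sE}_{\sigma_i}(\bar{p}) = \partial C/\partial z^i_0$, and this is nonzero under the mild genericity hypothesis that $C$ has nontrivial $z^i_0$-dependence, ensuring $Q_{\sigma_i+1} \neq 0$ via Lemma \ref{EOQ}.

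The main obstacle will be the combinatorial bookkeeping in verifying (\ref{CC PDE}) for $k \ge \sigma_i - N$: the mixed partials of $Q_{k+1}$ in $z^i_{\sigma_i-i'}$ and of $Q_{i'+1}$ in $z^i_{\sigma_i-k}$ unfold into telescoping sums of $\bar{\sD}_t$-iterates acting on various $\partial^2\bar{p}/\partial z^i_r\partial z^i_s$. The cancellation is transparent only after recognizing that the $r = 0$ piece of the expansion of $\bar{p}$ (i.e., the $C(t,z^i_0)$ contribution) contributes symmetrically to both sides of (\ref{CC PDE}) at exactly the top derived step, while the $\bar{\sD}_tG$ piece contributes in a manifestly commutator-like fashion. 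Once the cancellation is carried out, Proposition \ref{PDE form} delivers the conclusion, and the partial contact curve reduction $\bar{\mathcal{H}}_G$ is ESFL.
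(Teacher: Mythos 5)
Your plan is sound and, at the structural level, it is essentially the paper's proof: reduce everything to the two bullets of Proposition \ref{PDE form}, push $p$ through the reduction with Lemma \ref{reduced}, obtain explicit control of the $Q_k$, verify the Cauchy characteristic system (\ref{CC PDE}) using the special form of $\bar{p}$, and handle bracket generation through $\bar{\sE}_{\sigma_i}(\bar{p})$ via Lemma \ref{EOQ} and Corollary \ref{kernel cor}. The genuine differences are organizational and mostly buy brevity: you work with the unwound recursion (\ref{Qk+1 expanded}) directly, which avoids the paper's detour of applying Lemma \ref{EOQ}, ``formally inverting'' $\bar{\sD}_t^{\sigma_1-k}$, and then eliminating the kernel terms $C_{\sigma_1-k}$ using $Q_1$; and your grouping $\bar{p}=C(t,z^i_0)+\bar{\sD}_t G$ with $G$ of jet order at most $N-1$ treats all the $A_l$ simultaneously, where the paper instead argues ``without loss of generality $p=\sD_{t,i}^{\sigma_i}A$'' by linearity. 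You also state explicitly the genericity needed for $Q_{\sigma_i+1}\neq 0$ (nontrivial $z^i_0$-dependence of $C$, equivalently of $\bar{\sE}_{\sigma_i}(\bar{p})$), which the paper's proof never verifies and leaves to the remark before Lemma \ref{EOQ} and the worked example, so there is no discrepancy there. One caution: your claim that symmetry of mixed partials ``collapses the two sides of (\ref{CC PDE})'' cannot be the whole mechanism---if it were, every reduction would satisfy (\ref{CC PDE}), contradicting Theorem \ref{necessity} and the non-ESFL reduction in Example \ref{reduction example}. The mixed-partial symmetry only cancels the overlapping index range; the surviving terms vanish precisely because each $A_l$ depends on no $z^i$-jet of positive order, so $\partial/\partial z^i_s$ applied to $\bar{\sD}_t^{\,r}$ of such a function either vanishes or collapses to a single term of jet order too low to contribute---this is exactly the paper's final step, equation (\ref{final form for proof}). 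Make that jet-order count explicit and your bookkeeping closes.
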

\begin{proof} Without loss of generality, take $i=1$. For notational simplicity we will drop the superscript of `1' on all $z^1_l$ variables. Furthermore, we can take $p(z)=\sD^{\sigma_1}_{t,i}A$, where $A=A_{\sigma_1}$, and ignore the lower order terms in (\ref{p(z) form}). We can do so because all the operations applied to $p(z)$ in the proof are linear, and any power of $\sD_t$ that is smaller than $\sigma_1$ will satisfy all the important inequalities concerning $\sigma_1$ in the proof. 

We need to compute each $Q_{k}$ and check that the Cauchy characteristic conditions (\ref{CC PDE}) on the $Q_k$ are satisfied. Indeed, for $k\leq \sigma_1-1$, we use Lemma \ref{EOQ} to see that
\begin{equation}
\begin{aligned}
(-1)^{\sigma_1-1}\bar{\sD}^{\sigma_1-k}_t(Q_{k+1})&=\bar{\sE}_{\sigma_1}\left(\sum_{l=0}^{\sigma_1}{\sigma_1 \choose l}(-1)^l\bar{\sD}_t^{\sigma_1-l}\circ\frac{\partial^l}{\partial t^l}\bar{A}\right)\\
\,&-\bar{\sE}_{\sigma_1-k-1}\left(\sum_{l=0}^{\sigma_1}{\sigma_1 \choose l}(-1)^l\bar{\sD}_t^{\sigma_1-l}\circ\frac{\partial^l}{\partial t^l}\bar{A}\right).
\end{aligned}
\end{equation}
Now by Proposition \ref{EO kernel}, we have
\begin{equation}
\begin{aligned}
(-1)^{\sigma_1-1}\bar{\sD}^{\sigma_1-k}_t(Q_{k+1})&=\bar{\sE}_{\sigma_1}\left((-1)^{\sigma_1}\frac{\partial^{\sigma_1}}{\partial t^{\sigma_1}}\bar{A}+\bar{\sD}^{\sigma_1}_t\bar{A}\right)\\
\,&-\bar{\sE}_{\sigma_1-k-1}\left((-1)^{\sigma_1}\frac{\partial^{\sigma_1}}{\partial t^{\sigma_1}}\bar{A}+\sum_{l=0}^{k+1}{\sigma_1 \choose l}(-1)^l\bar{\sD}_t^{\sigma_1-l}\circ\frac{\partial^l}{\partial t^l}\bar{A}\right).
\end{aligned}
\end{equation}
However, since $\bar{A}$ has no dependence on $z_l$ for $l\geq 1$, 
\begin{equation}
\bar{\sE}_{\sigma_1}\left((-1)^{\sigma_1}\frac{\partial^{\sigma_1}}{\partial t^{\sigma_1}}\bar{A}\right)=\bar{\sE}_{\sigma_1-k-1}\left((-1)^{\sigma_1}\frac{\partial^{\sigma_1}}{\partial t^{\sigma_1}}\bar{A}\right)=\frac{\partial^{\sigma_1+1}}{\partial z_0\,\partial t^{\sigma_1}}\bar{A}.
\end{equation}
Hence, these terms cancel, leaving
\begin{equation}
\begin{aligned}
(-1)^{\sigma_1-1}\bar{\sD}^{\sigma_1-k}_t(Q_{k+1})&=\bar{\sE}_{\sigma_1}\left(\bar{\sD}^{\sigma_1}_t\bar{A}\right)-\bar{\sE}_{\sigma_1-k-1}\left(\sum_{l=0}^{k+1}{\sigma_1 \choose l}(-1)^l\bar{\sD}_t^{\sigma_1-l}\circ\frac{\partial^l}{\partial t^l}\bar{A}\right).
\end{aligned}
\end{equation}
Now by use of Corollary \ref{kernel cor},
\begin{equation}
\begin{aligned}
(-1)^{\sigma_1-1}\bar{\sD}^{\sigma_1-k}_t(Q_{k+1})&=(-1)^{\sigma_1}\bar{\sD}^{\sigma_1+1}_t\left(\partiald{\,}{z_{\sigma_1}}\bar{\sD}^{\sigma_1-1}_t\bar{A}\right)\\
\,&-\sum_{l=0}^{k+1}{\sigma_1 \choose l}(-1)^l\bar{\sE}_{\sigma_1-k-1}\left(\bar{\sD}_t^{\sigma_1-l}\circ\frac{\partial^l}{\partial t^l}\bar{A}\right)\\
\,&= (-1)^{\sigma_1-k}\sum_{l=0}^{k+1}{\sigma_1 \choose l}(-1)^l\bar{\sD}^{\sigma_1-k}_t\left(\partiald{\,}{z_{\sigma_1-k-1}}\circ\bar{\sD}_t^{\sigma_1-l-1}\circ\frac{\partial^l}{\partial t^l}\bar{A}\right).
\end{aligned}
\end{equation}
Formally inverting $\bar{\sD}^{\sigma_1-k}_t$ as a linear differential operator, we find that
\begin{equation}
\begin{aligned}
Q_{k+1}&=C_{\sigma_1-k}+(-1)^{k+1}\sum_{l=0}^{k+1}{\sigma_1 \choose l}(-1)^l\partiald{\,}{z_{\sigma_1-k-1}}\circ\bar{\sD}_t^{\sigma_1-l-1}\circ\frac{\partial^l}{\partial t^l}\bar{A}\\
\,&=C_{\sigma_1-k}+(-1)^{k+1}\sum_{l=0}^{k}{\sigma_1 \choose l}(-1)^l\partiald{\,}{z_{\sigma_1-k-1}}\circ\bar{\sD}_t^{\sigma_1-l-1}\circ\frac{\partial^l}{\partial t^l}\bar{A},
\end{aligned}
\end{equation}
where $C_{\sigma_1-k}\in \text{ker}\,\bar{\sD}^{\sigma_1-k}_t$ and the upper limit in the sum decreases by one because $\bar{\sD}^{\sigma_1-k-2}\circ\frac{\partial^l}{\partial t^l}\bar{A}$ has no dependence on $z_{\sigma_1-k-1}$. Thus we also have
\begin{equation}
Q_{k}=C_{\sigma_1-k+1}+(-1)^k\sum_{l=0}^{k-1}{\sigma_1 \choose l}(-1)^l\partiald{\,}{z_{\sigma_1-k}}\circ\bar{\sD}_t^{\sigma_1-l-1}\circ\frac{\partial^l}{\partial t^l}\bar{A}.
\end{equation}
We need to show that the $C_{\sigma_1-k}$ terms are zero for all defined values of $k$. we will check that the recursive definition for the $Q_{k+1}$'s is satisfied. Indeed, 
\begin{equation}
Q_{k+1}=\bar{\sD}_t(Q_k)+(-1)^{k+1}\partiald{\bar{p}}{z_{\sigma_1-k}}.
\end{equation}
Now, for our choice of $\bar{p}$ we have
\begin{equation}
\begin{aligned}
\partiald{\bar{p}}{z_{\sigma_1-k}}&=\sum_{l=0}^{\sigma_1}{\sigma_1\choose l}(-1)^l\frac{\partial}{\partial z_{\sigma_1-k}}\circ\bar{\sD}^{\sigma_1-l}_t\circ\frac{\partial^l}{\partial t^l}\bar{A}\\
\,&=\sum_{l=0}^k{\sigma_1\choose l}(-1)^l\frac{\partial}{\partial z_{\sigma_1-k}}\circ\bar{\sD}^{\sigma_1-l}_t\circ\frac{\partial^l}{\partial t^l}\bar{A},
\end{aligned}
\end{equation}
where the terms in the sum for $l>k$ are all zero since $\bar{\sD}^{\sigma_1-l}_t\circ\frac{\partial^l}{\partial t^l}\bar{A}$ has dependence on jet coordinates only up to order $\sigma_1-l$. Now using
\begin{equation}
\frac{\partial}{\partial z_{\sigma_1-k}}\circ\bar{\sD}^{\sigma_1-l}_t=\frac{\partial}{\partial z_{\sigma_1-k-1}}\circ\bar{\sD}^{\sigma_1-l-1}_t+\bar{\sD}_t\circ\frac{\partial}{\partial z_{\sigma_1-k}}\circ \bar{\sD}^{\sigma_1-l-1}_t,
\end{equation}
we see that
\begin{multline}
\partiald{\bar{p}}{z_{\sigma_1-k}}=\sum_{l=0}^k{\sigma_1\choose l}(-1)^l\frac{\partial}{\partial z_{\sigma_1-k-1}}\circ\bar{\sD}^{\sigma_1-l-1}_t\circ\frac{\partial^l}{\partial t^l}\bar{A}\\
+\bar{\sD}_t\sum_{l=0}^{k-1}{\sigma_1\choose l}(-1)^l\frac{\partial}{\partial z_{\sigma_1-k}}\circ\bar{\sD}^{\sigma_1-l-1}_t\circ\frac{\partial^l}{\partial t^l}\bar{A}.
\end{multline}
Hence the recursion relation simplifies to 
\begin{equation}
Q_{k+1}=\bar{\sD}_t(C_{\sigma_1-k+1})+(-1)^{k+1}\sum_{l=0}^k{\sigma_1\choose l}(-1)^l\frac{\partial}{\partial z_{\sigma_1-k-1}}\circ\bar{\sD}^{\sigma_1-l-1}_t\circ\frac{\partial^l}{\partial t^l}\bar{A}.
\end{equation}
This is precisely the form we have already derived, where $C_{\sigma_1-k}=\bar{\sD}_t(C_{\sigma_1-k+1})$. This implies that $C_{\sigma_1-k}=\bar{\sD}^k_t(C_{\sigma_1})$ for all $k$. Now, checking directly that
\begin{equation}
Q_1=-\partiald{\bar{p}}{z_{\sigma_1}},
\end{equation}
by the recursion relation definition of the $Q_k$ we can conclude that $C_{\sigma_1}=0$, and hence all $C_{\sigma_1-k}=0$. Now we check the Cauchy characteristic PDE (\ref{CC PDE}) and find that
\begin{equation}
\begin{aligned}
\,&(-1)^{k+i}\sum_{l=0}^{k-1}{\sigma_1 \choose l}(-1)^l\partialds{\,}{z_{\sigma_1-k}}{z_{\sigma_1-i}}\circ\bar{\sD}_t^{\sigma_1-l-1}\circ\frac{\partial^l}{\partial t^l}\bar{A}\\
\,&+(-1)^{k+i+1}\sum_{l=0}^{i-1}{\sigma_1 \choose l}(-1)^l\partialds{\,}{z_{\sigma_1-i}}{z_{\sigma_1-k}}\circ\bar{\sD}_t^{\sigma_1-l-1}\circ\frac{\partial^l}{\partial t^l}\bar{A}=0,
\end{aligned}
\end{equation}
which simplifies to
\begin{equation}\label{final form for proof}
\sum_{l=i}^{k-1}{\sigma_1 \choose l}(-1)^l\partialds{\,}{z_{\sigma_1-k}}{z_{\sigma_1-i}}\circ\bar{\sD}_t^{\sigma_1-l-1}\circ\frac{\partial^l}{\partial t^l}\bar{A}=0.
\end{equation}
However, since $\bar{A}$ has dependence only on $t$ and $z_0$, the function $\bar{\sD}_t^{\sigma_1-l-1}\circ\frac{\partial^l}{\partial t^l}\bar{A}$ has no dependence on $z_{\sigma_1-i}$ for $i\leq l\leq k-1$, and hence equation (\ref{final form for proof}) is true. 
\end{proof}
Theorem \ref{sufficiency} gives an entire class of examples of control systems that have the property that they admit ESFL reductions. So for the first time, we can now construct explicit examples of control systems that are CFL and hence DFL. One can in principle ask how such functions $p(z)$ arise from the reconstruction process of Anderson and Fels, and this will be a focus of later work. Furthermore, the author informally conjectures the following:
\begin{conj}
The \textit{only} examples of systems with ESFL reductions by codimension 1 partial contact curves must have $p(z)$ in the kernel of the truncated Euler operator and also satisfy truncated versions of identities relating higher order Euler operators to the total derivative operator. 
\end{conj}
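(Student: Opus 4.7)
The plan is to invert the analysis underlying Theorem \ref{necessity} and Theorem \ref{sufficiency} by turning the full system of Cauchy characteristic PDEs of Proposition \ref{PDE form} into conditions on the \emph{entire tower} of truncated Euler operators $\bar{\sE}_j(\bar{p})$ for $0 \leq j \leq \sigma_1$, rather than only the top-order operator $\bar{\sE}_{\sigma_1}$. The endpoint will be an integration of this tower that recovers the decomposition $\bar{p} = \sum_{l=0}^{\sigma_1} \bar{\sD}_t^{\,l} \bar{A}_l$ appearing in Theorem \ref{sufficiency}, followed by a lift back to $p(z)$ on $J^\kappa$.

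First I would extend Lemma \ref{EOQ}. The conditions $[\bar{Y}_i,\bar{Y}_k]=0$ for all $i\leq k\leq\sigma_1$ produce the PDE system (\ref{CC PDE}) on the $Q_k$, and by applying Lemma \ref{EOQ} at each intermediate level rather than only the top, I expect to show that (\ref{CC PDE}) is equivalent to the much stronger statement that \emph{every} $\bar{\sE}_j(\bar{p})$, not merely $\bar{\sE}_{\sigma_1}(\bar{p})$, depends only on $(t,z_0)$. In effect, Frobenius integrability of the nested chain $\Pi^0\subset\Pi^1\subset\cdots\subset\Pi^{\sigma_1}$ of fundamental bundles yields a Helmholtz-style tower, one PDE layer per $\Pi^j$, and the truncated Poincar\'e--Cartan form $\psi$ from the proof of Theorem \ref{necessity} organizes this tower naturally.

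Next, I would integrate the tower by descending induction on $j$. The base case $j=\sigma_1$ is Theorem \ref{necessity}; for the inductive step, Proposition \ref{EO kernel}, Corollary \ref{kernel cor} and Proposition \ref{t indep tot invs} should pin down the $j$th layer up to a term of the form $\bar{\sD}_t \bar{A}_j$ with $\bar{A}_j$ depending only on $(t,z_0)$. Peeling off such a term and recursing yields the desired decomposition on the reduced jet space. The final step is to lift: applying Lemma \ref{reduced} in reverse and exploiting that a codimension-1 partial contact curve reduction is parametrised by $m-1$ \emph{arbitrary} smooth functions $f_l(t)$, I would argue that a decomposition $\bar{p}=\sum_l \bar{\sD}_t^{\,l}\bar{A}_l$ valid simultaneously for \emph{all} choices of the $f_l$ can come only from $p(z)=\sum_l \sD_{t,i}^{\,l} A_l$ on $J^\kappa$, modulo terms in the joint kernel of every $\sE_j$.

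The main obstacle will be the inductive step for the non-top layers. Unlike the full Euler operator, whose kernel is precisely the image of the total derivative, the kernel of $\bar{\sE}_j$ carries correction summands controlled by $\bar{\sD}_t^{\,j+1}$ acting on partials of the primitive (Corollary \ref{kernel cor}), and these corrections cascade when one tries to annihilate several $\bar{\sE}_j$ simultaneously. I expect that organising these corrections into the ``truncated identities'' named in the conjecture --- precise analogues of the variational identity $\sE \circ \sD_t=0$ carrying explicit boundary-type remainders, presumably of the shape $\sE_{\tau}\circ\sD_t^{\,r} = \sum_s c_{\tau,r,s}\,\sD_t^{\,s}\circ\partial_{z_{\tau}}$ --- is exactly what is needed to close the induction. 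Formulating and proving this family of identities, and verifying that the remainders vanish under any admissible partial contact curve reduction, appears to be the technical heart of the conjecture.
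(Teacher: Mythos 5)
First, note that the paper itself offers no proof of this statement --- it is posed as an informal conjecture --- so your proposal stands or falls on its own, and it falls at its first step. You claim that the Cauchy characteristic system (\ref{CC PDE}), i.e.\ Frobenius integrability of the chain $\Pi^0\subset\cdots\subset\Pi^{\sigma_1}$, is equivalent to \emph{every} $\bar{\sE}_j(\bar{p})$, $0\leq j\leq\sigma_1$, depending only on $(t,z^1_0)$. That is false, and it is contradicted by the paper's own sufficient class. The intermediate systems $\mathcal{F}^k=\langle dt,\,d\epsilon-\psi_k,\,\theta^0,\ldots,\theta^{\sigma_1-k-1}\rangle$ are required to close only \emph{modulo the extra contact forms} $\theta^0,\ldots,\theta^{\sigma_1-k-1}$; only the top system $\mathcal{F}^{\sigma_1}=\langle dt,\,d\epsilon-\psi\rangle$ contains no contact forms, which is why the $d^2=0$ argument of Theorem \ref{necessity} yields an unconditional degeneracy statement there and only there. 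Concretely, take $m=2$, $\sigma_1=2$, $p=\sD_{t,1}^2A$ with $A=A(z^1_0,z^2_0)$, squarely inside the class of Theorem \ref{sufficiency}, so the codimension 1 reduction along $j^{\sigma_2}z^2_0=j^{\sigma_2}f(t)$ \emph{is} ESFL. Writing $z_l=z^1_l$ and $\bar{A}(t,z_0)=A(z_0,f(t))$, Lemma \ref{reduced} gives $\bar{p}=z_2\,\partiald{\bar{A}}{z_0}+z_1^2\,\partialdss{\bar{A}}{z_0}$, and one computes
\begin{equation}
\bar{\sE}_2(\bar{p})=\frac{\partial^3\bar{A}}{\partial t^2\,\partial z_0},\qquad
\bar{\sE}_1(\bar{p})=-z_2\,\partialdss{\bar{A}}{z_0}-z_1^2\,\frac{\partial^3\bar{A}}{\partial z_0^3}-2z_1\,\frac{\partial^3\bar{A}}{\partial t\,\partial z_0^2}.
\end{equation}
The top layer is degenerate, exactly as Theorem \ref{necessity} demands, but the intermediate layer depends on $z_1,z_2$ whenever $\partialdss{\bar{A}}{z_0}\neq0$. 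So the tower of degeneracies you want to extract from (\ref{CC PDE}) is violated by the very systems the conjecture is meant to characterize, and your descending induction cannot get below $j=\sigma_1$.

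Two further steps would remain unsubstantiated even after repairing the tower (the correct layer-$k$ condition must be formulated modulo the lower-order contact forms, not as degeneracy of $\bar{\sE}_j(\bar{p})$ itself). First, inverting $\bar{\sD}_t^{\sigma_1-k}$ in Lemma \ref{EOQ} determines $Q_{k+1}$ only up to an element of $\ker\bar{\sD}_t^{\sigma_1-k}$, which by Proposition \ref{Dt inv 1} is a large function space generated by the invariants $I_k$; in the proof of Theorem \ref{sufficiency} these terms are eliminated using the hypothesized explicit form of $\bar{p}$ together with the initialization $Q_1=-\partiald{\bar{p}}{z_{\sigma_1}}$, whereas in your converse direction no such ansatz is available, and showing that the kernel corrections can be absorbed into redefinitions of the $\bar{A}_l$ is precisely the family of ``truncated identities'' you defer to the end --- it is the heart of the conjecture, not a closing verification. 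Second, the lifting step assumes a decomposition of $\bar{p}$ valid for \emph{all} choices of $f_l$, but the hypothesis of an ESFL reduction is only generic in the contact curve: in Example \ref{so(5)} the reduction fails exactly when $\dot{g}\ddot{g}^2+\ddot{g}-1=0$, since then $Q_{\sigma_1+1}=0$ and bracket generation fails by Proposition \ref{PDE form}. You may therefore only assume the reduced decomposition on a generic family of curves, and passing from generic validity to an identity on $J^\kappa$ --- and from there to $p=\sum_l\sD^l_{t,i}A_l$ modulo the joint kernel of all reductions, which you have not characterized --- requires an argument the proposal does not sketch.
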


We mention that in Theorem \ref{sufficiency}, the requirement that the power of $\sD_{t,i}$ be no larger than $\sigma_i$ is necessary. Assume that 
\begin{equation}
p(z)=\sD^{\sigma_1+1}_{t,i}(A).
\end{equation}
Then the reduction of the contact sub-connection fails to be ESFL. Indeed, applying the truncated Euler operator and using Corollary \ref{kernel cor}, we find that
\begin{equation}
\bar{\sE}_{\sigma_i}(\bar{p})=\bar{\sD}^{\sigma_i+1}_t\left(\partiald{\bar{A}}{z_0}\right).
\end{equation}
This violates Theorem \ref{necessity} of the previous section, which says that if $\bar{\mathcal{H}}_G$ is ESFL then $\bar{\sE}_{\sigma_1}(\bar{p})$ must be nonzero and have dependence only on $t$ and $z_0$.

Recall Example \ref{so(5)} from Chapter 1:
\begin{equation}
\begin{aligned}
\dot{x}_1=\frac{1}{2}(x_2+2x_3x_5),& \phantom{==}\dot{x}_2=2(x_3+x_1x_5),\\
\dot{x}_3=\frac{2(u_1-x_1u_2)}{1+x_1},&\phantom{==}\dot{x}_4=x_5,\\
\dot{x}_5=\frac{2(u_1+u_2)}{1+x_1}.&\,
\end{aligned}
\end{equation}

This system is cascade feedback linearizable via a 1-dimensional symmetry group. This control system first appears in \cite{Cascade2}, where it was shown to be CFL, but not SFL or even EDFL by partial prolongation. The control system has a 1-dimensional control admissible symmetry group whose infinitesimal action is generated by the vector field
\begin{equation}
X_\Gamma=x_1\partial_{x_1}+x_2\partial_{x_2}+x_3\partial_{x_3}+u_1\partial_{u_1}+\frac{x_1u_2-u_1}{1+x_1}\partial_{u_2}.
\end{equation}
The associated quotient system is SFL, and thus we can use Theorem \ref{control reconstruction} to transform the system into a contact sub-connection:
\begin{equation}
\gamma^G=\left\langle\theta^i_0,\theta^i_1,d\epsilon-\frac{2(z^2_1)^2-z^2_1z^1_1-z^1_0}{z^2_1z^1_0-2}\,dt\right\rangle,\text{ for }i=1,2.
\end{equation}
Notice that in $\gamma^G$ the $p(z)$ term is given by
\begin{equation}
\begin{aligned}
p(z)&=\frac{2(z^2_1)^2-z^2_1z^1_1-z^1_0}{z^2_1z^1_0-2}\\
\,&=\frac{2(z^2_1)^2-z^1_0}{z^2_1z^1_0-2}-\sD_{t,1}\ln(z^2_1z^1_0-2).
\end{aligned}
\end{equation}
Looking at the hypotheses of Theorem \ref{sufficiency} we see that $p(z)=A_0+\sD_{t,1}(A_1)$, where
\begin{equation}
A_0=\frac{2(z^2_1)^2-z^1_0}{z^2_1z^1_0-2}\\
\end{equation}
and
\begin{equation}
A_1=-\ln(z^2_1z^1_0-2).
\end{equation}
The $A_0$ and $A_1$ terms both have dependence on $z^1_0$, but not on any higher order terms, i.e. $z^1_1$ or $z^1_2$. Thus if we reduce by codimension 1 partial contact curves that annihilate $\theta_0^2$ and $\theta_1^2$, then by Theorem \ref{sufficiency} we should expect the resulting control system to be ESFL. This reduction can be described by setting $z^2_0=g(t), z^2_1=\dot{g}(t),\text{ and }z^2_2=\ddot{g}(t)$. We then find that
\begin{equation}
\bar{\gamma}^G=\left\langle\theta^1_0,\theta^1_1,d\epsilon-\frac{2(\dot{g})^2-\dot{g}z^1_1-z^1_0}{\dot{g}z^1_0-2}\,dt\right\rangle.
\end{equation}
Additionally, if we compute $\bar{p}(z)$, then we can determine $Q_{\sigma_1+1}=Q_{3}$. Using Lemma \ref{reduced}, we obtain
\begin{equation}
\begin{aligned}
\bar{p}(z)&=\frac{2(\ddot{g})^2-z^1_0}{\dot{g}z^1_0-2}-\bar{\sD}_t\ln(\dot{g}z^1_0-2)+\frac{\ddot{g}z^1_0}{\dot{g}z^1_0-2}\\
\,&=\frac{2(\ddot{g})^2+(\ddot{g}-1)z^1_0}{\dot{g}z^1_0-2}-\bar{\sD}_t\ln(\dot{g}z^1_0-2),\\
\end{aligned}
\end{equation}
and so by Lemma \ref{EOQ} we can conclude that
\begin{equation}
\begin{aligned}
Q_{3}&=-\bar{\sE}_2(\bar{p})\\
\,&=\frac{2\dot{g}\ddot{g}^2+2(\ddot{g}-1)}{(\dot{g}z^1_0-2)^2}.
\end{aligned}
\end{equation}
Hence the choice of partial contact curve that permits $\bar{\gamma}^G$ to be ESFL is generic. To see this, notice that $g(t)$ cannot be a solution of $\dot{g}\ddot{g}^2+\ddot{g}-1=0$. Indeed, if $g(t)$ is a solution to $\dot{g}\ddot{g}^2+\ddot{g}-1=0$, then $Q_{\sigma_1+1}=0$. But this contradicts that $\bar{\mathcal{H}}_G$ is bracket generating because of (\ref{Qn+1 not 0}) in Proposition \ref{PDE form}.
\subsection{Final Thoughts and Future Work}
We conclude this thesis by highlighting the importance of the results in Chapter 4 in relation to the theories of CFL and EDFL control systems. Recall that Theorem \ref{CFL is EI} says that CFL systems are explicitly integrable; however, Proposition \ref{EI iff EDFL} from Chapter 1 says that a control system is explicitly integrable if and only if the control system is EDFL. Therefore it is sufficient to prove that a control system is CFL to determine that it is EDFL. In fact, given a CFL control system, \cite{Cascade2} gives an explicit way to produce an EDF linearization of the associated contact sub-connection. 

In light of the relationship between CFL and EDFL control systems, we then realize the importance of Theorem \ref{sufficiency}. It produces an entire class of control systems that are \textit{known} to be EDFL. Furthermore, Theorem \ref{necessity} imposes a strong necessary condition on a contact sub-connection to have an ESFL partial contact curve reduction and gives an indication about which contact coordinates should be reduced to arrive at an ESFL reduced system. These are both important results for the goal of completely classifying all control systems with symmetry that are EDFL. Thus, with the main reults of this thesis in mind, we are suddenly presented with a number of natural questions and conjectures to investigate in order to further complete a general theory of CFL control systems. 
\begin{Q}
Can Theorems \ref{necessity} and \ref{sufficiency} be extended to necessary and sufficient conditions on the unreduced function $p(t,z)$ in the contact sub-connection? Or stated differently, can we classify \textit{all} functions $p(t,z)$ that correspond to EDFL contact sub-connections?  
\end{Q}
The author believes this can be achieved by exploring truncated versions of more identities and operators from the calculus of variations. Namely, establishing an insightful relationship between the reduced truncated Euler operator $\bar{\sE}_{\sigma_1}$ and the non-reduced truncated Euler operator $\sE_{\sigma_1}$ in a similar manner to Lemma \ref{reduced} for the truncated total derivative operator. 
\begin{Q}
At least in the case of codimension 1 contact curve redutions, can the results of this thesis be extended to similar theorems for control systems with control admissible symmetry groups of dimension greater than 1? 
\end{Q}
The author believes this is also possible. Careful examination of procedure \textbf{contact} ought to reveal any additional obstructions to ESF linearizability of the reduced system. This will also be the content of future work by the author. 
\begin{conj}
If a contact sub-connection is ESFL via a codimension $s>1$ partial contact curve reduction, then the contact sub-connection must have a codimension 1 partial contact curve reduction that is ESFL. 
\end{conj}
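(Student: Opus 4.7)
The plan is to proceed by induction on the codimension $s$: given an ESFL codimension $s$ partial contact curve reduction, the goal is to produce an ESFL codimension $s-1$ reduction obtained by \emph{releasing} one of the $s$ constrained jet directions, and then iterate down to $s=1$. Set up notation: let $I=\{i_1,\ldots,i_s\}\subset\{1,\ldots,m\}$ index the reduced directions, and let $\bar{\gamma}^G_I$ denote the reduction obtained by substituting $j^{\sigma_{i_a}}z^{i_a}_0 = j^{\sigma_{i_a}}f_{i_a}(t)$ for $a=1,\ldots,s$. The ESFL hypothesis for $\bar{\gamma}^G_I$ means that the reduced distribution $\bar{\mathcal{H}}^I_G$ is a Goursat bundle whose refined derived type, Cauchy integrability conditions in filtration (\ref{filt fun}) or (\ref{filt res}), and conditions (1)-(2) of Theorem \ref{Goursat ESFL} all hold.

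For the inductive step, pick $i_a\in I$ and let $J = I\setminus\{i_a\}$. I would first argue that ESFL of $\bar{\gamma}^G_I$ must hold for generic $f_{i_a}$, so that the refined derived type computation and the integrability identities for $\bar{\gamma}^G_I$ are polynomial identities in the jets $f_{i_a}, \dot{f}_{i_a}, \ldots, f^{(\sigma_{i_a})}_{i_a}$ treated as symbols. Promoting these symbols back to genuine jet coordinates $z^{i_a}_0, z^{i_a}_1,\ldots,z^{i_a}_{\sigma_{i_a}}$ yields the codimension $s-1$ reduction $\bar{\gamma}^G_J$, and the very same identities, read in the enlarged jet space, should certify ESFL for $\bar{\gamma}^G_J$ provided the refined derived type upgrades correctly and the Cauchy integrability is preserved. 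The candidate index $i_a$ to release should be the one for which the truncated Euler obstruction of Theorem \ref{necessity} is least restrictive, typically the direction of largest order $\sigma_{i_a}$.

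For $\dim G = 1$ I would concretize this by using the PDE reformulation of Proposition \ref{PDE form}: ESFL reduces to the Cauchy-characteristic PDEs (\ref{CC PDE}) on the coefficients $Q_k$ built recursively from $\bar{p}$ through $\bar{\sD}_t$ and the partial derivatives $\partiald{\bar{p}}{z^1_{\sigma_1-k+1}}$. Lemma \ref{reduced} converts partial-contact-curve reductions into shifts of the operator $\sD_t - \partiald{\,}{t}$; running this correspondence in reverse, the identities satisfied by the codimension $s$ coefficients $Q^I_k$ should lift to identities for the codimension $s-1$ coefficients $Q^J_k$, so long as the order-$\sigma_{i_a}$ restriction of Theorem \ref{necessity} is compatible with $\bar{p}|_J$. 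Combining this with the template of Theorem \ref{sufficiency}, one expects the un-released direction to inherit a $p$-function of the form $\sum_{l=0}^{N}\sD^l_{t,j}A_l$ for some $j\notin J$, which by Theorem \ref{sufficiency} produces the desired codimension 1 ESFL reduction.

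The hard part will be propagating the integrability of the Weber structure or fundamental bundle (item (3) of Definition \ref{Goursat}) from the codimension $s$ reduction up to the codimension $s-1$ reduction: newly live brackets involving $\partiald{\,}{z^{i_a}_{l}}$ can a priori push the derived flag outside the Brunovsk\'y target type or spoil the Cauchy flag. Controlling these extra brackets is essentially equivalent to the author's informal conjecture after Theorem \ref{sufficiency} that $p(z)$ must lie in the kernel of the truncated Euler operator \emph{together with} satisfying truncated analogues of the higher Euler--total-derivative identities; once that stronger conjecture is established, the codimension reduction argument above should close by induction. The extension to $\dim G > 1$ is expected to be the main further technical obstacle, since the scalar truncated Euler operator $\sE_{\tau_i}$ must be replaced by a matrix-valued variational operator acting on the vector of coefficients $p^a(t,\bz)$ of $\Theta^G$; developing that calculus on principal $G$-bundles is, in my view, the most natural route to a complete proof.
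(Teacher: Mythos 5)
This statement is a conjecture in the paper: the author gives no proof of it, only the single piece of evidence that the BC system satisfies it (further reducing (\ref{BC reduced}) along $z^2_0=g(t)$, $z^2_1=\dot g(t)$ yields an ESFL single-input system). So there is no argument of the paper for you to match, and your text is a research plan rather than a proof; as written it does not close the conjecture.

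The central gap is the ``promotion'' step in your inductive argument. ESFL of the codimension $s$ reduction gives you identities (rank growth of the derived flag, the Cauchy conditions (\ref{CC PDE}), integrability of the filtration, and conditions (1)--(2) of Theorem \ref{Goursat ESFL}) that hold only after the substitution $j^{\sigma_{i_a}}z^{i_a}_0=j^{\sigma_{i_a}}f_{i_a}(t)$, i.e.\ along curves where the released variables are jets of one function of $t$ and are differentiated through the $\partial_t$ part of $\bar{\sD}_t$. Declaring these to be ``polynomial identities in the symbols $f_{i_a}^{(k)}$'' and then reading them in independent jet coordinates is exactly the unproven content: on the larger space the relevant operator changes structure (the new top variable $z^{i_a}_{\sigma_{i_a}}$ has no successor, and new brackets with $\partial_{z^{i_a}_l}$ enter the flag), and Lemma \ref{reduced} only tells you how an unreduced $p=\sD^r_{t,i}A$ descends to a reduction, not how reduced identities lift --- inverting that correspondence is precisely what would need to be established. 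Beyond this, your argument is conditional on the author's own informal conjecture following Theorem \ref{sufficiency}, and it misuses Theorem \ref{sufficiency}, which is only a sufficiency statement: knowing the codimension $s$ reduction is ESFL does not force the surviving direction to inherit a $p$-function of the form $\sum_{l}\sD^l_{t,j}A_l$, so you cannot conclude ESFL of the codimension $1$ reduction by invoking that theorem. Finally, the step you defer as ``the hard part'' --- showing the newly released bracket directions do not spoil the refined derived type, the integrability of the fundamental bundle or Weber structure, or the ESFT conditions ($dt$ annihilating the relevant Cauchy/resolvent bundle) --- is not a technicality; it is the substance of the conjecture, and the $\dim G>1$ case you mention is likewise untouched by the paper's machinery, which in Proposition \ref{PDE form}, Theorem \ref{necessity}, and Theorem \ref{sufficiency} is developed only for $\dim G=1$.
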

This conjecture is of central importance to the theory of CFL control systems. If true, the conjecture would mean that classifying all contact sub-connections that possess an ESFL partial contact curve reduction would be reduced to understanding the phenomenom of ESFL partial contact curve reductions to control systems of a single input. One example for which this conjecture holds true is the BC system. If one further reduces the reduced contact sub-connection (\ref{BC reduced}) with $z_0^2=g(t), z_1^2=\dot{g}(t)$ for an arbitrary $g(t)$ then the resulting control system with one control is ESFL.  
Additionally, the author has recently had some insight about the following: 
\begin{conj}
In the case that $\dim G=1$, if a contact sub-connection is EDFL by partial prolongation, then it must admit an ESFL codimension 1 partial contact curve reduction.  
\end{conj}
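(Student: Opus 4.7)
The plan is to exploit the fact that partial prolongation alters only the Brunovsk\'y sub-factor of $\gamma^G$ while leaving the Lie-type 1-form $d\epsilon - p(t,z)\,dt$ unchanged, so the ESFL property of the prolongation $\tilde{\gamma}^G$ translates directly into structural constraints on $p(t,z)$. Concretely, suppose $\gamma^G$ is EDFL by partial prolongation, so there exist nonnegative integers $r_1, \ldots, r_m$, not all zero, such that $\tilde{\gamma}^G = \beta^{\tilde{\kappa}} \oplus \langle d\epsilon - p(t,z)\,dt\rangle$ on $J^{\tilde{\kappa}} \times G$ with $\tilde{\sigma}_i = \sigma_i + r_i$ is ESFL, while $p(t,z)$ still has no dependence on $z^i_l$ for $l > \sigma_i$. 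First I would fix a direction $i_0$ with $r_{i_0} > 0$, with the goal of showing that the codimension 1 partial contact curve reduction of $\gamma^G$ along the directions $j \neq i_0$ is ESFL.

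Next I would compute the derived flag of $\tilde{\gamma}^G$ in the spirit of Proposition \ref{PDE form}. Because $p$ is independent of $z^{i_0}_l$ for $l > \sigma_{i_0}$, the first $r_{i_0}$ iterated brackets $[\tilde{X}, \cdot]$ starting from $\partial_{z^{i_0}_{\tilde{\sigma}_{i_0}}}$ produce pure jet directions $(-1)^k \partial_{z^{i_0}_{\tilde{\sigma}_{i_0}-k}}$ with no $\partial_\epsilon$ component, and the $\partial_\epsilon$ terms only appear starting at step $r_{i_0}+1$ with coefficients built from $\sD_t$-iterates of $\partial p / \partial z^{i_0}_{\sigma_{i_0}}$. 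The ESFL hypothesis for $\tilde{\gamma}^G$ forces Cauchy characteristic PDEs analogous to (\ref{CC PDE}) at every stage, and the $dt$-annihilator condition from Theorem \ref{Goursat ESFL} pins down how $p$ interacts with the truncated total derivative $\sD_{t,i_0}$. Tracking this recursion carefully, and using Lemma \ref{EOQ} together with the observation that the pure-jet steps impose vanishing on the truncated Euler operators $\sE_{\tilde{\sigma}_{i_0}}$, should yield a decomposition
\begin{equation*}
p(t,z) = \sum_{l=0}^{N} \sD_{t,i_0}^l\, A_l(t,z), \qquad \partiald{A_l}{z^{i_0}_k} = 0 \text{ for all } k \geq 1,
\end{equation*}
that is, $p$ lies in the image of iterated sub-fiber total derivatives applied to functions of $z^{i_0}_0$ and the other jet variables alone, with $N \leq \sigma_{i_0}$. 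This decomposition is precisely the form required by Theorem \ref{sufficiency}.

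Finally, invoking Theorem \ref{sufficiency} with this decomposition (and restricting the $A_l$ to the partial contact curve $j^{\sigma_j} z^j_0 = j^{\sigma_j} f^j(t)$ for $j \neq i_0$ via Lemma \ref{reduced}) yields that the codimension 1 reduction $\bar{\gamma}^G$ of $\gamma^G$ along all directions except $i_0$ is ESFL, which is the desired conclusion.

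The main obstacle will be extracting the explicit decomposition of $p$ from the Cauchy characteristic PDEs of the prolonged system. This requires a multi-directional generalization of Proposition \ref{PDE form}, in which the $Q_k$ recursion must be indexed by both the direction and the step, and one must show that the pure-jet bracket steps in direction $i_0$ combined with the Frobenius conditions on $\tilde{\mathcal{F}}^k$ force the $\sD_{t,i_0}$-graded decomposition above rather than merely some more general integrability statement. A secondary subtlety is the case where several $r_i > 0$: one must verify that consistency of the Cauchy characteristic conditions across all prolonged directions still allows the selection of a single $i_0$ along which the decomposition and subsequent reduction are valid, which likely amounts to showing that when multiple directions are simultaneously prolonged, the resulting PDE system forces a decomposition in at least one of those directions in isolation.
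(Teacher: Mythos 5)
The statement you are addressing is stated in the paper as an open conjecture: the author gives no proof, only the remark that ``the key idea here has to do with the fundamental bundle from procedure \textbf{contact}.'' So there is no paper argument to compare against, and your proposal must stand on its own — and as written it does not. The decisive step is the one you yourself flag as the ``main obstacle'': you assert that the Cauchy characteristic PDEs and the $dt$-annihilator condition for the prolonged system ``should yield'' the decomposition $p=\sum_{l=0}^{N}\sD_{t,i_0}^{l}A_l$ with each $A_l$ free of $z^{i_0}_k$, $k\geq 1$. Nothing in the paper supplies this implication. Theorem \ref{necessity} only constrains the \emph{reduced} quantity $\bar{\sE}_{\sigma_1}(\bar{p})$ (it must depend only on $t$ and $z^1_0$), and Lemma \ref{EOQ} relates the $Q_k$ to truncated Euler operators of $\bar{p}$, not of $p$; neither gives you control of the unreduced $p$ across all jet directions, let alone the graded $\sD_{t,i_0}$-decomposition. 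In effect you are invoking a converse to Theorem \ref{sufficiency}, which the paper itself lists as a separate informal conjecture (``the \emph{only} examples \ldots must have $p(z)$ in the kernel of the truncated Euler operator \ldots''), so your plan reduces one open statement to another rather than proving it. The multi-input bookkeeping you defer (a multi-directional analogue of Proposition \ref{PDE form}, and the case of several $r_i>0$) is additional unproved machinery, since Proposition \ref{PDE form} is established only for $m$ reduced to a single surviving jet direction and $\dim G=1$.

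A second, smaller mismatch: even if you obtained the decomposition, you could not directly cite Theorem \ref{sufficiency}, because its hypotheses require $p=p(z)$ with the $A_l$ having \emph{no dependence on $t$} (as well as none on $j^l(z^{i_0}_0)$ for $l\geq 1$), whereas you work with $p(t,z)$ and $A_l(t,z)$. You would need a time-dependent extension of Theorem \ref{sufficiency} (its proof uses Lemma \ref{reduced}, which is stated for $t$-independent $A$), and that extension is not in the paper. So the proposal is a reasonable research plan — the reduction of the conjecture to a structural normal form for $p$ is a sensible way to attack it — but both the extraction of that normal form from the ESFL hypothesis on the prolongation and the applicability of the sufficiency theorem to it are genuine gaps, and the first of them is essentially the content of the conjecture itself.
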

The key idea here has to do with the fundamental bundle from procedure \textbf{contact}. 

One can also investigate other aspects of CFL control systems related to a control admissible symmetry group. Interestingly, there appears to be little connection between the algebraic structure of the control admissible symmetry groups and whether or not control systems are CFL. However, the \textit{representation} of a Lie group of control admissible symmetries in the diffeomorphism group of the manifold of time, states, and controls for the control system \textit{does} appear to have an impact. This is seen in Theorem \ref{lifted solutions} in Chapter 3, where the equation of Lie type is constructed precisely from the \textit{action} of $G$ on $M$. From this perspective, the explicit form of the contact sub-connection in Theorem \ref{sufficiency} seems to hint at some kind of prolongation structure for the action of $G$ on $M$. The author hopes that research in this direction will help shed light on the following conjecture. 
\begin{conj}
If a control system with control admissible symmetry group is EDFL, then it must be CFL. 
\end{conj}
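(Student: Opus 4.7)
The strategy I would pursue is to chain together the equivalence of EDFL and EI from Proposition \ref{EI iff EDFL} with a symmetry-equivariant analysis of the EI parameterization. First, assuming $(M,\omega)$ is EDFL with control admissible symmetry group $G$, I would invoke Proposition \ref{EI iff EDFL} to obtain explicit formulas expressing generic trajectories in terms of arbitrary smooth functions $z^i_0(t)$ and their derivatives. Because $G$ acts by ESFTs preserving $\omega$, it must send solutions to solutions, hence act on the parameter functions and their jets by some induced action. The next step is to show that this induced action is sufficiently regular to descend to a bona fide EI parameterization of the quotient $\omega/G$, which by Proposition \ref{EI iff EDFL} would make $\omega/G$ an EDFL control system on $M/G$.

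The crucial and most delicate step would then be to bootstrap from ``$\omega/G$ is EDFL'' to ``$\omega/G$ is ESFL,'' thereby realizing items (1) and (2) of Definition \ref{CFL def}. One possible route is an inductive argument on the dimension of the symmetry group: if $\omega/G$ itself required a nontrivial dynamic extension, one could attempt to locate a residual control admissible symmetry $G'$ of the extended system and iterate the reduction, with the minimal base case handled directly via the classification of low-dimensional control systems. Once an ESFL quotient is secured, Theorem \ref{control reconstruction} yields the contact sub-connection $(J^\kappa \times G, \gamma^G)$, and it then remains to show that some partial contact curve reduction of $\gamma^G$ is itself ESFL. Here I would combine the ambient EI structure of $\omega$ with Theorem \ref{necessity}: the existence of an EI parameterization for $\omega$, pulled through the reconstruction, should force $\bar{\sE}_{\sigma_i}(\bar{p})$ to exhibit the required degeneracy, and a suitably refined converse of Theorem \ref{sufficiency} would then supply the ESFL reduction.

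The principal obstacle will be the jump from EDFL to ESFL for the quotient system. There is no intrinsic reason why the symmetry group should absorb precisely those arbitrary functions in the EI parameterization that obstruct static linearizability; a priori, the quotient could carry its own intrinsic nonlinearity requiring yet another augmentation, and the inductive loop might not terminate in a single pass. Addressing this obstacle seems to require careful bookkeeping of how the $G$-action interacts with the filtration by Cauchy characteristic subbundles in procedure \textbf{contact}, quite possibly through a $G$-equivariant refinement of the truncated Euler operator developed in Section 4.3. A second, subtler difficulty is that even once an ESFL quotient is in hand, the existence of an ESFL partial contact curve reduction of $\gamma^G$ is presently established only for the special functional forms of $p(z)$ covered by Theorem \ref{sufficiency}; extending this to all EDFL contact sub-connections is essentially the preceding conjecture of Section 4.6 and would likely have to be settled as a prerequisite.
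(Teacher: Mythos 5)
This statement is one of the paper's open conjectures in Section 4.6 (Final Thoughts and Future Work); the paper offers no proof of it, so there is no argument of mine to match yours against, and your proposal must stand on its own. It does not: it is a conditional research outline rather than a proof, and the gaps are substantive. First, the descent step already overreaches the hypotheses. A control admissible symmetry group (Definition of control admissible symmetries) is required to be free, regular, and $t$-preserving, but \emph{not} strongly transverse, and the quotient is only known to be a control system when strong transversality and $\dim G<\dim \bX(M)$ hold; without those, item (1) of Definition \ref{CFL def} can fail before any linearization question arises. Second, and most seriously, your ``bootstrap from EDFL to ESFL for the quotient'' is asserted, not argued. You yourself note there is no intrinsic reason the $G$-action absorbs exactly the obstructions to static linearizability of $\omega/G$; the proposed induction on residual symmetries of a dynamic extension has no mechanism guaranteeing that such a residual control admissible symmetry exists, that it is strongly transverse, or that the loop terminates, and the ``minimal base case via classification of low-dimensional systems'' is only available for 3 states and 2 controls.

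Third, the final step is circular relative to the paper's own open problems: you invoke ``a suitably refined converse of Theorem \ref{sufficiency}'' to produce the ESFL partial contact curve reduction, but the existence of such a converse is precisely the content of the conjecture and Question 1 stated alongside this one (classifying all $p$ admitting ESFL reductions), and Theorem \ref{necessity} only rules reductions out --- it cannot certify one. As written, your argument proves at best the conditional statement ``if the quotient of an EDFL system by its control admissible symmetry group is ESFL and if every EDFL contact sub-connection admits an ESFL codimension 1 partial contact curve reduction, then the system is CFL,'' which repackages the conjecture rather than settling it. A genuinely new ingredient would be needed at the EDFL-to-ESFL step for the quotient --- for instance, a $G$-equivariant analysis of how the EI parameterization interacts with the filtrations (\ref{filt res}) and (\ref{filt fun}) of procedure \textbf{contact}, as you suggest, but carried out rather than proposed.
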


Although algebraic properties of the control admissible symmetry group seem to be mostly irrelevant, one algebraic structure that is known to play a role is solvability. This property eases finding first integrals for the group action, for example. Another interesting structure to investigate is the relationship between two CFL reductions of the same control system $\omega$ with control admissible symmetry groups $G$ and $H$ such that $H<G$. If $H$ is normal in $G$, then $\omega/G=(\omega/H)(G/H)$, which is found in \cite{AndersonFelsBacklund}. 
\begin{Q}
Let $\omega$ be a control system with two control admissible symmetry groups $G$ and $H$ such that $H<G$. If $\omega/G$ and $\omega/H$ are ESFL, then what can be said about ESFL partial contact curve reductions of $\gamma^H$ and $\gamma^G$? 
\end{Q}
The author intends to present some results concerning this question in a forthcoming work. 

One can also consider extensions of CFL theory of control systems to cases in which quotient systems are not ESFL. 
\begin{Q}
Assume that $\omega$ is a control system with 2 controls and that it possesses a control admissible symmetry group $G$. If the dimension of $G$ is such that the quotient system $\omega/G$ is a control system of 3 states and 2 controls, then the work of \cite{Wilkens3s2cEquiv} allows one to find a normal form for $\omega/G$ via a SF transformation. Can one then construct a theory analagous to that of CFL systems? That is, can one construct a ``contact" sub-connection $\delta^G$ along with some variation of ESFL contact curve reductions of $\delta^G$? 
\end{Q} 
Naturally, this leaves behind the question of explicit integrability, but in principle, if one can perform such a ``generalized" CFL process, then further avenues of control system classification present themselves.

The questions and conjectures above are by no means an exhaustive list of possible directions for future research. One could envision studying global phenomena or incorporating topological questions along the lines of $h$-principles, for example. One may even be able to build analagous versions of CFL theory for \textit{PDE} control systems or combine the theory of CFL system with mixed discrete or stochastic control processes, although these latter subjects are (currently) beyond the expertise of the author. 

Overall, this thesis presents a thorough treatment of the geometric theory of cascade feedback linearizable control systems. Furthermore, it adds to that theory by presenting new examples, new theorems, as well as brand new connections to operators that are related to the calculus of variations. The theory of CFL control systems is still far from complete but seems to offer many different avenues of interesting research. Finally, the author hopes that this theory will find itself useful in applications to concrete scientific and engineering problems.

\bibliographystyle{plain}	
\nocite{*}		
\bibliography{CtrlRefThesis}		

\end{document}